\setlist[itemize]{noitemsep, nolistsep}
\setlist[enumerate]{noitemsep, nolistsep}
\theoremstyle{plain}
\newtheorem{lemma}{Lemma}[section]
\newtheorem{theorem}[lemma]{Theorem}
\newtheorem{corollary}[lemma]{Corollary}
\newtheorem{proposition}[lemma]{Proposition}
\newtheorem{assumption}[lemma]{Assumption}
\theoremstyle{definition}
\newtheorem{definition}[lemma]{Definition}
\newtheorem{remark}[lemma]{Remark}
\newtheorem{example}[lemma]{Example}
\newcommand{\VERT}{\vert\!\vert\!\vert}
\def\onorm#1{| #1|_0}
\def\enorm#1{| #1|_{\eps}}
\def\snorm#1{\Vert #1\Vert_\s}
\def\sabs#1{| #1|_\s}
\def\senorm#1{\Vert #1 \Vert_{\s, \eps}}
\def\wnorm#1{\lfloor \hspace{-0.29em} \rceil #1 \lfloor \hspace{-0.29em} \rceil^{(\eps)}}
\newcommand{\boxvert}{\vrule\@width 2\p@}
\newcommand{\eqdef}{\stackrel{\mathclap{\mbox{\tiny def}}}{=}}
\def\gen #1{{#1}^{\mathrm{gen}}}
\def\poly #1{{#1}_{\mathrm{poly}}}
\renewcommand{\P}{\mathbf{P}}
\newcommand{\R}{\mathbf{R}}
\newcommand{\Z}{\mathbf{Z}}
\newcommand{\N}{\mathbf{N}}
\newcommand{\Torus}{\mathbf{T}}
\newcommand{\projw}{\mathcal{P}}
\newcommand{\eps}{\varepsilon}
\def\CS{\mathcal{S}}
\def\CU{\mathcal{U}}
\def\CK{\mathcal{K}}
\def\CJ{\mathcal{J}}
\def\CI{\mathcal{I}}
\def\CN{\mathcal{N}}
\def\CO{\mathcal{O}}
\def\CQ{\mathcal{Q}}
\def\CR{\mathcal{R}}
\def\CB{\mathcal{B}}
\def\CC{\mathcal{C}}
\def\CD{\mathcal{D}}
\def\CW{\mathcal{W}}
\def\HT{\hat{\CT}}
\def\HF{\hat{\CF}}
\def\CT{\mathcal{T}}
\def\CF{\mathcal{F}}
\def\CA{\mathcal{A}}
\def\CG{\mathcal{G}}
\def\EE{\mathbf{E}}
\def\1{\mathbf{1}}
\def\MM{\mathscr{M}}
\def\ST{\mathscr{T}}
\def\SF{\mathscr{F}}
\def\s{\mathfrak{s}}
\def\CP{\mathcal{P}}
\def\spanning{\mathrm{span}}
\def\PPsi{{\boldsymbol{\Psi}}}
\def\rho{\varrho}
\def\phi{\varphi}
\begin{document}

\title{Discretisations of rough stochastic PDEs}
\author{M. Hairer$^1$ and K. Matetski$^2$}
\institute{University of Warwick, \email{m.hairer@warwick.ac.uk}
\and University of Toronto, \email{matetski@math.toronto.edu}}
\date{\today}
\maketitle

\begin{abstract}
We develop a general framework for spatial discretisations of parabolic stochastic PDEs whose 
solutions are provided in the framework of the theory of regularity structures and which are 
functions in time. As an application, we show that the dynamical $\Phi^4_3$ model on the 
dyadic grid converges after renormalisation to its continuous counterpart. This result in particular implies that, as expected, the $\Phi^4_3$ measure with a sufficiently small coupling constant is invariant for this equation and that 
the lifetime of its solutions is almost surely infinite for almost 
every initial condition.
\end{abstract}

\begin{keywords}
Stochastic PDEs, discretisations, regularity structures, stochastic quantization equation, invariant measure.
\end{keywords}

\tableofcontents


\section{Introduction}

The aim of this article is to develop a general framework for spatial discretisations of the parabolic stochastic PDEs of the form
\begin{equ}
 \partial_t u = A u + F(u, \xi)\;,
\end{equ}
where $A$ is an elliptic differential operator, $\xi$ is a rough noise, and $F$ is a non-linear function in $u$
which is affine in $\xi$. The class of spatial discretisations we work with are of the form
\begin{equ}
 \partial_t u^\eps = A^\eps u^\eps + F^\eps(u^\eps, \xi^\eps)\;,
\end{equ}
with the spatial variable taking values in the dyadic grid with mesh size $\eps > 0$, 
where $A^\eps$, $\xi^\eps$ and $F^\eps$ are discrete approximations of $A$, $\xi$ and $F$ 
respectively.

A particular example prototypical for the class of equations we are interested in 
is the dynamical $\Phi^4$ model in dimension $3$, 
which can be formally described by the equation
\begin{equs}[e:Phi]
\partial_t \Phi = \Delta \Phi + \bigl(\infty - a\bigr) \Phi - \lambda \Phi^3 + \xi\;, \qquad \Phi(0, \cdot) = \Phi_0(\cdot)\;, \tag{$\Phi^4_3$}
\end{equs}
on the torus $\Torus^3 \eqdef (\R / \Z)^3$ and for $t \geq 0$, where $\Delta$ is the Laplace operator on $\Torus^3$, $a \in \R$ is a fixed constant, $\lambda > 0$ is a ``coupling constant'', $\Phi_0$ is some initial data, and $\xi$ is the space-time white noise over $L^2(\R \times \Torus^3)$, see \cite{PZ14}. 

Here, $\infty$ denotes an ``infinite constant'': \eqref{e:Phi} should be interpreted
as the limit of solutions to the equation obtained by mollifying $\xi$ and replacing 
$\infty$
by a constant which diverges in a suitable way as the mollifier tends to the identity.
It was shown in \cite{Hai14} that this limit exists and is independent of the choice
of mollifier. The reason for the appearance of this infinite constant is that
solutions are random Schwartz distributions (this is already the case for the linear
equation, see \cite{PZ14}), so that their third power is undefined.
 The above notation also correctly suggests that 
solutions to \eqref{e:Phi} still depend on one parameter, namely the ``finite part'' of the
infinite constant, but this will not be relevant here and we consider this as being fixed from
now on. 

In two spatial dimensions, a solution theory for \eqref{e:Phi} was given in 
\cite{AR91,DPD03}, see also \cite{MR815192} for earlier work on a closely related model. 
In three dimensions, alternative approaches to \eqref{e:Phi} were recently obtained 
in \cite{CC13} (via paracontrolled 
distributions, see \cite{GIP12} for the development of that approach), 
and in \cite{Antti} (via renormalisation group techniques
\`a la Wilson).

It is natural to consider finite difference approximations to \eqref{e:Phi} for a number of reasons.
Our main motivation goes back to the seminal article \cite{BFS83}, where the authors provide a 
very clean and relatively compact argument showing that lattice approximations $\mu_\eps$ to the $\Phi^4_3$
measure are tight as the mesh size goes to $0$. These measures are given on the dyadic grid $\Torus^3_\eps \subset \Torus^3$ with the mesh size $\eps > 0$ by
\begin{equ}[e:mu_eps]
 \mu_\eps(\Phi^\eps) \eqdef e^{-S_\eps(\Phi^\eps)} \prod_{x \in \Torus^3_\eps} d \Phi^\eps(x) / Z_\eps\;,
\end{equ}
for every function $\Phi^\eps$ on $\Torus^3_\eps$, where $Z_\eps$ is a normalisation factor and
\begin{equ}[e:DAction]
S_\eps(\Phi^\eps) \eqdef \eps \sum_{x \sim y} \bigl(\Phi^\eps(x) - \Phi^\eps(y)\bigr)^2 - \frac{(C_\lambda^{(\eps)} - a) \eps^3}{2} \sum_{x \in \Torus^3_\eps} \Phi^\eps(x)^2 + \frac{\lambda \eps^3}{4} \sum_{x \in \Torus^3_\eps} \Phi^\eps(x)^4\;,
\end{equ}
with $C_\lambda^{(\eps)}$ being a ``renormalisation constant'' and with the first sum running over all the nearest neighbours on the grid $\Torus^3_\eps$, when each pair $x, y$ is counted twice. Then the $\Phi^4_3$ measure $\mu$ can be heuristically written as
\begin{equ}[e:PhiMeasure]
 \mu(\Phi) \sim e^{-S(\Phi)} \prod_{x \in \Torus^3} d \Phi(x)\;, 
\end{equ}
for $\Phi \in \CS'$ and for $S$ begin a limit of its finite difference approximations \eqref{e:DAction}:
\begin{equ}
S(\Phi) = \int_{\Torus^3} \left(\frac{1}{2} \bigl(\nabla \Phi(x)\bigr)^2 - \frac{\infty - a}{2} \Phi(x)^2 + \frac{\lambda}{4} \Phi(x)^4\right) dx\;.
\end{equ}
Since the measures $\mu_\eps$ with a sufficiently small coupling constant are invariant for the natural
finite difference approximations of \eqref{e:Phi}, showing that these converge to \eqref{e:Phi}
straightforwardly implies that any accumulation point of $\mu_\eps$ is invariant for the
solutions of \eqref{e:Phi}. These accumulation points are known to coincide with the $\Phi^4_3$ measure $\mu$
\cite[Thm.~2.1]{Park}, thus showing that $\mu$ is indeed invariant for \eqref{e:Phi}, as one might expect.
Another reason why discretisations of \eqref{e:Phi} are interesting is because they can be related to 
the behaviour of Ising-type models under Glauber dynamics near their critical temperature, see \cite{Simon1,Simon2}. 
See also the related result \cite{MW14} where the dynamical $\Phi^4_2$ model 
is obtained from the Glauber dynamic for a Kac-Ising model
in a more direct way, without going through lattice approximations. 
Similar results are expected to hold in three spatial dimensions, see e.g.\ the
review article \cite{GLP99}.

We will henceforth consider discretisations of \eqref{e:Phi} of the form
\begin{equs}[e:DPhiRenorm]
\frac{d}{dt} \Phi^\eps = \Delta^\eps \Phi^\eps + \bigl( C_\lambda^{(\eps)} - a\bigr) \Phi^\eps - \lambda \bigl(\Phi^\eps\bigr)^3 + \xi^\eps\;, \quad \Phi^\eps(0, \cdot) = \Phi^\eps_0(\cdot)\;, \tag{$\Phi^{4}_{3,\eps}$}
\end{equs}
on the dyadic discretisation $\Torus^3_\eps$ of $\Torus^3$ with mesh size $\eps = 2^{-N}$ for $N \in \N$, where $\Phi^\eps_0 \in \R^{\Torus^3_\eps}$, $\Delta^\eps$ is the nearest-neighbour approximation of the Laplacian $\Delta$, $\xi^\eps$ is a spatial discretisation of $\xi$, and $C_\lambda^{(\eps)}$ is a sequence of diverging, as $\eps \to 0$, renormalization constants depending on $\lambda$. We construct these discretisations on a common probability space by setting
\begin{equ}[e:SimpleDNoise]
\xi^\eps(t,x) \eqdef \eps^{-3} \langle \xi(t, \cdot), \1_{|\cdot - x| \leq \eps / 2} \rangle\;, \qquad (t,x) \in \R \times \Torus^3_\eps\;,
\end{equ}
where $| x|$ denotes the supremum norm of $x \in \R^3$.
Our results are however flexible enough to easily accommodate a variety of different approximations to the noise and the Laplacian. 

Existence and uniqueness of global solutions to \eqref{e:DPhiRenorm} for any fixed $\eps > 0$ follows 
immediately from standard results for SDEs \cite{Khasminkii,IW89}. 
Our main approximation result is the following, where we take the initial conditions
$\Phi^\eps_0$ to be random variables defined on a common probability space,
independent of the noise $\xi$. (We could of course simply take them deterministic, but
this formulation will be how it will then 
be used in our proof of existence of global solutions.)

\begin{theorem}\label{t:Phi}
Let $\xi$ be a space-time white noise over $L^2(\R \times \Torus^3)$ on a probability space $(\Omega, \SF, \mathbf{P})$, let $\Phi_0 \in \CC^\eta(\R^{3})$ almost surely,
for some $\eta > -\frac{2}{3}$, and let $\Phi$ be the unique maximal solution of \eqref{e:Phi} on $[0, T_\star]$ with fixed constants $a \in \R$ and $\lambda > 0$. Let furthermore $\Delta^\eps$ be the nearest-neighbour approximation of $\Delta$, let $\Phi^\eps_0 \in \R^{\Torus^3_\eps}$ be a random variable on the same probability space, let $\xi^\eps$ be given by \eqref{e:SimpleDNoise}, and let $\Phi^\eps$ be the unique global solution of \eqref{e:DPhiRenorm}. If the initial data satisfy almost surely
\begin{equ}
\lim_{\eps \to 0} \Vert \Phi_0; \Phi^\eps_0 \Vert^{(\eps)}_{\CC^\eta} = 0\;,
\end{equ}
then for every $\alpha < -\frac{1}{2}$ there is a sequence of renormalisation constants 
\begin{equ}[e:RenormIntro]
C^{(\eps)}_\lambda \sim \frac{\lambda}{\eps} - \lambda^2 \log \eps
\end{equ}
 in \eqref{e:DPhiRenorm} and a sequence of stopping times $T_\eps$ (which also depend on $\lambda$ and $a$) satisfying $\lim_{\eps \to 0} T_\eps = T_\star$ in probability such that, for every $\bar{\eta} < \eta \wedge \alpha$, and for any $\delta > 0$ small enough, one has the  limit in probability
\begin{equ}[e:PhiConvergence]
\lim_{\eps \to 0} \Vert \Phi; \Phi^\eps \Vert^{(\eps)}_{\CC^{\delta, \alpha}_{\bar{\eta}, T_\eps}} = 0\;.
\end{equ}
\end{theorem}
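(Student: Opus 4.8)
The plan is to prove Theorem~\ref{t:Phi} by passing through the abstract theory of regularity structures, using the discretised framework that the rest of the paper develops. The strategy mirrors the continuous proof of \cite{Hai14} but with every analytic object (noise, kernel, models, fixed-point map, reconstruction operator) replaced by its $\eps$-dependent discrete counterpart and with all estimates taken uniform in $\eps$. Concretely: (i) lift \eqref{e:DPhiRenorm} to an abstract fixed-point problem $\Phi^\eps = \mathcal{P}^\eps \mathbf{1}_{+} \bigl(\Xi - \lambda (\Phi^\eps)^{\star 3} + \ldots\bigr) + \mathcal{P}^\eps \Phi^\eps_0$ in a discrete modelled-distribution space $\mathcal{D}^{\gamma,\eta}_\eps$ built over the same regularity structure $\mathscr{T}$ used for $\Phi^4_3$, with the same abstract fixed point as in the continuous case; (ii) construct the discrete renormalised model $\hat Z^\eps = (\hat\Pi^\eps, \hat\Gamma^\eps)$ from $\xi^\eps$ via the discrete analogue of the BPHZ/Wick renormalisation, the renormalisation constants being exactly the $C^{(\eps)}_\lambda$ of \eqref{e:RenormIntro}; (iii) prove that $\hat Z^\eps$ converges in probability, in the appropriate discrete model topology, to the continuous renormalised model $\hat Z$ driving \eqref{e:Phi}; (iv) invoke the uniform (in $\eps$) continuity of the discrete solution map together with the uniform bounds on the fixed point to transfer model convergence into convergence of the abstract solutions $\Phi^\eps \to \Phi$ in $\mathcal{D}^{\gamma,\eta}_\eps$; (v) apply the discrete reconstruction operator and compare it with the continuous one to obtain \eqref{e:PhiConvergence} up to the stopping time $T_\eps$, which is defined as the first time the discrete solution leaves a large ball (ensuring the a priori bounds needed for the fixed point hold), and show $T_\eps \to T_\star$ in probability.

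In more detail, the analytic backbone consists of three transfer results that I expect to have been established in the preceding (unquoted) sections of the paper and which I would simply cite here: a discrete Schauder estimate showing that the discrete heat kernel $G^\eps$ admits a decomposition into a smooth kernel plus a remainder with the same scaling behaviour as its continuous counterpart, so that the discrete abstract integration operator $\mathcal{P}^\eps$ maps $\mathcal{D}^{\gamma,\eta}_\eps$ into $\mathcal{D}^{\gamma+2,\bar\eta}_\eps$ with $\eps$-uniform norm; a discrete reconstruction theorem with an $\eps$-uniform bound and a quantitative comparison $\Vert \mathcal{R} f - \mathcal{R}^\eps f^\eps\Vert \lesssim \Vert f; f^\eps\Vert^{(\eps)} + (\text{model distance}) + o_\eps(1)$; and uniform multiplication/composition bounds for the nonlinearity $\Phi \mapsto \Phi^{\star 3}$ in the discrete spaces. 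Granting these, the abstract fixed-point argument is the standard Banach contraction on a small time interval, run simultaneously for all $\eps$ small; the only subtlety is that the contraction time depends on the size of the model and of the initial data, which is why one must work up to the stopping time $T_\eps$ and use a patching/continuation argument to reach (a time close to) $T_\star$.

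The genuinely hard step — and the technical heart of the paper — is step (iii): the convergence in probability of the renormalised discrete model $\hat Z^\eps$ to $\hat Z$. This requires (a) computing the divergent parts of the relevant discrete stochastic objects (the discrete analogues of $\<2>$, $\<3>$, $\<30>$, etc.) and identifying that the counterterms needed to renormalise them are precisely $\lambda/\eps$ at leading order and $-\lambda^2\log\eps$ at next order, matching \eqref{e:RenormIntro}; and (b) establishing $L^p$ bounds on the discrete Wiener-chaos components of $\hat\Pi^\eps$, uniformly in $\eps$, together with the convergence of their kernels to the continuous ones. The standard tool is to write each object via its chaos decomposition, estimate the $L^2$ norm of each chaos component through a diagrammatic bound on the contracted kernels, and then verify that these kernel integrals, now over the lattice $\Torus^3_\eps$ rather than $\Torus^3$, converge to their continuous values with a quantitative rate. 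The delicate points are the lattice-specific phenomena: discrete convolutions must be controlled uniformly down to scale $\eps$ (not below), Fourier multipliers on the torus are replaced by their periodised lattice versions, and one must exhibit enough regularity in the spectral variable to get a uniform bound while still extracting the logarithmic subdivergence. Once $\hat Z^\eps \to \hat Z$ in probability in the model metric is in hand, the remaining steps are deterministic continuity statements and the identification of $T_\eps$, which follow the now-classical pattern; the upgrade from convergence on a random short interval to convergence up to $T_\eps \wedge T_\star$ with $T_\eps \to T_\star$ uses the blow-up criterion for the continuous solution together with the uniform local well-posedness, exactly as in \cite{Hai14}.
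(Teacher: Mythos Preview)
Your overall strategy is sound and the deterministic steps (i), (ii), (iv), (v) match the paper's architecture closely. However, your step (iii) --- proving directly that the discrete renormalised model $\hat Z^\eps$ converges to the continuous model $\hat Z$ in a mixed discrete/continuous model metric --- is \emph{not} the route the paper takes, and this is the one substantive divergence.

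The paper explicitly avoids comparing a discrete model to a continuous one. Instead it introduces a \emph{second} small parameter $\bar\eps \ge \eps$ and mollifies the noise at scale $\bar\eps$, producing four objects: the continuous solution $\Phi$, a continuous solution $\Phi^{\bar\eps,0}$ driven by the mollified noise $\xi \star \psi^{\bar\eps}$, a discrete solution $\Phi^{\bar\eps,\eps}$ driven by a discretised mollified noise, and the target discrete solution $\Phi^\eps$. The convergence is then obtained by a triangle inequality with three legs: (a) $\Phi$ versus $\Phi^{\bar\eps,0}$, controlled by comparing two \emph{continuous} models via the results of \cite{Hai14}; (b) $\Phi^{\bar\eps,\eps}$ versus $\Phi^\eps$, controlled by comparing two \emph{discrete} models via the discrete Theorem~\ref{t:ModelsConvergence} and Proposition~\ref{p:CovarianceConvergence}; and (c) $\Phi^{\bar\eps,0}$ versus $\Phi^{\bar\eps,\eps}$, which for fixed $\bar\eps$ is a \emph{classical} numerical-analysis statement (smooth noise, smooth solutions, standard finite-difference convergence). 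One then sends $\eps \to 0$ first and $\bar\eps \to 0$ afterwards, with the stopping times absorbing the a~priori bounds.

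What each approach buys: the paper's diagonal argument keeps all stochastic estimates within a single category (continuous-to-continuous or discrete-to-discrete), so the Wiener-chaos/kernel bounds of Section~\ref{s:GaussModels} can be run twice in parallel without ever defining a hybrid metric. Your direct approach is conceptually cleaner but requires building and controlling a genuine distance between objects living on $\R^d$ and on $\Lambda_\eps^d$; the paper does sketch such a distance in the remark following Theorem~\ref{t:DReconstruct} but pointedly states it ``will however not make use of this in the present article''. So your proposal is not wrong, but it commits you to an additional layer of analysis that the paper sidesteps entirely via the intermediate mollification.
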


\begin{remark}
By writing \eqref{e:RenormIntro} we mean that $C^{(\eps)}_\lambda$ is a sum of two terms proportional to $\lambda$ and $- \lambda^2$ respectively, whose asymptotic divergence speeds are $\eps^{-1}$ and $\log \eps$  as $\eps \to 0$. 
\end{remark}

As a corollary of this convergence result and an argument along the lines of
\cite{Bourgain}, we have the following result, where we denote by $\mu$ the $\Phi^4_3$ measure
on the torus with a coupling constant $\lambda > 0$ and mass $m_0 > 0$, see \cite{BFS83} for a definition. 

\begin{corollary}\label{c:Phi}
If $a = m_0^2 > 0$ and if the coupling constant $\lambda > 0$ in \eqref{e:Phi} is sufficiently small, then for $\mu$-almost every initial condition $\Phi_0$ and for every $T > 0$, the solution of \eqref{e:Phi} constructed in \cite{Hai14} belongs to $\CC^{\delta, \alpha}_{\bar \eta}\bigl([0,T], \Torus^3\bigr)$, 
for $\delta$, $\alpha$ and $\bar \eta$ as in \eqref{e:PhiConvergence}. In particular, this
yields a reversible Markov process on $\CC^{\alpha}\bigl(\Torus^3\bigr)$ with an
invariant measure $\mu$.
\end{corollary}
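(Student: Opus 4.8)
The plan is to transfer the invariance (and reversibility) of the lattice measures $\mu_\eps$ for \eqref{e:DPhiRenorm} to the continuum via Theorem~\ref{t:Phi}, following the strategy of Bourgain \cite{Bourgain} for building global flows with respect to an invariant measure. Fix $T>0$ and $\eta\in(-\tfrac23,-\tfrac12)$, so that $\mu$ is supported on $\CC^\eta(\Torus^3)$. First I would note that the embedded lattice measures $(\iota_\eps)_\star\mu_\eps$ converge weakly to $\mu$ in $\CC^\eta$: tightness is \cite{BFS83}, and every accumulation point equals $\mu$ by \cite[Thm.~2.1]{Park}. By the Skorokhod representation theorem I then realise, on a probability space carrying $\xi$ and independent of it, variables $\Phi^\eps_0\sim\mu_\eps$ and $\Phi_0\sim\mu$ with $\Vert\Phi_0;\Phi^\eps_0\Vert^{(\eps)}_{\CC^\eta}\to 0$ almost surely. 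Let $\Phi^\eps$ be the global solution of \eqref{e:DPhiRenorm} from $\Phi^\eps_0$, and $\Phi$ the maximal solution of \eqref{e:Phi} from $\Phi_0$, with blow-up time $T_\star$; Theorem~\ref{t:Phi} provides stopping times $T_\eps\to T_\star$ in probability with $\Vert\Phi;\Phi^\eps\Vert^{(\eps)}_{\CC^{\delta,\alpha}_{\bar\eta,T_\eps}}\to 0$ in probability.

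The core of the argument is a uniform-in-$\eps$ a priori bound. Since $\mu_\eps$ is invariant for \eqref{e:DPhiRenorm}, the process $\Phi^\eps$ is stationary, so $\Phi^\eps(t)\sim\mu_\eps$ for every $t$; tightness of $\{\mu_\eps\}$ in $\CC^\eta$ then yields $\mathbf P(\Vert\Phi^\eps(t)\Vert_{\CC^\eta}>R)\le\rho(R)$ uniformly in $\eps$ and $t$, with $\rho(R)\to 0$. I would combine this with two ingredients drawn from the analysis behind Theorem~\ref{t:Phi}: a quantitative local solution bound for \eqref{e:DPhiRenorm}, of the form $\Vert\Phi^\eps\Vert_{\CC^{\delta,\alpha}_{\bar\eta}([s,s+\tau])}\le Q(K,M)$ on any interval of length $\tau\le\tau_\star(K,M)$ on which $\Vert\Phi^\eps(s)\Vert_{\CC^\eta}\le K$ and the renormalised model of $\xi^\eps$ has norm $\le M$, with $\tau_\star$ decaying at most polynomially and $Q$ locally bounded, uniformly in $\eps$; and Gaussian-type tail bounds $\sigma(R)$ on the model norm over a unit interval, uniform in $\eps$, coming from equivalence of moments in the fixed Wiener chaos. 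Partitioning $[0,T]$ into $n\asymp\tau_\star(R,R)^{-1}$ sub-intervals and taking a union bound over the partition points — using stationarity of $\Phi^\eps$ for the solution norms and stationarity of the noise for the model norms — one gets $\Vert\Phi^\eps\Vert_{\CC^{\delta,\alpha}_{\bar\eta}([0,T])}\le Q(R,R)$ with probability at least $1-n(\rho(R)+\sigma(R))$; since $n(\rho(R)+\sigma(R))\to 0$ as $R\to\infty$ this gives
\begin{equ}
\lim_{R\to\infty}\ \limsup_{\eps\to 0}\ \mathbf P\bigl(\Vert\Phi^\eps\Vert_{\CC^{\delta,\alpha}_{\bar\eta}([0,T])}>R\bigr)=0\;,
\end{equ}
i.e.\ $\{\Phi^\eps\}_\eps$ is bounded in probability in $\CC^{\delta,\alpha}_{\bar\eta}([0,T])$.

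It remains to rule out blow-up and identify the limit. Passing to a subsequence, one has $\Phi^\eps\to\Phi$ a.s.\ in $\CC^{\delta,\alpha}_{\bar\eta,T_\eps}$, $T_\eps\to T_\star$ a.s., and (by the display above and Fatou) $L:=\liminf_\eps\Vert\Phi^\eps\Vert_{\CC^{\delta,\alpha}_{\bar\eta}([0,T])}<\infty$ a.s. If $\mathbf P(T_\star<T)>0$, then on that event the blow-up criterion for \eqref{e:Phi} forces $\Vert\Phi\Vert_{\CC^{\delta,\alpha}_{\bar\eta}([0,T'])}\to\infty$ as $T'\uparrow T_\star$, whereas the convergence on each $[0,T']$ with $T'<T_\star$ (valid once $T_\eps>T'$) gives $\Vert\Phi\Vert_{\CC^{\delta,\alpha}_{\bar\eta}([0,T'])}\le L<\infty$ for all such $T'$ — a contradiction (taking the parameters in the relevant range near $-\tfrac12$; the other cases follow by embedding). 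Hence $\mathbf P(T_\star<T)=0$ for every $T$, so $T_\star=\infty$ a.s., and by Fubini and independence of $\Phi_0$ and $\xi$, $T_\star=\infty$ almost surely for $\mu$-almost every $\Phi_0$; Theorem~\ref{t:Phi} (now with $T_\eps\to\infty$) then places the corresponding solution in $\CC^{\delta,\alpha}_{\bar\eta}([0,T],\Torus^3)$ for every $T$. Restricting $\Phi$ to this full-measure set of initial data defines a Markov semigroup $P_t$ on $\CC^\alpha(\Torus^3)$; since \eqref{e:DPhiRenorm} is the Langevin dynamics of $S_\eps$ and hence reversible for $\mu_\eps$, passing to the limit in $\int(P^\eps_tF)\,G\,d\mu_\eps=\int F\,(P^\eps_tG)\,d\mu_\eps$ — using $\Phi^\eps(t)\to\Phi(t)$ in $\CC^\alpha$ in probability, the weak convergence $(\iota_\eps)_\star\mu_\eps\to\mu$, and boundedness of $F,G\in C_b(\CC^\alpha)$ — yields reversibility of $\mu$ for $P_t$, hence in particular its invariance. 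I expect the only real difficulty to be the uniform a priori bound of the second step: one must extract from the proof of Theorem~\ref{t:Phi} a local well-posedness statement for \eqref{e:DPhiRenorm} whose existence time and solution bound depend quantitatively and uniformly in $\eps$ on the data and the model, so that the union bound over the time partition closes.
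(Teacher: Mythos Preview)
Your approach is essentially the same as the paper's: Skorokhod coupling of initial data, a Bourgain-type argument partitioning $[0,T]$ into short subintervals, using stationarity of $\Phi^\eps$ together with quantitative local well-posedness and model bounds to close a union bound, and then passing the invariance and reversibility of $\mu_\eps$ to the limit. The paper packages the blow-up exclusion slightly differently (via a one-point compactification $\bar\CC$ and showing $\mu$ has no atom at $\infty$), but the substance is identical.

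One point to sharpen: in your union bound you need $n\,\rho(R)\to 0$ with $n$ growing polynomially in $R$, and bare tightness of $\{\mu_\eps\}$ only gives $\rho(R)\to 0$ with no rate. You must invoke the \emph{uniform moment bounds} on $\mu_\eps$ from \cite[Sec.~8]{BFS83} (as the paper does, and as recorded in Proposition~\ref{prop:mu_tight}) to get $\rho(R)\lesssim R^{-q}$ for every $q$; this is what makes the Bourgain argument close. Similarly, be explicit that the discrete-to-continuous comparison norm $\Vert\Phi;\Phi^\eps\Vert^{(\eps)}$ going to zero together with a uniform bound on $\Vert\Phi^\eps\Vert^{(\eps)}$ indeed controls $\Vert\Phi\Vert$; the paper sidesteps this by working directly in the compactified space.
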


In order to prove this result, we will use regularity structures, as 
introduced in \cite{Hai14}, to obtain uniform bounds (in $\eps$) on solutions to 
\eqref{e:DPhiRenorm} by describing the right hand side via a type of generalised
``Taylor expansion'' in the neighbourhood of any space-time point. The problem of 
obtaining uniform bounds is then split into the problem of on the one hand 
obtaining uniform bounds on the objects playing the role of Taylor monomials
(these require subtle stochastic cancellations, but are given by explicit formulae),
and on the other hand obtaining uniform regularity estimates on the ``Taylor coefficients''
(these are described implicitly as solutions to a fixed point problem but can be
controlled by standard Banach fixed point arguments). 

In order to treat the discretised equation \eqref{e:DPhiRenorm}, we introduce a discrete
analogue to the concept of ``model'' introduced in \cite{Hai14} and we show that the
corresponding ``reconstruction map'' satisfies uniform bounds analogous to the ones
available in the continuous case. 
One technical difficulty we encounter with this approach
is that the set-up is somewhat asymmetric since
time is continuous while space is discrete. Instead of considering a fixed model 
as in \cite{Hai14}, we will consider a family of models indexed by the time parameter
and satisfying a suitable regularity property. This idea requires some modification of the original theory, in particular of the ``abstract integration'' operation 
\cite[Sec.~5]{Hai14} and of the corresponding Schauder-type estimates.

As this article was nearing its completion, Zhu and Zhu \cite{Twins}  
independently obtained the convergence of 
solutions to \eqref{e:DPhiRenorm} to those of \eqref{e:Phi} using different methods.
Additionally, Gubinelli and Perkowski \cite{Reloaded} recently obtained a similar
result for the KPZ equation. 
One advantage of the approach pursued here is that it is quite systematic and that 
many of our intermediate
results do not specifically refer to the $\Phi^4_3$ model.
This lays the foundations of a systematic approximation theory which 
can in principle be applied to many other singular SPDEs, e.g.\ 
stochastic Burgers-type equations \cite{Hai11a,HMW12, HM14}, the KPZ 
equation \cite{PhysRevLett.56.889,MR1462228,Hai13}, 
or the continuous parabolic Anderson model \cite{Hai14, HL15}.

\subsection*{Structure of the article}

In Section~\ref{s:RegStruct} we introduce regularity structures and inhomogeneous models 
(i.e.\ models which are functions in the time variable). Furthermore, we prove here the 
key results of the theory in our present framework, namely the reconstruction theorem 
and the Schauder estimates. In Section~\ref{s:PDEs} we provide a solution theory for 
a general parabolic stochastic PDE, whose solution is a function in time. 
Section~\ref{s:DModels} is devoted to the development of a discrete analogue of 
inhomogeneous models, which we use in Section~\ref{s:DPDEs} to analyse solutions 
of discretised stochastic equations. In Section~\ref{s:GaussModels} we analyse models, 
built from a Gaussian noise. Finally, in Section~\ref{s:DPhi}, we prove Theorem~\ref{t:Phi} 
and Corollary~\ref{c:Phi}.

\subsection*{Notations and conventions}

Throughout this article, we will work in $\R^{d+1}$ where $d$ is the dimension of space and $1$ is the dimension of time. Moreover, we consider the time-space scaling $\s = (\s_0, 1, \ldots, 1)$ of $\R^{d+1}$, where $\s_0 > 0$ is an integer time scaling and $\s_i = 1$, for $i=1, \ldots, d$, is the scaling in each spatial direction. We set $|\s| \eqdef \sum_{i=0}^d \s_i$, denote by $| x|$ the $\ell^\infty$-norm of a point $x \in \R^d$, and define $\snorm{z} \eqdef |t|^{1/\s_0} \vee | x|$ to be the $\s$-scaled $\ell^\infty$-norm of $z=(t,x) \in \R^{d+1}$. For a multiindex $k \in \N^{d+1}$ we define $\sabs{k} \eqdef \sum_{i=0}^{d} \s_i k_i$, and for $k \in \N^d$ with the scaling $(1, \ldots, 1)$ we denote the respective norm by $|k|$. (Our natural numbers $\N$ include $0$.)

For $r > 0$, we denote by $\CC^r(\R^d)$ the usual H\"{o}lder space on $\R^d$, by $\CC^r_0(\R^d)$ we denote the space of compactly supported $\CC^r$-functions and by $\CB^r_0(\R^d)$ we denote the set of $\CC^r$-functions, compactly supported in $B(0,1)$ (the unit ball centered at the origin) and with the $\CC^r$-norm bounded by $1$.

For $\varphi \in \CB^r_0(\R^d)$, $\lambda > 0$ and $x, y \in \R^{d}$ we define $\varphi_x^\lambda(y) \eqdef \lambda^{-d} \varphi(\lambda^{-1}(y-x))$. For $\alpha < 0$, we define the space $\CC^\alpha(\R^d)$ to consist of $\zeta \in \CS'(\R^d)$, belonging to the dual space of the space of $\CC^{r}_0$-functions, with $r > -\lfloor \alpha \rfloor$, and such that
\begin{equ}[e:AlphaNorm]
\Vert \zeta \Vert_{\CC^\alpha} \eqdef \sup_{\varphi \in \CB^r_0} \sup_{x \in \R^d} \sup_{\lambda \in (0,1]} \lambda^{-\alpha} |\langle \zeta, \varphi_x^\lambda \rangle| < \infty\;.
\end{equ}
Furthermore, for a function $\R \ni t \mapsto \zeta_t$ we define the operator $\delta^{s, t}$ by
\begin{equ}[e:deltaTime]
 \delta^{s, t} \zeta \eqdef \zeta_t - \zeta_s\;,
\end{equ}
and for $\delta > 0$, $\eta \leq 0$ and $T > 0$, we define the space $\CC^{\delta, \alpha}_{\eta}\bigl([0,T], \R^d\bigr)$ to consist of the functions $(0, T] \ni t \mapsto \zeta_t \in \CC^{\alpha}(\R^d)$, such that the following norm is finite
\begin{equ}[e:SpaceTimeNorm]
\Vert \zeta \Vert_{\CC^{\delta, \alpha}_{\eta, T}} \eqdef \sup_{t \in (0, T]} \onorm{t}^{-\eta} \Vert \zeta_t \Vert_{\CC^\alpha} + \sup_{s \neq t \in (0, T]} \onorm{t, s}^{-\eta} \frac{\Vert \delta^{s, t} \zeta \Vert_{\CC^{\alpha - \delta}}}{|t-s|^{\delta/\s_0}}\;,
\end{equ}
where $\onorm{t} \eqdef |t|^{1/\s_0} \wedge 1$ and $\onorm{t, s} \eqdef \onorm{t} \wedge \onorm{s}$. The space $\CC^{0, \alpha}_{\eta}\bigl([0,T], \R^d\bigr)$ contains the function $\zeta$ as above which are continuous in time and is equipped with the norm defined by the first term in \eqref{e:SpaceTimeNorm}. 

Sometimes we will need to work with space-time distributions with scaling $\s$. In order to describe their regularities, we define, for a test function $\varphi$ on $\R^{d+1}$, for $\lambda > 0$ and $z, \bar z \in \R^{d+1}$,
\begin{equ}[e:ScaleEta]
\varphi_z^{\lambda, \s}(\bar z) \eqdef \lambda^{-|\s|} \varphi\bigl(\lambda^{-s_0}(\bar z_0- z_0), \lambda^{-1}(\bar z_1- z_1), \ldots, \lambda^{-1}(\bar z_d- z_d)\bigr)\;,
\end{equ}
and we define the space $\CC_{\s}^\alpha(\R^{d+1})$ similarly to $\CC^\alpha(\R^{d})$, but using the scaled functions \eqref{e:ScaleEta} in \eqref{e:AlphaNorm}.

In this article we will also work with discrete functions $\zeta^\eps \in \R^{\Lambda_\eps^d}$ on the dyadic grid $\Lambda_\eps^d \subset \R^d$ with the mesh size $\eps  = 2^{-N}$ for $N \in \N$. In order to compare them with their continuous counterparts $\zeta \in \CC^\alpha(\R^d)$ with $\alpha \leq 0$, we introduce the following ``distance''
\begin{equ}
\Vert \zeta; \zeta^\eps \Vert^{(\eps)}_{\CC^\alpha} \eqdef \sup_{\varphi \in \CB^r_0} \sup_{x \in \Lambda_\eps^d} \sup_{\lambda \in [\eps,1]} \lambda^{-\alpha} |\langle \zeta, \varphi_x^\lambda \rangle - \langle \zeta^\eps, \varphi_x^\lambda \rangle_\eps|\;,
\end{equ}
where $\langle \cdot, \cdot \rangle_\eps$ is the discrete analogue of the duality pairing on the grid, i.e.
\begin{equ}[e:DPairing]
 \langle \zeta^\eps, \varphi_x^\lambda \rangle_\eps \eqdef \int_{\Lambda_\eps^d} \zeta^\eps(y) \varphi_x^\lambda(y)\, dy \eqdef \eps^d \sum_{y \in \Lambda_\eps^d} \zeta^\eps(y) \varphi_x^\lambda(y)\;.
\end{equ}
For space-time distributions / functions $\zeta$ and $\zeta^\eps$, for $\delta > 0$ and $\eta \leq 0$, we define
\begin{equ}[e:DHolderDist]
\Vert \zeta; \zeta^\eps \Vert^{(\eps)}_{\CC^{\delta, \alpha}_{\eta, T}} \eqdef \sup_{t \in (0, T]} \onorm{t}^{-\eta} \Vert \zeta_t; \zeta_t^\eps \Vert^{(\eps)}_{\CC^\alpha} + \sup_{s \neq t \in (0, T]} \onorm{s, t}^{-\eta} \frac{\Vert \delta^{s, t} \zeta; \delta^{s, t}\zeta^\eps \Vert^{(\eps)}_{\CC^{\alpha - \delta}}}{\bigl(|t-s|^{1/\s_0} \vee \eps\bigr)^{\delta}}.
\end{equ}
Furthermore, we define the norm $\Vert \zeta^\eps \Vert^{(\eps)}_{\CC^{\delta, \alpha}_{\eta, T}}$ in the same way as in \eqref{e:AlphaNorm} and \eqref{e:SpaceTimeNorm}, but using the discrete pairing \eqref{e:DPairing}, the quantities $\enorm{t} \eqdef \onorm{t} \vee \eps$ and $\enorm{s, t} \eqdef \enorm{s} \wedge \enorm{t}$ instead of $\onorm{t}$ and $\onorm{s, t}$ respectively, and $|t-s|^{1/\s_0} \vee \eps$ instead of $|t-s|^{1/\s_0}$.

Finally, we denote by $\star$ and $\star_\eps$ the convolutions on $\R^{d+1}$ and $\R \times \Lambda_\eps^d$ respectively, and by $x \lesssim y$ we mean that there exists a constant $C$ independent of the relevant quantities such that $x \leq C y$.

\subsection*{Acknowledgements}

The authors would like to thank Hendrik Weber for valuable discussions of this and related problems, 
Dirk Erhard for pointing out an inaccuracy in the formulation of a previous version 
of Theorem~\ref{t:ModelsConvergence}, as well as Rongchan Zhu and Xiangchan Zhu for pointing out
that the results of \cite{Fel74,Park,BFS83} rely on the smallness of the coupling constant.
MH would like to gratefully acknowledge support by the ERC and the Leverhulme Foundation through
a consolidator grant (number 615897) and a leadership award respectively.


\section{Regularity structures}
\label{s:RegStruct}

In this section we recall the definition of a regularity structure and we introduce the inhomogeneous models
used in this article, 
which are maps from $\R$ (the time coordinate) to the usual space of models as in \cite[Def. 2.17]{Hai14},
endowed with a norm enforcing some amount of time regularity. 
Furthermore, we define inhomogeneous modelled distributions and prove the respective 
reconstruction theorem and Schauder estimates.
Throughout this section, we work with the scaling $\s=(\s_0, 1, \ldots, 1)$ of $\R^{d+1}$,
but all our results can easily be generalised to any non-Euclidean scaling in space, similarly to \cite{Hai14}.

\subsection{Regularity structures and inhomogeneous models}
\label{ss:Models}

The purpose of regularity structures, introduced in \cite{Hai14} and motivated by
\cite{Lyo98,Gub04}, is to generalise Taylor expansions using essentially arbitrary
functions/distributions instead of polynomials. The precise definition is as follows.

\begin{definition}
A {\it regularity structure} $\ST = (\CT, \CG)$ consists of two objects:
\begin{itemize}
\item A {\it model space} $\CT$, which is a graded vector space $\CT = \bigoplus_{\alpha \in \CA} \CT_\alpha$, where each $\CT_\alpha$ is a (finite dimensional in our case) Banach space and $\CA \subset \R$ is a finite set of
``homogeneities''.
\item A {\it structure group} $\CG$ of linear transformations of $\CT$, such that for every $\Gamma~\in~\CG$, every $\alpha \in \CA$ and every $\tau \in \CT_\alpha$ one has
$\Gamma \tau - \tau \in \CT_{< \alpha}$, with $\CT_{< \alpha} \eqdef \bigoplus_{\beta < \alpha} \CT_\beta$. 
\end{itemize}
\end{definition}

In \cite[Def. 2.1]{Hai14}, the set $\CA$ was only assumed to be locally finite and bounded from below. 
Our assumption is more strict, but does not influence anything in the analysis of the equations we consider. 
In addition, our definition rules out the ambiguity of topologies on $\CT$.

\begin{remark}\label{ex:Poly}
One of the simplest non-trivial examples of a regularity structure is given by the ``abstract polynomials'' in $d+1$ 
indeterminates $X_i$, with $i = 0, \ldots, d$. The set $\CA$ in this case consists of the values 
$\alpha \in \N$ such that $\alpha \leq r$, for some $r < \infty$ and, for each $\alpha \in \CA$, the space 
$\CT_\alpha$ contains all monomials in the $X_i$ of scaled degree $\alpha$. 
The structure group $\poly\CG$ is then simply the group of translations in $\R^{d+1}$
acting on $X^k$ by $h \mapsto (X-h)^k$. 

We now fix $r > 0$ to be sufficiently large and denote by $\poly{\CT}$ the 
space of such polynomials of scaled degree $r$ and by $\poly{\CF}$ 
the set $\{X^k\,:\, \sabs{k} \leq r\}$.
We will only ever consider regularity structures containing
$\poly\CT$ as a subspace. In particular, we always assume that there's a natural morphism
$\CG \to \poly\CG$ compatible with the action of $\poly\CG$ on $\poly\CT \hookrightarrow \CT$.  
\end{remark}

\begin{remark}\label{r:QOperators}
For $\tau \in \CT$ we will write $\CQ_\alpha \tau$ for its canonical projection onto $\CT_\alpha$, and define $\Vert \tau \Vert_{\alpha} \eqdef \Vert \CQ_\alpha \tau \Vert$. We also write $\CQ_{<\alpha}$ 
for the projection onto $\CT_{< \alpha}$, etc.
\end{remark}

Another object in the theory of regularity structures is a model. Given an abstract expansion, the model converts it into a concrete distribution describing its local behaviour around every point. We modify the original definition of model in \cite{Hai14}, in order to be able to describe time-dependent distributions.

\begin{definition}\label{d:Model}
Given a regularity structure $\ST = (\CT, \CG)$, an {\it inhomogeneous  model} $(\Pi, \Gamma, \Sigma)$ consists of the following three elements:
\begin{itemize}
\item A collection of maps $\Gamma^t : \R^{d} \times \R^{d} \to \CG$, parametrised by $t \in \R$, such that 
\begin{equ}[e:GammaDef]
\Gamma^t_{x x}=1\;, \qquad \Gamma^t_{x y} \Gamma^t_{y z} = \Gamma^t_{x z}\;,
\end{equ}
for any $x, y, z \in \R^{d}$ and $t \in \R$, and the action of $\Gamma^t_{x y}$ on polynomials is given as in Remark~\ref{ex:Poly} with $h = (0, y-x)$.
\item A collection of maps $\Sigma_x : \R \times \R \to \CG$, parametrized by $x \in \R^d$, such that, for any $x \in \R^{d}$ and $s, r, t \in \R$, one has
\begin{equ}[e:SigmaDef]
\Sigma^{t t}_{x}=1\;, \qquad \Sigma^{s r}_{x} \Sigma^{r t}_{x} = \Sigma^{s t}_{x}\;, \qquad \Sigma^{s t}_{x} \Gamma^{t}_{x y} = \Gamma^{s}_{x y} \Sigma^{s t}_{y}\;,
\end{equ}
and the action of $\Sigma^{s t}_{x}$ on polynomials is given as in Remark~\ref{ex:Poly} with $h = (t-s, 0)$.
\item A collection of linear maps $\Pi^t_x: \CT \to \mathcal{S}'(\R^{d})$, such that
\begin{equ}[e:PiDef]
\Pi^t_{y} = \Pi^t_x \Gamma^t_{x y}\;, \quad \bigl(\Pi_x^t X^{(0, \bar k)}\bigr)(y) = (y-x)^{\bar k}\;, \quad \bigl(\Pi_x^t X^{(k_0, \bar k)}\bigr)(y) = 0\;,
\end{equ}
for all $x, y \in \R^{d}$, $t \in \R$, $\bar{k} \in \N^{d}$, $k_0 \in \N$ such that $k_0 > 0$.
\end{itemize}
Moreover, for any $\gamma > 0$ and every $T > 0$, there is a constant $C$ for which the analytic bounds
\minilab{Model}
\begin{equs}\label{e:PiGammaBound}
| \langle \Pi^t_{x} \tau, \varphi_{x}^\lambda \rangle| \leq C \Vert \tau \Vert \lambda^{l} &\;, \qquad \Vert \Gamma^t_{x y} \tau \Vert_{m} \leq C \Vert \tau \Vert | x-y|^{l - m}\;,\\
\Vert \Sigma^{s t}_{x} \tau \Vert_{m} &\leq C \Vert \tau \Vert |t - s|^{(l - m)/\s_0}\;,\label{e:SigmaBound}
\end{equs}
hold uniformly over all $\tau \in \CT_l$, with $l \in \CA$ and $l < \gamma$, all $m \in \CA$ such that $m < l$, all $\lambda \in (0,1]$, all $\varphi \in \CB^r_0(\R^d)$ with $r > -\lfloor\min \CA\rfloor$, and all $t, s \in [-T, T]$ and $x, y \in \R^d$ such that $|t - s| \leq 1$ and $|x-y| \leq 1$.

In addition, we say that the map $\Pi$ has time regularity $\delta > 0$, if the bound
\begin{equs}
\label{e:PiTimeBound}
| \langle \bigl(\Pi^t_{x} - \Pi^{s}_{x}\bigr) \tau, \varphi_{x}^\lambda \rangle| \leq C \Vert \tau \Vert |t-s|^{\delta/\s_0} \lambda^{l - \delta}\;,
\end{equs}
holds for all $\tau \in \CT_l$ and the other parameters as before.
\end{definition}

\begin{remark}\label{r:ModelNorm}
For a model $Z = (\Pi, \Gamma, \Sigma)$, we denote by $\Vert \Pi \Vert_{\gamma; T}$, $\Vert \Gamma \Vert_{\gamma;T}$ and $\Vert \Sigma \Vert_{\gamma;T}$ the smallest constants $C$ such that the bounds on $\Pi$, $\Gamma$ and $\Sigma$ in \eqref{e:PiGammaBound} and \eqref{e:SigmaBound} hold. Furthermore, we define 
\begin{equ}
\VERT Z \VERT_{\gamma; T} \eqdef \Vert \Pi \Vert_{\gamma; T} + \Vert \Gamma \Vert_{\gamma; T} + \Vert \Sigma \Vert_{\gamma; T}\;.
\end{equ}
If $\bar{Z} = (\bar{\Pi}, \bar{\Gamma}, \bar{\Sigma})$ is another model, then we also define the ``distance'' between two models
\begin{equ}[e:ModelsDist]
\VERT Z; \bar{Z} \VERT_{\gamma; T} \eqdef \Vert \Pi - \bar{\Pi} \Vert_{\gamma; T} + \Vert \Gamma - \bar{\Gamma} \Vert_{\gamma; T} + \Vert \Sigma - \bar{\Sigma} \Vert_{\gamma; T}\;.
\end{equ}
We note that the norms on the right-hand side still make sense with $\Gamma$ and $\Sigma$ viewed
as linear maps on $\CT$. We also set $\Vert \Pi \Vert_{\delta, \gamma; T} \eqdef \Vert \Pi \Vert_{\gamma; T} + C$, where $C$ is the smallest constant such that the bound \eqref{e:PiTimeBound} holds, and we define
\begin{equ}
\VERT Z \VERT_{\delta, \gamma; T} \eqdef \Vert \Pi \Vert_{\delta, \gamma; T} + \Vert \Gamma \Vert_{\gamma; T} + \Vert \Sigma \Vert_{\gamma; T}\;.
\end{equ}
Finally, we define the ``distance'' $\VERT Z; \bar{Z} \VERT_{\delta, \gamma; T}$ as in \eqref{e:ModelsDist}.
\end{remark}

\begin{remark}
In \cite[Def. 2.17]{Hai14} the analytic bounds on a model were assumed to hold locally uniformly. 
In the problems which we aim to consider, the models are periodic in space, which allows us to 
require the bounds to hold globally.
\end{remark}

\begin{remark} \label{r:OriginalModel}
For a given model $(\Pi, \Gamma, \Sigma)$ we can define the following two objects
\begin{equ}[e:ModelTilde]
\bigl(\tilde{\Pi}_{(t,x)} \tau\bigr) (s, y) = \bigl(\Pi_x^s \Sigma_x^{s t} \tau\bigr) (y)\;, \qquad \tilde{\Gamma}_{(t,x), (s,y)} = \Gamma^{t}_{xy} \Sigma_y^{t s} = \Sigma_x^{t s} \Gamma^{s}_{xy} \;,
\end{equ}
for $\tau \in \CT$. Of course, in general we cannot fix the spatial point $y$ in the definition of $\tilde{\Pi}$, and we should really write $\bigl(\bigl(\tilde{\Pi}_{(t,x)} \tau\bigr) (s, \cdot)\bigr)(\varphi) = \bigl(\Pi_x^s \Sigma_x^{s t} \tau\bigr) (\varphi)$ instead, for any test function $\varphi$, but the notation \eqref{e:ModelTilde} is more suggestive. One can then easily verify that the pair $(\tilde{\Pi}, \tilde{\Gamma})$ is a model in the original sense of \cite[Def.~2.17]{Hai14}.
\end{remark}


\subsection{Inhomogeneous modelled distributions}
\label{ss:ModelledDistr}

Modelled distributions represent abstract expansions in the basis of a regularity structure. In order to be able to describe the singularity coming from the behaviour of our
solutions near time $0$, we introduce inhomogeneous modelled distributions which admit 
a certain blow-up as time goes to zero.

Given a regularity structure $\ST = (\CT, \CG)$ with a model $Z=(\Pi, \Gamma, \Sigma)$, values 
$\gamma, \eta \in \R$ and a final time $T > 0$, we consider maps 
$H : (0, T] \times \R^{d} \to \CT_{<\gamma}$ and define
\begin{equs}[e:ModelledDistributionNormAbs]
 \Vert H \Vert_{\gamma, \eta; T} \eqdef \sup_{t \in (0,T]} &\sup_{x \in \R^d} \sup_{l < \gamma} \onorm{t}^{(l - \eta) \vee 0} \Vert H_t(x) \Vert_l\\
 &+ \sup_{t \in (0,T]} \sup_{\substack{x \neq y \in \R^d \\ | x - y | \leq 1}} \sup_{l < \gamma} \frac{\Vert H_t(x) - \Gamma^{t}_{x y} H_t(y) \Vert_l}{\onorm{t}^{\eta - \gamma} | x - y |^{\gamma - l}}\;,
\end{equs}
where $l \in \CA$ in the third supremum. Then the space $\CD^{\gamma, \eta}_T$ consists of all such functions $H$, for which one has
\begin{equ}[e:ModelledDistributionNorm]
\VERT H \VERT_{\gamma, \eta; T} \eqdef \Vert H \Vert_{\gamma, \eta; T} + \sup_{\substack{s \neq t \in (0,T] \\ | t - s | \leq \onorm{t, s}^{\s_0}}} \sup_{x \in \R^d} \sup_{l < \gamma} \frac{\Vert H_t(x) - \Sigma_x^{t s} H_{s}(x) \Vert_l}{\onorm{t, s}^{\eta - \gamma} |t - s|^{(\gamma - l)/\s_0}} < \infty\;.
\end{equ}
The quantities $\onorm{t}$ and $\onorm{t, s}$ used in these definitions were introduced in \eqref{e:SpaceTimeNorm}. Elements of these spaces will be called {\it inhomogeneous modelled distributions}.

\begin{remark}
The norm in \eqref{e:ModelledDistributionNorm} depends on $\Gamma$ and $\Sigma$, but 
does {\it not} depend on $\Pi$; this fact will be crucial in the sequel. 
When we want to stress the dependency on the model, we will also write $\CD^{\gamma, \eta}_T(Z)$.
\end{remark}

\begin{remark}\label{r:ModelledDistrib}
In contrast to the singular modelled distributions from \cite[Def.~6.2]{Hai14}, we do not require the restriction $|x - y| \leq \onorm{t, s}$ in the second term in \eqref{e:ModelledDistributionNormAbs}. This is due to the fact that we consider the space and time variables separately (see the proof of Theorem~\ref{t:Integration}, where this fact is used).
\end{remark}

\begin{remark}\label{r:DistrMult}
Since our spaces $\CD^{\gamma, \eta}_T$ are almost identical to those of \cite[Def.~6.2]{Hai14}, the multiplication and differentiation results from \cite[Sec.~6]{Hai14} hold also for our definition.
\end{remark}

To be able to compare two modelled distributions $H \in \CD^{\gamma, \eta}_T(Z)$ and $\bar{H} \in \CD^{\gamma, \eta}_T(\bar{Z})$, we define the quantities
\minilab{ModelledNorms}
\begin{equs}
\Vert H; \bar{H} \Vert_{\gamma, \eta; T} &\eqdef \sup_{t \in (0,T]} \sup_{x \in \R^d} \sup_{l < \gamma} \onorm{t}^{(l - \eta) \vee 0} \Vert H_t(x) - \bar{H}_t(x) \Vert_l \\
& + \sup_{t \in (0,T]} \sup_{\substack{x \neq y \in \R^d \\ | x - y | \leq 1}} \sup_{l < \gamma} \frac{\Vert H_t(x) - \Gamma^{t}_{x y} H_t(y) - \bar{H}_t(x) + \bar{\Gamma}^{t}_{x y} \bar{H}_t(y) \Vert_l}{\onorm{t}^{\eta - \gamma} | x - y |^{\gamma - l}}\;,\\
\VERT H; \bar{H} \VERT_{\gamma, \eta; T} &\eqdef \Vert H; \bar{H} \Vert_{\gamma, \eta; T}\\
& + \sup_{\substack{s \neq t \in (0,T] \\ | t - s | \leq \onorm{t, s}^{\s_0}}} \sup_{x \in \R^d} \sup_{l < \gamma} \frac{\Vert H_t(x) - \Sigma_x^{t s} H_{s}(x) -  \bar{H}_t(x) + \bar{\Sigma}_x^{t s} \bar{H}_{s}(x) \Vert_l}{\onorm{t, s}^{\eta - \gamma} |t - s|^{(\gamma - l)/\s_0}}\;.
\end{equs}

The ``reconstruction theorem'' is one of the key results of the theory of regularity structures.
Here is its statement in our current framework.

\begin{theorem}\label{t:Reconstruction}
Let $\ST = (\CT, \CG)$ be a regularity structure with $\alpha \eqdef \min \CA < 0$ and $Z = (\Pi, \Gamma, \Sigma)$ be a model. Then, for every $\eta \in \R$, $\gamma > 0$ and $T > 0$, there is a unique family of linear 
operators $\CR_t : \CD_T^{\gamma, \eta}(Z) \to \CC^{\alpha}(\R^d)$, parametrised by $t \in (0,T]$, 
such that the bound
\begin{equ}[e:Reconstruction]
|\langle \CR_t H_t - \Pi^t_x H_t(x), \varphi_x^\lambda \rangle| \lesssim \lambda^\gamma \onorm{t}^{\eta - \gamma} \Vert H \Vert_{\gamma, \eta; T} \Vert \Pi \Vert_{\gamma; T}\;,
\end{equ}
holds uniformly in $H \in \CD^{\gamma, \eta}_T(Z)$, $t \in (0,T]$, $x \in \R^d$, $\lambda \in (0,1]$ and $\varphi \in \CB^r_0(\R^d)$ with $r > -\lfloor \alpha\rfloor$.

If furthermore the map $\Pi$ has time regularity $\delta > 0$, then, for any $\tilde{\delta} \in (0, \delta]$ such that $\tilde{\delta} \leq (m - \zeta)$ for all $\zeta, m \in \left((-\infty, \gamma) \cap \CA\right) \cup \{\gamma\}$ such that $\zeta < m$, the function $t \mapsto \CR_t H_t$ satisfies
\begin{equ}[e:ReconstructBound]
\Vert \CR H \Vert_{\CC^{\tilde{\delta}, \alpha}_{\eta - \gamma, T}} \lesssim \Vert \Pi \Vert_{\delta, \gamma; T} \bigl(1 + \Vert \Sigma \Vert_{\gamma; T} \bigr) \VERT H \VERT_{\gamma, \eta; T}\;.
\end{equ}

Let $\bar{Z} = (\bar{\Pi}, \bar{\Gamma}, \bar{\Sigma})$ be another model for the same regularity structure, and let $\bar{\CR}_t$ be the operator as above, but for the model $\bar Z$. Moreover, let the maps $\Pi$ and $\bar \Pi$ have time regularities $\delta > 0$. Then, for every $H \in \CD^{\gamma, \eta}_T(Z)$ and $\bar{H} \in \CD^{\gamma, \eta}_T(\bar{Z})$, the maps $t \mapsto \CR_t H_t$ and $t \mapsto \bar \CR_t \bar H_t$ satisfy
\begin{equ}[e:ReconstructTime]
\Vert \CR H - \bar \CR \bar H \Vert_{\CC^{\tilde{\delta}, \alpha}_{\eta - \gamma, T}} \lesssim \VERT H; \bar H \VERT_{\gamma, \eta; T} + \VERT Z; \bar{Z} \VERT_{\delta, \gamma; T}\;,
\end{equ}
for any $\tilde{\delta}$ as above, and where the proportionality constant depends on $\VERT H \VERT_{\gamma, \eta; T}$, $\VERT \bar H \VERT_{\gamma, \eta; T}$, $\VERT Z \VERT_{\delta, \gamma; T}$ and $\VERT \bar Z \VERT_{\delta, \gamma; T}$.
\end{theorem}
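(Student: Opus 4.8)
The plan is to follow the strategy of the classical reconstruction theorem in \cite[Thm.~3.10]{Hai14}, adapted to the inhomogeneous setting, and then add a second layer of argument for the temporal regularity. First I would fix $t \in (0,T]$ and treat $H_t \in \CD^{\gamma,\eta}_T(Z)$ essentially as a singular modelled distribution in the spatial variable with the model $\Pi^t$; since the definition of $\CD^{\gamma,\eta}_T$ restricted to a fixed time slice is (apart from the absence of the constraint $|x-y|\le\onorm{t,s}$, cf.\ Remark~\ref{r:ModelledDistrib}) of the same form as \cite[Def.~6.2]{Hai14}, the spatial reconstruction $\CR_t H_t \in \CC^\alpha(\R^d)$ together with the bound \eqref{e:Reconstruction} follows from the argument there: one builds $\CR_t H_t$ by a wavelet/dyadic-multiscale approximation to $\langle \Pi^t_x H_t(x), \varphi^\lambda_x\rangle$, uses the algebraic identity $\Pi^t_y = \Pi^t_x\Gamma^t_{xy}$ together with the two bounds in \eqref{e:PiGammaBound} to control increments between scales, and checks that the sum converges with the claimed $\lambda^\gamma$ rate. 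Uniqueness is immediate since $\alpha<0<\gamma$ forces any two candidates to agree when tested against $\varphi^\lambda_x$ as $\lambda\to0$. This gives the first assertion.

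For the time regularity bound \eqref{e:ReconstructBound}, the point is to estimate $\langle \CR_t H_t - \CR_s H_s, \varphi^\lambda_x\rangle$. I would insert the model-based local approximations at both times and split
\begin{equ}
\CR_t H_t - \CR_s H_s = \bigl(\CR_t H_t - \Pi^t_x H_t(x)\bigr) - \bigl(\CR_s H_s - \Pi^s_x H_s(x)\bigr) + \Pi^t_x H_t(x) - \Pi^s_x H_s(x)\;,
\end{equ}
and further decompose the last difference as $\Pi^t_x\bigl(H_t(x) - \Sigma^{ts}_x H_s(x)\bigr) + \bigl(\Pi^t_x - \Pi^s_x\bigr)\Sigma^{ts}_x H_s(x)$, using the intertwining relation from \eqref{e:SigmaDef} and \eqref{e:ModelTilde}. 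The first two bracketed terms are controlled by \eqref{e:Reconstruction} at times $t$ and $s$; the term $\Pi^t_x(H_t(x)-\Sigma^{ts}_x H_s(x))$ is handled by the analytic bound on $\Pi^t$ in \eqref{e:PiGammaBound} together with the third (temporal) term in the definition \eqref{e:ModelledDistributionNorm} of $\VERT H\VERT_{\gamma,\eta;T}$; and the term $(\Pi^t_x-\Pi^s_x)\Sigma^{ts}_x H_s(x)$ is exactly what the time-regularity hypothesis \eqref{e:PiTimeBound} on $\Pi$ is designed for, after bounding $\Vert\Sigma^{ts}_x H_s(x)\Vert_l$ via \eqref{e:SigmaBound} and the norm of $H$. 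Collecting the powers of $\lambda$, $|t-s|$ and $\onorm{t,s}$ and matching them against the definition \eqref{e:DHolderDist}/\eqref{e:SpaceTimeNorm} of $\Vert\cdot\Vert_{\CC^{\tilde\delta,\alpha}_{\eta-\gamma,T}}$ gives \eqref{e:ReconstructBound}; the restriction $\tilde\delta\le m-\zeta$ over consecutive homogeneities $\zeta<m$ is precisely what is needed so that the various $\lambda$-exponents appearing (e.g.\ $l-\tilde\delta$ for $l\in\CA$, and $\gamma-\tilde\delta$) all stay above $\alpha$ and the telescoping over scales still converges.

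For the comparison bound \eqref{e:ReconstructTime} between two models, I would run the same decomposition for $\CR_t H_t - \bar\CR_t \bar H_t$ (and its time increments), systematically adding and subtracting mixed terms such as $\bar\Pi^t_x H_t(x)$ so that each resulting difference involves either a difference of modelled distributions (controlled by $\VERT H;\bar H\VERT_{\gamma,\eta;T}$, times a norm of one model) or a difference of models (controlled by $\VERT Z;\bar Z\VERT_{\delta,\gamma;T}$, times a norm of one modelled distribution), which accounts for the stated dependence of the proportionality constant on $\VERT H\VERT$, $\VERT\bar H\VERT$, $\VERT Z\VERT$, $\VERT\bar Z\VERT$. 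The main obstacle I anticipate is purely bookkeeping of the homogeneity exponents in the temporal estimates: one must verify that after inserting $\Sigma^{ts}_x$ and using \eqref{e:PiTimeBound} the loss of $\tilde\delta$ in the scaling exponent never pushes any term's effective regularity at or below $\alpha$, which is exactly where the somewhat technical condition on $\tilde\delta$ in terms of gaps $m-\zeta$ in $\bigl((-\infty,\gamma)\cap\CA\bigr)\cup\{\gamma\}$ enters, and one must be careful that the weight $\onorm{t,s}^{\eta-\gamma}$ is the correct one at every occurrence (this is where the separate treatment of space and time, rather than the joint constraint $|x-y|\le\onorm{t,s}$, pays off). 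The wavelet-convergence part itself is a routine adaptation of \cite{Hai14} and I would not reproduce it in detail.
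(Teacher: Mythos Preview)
Your treatment of the fixed-time reconstruction \eqref{e:Reconstruction} is fine and matches the paper's appeal to \cite[Thm.~3.10]{Hai14}. The gap is in the time-regularity step. In your decomposition
\[
\CR_t H_t - \CR_s H_s = \bigl(\CR_t H_t - \Pi^t_x H_t(x)\bigr) - \bigl(\CR_s H_s - \Pi^s_x H_s(x)\bigr) + \zeta_x^{st}\;,
\]
you propose to bound the first two brackets separately by \eqref{e:Reconstruction}. This yields only $\lambda^\gamma\onorm{t,s}^{\eta-\gamma}$ with \emph{no} factor of $|t-s|$, so you cannot reach the target $|t-s|^{\tilde\delta/\s_0}\lambda^{\alpha-\tilde\delta}\onorm{t,s}^{\eta-\gamma}$ in the regime $|t-s|^{1/\s_0}\ll\lambda$. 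The point is that the two reconstruction errors at times $t$ and $s$ must be estimated \emph{jointly}, not separately, to see the cancellation.

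The paper achieves this by first proving the spatial increment bound
\[
|\langle \zeta_x^{st}-\zeta_y^{st},\varrho_x^\lambda\rangle|
\lesssim |t-s|^{\tilde\delta/\s_0}\onorm{s,t}^{\eta-\gamma}|x-y|^{\gamma-\tilde\delta-\alpha}\lambda^\alpha
\]
for $\lambda\le|x-y|\le 1$, via a case split $|x-y|\le|t-s|^{1/\s_0}$ versus $|x-y|>|t-s|^{1/\s_0}$ (this is where the gap condition on $\tilde\delta$ is actually used, through terms like $(1-\Sigma_x^{st})(H_t(x)-\Gamma^t_{xy}H_t(y))$). Then \cite[Prop.~3.25]{Hai14} is applied to the family $\{\zeta_x^{st}\}_x$, which outputs directly
\[
|\langle \CR_t H_t - \CR_s H_s - \zeta_x^{st},\varrho_x^\lambda\rangle|
\lesssim |t-s|^{\tilde\delta/\s_0}\lambda^{\gamma-\tilde\delta}\onorm{s,t}^{\eta-\gamma}\;,
\]
i.e.\ the combined first two brackets. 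Only \emph{after} this does one add back $\zeta_x^{st}$ and decompose it essentially as you suggest. So your decomposition of $\zeta_x^{st}$ is correct and useful, but it handles only the easy half; the missing ingredient is re-running the reconstruction argument on the time increment via the $\zeta_x^{st}-\zeta_y^{st}$ bound.
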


\begin{proof}
Existence and uniqueness of the maps $\CR_t$, as well as the bound \eqref{e:Reconstruction}, 
follow from \cite[Thm.~3.10]{Hai14}. The uniformity in time in \eqref{e:Reconstruction} follows from the uniformity of the corresponding bounds in \cite[Thm.~3.10]{Hai14}.

To prove that $t \mapsto \CR_t H_t$ belongs to $\CC^{\tilde{\delta}, \alpha}_{\eta - \gamma}\bigl([0,T], \R^d\bigr)$, we will first bound $\langle \CR_t H_t, \varrho_x^\lambda\rangle$, for $\lambda \in (0,1]$, $x \in \R^d$ and $\varrho \in \CB^r_0(\R^d)$. Using \eqref{e:Reconstruction} and properties of $\Pi$ and $H$ we get
\begin{equs}[e:ReconstructMapBound]
|\langle\CR_t H_t, \varrho_x^\lambda\rangle| &\leq |\langle\CR_t H_t - \Pi^t_{x} H_t(x), \varrho_x^\lambda\rangle | + |\langle\Pi^t_{x} H_t(x), \varrho_x^\lambda\rangle|\\
&\lesssim \lambda^{\gamma} \onorm{t}^{\eta - \gamma} + \sum_{\zeta \in [\alpha, \gamma) \cap \CA} \lambda^{\zeta} \onorm{t}^{(\eta - \zeta) \wedge 0} \lesssim \lambda^{\alpha} \onorm{t}^{\eta - \gamma}\;,
\end{equs}
where the proportionality constant is affine in $\Vert H \Vert_{\gamma, \eta; T} \Vert \Pi \Vert_{\gamma; T}$, and $\alpha$ is the minimal homogeneity in $\CA$.

In order to obtain the time regularity of $t \mapsto \CR_t H_t$, we show that the distribution $\zeta_x^{s t} \eqdef \Pi^t_x H_t(x) - \Pi^s_x H_s(x)$ satisfies the bound
\begin{equs}[e:RecosntructTimeProof]
| \langle \zeta_x^{s t} - \zeta_y^{s t}, \varrho_x^{\lambda} \rangle | \lesssim |t-s|^{\tilde{\delta}/\s_0} \onorm{s,t}^{\eta - \gamma} |x-y|^{\gamma - \tilde{\delta} - \alpha} \lambda^\alpha\;,
\end{equs}
uniformly over all $x, y \in \R^d$ such that $\lambda \leq |x-y| \leq 1$, all $s, t \in \R$, and for any value of $\tilde{\delta}$ as in the statement of the theorem. To this end, we consider two regimes: $|x-y| \leq |t-s|^{1/\s_0}$ and $|x-y| > |t-s|^{1/\s_0}$.

In the first case, when $|x-y| \leq |t-s|^{1/\s_0}$, we write, using Definition~\ref{d:Model},
\begin{equ}[e:RecosntructTimeProofFirstCase]
\zeta_x^{s t} - \zeta_y^{s t} = \Pi^t_x \left( H_t(x) - \Gamma^t_{x y} H_t(y)\right) - \Pi^s_x \left( H_s(x) - \Gamma^s_{x y} H_s(y)\right),
\end{equ}
and bound these two terms separately. From the properties \eqref{e:PiGammaBound} and \eqref{e:ModelledDistributionNorm} we get
\begin{equs}
 | \langle \Pi^t_x &\big( H_t(x) - \Gamma^t_{x y} H_t(y)\big), \varrho_x^{\lambda} \rangle | \lesssim \sum_{\zeta \in [\alpha, \gamma) \cap \CA} \lambda^\zeta \Vert H_t(x) - \Gamma^t_{x y} H_t(y) \Vert_\zeta \\
 &\lesssim \sum_{\zeta \in [\alpha, \gamma) \cap \CA} \lambda^\zeta | x - y |^{\gamma - \zeta} \onorm{t}^{\eta - \gamma} \lesssim \lambda^\alpha |x-y|^{\gamma - \alpha} \onorm{t}^{\eta - \gamma}\;,
 \label{e:RecosntructTimeProofFirstCaseBound}
\end{equs}
where we have exploited the condition $|x-y| \geq \lambda$. Recalling now the case we consider, we can bound the last expression by the right-hand side of \eqref{e:RecosntructTimeProof}. The same estimate holds for the second term in \eqref{e:RecosntructTimeProofFirstCase}.

Now, we will consider the case $|x-y| > |t-s|^{1/\s_0}$. In this regime we use the definition of model and write
\begin{equs}
\zeta_x^{s t} - \zeta_y^{s t} &= \big(\Pi^t_x - \Pi^s_x\big) \big(H_t(x) - \Gamma^t_{x y} H_t(y)\big) + \Pi^s_x \big(1 - \Sigma_x^{s t}\big) \big(H_t(x) - \Gamma^t_{x y} H_t(y)\big)\\
&\qquad - \Pi^s_x \big(H_s(x) - \Sigma^{s t}_{x} H_t(x)\big) + \Pi^s_y \big(H_s(y) - \Sigma^{s t}_{y} H_t(y)\big)\;.
\label{e:RecosntructTimeProofSecondCase}
\end{equs}
The first term can be bounded exactly as \eqref{e:RecosntructTimeProofFirstCaseBound}, but using this time \eqref{e:PiTimeBound}, i.e.
\begin{equs}
 | \langle \big(\Pi^t_x - \Pi^s_x\big) \big( H_t(x) - \Gamma^t_{x y} H_t(y)\big), \varrho_x^{\lambda} \rangle | \lesssim \lambda^{\alpha - \delta} |x-y|^{\gamma - \alpha} \onorm{t}^{\eta - \gamma} |t-s|^{\delta/\s_0}\;.
\end{equs}

In order to estimate the second term in \eqref{e:RecosntructTimeProofSecondCase}, we first notice that from \eqref{e:SigmaBound} and \eqref{e:ModelledDistributionNorm} we get
\begin{equs}[e:RecosntructTimeProofSecondCaseSigma]
\Vert \big(1 &- \Sigma_x^{s t}\big) \big(H_t(x) - \Gamma^t_{x y} H_t(y)\big) \Vert_{\zeta} \lesssim \sum_{\zeta < m < \gamma} |t - s|^{(m - \zeta) / \s_0} \Vert H_t(x) - \Gamma^t_{x y} H_t(y) \Vert_{m}\\
&\lesssim \sum_{\zeta < m < \gamma} |t - s|^{(m - \zeta) / \s_0} | x - y |^{\gamma - m} \onorm{t}^{\eta - \gamma} \lesssim |t - s|^{\tilde{\delta}/ \s_0} | x - y |^{\gamma - \tilde{\delta} - \zeta} \onorm{t}^{\eta - \gamma}\;,
\end{equs}
for any $\tilde{\delta} \leq \min_{m > \zeta \in \CA} (m - \zeta)$, where we have used the assumption on the time variables. Hence, for the second term in \eqref{e:RecosntructTimeProofSecondCase} we have
\begin{equs}
| \langle \Pi^s_x \big(1 - \Sigma_x^{s t}\big) \big(H_t(x) &- \Gamma^t_{x y} H_t(y)\big), \varrho_x^{\lambda} \rangle | \\
&\lesssim |t - s|^{\tilde{\delta}/\s_0} \onorm{t}^{\eta - \gamma} \sum_{\zeta < \gamma} \lambda^\zeta | x - y |^{\gamma - \tilde{\delta} - \zeta}\;.
\end{equs}
Since $|x-y| \geq \lambda$ and $\zeta \geq \alpha$, the estimate \eqref{e:RecosntructTimeProof} holds for this expression.

The third term in \eqref{e:RecosntructTimeProofSecondCase} we bound using the properties \eqref{e:PiGammaBound} and \eqref{e:ModelledDistributionNorm} by
\begin{equs}[e:RecosntructTimeProofSecondCaseBound]
| \langle \Pi^s_x (H_s(x) - \Sigma^{s t}_{x} H_t(x)), \varrho_x^{\lambda} \rangle | &\lesssim \sum_{\zeta < \gamma} \lambda^\zeta \Vert H_s(x) - \Sigma^{s t}_{x} H_t(x) \Vert_\zeta \\
&\lesssim \sum_{\zeta < \gamma} \lambda^\zeta |t - s|^{(\gamma - \zeta)/\s_0} \onorm{t, s}^{\eta - \gamma}\;.
\end{equs}
It follows from $|x-y| \geq \lambda$, $|x-y| > |t-s|^{1/\s_0}$ and $\zeta \geq \alpha$, that the latter can be estimated as in \eqref{e:RecosntructTimeProof}, when $\tilde{\delta} \leq \min\{\gamma - \zeta : \zeta \in \CA,\, \zeta < \gamma\}$. The same bound holds for the last term in \eqref{e:RecosntructTimeProofSecondCase}, and this finishes the proof of \eqref{e:RecosntructTimeProof}.

In view of the bound \eqref{e:RecosntructTimeProof} and \cite[Prop.~3.25]{Hai14}, we conclude that 
\begin{equs}[e:ReconstructFullTime]
|\langle \CR_t H_t - \CR_s H_s - \zeta_x^{s t}, \varrho_x^\lambda \rangle| \lesssim |t-s|^{\tilde{\delta}/\s_0} \lambda^{\gamma - \tilde{\delta}} \onorm{s,t}^{\eta - \gamma}\;,
\end{equs}
uniformly over $s, t \in \R$ and the other parameters as in \eqref{e:Reconstruction}. Thus, we can write 
\begin{equs}
\langle \CR_t H_t - \CR_s H_s, \varrho_x^\lambda \rangle = \langle \CR_t H_t - \CR_s H_s - \zeta_x^{s t}, \varrho_x^\lambda \rangle + \langle \zeta_x^{s t}, \varrho_x^\lambda \rangle\;,
\end{equs}
where the first term is bounded in \eqref{e:ReconstructFullTime}. The second term we can write as 
\begin{equs}
\langle \zeta_x^{s t}, \varrho_x^\lambda \rangle = \langle \big(\Pi^t_x - \Pi^s_x\big) H_t(x), \varrho_x^\lambda \rangle &+ \langle \Pi^s_x \big(H_t(x) - \Sigma_x^{t s} H_s(x)\big), \varrho_x^\lambda \rangle\\
&+ \langle \Pi^s_x \big(\Sigma_x^{t s}-1\big) H_s(x), \varrho_x^\lambda \rangle\;,
\end{equs}
which can be bounded by $|t-s|^{\tilde{\delta}/\s_0} \lambda^{\alpha - \tilde{\delta}} \onorm{s,t}^{\eta - \gamma}$, using \eqref{e:PiTimeBound}, \eqref{e:RecosntructTimeProofSecondCaseBound} and \eqref{e:SigmaBound}. Here, in order to estimate the last term, we act similarly to \eqref{e:RecosntructTimeProofSecondCaseSigma}. Combining all these bounds together, we conclude that
\begin{equ}[e:ReconstructTimeReg]
|\langle \CR_t H_t - \CR_s H_s, \varrho_x^\lambda \rangle| \lesssim |t-s|^{\tilde{\delta}/\s_0} \lambda^{\alpha - \tilde{\delta}} \onorm{s,t}^{\eta - \gamma}\;,
\end{equ}
which finishes the proof of the claim.

The bound \eqref{e:ReconstructTime} can be shown in a similar way. More precisely, similarly to \eqref{e:ReconstructMapBound} and using \cite[Eq.~3.4]{Hai14}, we can show that
\begin{equs}
|\langle\CR_t H_t - \bar{\CR}_t \bar{H}_t, \varrho_x^\lambda\rangle| \lesssim \lambda^{\alpha} \onorm{t}^{\eta - \gamma}\bigl(\Vert \Pi \Vert_{\gamma; T} \VERT H; \bar H \VERT_{\gamma, \eta; T} + \Vert \Pi - \bar \Pi \Vert_{\gamma; T} \VERT \bar H \VERT_{\gamma, \eta; T}\bigr).
\end{equs}
Denoting $\bar{\zeta}_x^{s t} \eqdef \bar \Pi^t_x \bar H_t(x) - \bar \Pi^s_x \bar H_s(x)$ and acting as above, we can prove an analogue of \eqref{e:ReconstructFullTime}:
\begin{equs}
| \langle \CR_t H_t - \bar{\CR}_t \bar{H}_t &- \CR_s H_s + \bar{\CR}_s \bar{H}_s - \zeta_x^{s t} + \bar{\zeta}_x^{s t}, \varrho_x^\lambda \rangle| \\
&\lesssim |t-s|^{\tilde{\delta}/\s_0} \lambda^{\gamma - \tilde{\delta}} \onorm{s,t}^{\eta - \gamma} \bigl(\VERT H; \bar H \VERT_{\gamma, \eta; T} + \VERT Z; \bar{Z} \VERT_{\delta, \gamma; T}\bigr)\;,
\end{equs}
with the values of $\tilde{\delta}$ as before. Finally, similarly to \eqref{e:ReconstructTimeReg} we get
\begin{equs}
| \langle \CR_t H_t - \bar{\CR}_t \bar{H}_t - \CR_s H_s + \bar{\CR}_s \bar{H}_s&, \varrho_x^\lambda \rangle| \lesssim |t-s|^{\tilde{\delta}/\s_0} \lambda^{\alpha - \tilde{\delta}} \onorm{s,t}^{\eta - \gamma} \\
&\times \bigl(\VERT H; \bar H \VERT_{\gamma, \eta; T} + \VERT Z; \bar{Z} \VERT_{\delta, \gamma; T}\bigr)\;,
\end{equs}
which finishes the proof.
\end{proof}

\begin{definition}\label{d:Reconstruct}
We will call the map $\CR$, introduced in Theorem~\ref{t:Reconstruction}, the {\it reconstruction operator}, and we will always postulate in what follows that $\CR_t = 0$, for $t \leq 0$.
\end{definition}

\begin{remark}\label{r:OriginalReconstruct}
One can see that the map $\tilde{\CR}(t,\cdot) \eqdef \CR_t(\cdot)$ is the reconstruction operator for the model \eqref{e:ModelTilde} in the sense of \cite[Thm.~3.10]{Hai14}.
\end{remark}


\subsection{Convolutions with singular kernels}

In the definition of a mild solution to a parabolic stochastic PDE, convolutions with singular kernels are involved. In particular Schauder estimates plays a key role. To describe this on the abstract level, we introduce the abstract integration map.

\begin{definition}\label{d:AbstractIntegration}
Given a regularity structure $\ST=(\CT, \CG)$, a linear map $\CI : \CT \to \CT$ is said to be 
an {\it abstract integration map} of order $\beta > 0$ if it satisfies the following properties:
\begin{itemize}
 \item One has $\CI : \CT_m \to \CT_{m + \beta}$, for every $m \in \CA$ such that $m + \beta \in \CA$.
 \item For every $\tau \in \poly{\CT}$, one has $\CI \tau = 0$, where $\poly{\CT} \subset \CT$ contains the polynomial part of $\CT$ and was introduced in Remark~\ref{ex:Poly}.
 \item One has $\CI \Gamma \tau - \Gamma \CI \tau \in \poly{\CT}$, for every $\tau \in \CT$ and $\Gamma \in \CG$.
\end{itemize}
\end{definition}

\begin{remark}
The second and third properties are dictated by the special role played by polynomials in the Taylor expansion. 
One can find a more detailed motivation for this definition in \cite[Sec. 5]{Hai14}.
In general, we also allow for the situation where $\CI$ has a domain which isn't all of $\CT$.
\end{remark}

Now, we will define the singular kernels, convolutions with which we are going to describe.

\begin{definition}\label{d:Kernel}
A function $K : \R^{d + 1} \setminus \{0\} \to \R$ is regularising of order $\beta > 0$, if there is a constant $r > 0$ such that we can decompose
\begin{equ}[e:KernelExpansion]
 K = \sum_{n \geq 0} K^{(n)}\;,
\end{equ}
in such a way that each term $K^{(n)}$ 
is supported in $\{z \in \R^{d+1} : \snorm{z} \leq c 2^{-n} \}$ for some $c > 0$,
satisfies
\begin{equs}[e:KernelBound]
|D^k K^{(n)}(z) | \lesssim 2^{(|\s| - \beta + \sabs{k})n}\;,
\end{equs}
for every multiindex $k$ with $\sabs{k} \leq r$, and annihilates every polynomial
of scaled degree $r$, i.e. for every $k \in \N^{d+1}$ such that $\sabs{k} \leq r$ it satisfies 
\begin{equ}[e:PolyKill]
 \int_{\R^{d+1}} z^k K^{(n)}(z)\, dz = 0\;.
\end{equ}
\end{definition}

Now, we will describe the action of a model on the abstract integration map. When it is convenient for us, we will write $K_t(x) = K(z)$, for $z=(t,x)$.

\begin{definition}
Let $\CI$ be an abstract integration map of order $\beta$ for a regularity structure $\ST=(\CT, \CG)$, let $Z = (\Pi, \Gamma, \Sigma)$ be a model and let $K$ be regularising of 
order $\beta$ with $r > -\lfloor\min \CA\rfloor$. We say that $Z$ realises $K$ for $\CI$, if 
for every $\alpha \in \CA$ and every $\tau \in \CT_\alpha$ one has the identity
\begin{equ}[e:PiIntegral]
\Pi^t_{x} \left(\CI \tau + \CJ_{t, x} \tau \right)(y) = \int_{\R} \langle \Pi^{s}_{x} \Sigma_x^{s t} \tau, K_{t-s}(y - \cdot)\rangle\, ds\;,
\end{equ}
where the polynomial $\CJ_{t, x} \tau$ is defined by
\begin{equs}
\label{e:JDef}
\CJ_{t, x} \tau \eqdef \sum_{\sabs{k} < \alpha + \beta} \frac{X^k}{k!} \int_{\R} \langle\Pi^{s}_{x} \Sigma_x^{s t} \tau, D^k K_{t-s}(x - \cdot)\rangle\, ds\;,
\end{equs}
with $k \in \N^{d+1}$ and the derivative $D^k$ in time-space. Moreover, we require that
\begin{equs}[e:GammaSigmaIntegral]
\Gamma_{x y}^t \big(\CI + \CJ_{t, y}\big) &= \big(\CI + \CJ_{t, x}\big) \Gamma_{x y}^t\;,\\
\Sigma_x^{st} \big(\CI + \CJ_{t, x}\big) &= \big(\CI + \CJ_{s, x}\big) \Sigma_x^{st}\;,
\end{equs}
for all $s, t \in \R$ and $x, y \in \R^d$.
\end{definition}

\begin{remark}
We define the integrals in \eqref{e:PiIntegral} and \eqref{e:JDef} as sums of the same integrals, but using the functions $K^{(n)}$ from the expansion \eqref{e:KernelExpansion}. Since these integrals coincide with those from \cite{Hai14} for the model \eqref{e:ModelTilde}, it follows from \cite[Lem.~5.19]{Hai14} that these sums converge absolutely, and hence the expressions in \eqref{e:PiIntegral} and \eqref{e:JDef} are well defined.
\end{remark}

\begin{remark}
The identities \eqref{e:GammaSigmaIntegral} should be viewed as defining 
$\Gamma_{xy}^t \CI\tau$ and $\Sigma_x^{st} \CI \tau$ in terms of $\Gamma_{xy}^t \tau$,
$\Sigma_x^{st} \tau$, and \eqref{e:JDef}.
\end{remark}

With all these notations at hand we introduce the following operator acting on modelled distribution 
$H \in \CD^{\gamma, \eta}_T(Z)$ with $\gamma + \beta > 0$:
\begin{equ}[e:KDef]
\bigl(\CK_{\gamma} H\bigr)_t(x) \eqdef \CI H_t(x) + \CJ_{t, x} H_t(x) + \bigl(\CN_{\gamma} H\bigr)_t(x)\;.
\end{equ}
Here, the last term is $\poly{\CT}$-valued and is given by
\begin{equ}[e:NDef]
\bigl(\CN_{\gamma} H\bigr)_t(x) \eqdef \sum_{\sabs{k} < \gamma + \beta} \frac{X^k}{k!} \int_{\R} \langle\CR_s H_s - \Pi^{s}_{x} \Sigma_x^{s t} H_t(x), D^k K_{t-s}(x - \cdot)\rangle\, ds\;,
\end{equ} 
where as before $k \in \N^{d+1}$ and the derivative $D^k$ is in time-space, see Definition~\ref{d:Reconstruct} for consistency of notation.

\begin{remark}
It follows from Remark~\ref{r:OriginalReconstruct} and the proof of \cite[Thm.~5.12]{Hai14}, that the integral in \eqref{e:NDef} is well-defined, if we express it as a sum of the respective integrals with the functions $K^{(n)}$ in place of $K$. (See also the definition of the operator $\bf{R}^+$ in \cite[Sec. 7.1]{Hai14}.)
\end{remark}

The modelled distribution $\CK_{\gamma} H$ represents the space-time convolution of $H$ with $K$, and the following result shows that this action ``improves'' regularity by $\beta$.

\begin{theorem}\label{t:Integration}
Let $\ST=(\CT, \CG)$ be a regularity structure with the minimal homogeneity $\alpha$, let $\CI$ be an abstract integration map of an integer order $\beta > 0$, let $K$ be a singular function regularising by $\beta$, and let $Z = (\Pi, \Gamma, \Sigma)$ be a model, which realises $K$ for $\CI$. Furthermore, let $\gamma > 0$, $\eta < \gamma$, $\eta > -\s_0$, $\gamma < \eta + \s_0$, $\gamma + \beta \notin \N$, $\alpha + \beta > 0$ and $r > -\lfloor \alpha \rfloor$, $r > \gamma + \beta$ in Definition~\ref{d:Kernel}.

Then $\CK_\gamma$ maps $\CD^{\gamma, \eta}_T(Z)$ into $\CD^{\bar{\gamma}, \bar{\eta}}_T(Z)$, where $\bar{\gamma} = \gamma + \beta$, $\bar{\eta} = \eta \wedge \alpha + \beta$, and for any $H \in \CD^{\gamma, \eta}_T(Z)$ the following bound holds
\begin{equ}[e:Integration]
\VERT \CK_{\gamma} H \VERT_{\bar{\gamma}, \bar{\eta}; T} \lesssim \VERT H \VERT_{\gamma, \eta; T} \Vert \Pi \Vert_{\gamma; T} \Vert \Sigma \Vert_{\gamma; T} \bigl(1 + \Vert \Gamma \Vert_{\bar{\gamma}; T} + \Vert \Sigma \Vert_{\bar{\gamma}; T}\bigr)\;.
\end{equ}
Furthermore, for every $t \in (0, T]$, one has the identity
\begin{equ}[e:IntegralIdentity]
\CR_t \bigl(\CK_{\gamma} H\bigr)_t(x) = \int_{0}^t \langle\CR_s H_s, K_{t-s}(x - \cdot)\rangle\, ds\;.
\end{equ}

Let $\bar{Z} = (\bar{\Pi}, \bar{\Gamma}, \bar{\Sigma})$ be another model realising $K$ for $\CI$, which satisfies the same assumptions, and let $\bar{\CK}_{\gamma}$ be defined by \eqref{e:KDef} for this model. Then one has
\begin{equ}[e:IntegrationDistance]
\VERT \CK_{\gamma} H; \bar{\CK}_{\gamma} \bar{H} \VERT_{\bar{\gamma}, \bar{\eta}; T} \lesssim \VERT H; \bar{H} \VERT_{\gamma, \eta; T} + \VERT Z; \bar{Z} \VERT_{\bar{\gamma}; T}\;,
\end{equ}
for all $H \in \CD^{\gamma, \eta}_T(Z)$ and $\bar{H} \in \CD^{\gamma, \eta}_T(\bar{Z})$. Here, the proportionality constant depends on $\VERT H \VERT_{\gamma, \eta; T}$, $\VERT \bar{H} \VERT_{\gamma, \eta; T}$ and the norms on the models $Z$ and $\bar{Z}$ involved in the estimate \eqref{e:Integration}.
\end{theorem}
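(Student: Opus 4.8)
The plan is to deduce the statement from the Schauder estimate \cite[Thm.~5.12]{Hai14} applied to the ``time--flattened'' model $\tilde Z = (\tilde\Pi, \tilde\Gamma)$ of \eqref{e:ModelTilde}, which by Remark~\ref{r:OriginalModel} is an ordinary model on $\R^{d+1}$ in the sense of \cite[Def.~2.17]{Hai14} and whose reconstruction operator is $\tilde\CR(t, \cdot) \eqdef \CR_t(\cdot)$ (Remark~\ref{r:OriginalReconstruct}). First I would check, unwinding \eqref{e:JDef} and \eqref{e:NDef} and using the cocycle relations \eqref{e:SigmaDef} together with the compatibility relations \eqref{e:GammaSigmaIntegral}, that $\tilde Z$ realises $K$ for $\CI$ in the sense of \cite[Sec.~5]{Hai14}, and that with $\tilde H(t, x) \eqdef H_t(x)$ one has
\begin{equ}
\bigl(\CK_\gamma H\bigr)_t(x) = \bigl(\CK^{\tilde Z} \tilde H\bigr)(t, x)\;,
\end{equ}
where $\CK^{\tilde Z}$ is the integration operator of \cite[Thm.~5.12]{Hai14} associated to $\tilde Z$: the three summands in \eqref{e:KDef} are exactly its $\CI$-, $\CJ$- and $\CN$-terms, with the convention $\CR_s = 0$ for $s \le 0$ identifying the time integrals, absolute convergence being guaranteed as in the remarks following \eqref{e:NDef}.

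The remaining work is the translation of norms in both directions. For the domain, one verifies that $H \mapsto \tilde H$ maps $\CD^{\gamma, \eta}_T(Z)$ continuously into the space $\CD^{\gamma, \eta}$ of singular modelled distributions of \cite[Def.~6.2]{Hai14} for $\tilde Z$, with singular set $\{t = 0\}$: the pointwise bounds coincide, and the ``translation'' bound of \cite{Hai14} on $\Vert H_t(x) - \Gamma^t_{xy}\Sigma^{t s}_y H_s(y) \Vert_l$ in terms of $\snorm{(t-s, x-y)}$ follows by inserting $\Gamma^t_{xy} H_t(y)$ and using \eqref{e:ModelledDistributionNormAbs}, \eqref{e:ModelledDistributionNorm} and \eqref{e:PiGammaBound} (for $\Gamma^t_{xy}$), with the standard bookkeeping of the $\onorm{t,s}$-weights. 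Then \cite[Thm.~5.12]{Hai14} gives $\CK^{\tilde Z}\tilde H \in \CD^{\bar\gamma, \bar\eta}$ (for $\tilde Z$) with the quantitative bound, which I would translate back into $\VERT \CK_\gamma H \VERT_{\bar\gamma, \bar\eta; T} < \infty$: the pointwise part is immediate; the $\Sigma$-increment (third term of \eqref{e:ModelledDistributionNorm}) is the \cite{Hai14} translation bound specialised to equal spatial points $\bar z = (s, x)$, using $\tilde\Gamma_{(t,x),(s,x)} = \Sigma^{t s}_x$ and $\onorm{t} \ge \onorm{t,s}$; and the $\Gamma$-increment (second term of \eqref{e:ModelledDistributionNormAbs}), which our definition demands for all $|x-y| \le 1$ whereas \cite{Hai14} yields it only for $|x-y| \lesssim \onorm{t}$, is extended to the range $\onorm{t} < |x-y| \le 1$ by the triangle inequality using the pointwise bounds on $\Vert (\CK_\gamma H)_t \Vert_m$ already obtained. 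The identity \eqref{e:IntegralIdentity} is then the reconstruction identity of \cite[Thm.~5.12]{Hai14} evaluated at $(t, x)$ together with $\CR_s = 0$ for $s \le 0$, and the ``distance'' bound \eqref{e:IntegrationDistance} is obtained verbatim from the model-continuity statement of \cite[Thm.~5.12]{Hai14} for $\tilde Z$, $\bar{\tilde{Z}}$ and $\tilde H$, $\bar{\tilde{H}}$, using the bound on $\VERT \tilde Z; \bar{\tilde{Z}} \VERT$ from Remark~\ref{r:OriginalModel} and the fact that $\CD^{\gamma, \eta}_T$ does not depend on $\Pi$.

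I expect the two delicate points — and the reason this is not a verbatim corollary of \cite{Hai14} — to be: checking that $\tilde Z$ realises $K$ in the sense of \cite{Hai14}, which is precisely where the compatibility relations \eqref{e:GammaSigmaIntegral} enter (they are imposed for this purpose); and the two-way translation between the ``separated space/time'' norm $\VERT\,\cdot\,\VERT_{\gamma, \eta; T}$ and the scaled space-time norm of \cite{Hai14}, the only genuinely stronger feature of ours being the unrestricted range $|x-y| \le 1$ in the $\Gamma$-increment, handled by the elementary argument above. As a more self-contained alternative one could instead rerun the multiscale proof of \cite[Thm.~5.12]{Hai14} directly with the time-indexed data $\Pi^t$, $\CJ_{t,\cdot}$, $\CR_s$: decomposing $K = \sum_n K^{(n)}$ and, for the $\Sigma$-increment of the $\CN$-term, separating the regimes $2^{-n} \lesssim |t-s|^{1/\s_0}$ and $2^{-n} \gtrsim |t-s|^{1/\s_0}$; in that route the delicate estimate is the time increment of the $\CN$-term, just as the $\mathbf{R}^+$-term is the subtle point in \cite[Sec.~5, 7.1]{Hai14}.
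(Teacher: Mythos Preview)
Your overall strategy---reduce to \cite{Hai14} via the time-flattened model $\tilde Z$ of Remark~\ref{r:OriginalModel}---is the same as the paper's. The paper likewise appeals to \cite{Hai14} (Prop.~6.16, the singular version, rather than Thm.~5.12) for the pointwise bounds, the $\Sigma$-increment, and the non-integer $\Gamma$-increment components. The divergence is precisely at the point you flag as delicate: extending the $\Gamma$-increment bound from the range $|x-y|\lesssim\onorm{t}$ guaranteed by \cite{Hai14} to the full range $|x-y|\le 1$ demanded by \eqref{e:ModelledDistributionNormAbs}.

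Your proposed fix---triangle inequality plus the pointwise bounds---does not close. For a component of homogeneity $l<\bar\eta$ (for instance $l=0$, which occurs since $\bar\eta=(\eta\wedge\alpha)+\beta>0$), the pointwise bound gives only $\Vert(\CK_\gamma H)_t(x)\Vert_l\lesssim 1$, whereas the target is $|x-y|^{\bar\gamma-l}\onorm{t}^{\bar\eta-\bar\gamma}$. Take $|x-y|\sim\onorm{t}$ with $\onorm{t}$ small: the target is of order $\onorm{t}^{\bar\eta-l}\ll 1$, so the pointwise bound is too weak. The same issue arises for the terms $|x-y|^{m-l}\Vert(\CK_\gamma H)_t(y)\Vert_m$ with $l\le m<\bar\eta$ coming from $\Gamma^t_{xy}$.

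The paper's remedy is your ``alternative'': it reopens the multiscale proof of \cite[Prop.~6.16]{Hai14} for the integer-homogeneity $\Gamma$-increment. The point is that the \emph{input} bound on $H$ is also stronger here (no restriction $|x-y|\le\onorm{t}$ in \eqref{e:ModelledDistributionNormAbs}), which permits collapsing the three regimes of \cite{Hai14} to just two, $c\,2^{-n+1}\lessgtr|x-y|$; with this simpler splitting the sum over $n$ produces the required bound for all $|x-y|\le 1$. The only estimate needing separate verification is
\[
\Big|\int_{\R}\langle\CR_s H_s-\Pi^s_x H_s(x),\,D^kK^{(n)}_{t-s}(x-\cdot)\rangle\,ds\Big|\lesssim 2^{(\sabs{k}-\gamma-\beta)n}\onorm{t}^{\eta-\gamma}\,,
\]
which follows from Theorem~\ref{t:Reconstruction} and Definition~\ref{d:Kernel} and is where the hypothesis $\gamma<\eta+\s_0$ is used (integrability of the singularity at $s=0$). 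So your second route is the correct one; the first does not work as stated.
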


\begin{proof}
In view of Remarks \ref{r:OriginalModel} and \ref{r:OriginalReconstruct}, the required bounds on the components of $(\CK_\gamma H)_t(x)$ and $(\CK_\gamma H)_{t}(x) - \Sigma^{t s}_x (\CK_\gamma H)_{s}(x)$, as well as on the components of $(\CK_{\gamma} H)_t(y) - \Gamma_{y x}^t (\CK_{\gamma} H)_t(x)$ with non-integer homogeneities, can be obtained in exactly the same way as in \cite[Prop.~6.16]{Hai14}. (See the definition of the operator $\bf{R}^+$ in \cite[Sec.~7.1]{Hai14}.)

In order to get the required bounds on the elements of $(\CK_\gamma H)_{t}(x) - \Gamma^{t}_{x y} (\CK_\gamma H)_{t}(y)$ with integer homogeneities, we need to modify the proof of \cite[Prop.~6.16]{Hai14}. The problem is that our definition of modelled distributions is slightly different than the one in \cite[Def.~6.2]{Hai14} (see Remark~\ref{r:ModelledDistrib}). That's why we have to consider only two regimes, $c 2^{-n + 1} \leq |x - y|$ and $c 2^{-n + 1} > |x - y|$, in the proof of \cite[Prop.~6.16]{Hai14}, where $c$ is from Definition \ref{d:Kernel}. The only place in the proof, which requires a special treatment, is the derivation of the estimate
\begin{equ}
\Big|\int_{\R} \langle\CR_s H_s - \Pi^{s}_{x} H_s(x), D^k K^{(n)}_{t-s}(x - \cdot)\rangle\, ds\Big| \lesssim 2^{(\sabs{k} - \gamma - \beta)n} \onorm{t}^{\eta - \gamma}\;,
\end{equ}
which in our case follows trivially from Theorem~\ref{t:Reconstruction} and Definition~\ref{d:Kernel}. Here is the place where we need $\gamma - \eta < \s_0$, in order to have an integrable singularity. Here, we use the same argument as in the proof of \cite[Thm.~7.1]{Hai14} to make sure that the time interval does not increase.

With respective modifications of the proof of \cite[Prop.~6.16]{Hai14} we can also show that \eqref{e:IntegralIdentity} and \eqref{e:IntegrationDistance} hold.
\end{proof}


\section{Solutions to parabolic stochastic PDEs}
\label{s:PDEs}

We consider a general parabolic stochastic PDE of the form
\begin{equ}[e:SPDE]
 \partial_t u = A u + F(u, \xi)\;, \qquad u(0, \cdot) = u_0(\cdot)\;,
\end{equ}
on $\R_+ \times \R^d$, where $u_0$ is the initial data, $\xi$ is a rough noise, $F$ is a function in $u$ and $\xi$, which depends in general on the space-time point $z$ and which is affine in $\xi$, and $A$ is a differential operator such that $\partial_t - A$ has a Green's function $G$, i.e. $G$ is the distributional solution of $(\partial_t - A) G = \delta_0$. Then we require the following assumption to be satisfied.

\begin{assumption}\label{a:Operator}
The operator $A$ is given by $Q(\nabla)$, for $Q$ a homogeneous polynomial on $\R^d$ of some even degree $\beta > 0$. Its Green's function $G : \R^{d+1} \setminus \{0\} \mapsto \R$ is smooth, non-anticipative, i.e. $G_{t} = 0$ for $t \leq 0$, and for $\lambda > 0$ satisfies the scaling relation
\begin{equ}
\lambda^{d} G_{\lambda^{\beta} t}(\lambda x) = G_{t}(x)\;.
\end{equ}
\end{assumption}

\begin{remark}
One can find in \cite{Hor55} precise conditions on $Q$ such that $G$ satisfies Assumption~\ref{a:Operator}.
\end{remark}

In order to apply the abstract integration developed in the previous section, we would like the localised singular part of $G$ to have the properties from Definition~\ref{d:Kernel}. The following result, following from \cite[Lem.~7.7]{Hai14}, shows that this is indeed the case.

\begin{lemma}\label{l:GreenDecomposition}
Let us consider functions $u$ supported in $\R_+ \times \R^d$ and periodic in the spatial variable with some fixed period. If Assumption~\ref{a:Operator} is satisfied with some $\beta > 0$, then we can write $G = K + R$, in such a way that the identity
\begin{equ}
\bigl(G \star u\bigr)(z) = \bigl(K \star u\bigr)(z) + \bigl(R \star u\bigr)(z)\;,
\end{equ}
holds for every such function $u$ and every $z \in (-\infty, 1] \times \R^{d}$, where $\star$ is the space-time convolution. Furthermore, $K$ has the properties from Definition~\ref{d:Kernel} with the parameters $\beta$ and some arbitrary (but fixed) value $r$, and the scaling $\s = (\beta, 1, \ldots, 1)$. The function $R$ is smooth, non-anticipative and compactly supported.
\end{lemma}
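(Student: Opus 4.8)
The plan is to reduce everything to the known decomposition result \cite[Lem.~7.7]{Hai14}, which produces, for a Green's function $G$ satisfying the scaling relation of Assumption~\ref{a:Operator}, a splitting $G = K + R$ with $K$ supported near the origin, having the properties of Definition~\ref{d:Kernel} with the prescribed $\beta$, $r$ and scaling $\s = (\beta, 1, \ldots, 1)$, and with $R$ smooth and non-anticipative. The only genuinely new point over the cited lemma is that we want the convolution identity $(G \star u)(z) = (K \star u)(z) + (R \star u)(z)$ to hold pointwise for every $z \in (-\infty, 1] \times \R^d$ and every $u$ of the stated type (supported in $\R_+ \times \R^d$, spatially periodic), rather than just for compactly supported $u$; the subtlety is that $G$ itself is only locally integrable and the spatial periodicity means $u$ is not integrable, so one has to check the convolutions make sense.

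First I would invoke \cite[Lem.~7.7]{Hai14} verbatim to obtain $K$ and $R$ with all the claimed local properties; since $\beta$ is even, $Q$ homogeneous of degree $\beta$, Assumption~\ref{a:Operator} gives exactly the scaling hypothesis that lemma requires, and the choice of $r$ is free. The construction there writes $K = \sum_{n \geq 0} K^{(n)}$ by multiplying $G$ with a partition of unity at dyadic scales and subtracting a compactly supported polynomial correction to enforce the annihilation property \eqref{e:PolyKill}; $R = G - K$ is then smooth away from nothing (the singularity of $G$ at $0$ is entirely absorbed into $K$), non-anticipative because both $G$ and each $K^{(n)}$ are, and compactly supported because $K$ agrees with $G$ outside a bounded set. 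Second I would check the convolution identity. Both $K$ and $R$ are, after the decomposition, either compactly supported ($R$, and also the ``tail'' beyond finitely many $n$ in $K$ once restricted to the relevant time horizon) or locally integrable with the explicit bounds \eqref{e:KernelBound}; since $u$ is spatially periodic and supported in $\R_+ \times \R^d$, for $z = (t,x)$ with $t \leq 1$ the relevant integration in time runs only over $[0, t] \subset [0,1]$, a bounded set, and in space the periodicity lets one reduce to a single period, so each of $G \star u$, $K \star u$, $R \star u$ is an absolutely convergent integral at such $z$. The identity $G \star u = K \star u + R \star u$ is then just linearity of the (convergent) integral, using $G = K + R$.

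The main obstacle, and the only place needing care, is the interchange of the infinite sum $\sum_n K^{(n)}$ with the convolution and with the integral defining $K \star u$: one must show $\sum_n |K^{(n)} \star u|$ converges at each $z$ with $t \leq 1$. This is controlled exactly as in \cite[Lem.~5.19]{Hai14}: the support condition $\snorm{\cdot} \leq c 2^{-n}$ means $K^{(n)} \star u(z)$ only sees $u$ on a ball of radius $\sim 2^{-n}$ around $z$ in the scaled metric, so for $n$ larger than $\log_2(1/t)/\beta$ the time-support forces $K^{(n)} \star u(z)$ to involve only $u$ near time $t > 0$ where $u$ is as regular as we need, and the bound \eqref{e:KernelBound} gives $|K^{(n)} \star u(z)| \lesssim 2^{(|\s| - \beta)n} \cdot 2^{-|\s| n} \sup |u| = 2^{-\beta n} \sup|u|$ on the relevant region, which is summable since $\beta > 0$; only finitely many $n$ contribute the ``singular'' part near time $0$ where $u$ is merely supported. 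Once this summability is in hand, the identity and all the stated properties follow, completing the proof.
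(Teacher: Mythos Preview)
Your approach is the same as the paper's: both simply invoke \cite[Lem.~7.7]{Hai14}, and the paper says nothing more than that. Your extra care about well-definedness of the convolutions is fine but not strictly needed, since the cited lemma already handles this.

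There is one genuine error in your explanation, though. You write that $R$ is ``compactly supported because $K$ agrees with $G$ outside a bounded set''. This is backwards: by Definition~\ref{d:Kernel}, each $K^{(n)}$ is supported in a ball of radius $c2^{-n}$, so $K$ itself is supported in the ball of radius $c$ around the origin, and hence $R = G - K$ equals $G$ outside that ball. But $G$ (think of the heat kernel) decays yet does \emph{not} have compact support, so naively $R$ would not be compactly supported either. The actual mechanism in \cite[Lem.~7.7]{Hai14} is different: one exploits the spatial periodicity of $u$ together with the restriction $t \le 1$ and non-anticipativity to replace the smooth tail of $G$ by a compactly supported function yielding the same value of $(G \star u)(z)$ for all relevant $z$ and $u$. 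The same manoeuvre appears at the end of the proof of Lemma~\ref{l:DGreenDecomposition} in this paper (``we can make the function $R^\eps$ compactly supported in the same way as in \cite[Lem.~7.7]{Hai14}''), using the Schwartz-type decay~\eqref{e:GHatSchwartz}. Since you are citing the lemma anyway, this does not invalidate your proof, but your stated reason is wrong and worth correcting.
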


In particular, it follows from Lemma~\ref{l:GreenDecomposition} that for any $\gamma > 0$ and any periodic $\zeta_t \in \CC^{\alpha}\big(\R^d\big)$, with $t \in \R$, which is allowed to have an integrable singularity at $t = 0$, we can define
\begin{equ}[e:RDef]
\left(R_{\gamma} \zeta\right)_t(x) \eqdef \sum_{\sabs{k} < \gamma} \frac{X^k}{k!} \int_\R \langle \zeta_s,  D^k R_{t-s}(x - \cdot) \rangle\, ds\;,
\end{equ}
where $k \in \N^{d+1}$ and $D^k$ is taken in time-space.


\subsection{Regularity structures for locally subcritical stochastic PDEs}
\label{ss:RegStruct}

In this section we provide conditions on the equation \eqref{e:SPDE}, under which one can build a regularity structure for it. More precisely, we consider the mild form of equation \eqref{e:SPDE}:
\begin{equ}[e:MildGeneral]
 u = G \star F(u, \xi) + S u_0\;,
\end{equ}
where $\star$ is the space-time convolution, $S$ is the semigroup generated by $A$ and
$G$ is its fundamental solution. We will always assume that we are in a subcritical 
setting, as defined in \cite[Sec.~8]{Hai14}.

It was shown in \cite[Sec.~8.1]{Hai14} that it is possible to build a regularity structure $\ST = (\CT, \CG)$ for a locally subcritical equation and to reformulate it as a fixed point problem in an associated space of modelled distributions. We do not want to give a precise description of this regularity structure, see for example
\cite{Hai14,CDM} for details in the case of $\Phi^4_3$. 
Let us just mention that we can recursively build two sets of symbols, $\CF$ and $\CU$. 
The set $\CF$ contains $\Xi$, $\1$, $X_i$, as well as some of the symbols that can
be built recursively from these basic building blocks by the operations 
\begin{equ}[e:basicOps]
\tau \mapsto \CI(\tau)\;,\qquad (\tau,\bar \tau) \mapsto \tau\bar \tau\;,
\end{equ}
subject to the equivalences $\tau \bar \tau = \bar \tau \tau$, $\1 \tau = \tau$,
and $\CI(X^k) = 0$.
These symbols are involved in the  description of the right hand side of \eqref{e:SPDE}. The set $\CU \subset \CF$ on the other hand contains only those symbols which 
are used in the description of the solution itself, which are either of the form
$X^k$ or of the form $\CI(\tau)$ with $\tau \in \CF$. The 
model space $\CT$ is then defined as $\spanning\{\tau \in \CF\,:\, |\tau| \le r\}$ for a sufficiently large $r > 0$, the set of all (real) linear combinations of  symbols in $\CF$ of homogeneity $|\tau| \le r$, where
$\tau \mapsto |\tau|$ is given by
\begin{equ}[e:defDegree]
|\1| = 0\;,\quad |X_i| = \s_i\;,\quad 
|\Xi| = \alpha, \quad|\CI (\tau)| = |\tau| + \beta\;,\quad 
|\tau \bar \tau| = |\tau| + |\bar \tau|\;.
\end{equ} 
In the situation of interest, namely the $\Phi^4_3$ model, one chooses $\beta = 2$
and $\alpha = -{5\over 2}-\kappa$ for some $\kappa > 0$ sufficiently small. 
Subcriticality then guarantees that $\CT$ is finite-dimensional.
We will also write $\CT_\CU$ for the linear span of $\CU$ in $\CT$.

One can also build a structure group $\CG$ acting on $\CT$ in such a way that
the operation $\CI$ satisfies the assumptions of Definition~\ref{d:AbstractIntegration} 
(corresponding to the convolution operation with the kernel $K$), and such that 
it acts on $\poly{\CT}$ by translations as required.

Let now $Z$ be a model realising $K$ for $\CI$, we denote by $\CR$, $\CK_{\bar{\gamma}}$ and $R_\gamma$ the reconstruction operator, and the corresponding operators \eqref{e:KDef} and \eqref{e:RDef}. 
We also use the notation $\CP \eqdef \CK_{\bar \gamma} + R_\gamma \CR$
for the operator representing convolution with the heat kernel.
With these notations at hand, it was shown in \cite{Hai14} that  
one can associate to \eqref{e:MildGeneral} the fixed point problem
in $\CD^{\gamma, \eta}_T(Z)$ given by
\begin{equ}[e:AbstractEquationBasic]
U = \CP F(U) + Su_0\;,
\end{equ}
for a suitable function (which we call again $F$) 
which ``represents'' the nonlinearity of the SPDE 
in the sense of \cite[Sec.~8]{Hai14} and which is such that
$\CI F(\tau) \in \CT$ for every $\tau \in \CT_\CU$.
In our running example, we would take
\begin{equ}[e:ourF]
F(\tau) = - \CQ_{\le 0} \bigl(a \tau + \lambda \tau^3\bigr) + \Xi\;,
\end{equ}
where $\CQ_{\le 0}$ denotes the canonical projection onto $\CT_{\le 0}$ defined in Remark~\ref{r:QOperators}%
\footnote{The reason for adding this projection is to guarantee that $\CI F$
maps $\CT_\CU$ into $\CT$, since we truncated $\CT$ at homogeneity $r$. Note also that the presence
of this projection does not affect the outcome of the reconstruction operator when applied to $F(U)$.} and $a$ and $\lambda$ are the constants from \eqref{e:Phi}.
The problem we encounter is that since we impose that our models are
functions of time, there exists no model for which $\Pi^t_x \Xi = \xi$ with
$\xi$ a typical realisation of space-time white noise. 
We would like to replace \eqref{e:AbstractEquationBasic} by an equivalent fixed 
point problem that circumvents this problem, and this is the content of the
next two subsections. 


\subsection{Truncation of regularity structures}
\label{sec:trunc}

In general, as just discussed, we cannot always define a suitable inhomogeneous model for the 
regularity structure $\ST = (\CT, \CG)$, so we introduce the 
following truncation procedure, which amounts to simply removing the problematic 
symbols.

\begin{definition}\label{d:TruncSets}
Consider a set of {\it generators} $\gen{\CF} \subset \CF$ such that $\poly{\CF} \subset \gen{\CF}$
and such that $\gen{\CT} \eqdef \spanning\{\tau \in \gen{\CF}\,:\, |\tau| \le r\} \subset \CT$ is closed under the action of $\CG$. 
We then define the corresponding 
{\it generating regularity structure} $\gen{\ST} = (\gen{\CT}, \CG)$.

Moreover, we define $\HF$ as the subset of $\CF$ generated by $\gen{\CF}$ via  
the two operations \eqref{e:basicOps}, and we assume that $\gen{\CF}$ was chosen
in such a way that $\CU \subset \HF$, with $\CU$ as in the previous section. 
Finally, we define the {\it truncated regularity structure} $\hat{\ST} = (\HT, \CG)$
with $\HT \eqdef \spanning\{\tau \in \hat{\CF}\,:\, |\tau| \le r\} \subset \CT$.
\end{definition}

\begin{remark}
Note that $\hat{\ST}$ is indeed a regularity structure since
$\HT$ is automatically closed under $\CG$. This can easily be verified by induction
using the definition of $\CG$ given in \cite{Hai14}.

A set $\gen{\CF}$ with these properties always exists, because one can take either
$\gen{\CF} = \CF$ or $\gen\CF = \{\Xi\} \cup \poly\CF$. In both of these examples,
one simply has 
$\HF = \CF$, but in the case of \eqref{e:Phi}, it turns out to be convenient 
to make a choice for which this is not the case (see Section~\ref{s:DPhi} below).
\end{remark}


\subsection{A general fixed point map}
\label{ss:FixedPointMap}

We now reformulate \eqref{e:SPDE}, with the operator $A$ such that Assumption~\ref{a:Operator} is satisfied, using the regularity structure from the previous section, and show that the
corresponding fixed point problem admits local solutions.
For an initial condition $u_0$ in \eqref{e:SPDE} with ``sufficiently nice'' behavior at infinity, we can define the function $S_t u_0 : \R^d \to \R$, which has a singularity at $t=0$, where as before $S_t$ is the semigroup generated by $A$. In particular, we have a precise description of its singularity, the proof of which is provided in \cite[Lem.~7.5]{Hai14}:

\begin{lemma}\label{l:InitialData}
For some $\eta < 0$, let $u_0 \in \CC^\eta(\R^d)$  be periodic. 
Then, for every $\gamma > 0$
and every $T > 0$, the map $(t,x) \mapsto S_t u_0(x)$ can be lifted to 
$\CD^{\gamma, \eta}_T$ via its Taylor expansion. Furthermore, one has the bound
\begin{equ}[e:InitBound]
\VERT S u_0 \VERT_{\gamma, \eta; T} \lesssim \Vert u_0 \Vert_{\CC^\eta}\;.
\end{equ}
\end{lemma}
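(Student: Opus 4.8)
The plan is to reduce the statement to the corresponding result in \cite[Lem.~7.5]{Hai14} about the original (time-fixed) models and modelled distributions, using the correspondence between our inhomogeneous models and the models of \cite{Hai14} spelled out in Remark~\ref{r:OriginalModel}. The lift of $(t,x)\mapsto S_t u_0(x)$ is the natural one: since $S_t u_0$ is smooth away from $t=0$, we set $\bigl(Su_0\bigr)_t(x) \eqdef \sum_{\sabs{k}<\gamma}\frac{X^k}{k!}\,\partial^k\bigl(S u_0\bigr)(t,x)$, the truncated Taylor expansion in time-space around $(t,x)$, which is $\poly\CT$-valued and hence lies in $\CT_{<\gamma}$. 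With this definition the three components of the norm $\VERT Su_0\VERT_{\gamma,\eta;T}$ in \eqref{e:ModelledDistributionNormAbs}--\eqref{e:ModelledDistributionNorm} — the pointwise bound on $\Vert\bigl(Su_0\bigr)_t(x)\Vert_l$, the spatial increment bound involving $\Gamma^t_{xy}$, and the time increment bound involving $\Sigma^{ts}_x$ — are all controlled by Taylor's theorem with remainder, once we have the right pointwise estimates on the derivatives $\partial^k(Su_0)(t,x)$.

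The key analytic input is therefore a derivative estimate of Schauder type: for $u_0\in\CC^\eta(\R^d)$ periodic with $\eta<0$, one has
\begin{equ}
\bigl|\partial^k (S_t u_0)(x)\bigr| \lesssim \Vert u_0\Vert_{\CC^\eta}\,\onorm{t}^{\eta - \sabs{k}}\;,
\end{equ}
for all multiindices $k\in\N^{d+1}$ with $\sabs{k}\le r$, uniformly in $x$ and in $t\in(0,T]$. This is exactly the content of \cite[Lem.~7.5]{Hai14} (it uses Assumption~\ref{a:Operator}, i.e. the scaling relation for $G$ and hence for the heat semigroup $S$, together with the fact that testing $u_0$ against the rescaled kernel $\partial^k G_t$ picks up the factor $\onorm{t}^{\eta}$ from the $\CC^\eta$-norm and $\onorm{t}^{-\sabs{k}}$ from the scaling of the kernel and its derivatives). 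Since our notion of ``lift'' and the relevant norms on $\poly\CT$-valued functions are, on the polynomial sector, literally the same as in \cite{Hai14} — the action of $\Gamma^t_{xy}$ and $\Sigma^{ts}_x$ on $\poly\CT$ is the translation action with $h=(0,y-x)$ resp. $h=(t-s,0)$ by \eqref{e:PiDef}--\eqref{e:SigmaDef}, matching the polynomial model — the bounds transfer directly. Concretely: the first term of \eqref{e:ModelledDistributionNormAbs} is controlled by $\onorm{t}^{(l-\eta)\vee 0}\onorm{t}^{\eta-l}\lesssim 1$ (here one uses $\onorm{t}\le 1$); the spatial increment $\bigl(Su_0\bigr)_t(x)-\Gamma^t_{xy}\bigl(Su_0\bigr)_t(y)$ is a Taylor remainder of order $\gamma-l$ in the variable of size $|x-y|\le1$, and bounding the highest derivatives involved by $\onorm{t}^{\eta-\gamma}$ gives the required factor; the time increment in \eqref{e:ModelledDistributionNorm} is handled the same way, with the constraint $|t-s|\le\onorm{t,s}^{\s_0}$ ensuring that the intermediate times stay comparable to $t$ so that the singular factor $\onorm{t,s}^{\eta-\gamma}$ comes out.

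I expect the only real point requiring care — and hence the ``main obstacle'', though a mild one — is the bookkeeping of the negative powers of $\onorm{t}$ near $t=0$ in the time-increment term: one must check that splitting the Taylor remainder $\bigl(Su_0\bigr)_t(x)-\Sigma^{ts}_x\bigl(Su_0\bigr)_s(x)$ and using the derivative bound at an intermediate time $\theta\in[s,t]$ produces $\onorm{\theta}^{\eta-\gamma}$, which is then bounded by $\onorm{t,s}^{\eta-\gamma}$ precisely because the admissibility restriction $|t-s|\le\onorm{t,s}^{\s_0}$ forces $\onorm{\theta}\gtrsim\onorm{t,s}$. Everything else is a routine application of Taylor's theorem together with the scaling estimate from \cite[Lem.~7.5]{Hai14}, and in fact one could alternatively deduce the whole statement by observing that $(t,x)\mapsto\bigl(Su_0\bigr)_t(x)$ corresponds under \eqref{e:ModelTilde} to the lift of $Su_0$ used in \cite{Hai14} and that our $\CD^{\gamma,\eta}_T$-norm is dominated by the corresponding $\CD^{\gamma,\eta}$-norm there, but carrying out the estimates directly is cleaner given that only the polynomial sector is involved.
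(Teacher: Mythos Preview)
Your proposal is correct and takes essentially the same approach as the paper: both simply defer to \cite[Lem.~7.5]{Hai14}, with your version spelling out in more detail how the derivative estimate there feeds into each of the three pieces of the $\CD^{\gamma,\eta}_T$-norm via Taylor's theorem on the polynomial sector. The paper gives no independent argument beyond the citation.
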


Before reformulating \eqref{e:SPDE}, we make some assumptions on its nonlinear term $F$. 
For a regularity structure $\ST = (\CT, \CG)$, let $\hat{\ST} = (\HT, \CG)$ be as in Definition~\ref{d:TruncSets} for a suitable set $\gen{\CF}$. In what follows, we consider models on $\hat{\ST}$ and denote by $\CD^{\gamma, \eta}_{T}$ the respective spaces of modelled distributions. 
We also assume that we are given a function $F\colon \CT_\CU \to \CT$ as above (for example
\eqref{e:ourF}),
and we make the following assumption on $F$.

For some fixed $\bar \gamma > 0$, $\eta \in \R$ we choose, for 
any model $Z$ on $\hat \ST$,
elements $F_0(Z), I_0(Z) \in \CD^{\bar \gamma, \eta}_T(Z)$ such that, for
every $z$, $I_0(z) \in \HT$,
$I_0(z) - \CI F_0(z) \in \poly{\CT}$ and such that, setting
\begin{equ}[e:NonlinAs]
\hat{F}(z, \tau) \eqdef F(z, \tau) - F_0(z) \;,
\end{equ}
$\hat F(z,\cdot)$ maps $\{I_0(z) + \tau : \tau \in \HT \cap \CT_\CU\}$ into $\HT$.
Here we suppressed the argument $Z$ for conciseness by writing 
for example $I_0(z)$ instead of $I_0(Z)(z)$.

\begin{remark}
Since it is the \textit{same} structure group $\CG$ acting on both $\CT$
and $\HT$, the condition $F_0 \in \CD^{\bar \gamma, \eta}_T$ makes sense for
a given model on $\hat\ST$, even
though $F_0(z)$ takes values in all of $\CT$ rather than just $\HT$.
\end{remark}

Given such a choice of $I_0$ and $F_0$ and given $H : \R^{d+1} \to \HT \cap \CT_\CU$, we denote by 
$\hat F(H)$ the function 
\begin{equ}[e:NonlinearTerm]
\bigl(\hat F(H)\bigr)_t(x) \eqdef \hat F\bigl((t, x), H_t(x)\bigr)\;. 
\end{equ}
With this notation, we replace the problem \eqref{e:AbstractEquationBasic} by the problem
\begin{equ}[e:AbstractEquation]
U = \CP \hat F(U) + Su_0 + I_0\;.
\end{equ}
This shows that one should really think of $I_0$ as being given by 
$I_0 = \CP F_0$ since, at least formally, this would then turn \eqref{e:AbstractEquation}
into \eqref{e:AbstractEquationBasic}. 
The advantage of \eqref{e:AbstractEquation} is that it makes sense for any model on
$\hat\ST$ and does not require a model on all of $\ST$. 

We then assume that $\hat F$, $I_0$ and $F_0$ satisfy the following conditions.

\begin{assumption}\label{a:Nonlin}
In the above context, we assume that there exists $\gamma \ge \bar \gamma$ such
that, for every $B > 0$ there exists a constant $C > 0$ such that the bounds
\begin{equs}\label{e:Lipschitz}
 \VERT \hat F(H); \hat F(\bar H) \VERT_{\bar{\gamma}, \bar{\eta}; T} &\leq C \left( \VERT H; \bar H \VERT_{\gamma, \eta; T} + \VERT Z; \bar{Z} \VERT_{\gamma; T} \right),\\
 \VERT I_0(Z); I_0(\bar Z) \VERT_{\bar{\gamma}, \bar{\eta}; T} \leq C \VERT Z; \bar{Z}& \VERT_{\gamma; T}\;,\quad 
 \VERT F_0(Z); F_0(\bar Z) \VERT_{\bar{\gamma}, \bar{\eta}; T} \leq C \VERT Z; \bar{Z} \VERT_{\gamma; T}\;,
\end{equs}
hold for any two models $Z$, $\bar{Z}$ with $\VERT Z \VERT_{\gamma; T} + \VERT \bar{Z} \VERT_{\gamma; T} \leq B$, and for $H \in \CD^{\gamma, \eta}_T(Z)$, $\bar H \in \CD^{\gamma, \eta}_T(\bar{Z})$ such that $\VERT H \VERT_{\gamma, \eta; T} + \VERT \bar H \VERT_{\gamma, \eta; T} \leq B$.
\end{assumption}

\begin{remark}
The bounds in Assumption~\eqref{a:Nonlin} can usually be easily checked for a polynomial nonlinearity $F$ in \eqref{e:MildGeneral}. See Lemma~\ref{l:PhiLipschiz} below for a respective prove in the case when $F$ is give by \eqref{e:ourF}.
\end{remark}

The following theorem provides the existence and uniqueness results of a local solution to this equation.

\begin{theorem}\label{t:FixedMap}
In the described context, let $\alpha \eqdef \min \hat{\CA}$, and an abstract integration map $\CI$ be of order $\beta > -\alpha$. Furthermore, let the values $\gamma \geq \bar{\gamma} > 0$ and $\eta, \bar{\eta} \in \R$ from Assumption~\ref{a:Nonlin} satisfy $\eta < \bar{\eta} \wedge \alpha + \beta$, $\gamma < \bar{\gamma} + \beta$ and $\bar{\eta} > -\beta$.
 
Then, for every model $Z$ as above, and for every periodic $u_0 \in \CC^\eta(\R^d)$, there exists a time $T_\star \in (0, +\infty]$ such that, for every $T < T_\star$ the equation \eqref{e:AbstractEquation} admits a unique solution $U \in \CD^{\gamma, \eta}_T(Z)$.
Furthermore, if $T_\star < \infty$, then
\begin{equ}
\lim_{T \to T_\star} \Vert \CR_T \CS_{T} (u_0, Z)_T \Vert_{\CC^\eta} = \infty\;,
\end{equ}
where $\mathcal{S}_T : (u_0, Z) \mapsto U$ is the solution map. Finally, for every $T < T_\star$, the solution map $\mathcal{S}_T$ is jointly Lipschitz continuous in a neighbourhood around $(u_0, Z)$ in the sense that, for any $B > 0$ there is $C > 0$ such that, if $\bar{U} = \mathcal{S}_T(\bar{u}_0, \bar{Z})$ for some initial data $(\bar{u}_0, \bar{Z})$, then one has the bound $\VERT U; \bar{U} \VERT_{\gamma, \eta; T} \leq C \delta$, provided $\Vert u_0 - \bar{u}_0 \Vert_{\CC^{\eta}} + \VERT Z; \bar{Z} \VERT_{\gamma; T} \leq \delta$, for any $\delta \in (0,B]$.
\end{theorem}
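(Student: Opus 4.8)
The plan is to prove Theorem~\ref{t:FixedMap} by setting up \eqref{e:AbstractEquation} as a Banach fixed point problem in $\CD^{\gamma,\eta}_T(Z)$ on a small enough time interval, following the strategy of \cite[Thm.~7.8]{Hai14} but using our Theorems~\ref{t:Reconstruction} and~\ref{t:Integration} in place of their continuous counterparts. Concretely, I would introduce the map
\begin{equ}
\CM_{u_0, Z}(U) \eqdef \CP \hat F(U) + S u_0 + I_0\;,
\end{equ}
where $\CP = \CK_{\bar\gamma} + R_\gamma \CR$, and show that for $T$ small it maps a ball $\{U \in \CD^{\gamma,\eta}_T(Z) : U_0 = I_0(0) + S_0 u_0,\ \VERT U \VERT_{\gamma,\eta;T} \le B\}$ (with the appropriate ``initial'' constraint on the $\poly\CT$-component coming from $Su_0 + I_0$) into itself and is a contraction there. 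The output of $\hat F(U)$ lies in $\HT$ by the standing assumption on $\hat F$, and since $\hat F(U) \in \CD^{\bar\gamma,\bar\eta}_T$ (by Assumption~\ref{a:Nonlin} applied with a fixed reference model, together with the composition/multiplication results of Remark~\ref{r:DistrMult}), Theorem~\ref{t:Integration} gives $\CK_{\bar\gamma}\hat F(U) \in \CD^{\bar\gamma+\beta,(\bar\eta\wedge\alpha)+\beta}_T$, while $R_\gamma \CR \hat F(U)$ is smooth and hence in $\CD^{\gamma,\eta}_T$ for any $\gamma$. The arithmetic conditions $\eta < (\bar\eta\wedge\alpha)+\beta$ and $\gamma < \bar\gamma+\beta$ are exactly what is needed so that $\CP\hat F(U) \in \CD^{\gamma,\eta}_T$, and $\bar\eta > -\beta$ (equivalently $\gamma - \eta < \s_0 = \beta$ after the previous inequalities, matching the hypothesis of Theorem~\ref{t:Integration}) ensures the time-integrals defining $\CK$ and $R_\gamma$ converge; Lemma~\ref{l:InitialData} handles $Su_0 \in \CD^{\gamma,\eta}_T$ and the elements $I_0, F_0$ are in the relevant spaces by hypothesis.

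The contraction itself comes from the by-now standard observation that the relevant operator norms of $\CK_{\bar\gamma}$ and $R_\gamma\CR$ acquire a positive power of $T$ when one lowers the target regularity slightly, i.e.\ one gains a factor $T^\theta$ for some $\theta > 0$ because $\bar\gamma + \beta > \gamma$ and $(\bar\eta\wedge\alpha)+\beta > \eta$ strictly. Combined with the Lipschitz bound \eqref{e:Lipschitz} on $\hat F$, this gives $\VERT \CM_{u_0,Z}(U) - \CM_{u_0,Z}(\bar U)\VERT_{\gamma,\eta;T} \le C T^\theta \VERT U - \bar U\VERT_{\gamma,\eta;T}$ on the ball, so for $T$ small enough there is a unique fixed point. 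Uniqueness on all of $\CD^{\gamma,\eta}_T(Z)$ (not just the ball) and the gluing of local solutions into a maximal one on $[0,T_\star)$ is done exactly as in \cite[Sec.~7]{Hai14}: one iterates, starting each step from the reconstructed terminal data of the previous step, and one checks via Theorem~\ref{t:Reconstruction} (specifically \eqref{e:Reconstruction} and \eqref{e:ReconstructBound}) that the $\CC^\eta$-norm of $\CR_T U_T$ controls how long the next step can run; if $T_\star < \infty$ this norm must blow up, which is the stated explosion criterion. For the blow-up statement one also needs that $\CR_T \CS_T(u_0,Z)_T$ is the natural ``value at time $T$'' of the solution, which follows from the identity \eqref{e:IntegralIdentity} of Theorem~\ref{t:Integration} and the convention $\CR_t = 0$ for $t \le 0$.

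The joint Lipschitz dependence on $(u_0, Z)$ is proved in the same loop: one runs the fixed point argument for $(U,Z)$ and $(\bar U,\bar Z)$ simultaneously and estimates $\VERT U; \bar U\VERT_{\gamma,\eta;T}$ using the ``distance'' versions of all the ingredients — the second half of Theorem~\ref{t:Reconstruction} (\eqref{e:ReconstructTime}), the distance bound \eqref{e:IntegrationDistance} of Theorem~\ref{t:Integration}, the Lipschitz estimates \eqref{e:Lipschitz} for $\hat F, I_0, F_0$, and the $\CC^\eta$-Lipschitz continuity of $u_0 \mapsto Su_0$ from Lemma~\ref{l:InitialData} — again absorbing a factor $CT^\theta$ to close the estimate, so that $\VERT U;\bar U\VERT_{\gamma,\eta;T} \le C(\Vert u_0 - \bar u_0\Vert_{\CC^\eta} + \VERT Z;\bar Z\VERT_{\gamma;T})$ on the common interval of existence, with $C$ uniform over the ball of radius $B$. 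I expect the only genuinely delicate point, beyond bookkeeping, to be verifying that the exponents line up so that \emph{all} of $\CK_{\bar\gamma}\hat F(U)$, $R_\gamma \CR\hat F(U)$, $Su_0$ and $I_0$ simultaneously land in the \emph{same} space $\CD^{\gamma,\eta}_T$ with a strictly improving norm in at least one slot, so that the contraction factor $T^\theta$ is available — this is precisely what the chain of inequalities $\beta > -\alpha$, $\eta < \bar\eta\wedge\alpha+\beta$, $\gamma < \bar\gamma+\beta$, $\bar\eta > -\beta$ encodes, and checking it is the heart of the matter; everything else is a transcription of \cite[Sec.~7]{Hai14} to the inhomogeneous setting.
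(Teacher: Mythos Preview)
Your proposal is correct and follows essentially the same approach as the paper, which simply cites \cite[Thm.~7.8]{Hai14} combined with \cite[Prop.~7.11]{Hai14} and notes that the inhomogeneous setting causes no difficulty in evaluating $\CR_t U_t$. Your sketch is in fact a faithful unpacking of precisely that citation, using the inhomogeneous Theorems~\ref{t:Reconstruction} and~\ref{t:Integration} and Lemma~\ref{l:InitialData} in place of their counterparts in \cite{Hai14}.
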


\begin{proof}
See \cite[Thm.~7.8]{Hai14}, combined with \cite[Prop.~7.11]{Hai14}.
Note that since we consider inhomogeneous models, we have no problems
in evaluating $\CR_t U_t$.
\end{proof}

\begin{definition}\label{d:SPDESolution}
In the setting of Theorem~\ref{t:FixedMap}, let $U$ be the unique solution to the equation~\eqref{e:AbstractEquation} on $[0, T_\star)$. Then for $t < T_\star$ we define the solution to \eqref{e:SPDE} by
\begin{equ}[e:SPDESolution]
u_t(x) \eqdef \bigl(\CR_t U_t\bigr)(x)\;.
\end{equ}
\end{definition}

\begin{remark}\label{r:ClassicalSolution}
If the noise $\xi$ in \eqref{e:SPDE} is smooth, so that this equation can be solved in the classical sense, one can see that the reconstruction operator satisfies 
\begin{equ}
\bigl(\CR_t U_t\bigr)(x) = \bigl(\Pi_x^t U_t(x)\bigr)(x)\;, 
\end{equ}
and the solution \eqref{e:SPDESolution} coincides with the classical solution.
\end{remark}


\section{Discrete models and modelled distributions}
\label{s:DModels}

In order to be able to consider discretisations of the equations whose solutions were provided in Section~\ref{s:PDEs}, we introduce the discrete counterparts of inhomogeneous models and modelled distributions. In this section we use the following notation: for $N \in \N$, we denote by $\eps \eqdef 2^{-N}$ the mesh size of the grid $\Lambda_\eps^d \eqdef \bigl(\eps \Z\bigr)^d$, and we fix some scaling $\s=(\s_0, 1, \ldots, 1)$ of $\R^{d+1}$ with an integer $\s_0 > 0$.


\subsection{Definitions and the reconstruction theorem}

Now we define discrete analogues of the objects from Sections~\ref{ss:Models} and \ref{ss:ModelledDistr}. 

\begin{definition}\label{d:DModel}
Given a regularity structure $\ST$ and $\eps>0$, a {\it discrete model} $(\Pi^\eps, \Gamma^\eps, \Sigma^\eps)$ consists of the collections of maps 
\begin{equ}
\Pi_x^{\eps, t}: \CT \to \R^{\Lambda_\eps^d}\;, \qquad \Gamma^{\eps, t} : \Lambda^d_\eps \times \Lambda^d_\eps \to \CG\;, \qquad \Sigma^{\eps}_x : \R \times \R \to \CG\;,
\end{equ}
parametrised by $t \in \R$ and $x \in \Lambda_\eps^d$, which have all the algebraic properties of their continuous counterparts in Definition~\ref{d:Model}, with the spatial variables restricted to the grid. Additionally, we require $\bigl(\Pi^{\eps, t}_x \tau\bigr) (x) = 0$, for all $\tau \in \CT_l$ with $l > 0$, and all $x \in \Lambda^d_\eps$ and $t \in \R$.
\end{definition}

We define the quantities $\Vert \Pi^\eps \Vert^{(\eps)}_{\gamma; T}$ and $\Vert \Gamma^\eps \Vert_{\gamma;T}^{(\eps)}$ to be the smallest constants $C$ such that the bounds \eqref{e:PiGammaBound} hold uniformly in $x, y \in \Lambda^d_\eps$, $t \in \R$, $\lambda \in [\eps,1]$ and with the discrete pairing \eqref{e:DPairing} in place of the standard one. The quantity $\Vert \Sigma^\eps \Vert_{\gamma;T}^{(\eps)}$ is defined as the smallest constant $C$ such that the bounds
\begin{equ}[e:DSigmaBound]
 \Vert \Sigma^{\eps, s t}_{x} \tau \Vert_{m} \leq C \Vert \tau \Vert \bigl(|t - s|^{1/\s_0} \vee \eps\bigr)^{l - m}\;,
\end{equ}
hold uniformly in $x \in \Lambda_\eps^d$ and the other parameters as in \eqref{e:SigmaBound}.

We measure the time regularity of $\Pi^\eps$ as in \eqref{e:PiTimeBound}, by substituting the continuous objects by their discrete analogues, and by using $|t - s|^{1/\s_0} \vee \eps$ instead of $|t - s|^{1/\s_0}$ on the right-hand side. All the other quantities $\Vert \cdot \Vert^{(\eps)}$, $\VERT \cdot \VERT^{(\eps)}$, etc.\ are defined by analogy with Remark~\ref{r:ModelNorm}.

\begin{remark}
The fact that $\bigl(\Pi^{\eps, t}_x \tau\bigr) (x) = 0$ if $|\tau| > 0$ does not follow  
automatically from the discrete analogue of \eqref{e:PiGammaBound} since these
are only assumed to hold for test functions at scale $\lambda \ge \eps$. We use this 
property in the proof of \eqref{e:DIntegralIdentity}.
\end{remark}

\begin{remark}
The weakening of the continuity property of $\Sigma^{\eps,st}_x$ given by \eqref{e:DSigmaBound} 
will be used in the analysis of the ``discrete abstract integration'' in Section~\ref{ss:DConvols}. 
It allows us to deal with the fact that the discrete heat kernel is discontinuous at $t=0$,
so we simply use uniform bounds on very small time scales 
(see \cite[Lem.~6.7]{HMW12} for a simple explanation in a related context).
\end{remark}

For $\gamma, \eta \in \R$ and $T > 0$, for a discrete model $Z^\eps=(\Pi^\eps, \Gamma^\eps, \Sigma^\eps)$ on a regularity structure $\ST = (\CT, \CG)$, and for a function $H^\eps : (0, T] \times \Lambda_\eps^d \to \CT_{<\gamma}$, we define
\begin{equs}[e:DModelledDistributionNormAbs]
 \Vert H^\eps \Vert^{(\eps)}_{\gamma, \eta; T} \eqdef \sup_{t \in (0,T]} &\sup_{x \in \Lambda_\eps^d} \sup_{l < \gamma} \enorm{t}^{(l - \eta) \vee 0} \Vert H^\eps_t(x) \Vert_l\\
 &+ \sup_{t \in (0,T]} \sup_{\substack{x \neq y \in \Lambda^d_\eps \\ | x - y | \leq 1}} \sup_{l < \gamma} \frac{\Vert H^\eps_t(x) - \Gamma^{\eps, t}_{x y} H^\eps_t(y) \Vert_l}{\enorm{t}^{\eta - \gamma} | x - y |^{\gamma - l}}\;,
\end{equs}
where $l \in \CA$. Furthermore, we define the norm
\begin{equ}[e:DModelledDistributionNorm]
\VERT H^\eps \VERT^{(\eps)}_{\gamma, \eta; T} \eqdef \Vert H^\eps \Vert^{(\eps)}_{\gamma, \eta; T} + \sup_{\substack{s \neq t \in (0,T] \\ | t - s | \leq \onorm{t, s}^{\s_0}}} \sup_{x \in \Lambda_\eps^d} \sup_{l < \gamma} \frac{\Vert H^\eps_t(x) - \Sigma_x^{\eps, t s} H^\eps_{s}(x) \Vert_l}{\enorm{t, s}^{\eta - \gamma} \bigl(|t - s|^{1/s_0} \vee \eps\bigr)^{\gamma - l}}\;,
\end{equ}
where the quantities $\enorm{t}$ and $\enorm{t, s}$ are defined below \eqref{e:DHolderDist}. We will call such functions $H^\eps$ {\it discrete modelled distributions}.

\begin{remark}\label{r:DDistrMult}
It is easy to see that the properties of multiplication of modeled distributions from \cite[Sec.~6.2]{Hai14} can be translated mutatis mutandis to the discrete case.
\end{remark}

In contrast to the continuous case, a reconstruction operator of discrete modeled distributions can be defined in a simple way.

\begin{definition}\label{d:DReconstruct}
Given a discrete model $Z^\eps = (\Pi^\eps, \Gamma^\eps, \Sigma^\eps)$ and a discrete modelled distribution $H^\eps$ we define the {\it discrete reconstruction map} $\CR^{\eps}$ by $\CR^{\eps}_t = 0$ for $t \leq 0$, and
\begin{equ}[e:DReconstructDef]
\big(\CR^{\eps}_t H^\eps_t\big)(x) \eqdef \big(\Pi_x^{\eps, t} H^\eps_t(x) \big)(x)\;, \qquad (t, x) \in (0, T] \times \Lambda_\eps^d\;.
\end{equ}
\end{definition}

Recalling the definition of the norms from \eqref{e:DHolderDist}, the
following result is a discrete analogue of Theorem~\ref{t:Reconstruction}.

\begin{theorem}
\label{t:DReconstruct}
Let $\ST$ be a regularity structure with $\alpha \eqdef \min \CA < 0$ and $Z^\eps = (\Pi^\eps, \Gamma^\eps, \Sigma^\eps)$ be a discrete model. Then the bound
\begin{equ}
|\langle \CR^\eps_t H^\eps_t - \Pi^{\eps, t}_x H^\eps_t(x), \rho_x^\lambda \rangle_\eps| \lesssim \lambda^\gamma \enorm{t}^{\eta - \gamma} \Vert H^\eps \Vert^{(\eps)}_{\gamma, \eta; T} \Vert \Pi^\eps \Vert^{(\eps)}_{\gamma; T}\;,
\end{equ}
holds uniformly in $\eps$ (see Remark~\ref{r:Uniformity} below) for all discrete modelled distributions $H^\eps$, all $t \in (0,T]$, $x \in \Lambda^d_\eps$, $\rho \in \CB^r_0(\R^d)$ with $r > -\lfloor \alpha \rfloor$, all $\lambda \in [\eps, 1]$.

Let furthermore $\bar{Z}^\eps = (\bar{\Pi}^\eps, \bar{\Gamma}^\eps, \bar{\Sigma}^\eps)$ be another model for $\ST$ with the reconstruction operator $\bar{\CR}^\eps_t$, and let the maps $\Pi^\eps$ and $\bar \Pi^\eps$ have time regularities $\delta > 0$. Then, for any two discrete modelled distributions $H^\eps$ and $\bar{H}^\eps$, the maps $t \mapsto \CR^\eps_t H^\eps_t$ and $t \mapsto \bar \CR^\eps_t \bar H^\eps_t$ satisfy
\minilab{e:DReconstructBounds}
\begin{equs}
\Vert \CR^\eps H^\eps \Vert^{(\eps)}_{\CC^{\tilde{\delta}, \alpha}_{\eta - \gamma, T}} &\lesssim \Vert \Pi^\eps \Vert^{(\eps)}_{\delta, \gamma; T} \bigl(1 + \Vert \Sigma^\eps \Vert^{(\eps)}_{\gamma; T} \bigr) \VERT H^\eps \VERT^{(\eps)}_{\gamma, \eta; T}\;,\label{e:DReconstructSpace}\\
\Vert \CR^\eps H^\eps - \bar \CR^\eps \bar H^\eps \Vert^{(\eps)}_{\CC^{\tilde{\delta}, \alpha}_{\eta - \gamma, T}} &\lesssim \VERT H^\eps; \bar H^{\eps} \VERT^{(\eps)}_{\gamma, \eta; T} + \VERT Z^\eps; \bar Z^\eps \VERT^{(\eps)}_{\delta, \gamma; T}\;,\label{e:DReconstructTime}
\end{equs}
for any $\tilde{\delta}$ as in Theorem~\ref{t:Reconstruction}. Here, the norms of $H^\eps$ and $\bar H^\eps$ are defined via the models $Z^\eps$ and $\bar Z^\eps$ respectively, and the proportionality constants depend on $\eps$ only via $\VERT H^\eps \VERT^{(\eps)}_{\gamma, \eta; T}$, $\VERT \bar H^\eps \VERT^{(\eps)}_{\gamma, \eta; T}$, $\VERT Z^\eps \VERT^{(\eps)}_{\delta, \gamma; T}$ and $\VERT \bar Z^\eps \VERT^{(\eps)}_{\delta, \gamma; T}$.
\end{theorem}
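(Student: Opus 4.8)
The plan is to mimic the proof of Theorem~\ref{t:Reconstruction} as closely as possible, but exploiting the simplification that the discrete reconstruction operator is given by the explicit formula \eqref{e:DReconstructDef} rather than being characterised implicitly. First I would establish the first bound. Fix $t \in (0,T]$, $x \in \Lambda_\eps^d$, $\lambda \in [\eps,1]$ and $\rho \in \CB^r_0$. Since $\big(\CR^\eps_t H^\eps_t\big)(y) = \big(\Pi_y^{\eps,t} H^\eps_t(y)\big)(y)$, I would write, using $\Pi^{\eps,t}_y = \Pi^{\eps,t}_x \Gamma^{\eps,t}_{xy}$,
\begin{equ}
\langle \CR^\eps_t H^\eps_t - \Pi^{\eps,t}_x H^\eps_t(x), \rho_x^\lambda \rangle_\eps
= \eps^d \sum_{y \in \Lambda_\eps^d} \rho_x^\lambda(y)\, \big(\Pi^{\eps,t}_x \big(\Gamma^{\eps,t}_{xy} H^\eps_t(y) - H^\eps_t(x)\big)\big)(y)\;.
\end{equ}
For each $y$ in the support of $\rho_x^\lambda$ one has $|x-y| \lesssim \lambda$, and the summand is controlled by first applying the bound on $\Pi^\eps$ at scale $\lambda$ (legitimate since $\lambda \ge \eps$) against the ``Dirac-type'' evaluation, using the discrete analogue of \cite[Prop.~3.25]{Hai14} or, more directly, by noting that $\Pi^{\eps,t}_x \tau$ evaluated pointwise can be estimated via a multiresolution argument exactly as in \cite[Sec.~3.3]{Hai14}; the key input is the second term in $\Vert H^\eps \Vert^{(\eps)}_{\gamma,\eta;T}$ giving $\Vert \Gamma^{\eps,t}_{xy} H^\eps_t(y) - H^\eps_t(x)\Vert_l \lesssim \enorm{t}^{\eta-\gamma} |x-y|^{\gamma-l}$. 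Summing over the $\lesssim (\lambda/\eps)^d$ grid points in the support and using $\eps^d \sum \lesssim \lambda^d$ yields the claimed $\lambda^\gamma \enorm{t}^{\eta-\gamma}$ bound. Uniformity in $\eps$ is automatic since every constant comes from the structure group bounds and the kernel scaling, none of which see $\eps$ except through the stated norms.

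Next I would prove the time-regularity bound \eqref{e:DReconstructSpace}. The space part of the $\CC^{\tilde\delta,\alpha}_{\eta-\gamma}$-norm — i.e. the estimate on $|\langle \CR^\eps_t H^\eps_t, \rho_x^\lambda\rangle_\eps|$ for fixed $t$ — follows from the first bound of the theorem combined with the bound on $\langle \Pi^{\eps,t}_x H^\eps_t(x), \rho_x^\lambda\rangle_\eps$, exactly as in \eqref{e:ReconstructMapBound}. For the time-increment part I would set $\zeta_x^{st} \eqdef \Pi^{\eps,t}_x H^\eps_t(x) - \Pi^{\eps,s}_x H^\eps_s(x)$ and reprove the discrete analogue of \eqref{e:RecosntructTimeProof}, namely
\begin{equ}
|\langle \zeta_x^{st} - \zeta_y^{st}, \rho_x^\lambda\rangle_\eps| \lesssim \big(|t-s|^{1/\s_0}\vee\eps\big)^{\tilde\delta} \enorm{s,t}^{\eta-\gamma} |x-y|^{\gamma-\tilde\delta-\alpha}\lambda^\alpha\;,
\end{equ}
by splitting into the two regimes $|x-y| \le |t-s|^{1/\s_0}\vee\eps$ and $|x-y| > |t-s|^{1/\s_0}\vee\eps$ and using, respectively, the decompositions \eqref{e:RecosntructTimeProofFirstCase} and \eqref{e:RecosntructTimeProofSecondCase} verbatim, with the continuous model bounds replaced by their discrete counterparts \eqref{e:DSigmaBound} and the discrete time-regularity bound for $\Pi^\eps$, and with every occurrence of $|t-s|^{1/\s_0}$ replaced by $|t-s|^{1/\s_0}\vee\eps$. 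The one genuinely new ingredient is the role of the weakened continuity of $\Sigma^\eps$: in the estimate playing the role of \eqref{e:RecosntructTimeProofSecondCaseSigma}, the bound $\Vert(1-\Sigma^{\eps,st}_x)\cdots\Vert_\zeta \lesssim \sum_{\zeta<m} (|t-s|^{1/\s_0}\vee\eps)^{m-\zeta}\Vert\cdots\Vert_m$ degrades to order $\eps$ on very small time scales, which is exactly compatible with having $|t-s|^{1/\s_0}\vee\eps$ in the denominator of \eqref{e:DHolderDist} and in \eqref{e:DModelledDistributionNorm}. Then, as in \eqref{e:ReconstructFullTime}–\eqref{e:ReconstructTimeReg}, a multiresolution summation (the discrete $\ell^\infty$ version of \cite[Prop.~3.25]{Hai14}) upgrades the two-point bound to $|\langle \CR^\eps_t H^\eps_t - \CR^\eps_s H^\eps_s, \rho_x^\lambda\rangle_\eps| \lesssim (|t-s|^{1/\s_0}\vee\eps)^{\tilde\delta}\lambda^{\alpha-\tilde\delta}\enorm{s,t}^{\eta-\gamma}$, which is precisely \eqref{e:DReconstructSpace}.

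Finally, \eqref{e:DReconstructTime} is obtained by the same telescoping-of-differences device used for \eqref{e:ReconstructTime}: one runs the whole argument above on $\CR^\eps_t H^\eps_t - \bar\CR^\eps_t \bar H^\eps_t$, everywhere inserting a $\pm$ of the mixed term and using bilinearity of the various expressions in the pair (model, modelled distribution), so that each step produces a factor $\VERT H^\eps;\bar H^\eps\VERT^{(\eps)}_{\gamma,\eta;T} + \VERT Z^\eps;\bar Z^\eps\VERT^{(\eps)}_{\delta,\gamma;T}$ with a proportionality constant depending polynomially on the individual norms of $H^\eps,\bar H^\eps,Z^\eps,\bar Z^\eps$. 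I expect the main obstacle to be purely bookkeeping: verifying that the replacement $|t-s|^{1/\s_0}\rightsquigarrow |t-s|^{1/\s_0}\vee\eps$ propagates consistently through every line of the continuous argument and that the discrete multiresolution summation lemma holds uniformly in $\eps$ down to scale $\eps$ (and that nothing is needed below scale $\eps$, where the discrete pairing is trivial). No step requires a genuinely new idea beyond the observations already recorded in the remarks following Definition~\ref{d:DModel}.
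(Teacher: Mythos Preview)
Your overall strategy for the time-regularity bounds \eqref{e:DReconstructSpace} and \eqref{e:DReconstructTime} is correct and matches the paper's: once the first bound is established, the rest follows by the same two-regime splitting and telescoping as in the continuous case, with $|t-s|^{1/\s_0}$ replaced by $|t-s|^{1/\s_0}\vee\eps$ throughout. The paper itself simply asserts that these bounds ``can be shown similarly to \eqref{e:ReconstructBound} and \eqref{e:ReconstructTime}''.

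However, your direct argument for the first bound has a genuine gap. Writing the difference as $\eps^d \sum_y \rho_x^\lambda(y)\,\bigl(\Pi^{\eps,t}_y(H^\eps_t(y) - \Gamma^{\eps,t}_{yx}H^\eps_t(x))\bigr)(y)$ is correct, but the pointwise evaluation $(\Pi^{\eps,t}_y\sigma)(y)$ for a component of $\sigma$ of homogeneity $m < 0$ is only controlled, via the model bound at scale $\eps$, by $\eps^m\|\sigma\|_m$. Combining this with $\|H^\eps_t(y) - \Gamma^{\eps,t}_{yx}H^\eps_t(x)\|_m \lesssim |x-y|^{\gamma-m}\enorm{t}^{\eta-\gamma}$ and summing over $y$ gives $\eps^\alpha \lambda^{\gamma-\alpha}\enorm{t}^{\eta-\gamma}$, not $\lambda^\gamma\enorm{t}^{\eta-\gamma}$; these differ by the potentially large factor $(\lambda/\eps)^{-\alpha}$. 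The problem is precisely that pointwise evaluation lives at scale $\eps$, and the direct sum provides no mechanism to exploit cancellations between scales $\eps$ and $\lambda$.

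The paper's proof handles this by a genuine discrete multiresolution decomposition: it constructs (in Section~\ref{ss:DMultiresolutionAnalysis}) functions $\varphi^{N,n}_x \eqdef 2^{Nd/2}\langle\varphi^N_\cdot,\varphi^n_x\rangle$ that interpolate between the continuous scaling function at scale $2^{-n}$ and the Kronecker delta at scale $\eps = 2^{-N}$, satisfy the same refinement relations as the continuous wavelets, and form an orthonormal basis of $\ell^2(\Lambda_\eps^d)$. One then telescopes $\CR^\eps_t H^\eps_t - \Pi^{\eps,t}_x H^\eps_t(x) = \CR^{\eps,N}_t H^\eps_t - \projw_{\eps,N}(\zeta_x^{\eps,t})$ over scales $n$ from $n_0 \sim -\log_2\lambda$ up to $N$, bounding the scale-$n$ increment by $2^{-\gamma n}$ via the two-point estimate on $\zeta^{\eps,t}_{xy}$. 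Summing over $n$ produces $\lambda^\gamma$. This is the discrete analogue of \cite[Prop.~3.25]{Hai14} that you invoke as a fallback, but the construction of the discrete basis and the verification that the telescope terminates at scale $N$ and is uniform in $\eps$ is the substantive content of the proof rather than bookkeeping.
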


\begin{remark}\label{r:Uniformity}
 In the statement of Theorem~\ref{t:DReconstruct} and the following results we actually consider a sequence of discrete models and modeled distributions parametrised by $\eps = 2^{-N}$ with $N \in \N$. By ``uniformity in $\eps$'' we then mean that the estimates hold for all values of $\eps$ with a proportionality constant independent of $\eps$.
\end{remark}

\begin{remark}
 To compare a discrete model $Z^\eps = (\Pi^\eps, \Gamma^\eps, \Sigma^\eps)$ to a continuous model $Z = (\Pi, \Gamma, \Sigma)$, we can define
 \begin{equs}
\Vert \Pi&; \Pi^\eps \Vert^{(\eps)}_{\delta, \gamma; T} \eqdef \sup_{\varphi, x, \lambda, l, \tau} \sup_{t \in [-T, T]} \lambda^{-l} | \langle \Pi^t_{x} \tau, \varphi_{x}^\lambda \rangle - \langle \Pi^{\eps, t}_{x} \tau, \varphi_{x}^\lambda \rangle_\eps|\\
&+ \sup_{\varphi, x, \lambda, l, \tau} \sup_{\substack{s \neq t \in [-T, T] \\ |t-s| \leq 1}} \lambda^{-l + \delta} \frac{| \langle \bigl(\Pi^t_{x} - \Pi^s_{x}\bigr) \tau, \varphi_{x}^\lambda \rangle - \langle \bigl(\Pi^{\eps, t}_{x} - \Pi^{\eps, s}_{x}\bigr) \tau, \varphi_{x}^\lambda \rangle_\eps|}{\bigl(|t-s|^{1/\s_0} \vee \eps\bigr)^{\delta}}\;,
\end{equs}
where the supremum is taken over $\varphi \in \CB^r_0$, $x \in \Lambda_\eps^d$, $\lambda \in [\eps, 1]$, $l < \gamma$ and $\tau \in \CT_l$ with $\Vert \tau \Vert = 1$. In order to compare discrete and continuous modelled distributions, we use the quantities as in \eqref{ModelledNorms}, but with the respective modifications as in \eqref{e:DModelledDistributionNorm}.
 
 Then one can show similarly to \eqref{e:ReconstructTime} that for $H \in \CD^{\gamma, \eta}_T(Z)$ and a discrete modeled distribution $H^\eps$ the maps $t \mapsto \CR_t H_t$ and $t \mapsto \CR^\eps_t H^\eps_t$ satisfy the estimate
\begin{equ}
\Vert \CR H; \CR^\eps H^\eps \Vert^{(\eps)}_{\CC^{\tilde{\delta}, \alpha}_{\eta - \gamma, T}} \lesssim \VERT H; H^\eps \VERT^{(\eps)}_{\gamma, \eta; T} + \VERT Z; Z^\eps \VERT^{(\eps)}_{\delta, \gamma; T} + \eps^\theta\;,
\end{equ}
for $\tilde \delta > 0$ and $\theta > 0$ small enough. We will however not make use of this
in the present article.
\end{remark}

In order to prove Theorem~\ref{t:DReconstruct}, we need to introduce a multiresolution analysis and its discrete analogue.


\subsubsection{Elements of multiresolution analysis}
\label{ss:MultiresolutionAnalysis}

In this section we provide only the very basics of the multiresolution analysis, which are used in the sequel. For a more detailed introduction and for the proofs of the provided results we refer to \cite{Dau92} and \cite{Mey92}.

One of the remarkable results of \cite{Dau88} is that for every $r > 0$ there exists a compactly supported function $\varphi \in \CC^r(\R)$ (called {\it scaling function}) such that
\begin{equ}[e:ScaleFunctionsOrthonormality]
\int_{\R} \varphi(x)\, dx = 1\;, \qquad \int_{\R} \varphi(x) \varphi(x + k)\,dx = \delta_{0, k}, \quad k \in \Z\;,
\end{equ}
where $\delta_{\cdot, \cdot}$ is the Kronecker's delta on $\Z$. Furthermore, if for $n \in \N$ we define the grid $\Lambda_n \eqdef \{2^{-n} k : k \in \Z\}$ and the family of functions
\begin{equ}[e:FatherScale]
\varphi_x^n(\cdot) \eqdef 2^{n / 2} \varphi\bigl(2^{n} (\cdot - x)\bigr)\;, \qquad x \in \Lambda_n\;,
\end{equ}
then there is a finite collection of vectors $\CK \subset \Lambda_1$ and a collection of structure constants $\{ a_k : k \in \CK \}$ such that the {\it refinement equation}
\begin{equ}[e:FatherRelation]
\varphi_x^n = \sum_{k \in \CK} a_k \varphi_{x + 2^{-n}k}^{n+1}
\end{equ}
holds. Note that the multiplier in \eqref{e:FatherScale} preserves the $L^2$-norm of the scaled functions
rather than their $L^1$-norm. It follows immediately from \eqref{e:ScaleFunctionsOrthonormality} and \eqref{e:FatherRelation} that  one has the identities
\begin{equ}[e:StructRelations]
\sum_{k \in \CK} a_k = 2^{d/2}\;, \qquad \sum_{k \in \CK} a_k a_{k + m} = \delta_{0, m}\;, \quad m \in \Z^d\;.
\end{equ}

For a fixed scaling function $\varphi$, we denote by $V_n \subset L^2(\R)$ the subspace spanned by $\{ \varphi_x^n : x \in \Lambda_n \}$. Then the relation \eqref{e:FatherRelation} ensures the inclusion $V_n \subset V_{n+1}$ for every $n$. It turns out that there is a compactly supported function $\psi \in \CC^r(\R)$ (called {\it wavelet function}) such that the space $V_n^\perp$, which is the orthogonal complement of $V_n$ in $V_{n+1}$, is given by
\begin{equ}
V_n^\perp = \mathrm{span} \{ \psi_x^n : x \in \Lambda_n \}\;,
\end{equ}
where $\psi_x^n$ is as in \eqref{e:FatherRelation}. Moreover, there are constants $\{b_k : k \in \CK\}$, such that the {\it wavelet equation} holds:
\begin{equ}[e:MotherRelation]
\psi_x^n = \sum_{k \in \CK} b_k \varphi_{x + 2^{-n}k}^{n+1}\;.
\end{equ}
One more useful property of the wavelet function is that it has vanishing moments, in the sense that the identity
\begin{equ}[e:MotherKiller]
 \int_{\R} \psi(x)\, x^m dx = 0
\end{equ}
holds for all $m \in \N$ such that $m \leq r$.

There is a standard generalization of scaling and wavelet functions to $\R^d$, namely for $n \geq 0$ and $x = (x_1, \ldots, x_d) \in \Lambda^d_n$ we define
\begin{equ}
\varphi_x^n(y) \eqdef \varphi^n_{x_1}(y_1) \cdots \varphi^n_{x_d}(y_d)\;, \qquad y = (y_1, \ldots, y_d) \in \R^d\;.
\end{equ}
For these scaling functions we also define $V_n$ as the closed subspace in $L^2$ spanned 
by $\{ \varphi_x^n : x \in \Lambda^d_n \}$. Then there is a finite set $\PPsi$ of functions on $\R^d$ such that the space $V_n^\perp \eqdef V_{n+1} \setminus V_n$ is a span of $\{ \psi_x^n : \psi \in \PPsi,\, x \in \Lambda^d_n \}$, where we define the scaled function $\psi_x^n$ by
\begin{equ}
\psi_x^n(y) \eqdef 2^{n d/2} \psi\bigl(2^n (y_1 - x_1), \ldots, 2^n (y_d - x_d)\bigr)\;.
\end{equ}
All the results mentioned above can be literally translated from $\R$ to $\R^d$, but of course with $\CK \subset \Lambda_1^d$ and with different structure constants $\{a_k : k \in \CK\}$ and $\{b_k : k \in \CK\}$.


\subsubsection{An analogue of the multiresolution analysis on the grid}
\label{ss:DMultiresolutionAnalysis}

In this section we will develop an analogue of the multiresolution analysis which will be useful for working with functions defined on a dyadic grid. Our construction agrees with
the standard discrete wavelets on gridpoints, but also extends off the grid.
To this end, we use the notation of Section~\ref{ss:MultiresolutionAnalysis}. We recall furthermore that we use $\eps = 2^{-N}$ for a fixed $N \in \N$.

Let us fix a scaling function $\varphi \in \CC^r_0(\R)$, for some integer $r > 0$, as in Section~\ref{ss:MultiresolutionAnalysis}. For integers $0 \leq n \leq N$ we define the functions
\begin{equ}[e:DFather]
\varphi^{N,n}_x(\cdot) \eqdef 2^{N d / 2} \langle \varphi^N_{\cdot}, \varphi^{n}_x \rangle\;, \qquad x \in \Lambda^d_n\;.
\end{equ}
One has that $\varphi^{N,n}_x \in \CC^r(\R^d)$, it is supported
in a ball of radius $\CO(2^{-n})$ centered at $x$, it has the same scaling properties as $\varphi^{n}_x$, and it satisfies
\begin{equ}[e:DFatherProperty]
\varphi_x^{N,N}(y) = 2^{N d / 2} \delta_{x,y}\;, \qquad \quad x, y \in \Lambda^d_N\;,
\end{equ}
where $\delta_{\cdot, \cdot}$ is the Kronecker's delta on $\Lambda^d_N$. The last property follows from \eqref{e:ScaleFunctionsOrthonormality}. Furthermore, it follows from \eqref{e:FatherRelation} that for $n < N$ these functions satisfy the refinement identity
\begin{equ}[e:DFatherRelation]
\varphi^{N,n}_x = \sum_{k \in \CK} a_k\, \varphi^{N,n+1}_{x + 2^{-n}k}\;,
\end{equ}
with the same structure constants $\{ a_k : k \in \CK \}$ as for the functions $\varphi^{n}_x$. One more consequence of \eqref{e:ScaleFunctionsOrthonormality} is
\begin{equ}
2^{-Nd} \sum_{y \in \Lambda^d_N} \varphi_x^{N,n}(y) = 2^{- n d / 2}\;,
\end{equ}
which obviously holds for $n = N$, and for $n < N$ it can be proved by induction, using \eqref{e:DFatherRelation} and \eqref{e:StructRelations}.

The functions $\varphi_x^{N,n}$ inherit many of the crucial properties of the functions $\varphi^{n}_x$, which allows us to use them in the multiresolution analysis. In particular, for $n < N$ and $\psi \in \PPsi$ (the set of wavelet functions, introduced in Section~\ref{ss:MultiresolutionAnalysis}), we can define the functions
\begin{equ}
\psi^{N,n}_x(\cdot) \eqdef 2^{N d / 2} \langle \varphi^N_{\cdot}, \psi^n_x \rangle\;, \qquad x \in \Lambda^d_n\;,
\end{equ}
whose properties are similar to those of $\psi^n_x$. For example, $\psi^{N,n}_x \in \CC^r(\R)$, and it has the same scaling and support properties as $\psi^{n}_x$. Furthermore, it follows from \eqref{e:MotherRelation} that for $n < N$ the following identity holds
\begin{equ}[e:DMotherRelation]
\psi_x^{N,n} = \sum_{k \in \CK} b_k \varphi_{x + 2^{-n}k}^{N, n+1}\;,
\end{equ}
with the same constants $\{b_k : k \in \CK\}$. It is easy to see that the functions just introduced are not $L^2$-orthogonal, but still, using \eqref{e:StructRelations}, one can go by induction from $N$ to any $n < N$ and prove the following result:

\begin{proposition}
In the context just described, for every integer $n \in [0, N)$, the set
\begin{equ}
\{ \varphi_x^{N, n} : x \in \Lambda_n \} \cup \{ \psi_x^{N, m} : m \in [n, N), \,x \in \Lambda_m \}\;,
\end{equ}
forms an orthonormal basis of $\ell^2(\Lambda_\eps)$ equipped with the inner product $\langle \cdot, \cdot \rangle_\eps$.
\end{proposition}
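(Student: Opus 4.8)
The plan is to deduce the statement from the classical multiresolution analysis recalled in Section~\ref{ss:MultiresolutionAnalysis} by means of a single explicit unitary identification, thereby avoiding an induction on $n$. Write $V_N$ for the closed span of $\{\varphi^N_y : y \in \Lambda_\eps\}$, which is an orthonormal basis of $V_N$ by \eqref{e:ScaleFunctionsOrthonormality}. I would introduce the map $\iota \colon V_N \to \ell^2(\Lambda_\eps)$ given by $(\iota f)(y) \eqdef 2^{Nd/2}\langle f, \varphi^N_y\rangle$ for $y \in \Lambda_\eps$, and first check that it is a unitary isomorphism. Indeed, since $\eps^d = 2^{-Nd}$, the discrete pairing \eqref{e:DPairing} gives $\langle \iota f, \iota g\rangle_\eps = \sum_y \langle f,\varphi^N_y\rangle\langle g,\varphi^N_y\rangle = \langle f,g\rangle_{L^2}$ by Parseval in $V_N$, so $\iota$ is isometric; and it is onto because $f \eqdef 2^{-Nd/2}\sum_y F(y)\varphi^N_y$ lies in $V_N$ and satisfies $\iota f = F$ for every $F \in \ell^2(\Lambda_\eps)$.

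Next I would observe that $\iota$ intertwines the continuous and discrete wavelet systems: comparing \eqref{e:DFather} (and the analogous definition of $\psi^{N,m}_x$) with the definition of $\iota$, one has $\iota \varphi^n_x = \varphi^{N,n}_x$ and $\iota \psi^m_x = \psi^{N,m}_x$ as elements of $\ell^2(\Lambda_\eps)$, since both sides agree at every grid point, which is all that is relevant there. Now the classical theory provides the orthogonal decomposition $V_N = V_n \oplus \bigoplus_{m=n}^{N-1} V^\perp_m$, with $\{\varphi^n_x : x \in \Lambda_n\}$ an orthonormal basis of $V_n$ and $\{\psi^m_x : x \in \Lambda_m\}$ (with $\psi$ ranging over $\PPsi$ in the $\R^d$ case) an orthonormal basis of $V^\perp_m$ for each $m$, so that their union is an orthonormal basis of $V_N$. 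Applying the unitary $\iota$ sends this basis precisely onto the set in the statement, which is therefore an orthonormal basis of $\ell^2(\Lambda_\eps)$.

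No step here is genuinely hard; the points that need care are the bookkeeping of the normalisations ($2^{Nd/2}$ against $\eps^d = 2^{-Nd}$) so that $\iota$ comes out exactly isometric, and the remark that completeness of the discrete system cannot be obtained by a dimension count -- the grid $\Lambda_\eps$ being infinite -- but is instead inherited through $\iota$ from completeness of the continuous wavelet basis of $V_N$. If one prefers the more computational route hinted at before the statement, one argues by downward induction on $n$: the case $n = N$ is \eqref{e:DFatherProperty}, which exhibits the $\varphi^{N,N}_x$ as rescaled Kronecker deltas and hence an orthonormal basis of $\ell^2(\Lambda_\eps)$; the inductive step uses the refinement identities \eqref{e:DFatherRelation} and \eqref{e:DMotherRelation} to place $\{\varphi^{N,n}_x\} \cup \{\psi^{N,n}_x\}$ inside $\overline{\mathrm{span}}\{\varphi^{N,n+1}_x\}$, the structure relations \eqref{e:StructRelations} together with their companions $\sum_k b_k b_{k+m} = \delta_{0,m}$ and $\sum_k a_k b_{k+m} = 0$ (inherited from the continuous orthonormality relations) to verify that this new system is orthonormal, and the perfect-reconstruction identity of the continuous filter bank -- valid verbatim since the discrete functions obey the same relations with the same constants -- to conclude that its span is all of $\overline{\mathrm{span}}\{\varphi^{N,n+1}_x\}$. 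In that approach the one delicate point is exactly this last completeness claim, which is why I would favour the unitary transfer above.
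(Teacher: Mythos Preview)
Your unitary-transfer argument is correct and is a genuinely cleaner route than the one the paper alludes to. The paper does not spell out a proof but only indicates that one proceeds ``by induction from $N$ to any $n<N$'' using the structure relations \eqref{e:StructRelations}; this is precisely the second approach you sketch. Your primary argument sidesteps the induction entirely by observing that the sampling map $\iota\colon V_N\to\ell^2(\Lambda_\eps)$ is unitary and carries the continuous multiresolution basis of $V_N$ onto the discrete system in the statement, so that orthonormality and completeness are inherited in one stroke. The gain is conceptual transparency and the avoidance of the filter-bank bookkeeping (the identities $\sum_k b_k b_{k+m}=\delta_{0,m}$, $\sum_k a_k b_{k+m}=0$, and perfect reconstruction) that the inductive route needs; the paper's approach, on the other hand, is more self-contained in that it only invokes the scalar identities \eqref{e:StructRelations} rather than the full $L^2$ theory of $V_N$. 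Both are valid, and your identification of the completeness step as the delicate point of the induction is exactly right.
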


A generalisation of this discrete analogue of the wavelet analysis to higher dimensions can be done by analogy with the continuous case in Section~\ref{ss:MultiresolutionAnalysis}.


\subsubsection{Proof of the discrete reconstruction theorem}

With the help of the discrete analogue of the multiresolution analysis introduced in the previous section we are ready to prove Theorem~\ref{t:DReconstruct}.

\begin{proof}[Proof of Theorem~\ref{t:DReconstruct}]
We take a compactly supported scaling function $\varphi \in \CC^r(\R^d)$ of regularity $r > -\lfloor \alpha\rfloor$, where $\alpha$ is as in the statement of the theorem, and build the functions $\varphi^{N,n}_x$ as in \eqref{e:DFather}. Furthermore, we define the discrete functions $\zeta_{x}^{\eps, t} \eqdef \Pi^{\eps, t}_{x} H^\eps_t(x)$ and $\zeta_{x y}^{\eps, t} \eqdef \zeta_{y}^{\eps, t} - \zeta_{x}^{\eps, t}$. Then from Definition~\ref{d:DModel} we obtain
\begin{equs}
\bigl|\langle \zeta_{x y}^{\eps, t}, \varphi_y^{N,n} \rangle_{\eps}\bigr| &\lesssim \Vert \Pi^\eps \Vert^{(\eps)}_{\gamma; T} \sum_{l \in [\alpha, \gamma) \cap \CA} 2^{-n d / 2 - l n} \Vert H^\eps_t(y) - \Gamma_{yx}^{\eps, t} H^\eps_t(x) \Vert_{l} \\
&\lesssim \Vert \Pi^\eps \Vert^{(\eps)}_{\gamma; T} \Vert H^\eps \Vert^{(\eps)}_{\gamma, \eta; T} \enorm{t}^{\eta - \gamma} \sum_{l \in [\alpha, \gamma) \cap \CA} 2^{-n d / 2 - l n} |y - x|^{\gamma - l}\\
&\lesssim \Vert \Pi^\eps \Vert^{(\eps)}_{\gamma; T} \Vert H^\eps \Vert^{(\eps)}_{\gamma, \eta; T} \enorm{t}^{\eta - \gamma} 2^{- n d / 2 - \alpha n} |y - x|^{\gamma - \alpha}\;,\label{e:ReconstructIntermBound}
\end{equs}
which holds as soon as $|x -y| \geq 2^{-n}$. Moreover, we define
\begin{equ}
\CR_t^{\eps,n} H^\eps_t \eqdef \sum_{y \in \Lambda^d_n} \langle \zeta_{y}^{\eps, t}, \varphi_y^{N,n} \rangle_\eps\, \varphi_y^{N,n}\;.
\end{equ}
It follows from the property \eqref{e:DFatherProperty} that $\CR_t^{\eps} H^\eps_t = \CR_t^{\eps,N} H^\eps_t$ and $\Pi^{\eps, t}_{x} H^\eps_t(x) = \projw_{\eps,N} (\zeta_{x}^{\eps,t})$ (recall that $\eps = 2^{-N}$), where the operator $\projw_{\eps,n}$ is defined by
\begin{equ}
\projw_{\eps,n} (\zeta) \eqdef \sum_{y \in \Lambda^d_n} \langle \zeta, \varphi_y^{N,n} \rangle_\eps\, \varphi_y^{N,n}\;.
\end{equ}
This allows us to choose $n_0 \geq 0$ to be the smallest integer such that $2^{-n_0} \leq \lambda$ and rewrite
\begin{equs}
\CR_t^{\eps} H^\eps_t &- \Pi^{\eps, t}_{x} H^\eps_t(x) = \left( \CR^{\eps,n_0}_t H^\eps_t - \projw_{\eps,n_0} (\zeta_{x}^{\eps, t}) \right) \label{e:DReconstructExpansion}\\
&+ \sum_{n = n_0}^{N-1} \left( \CR_t^{\eps,n+1} H^\eps_t - \projw_{\eps,n+1} (\zeta_{x}^{\eps,t}) - \CR_t^{\eps,n} H^\eps_t + \projw_{\eps,n} (\zeta_{x}^{\eps, t}) \right).
\end{equs}
The first term on the right hand side yields
\begin{equ}[e:DReconstructExpansionFirst]
\langle \CR^{\eps,n_0}_t H^\eps_t - \projw_{\eps,n_0} (\zeta_{x}^{\eps, t}), \rho_x^\lambda \rangle_\eps = \sum_{y \in \Lambda^d_{n_0}} \langle \zeta_{x y}^{\eps, t}, \varphi_y^{N,n_0}\rangle_\eps\, \langle \varphi_y^{N,n_0}, \rho_x^\lambda \rangle_\eps\;.
\end{equ}
Using \eqref{e:ReconstructIntermBound} and the bound $|\langle \varphi_y^{N,n_0}, \rho_x^\lambda \rangle_\eps| \lesssim 2^{n_0 d/2}$, we obtain
\begin{equs}
\bigl| \langle \CR^{\eps,n_0}_t H^\eps_t - \projw_{\eps,n_0} (\zeta_{x}^{\eps, t}), \rho_x^\lambda\rangle_\eps\bigr| \lesssim \Vert \Pi^\eps \Vert^{(\eps)}_{\gamma; T} \Vert H^\eps \Vert^{(\eps)}_{\gamma, \eta; T} \enorm{t}^{\eta - \gamma} 2^{ - \gamma n_0}\;.
\end{equs}
Here, we have also used $|x-y| \lesssim 2^{-n_0}$ in the sum in \eqref{e:DReconstructExpansionFirst}, and the fact that only a finite number of points $y \in \Lambda^d_{n_0}$ contribute to this sum.

Now we will bound each term in the sum in \eqref{e:DReconstructExpansion}. Using \eqref{e:DFatherRelation} and \eqref{e:DMotherRelation}, we can write 
\begin{equ}
\CR_t^{\eps,n+1} H^\eps_t - \projw_{\eps,n+1} (\zeta_{x}^{\eps,t}) - \CR_t^{\eps,n} H^\eps_t + \projw_{\eps,n} (\zeta_{x}^{\eps, t}) = g^\eps_{t, n} + h^\eps_{t,n}\;, 
\end{equ}
where $g^\eps_{t, n}$ is defined by
\begin{equ}
g^\eps_{t, n} = \sum_{y \in \Lambda_n^d} \sum_{k \in \CK} a_k \langle \zeta^{\eps, t}_{y, y+2^{-n} k}, \varphi_{y+2^{-n} k}^{N, n+1}\rangle_\eps\, \varphi_y^{N, n}
\end{equ}
and the constants $\{a_k : k \in \CK\}$ are from \eqref{e:DFatherRelation}. For $h^\eps_{t,n}$ we have the identity
\begin{equs}[e:deltaFExpansion]
h^\eps_{t,n} = \sum_{y \in \Lambda_{n+1}^d} \sum_{k \in \CK} \sum_{\psi \in \PPsi} b_k \langle \zeta^{\eps, t}_{x y} , \varphi_{y}^{N, n+1}\rangle_\eps\, \psi_{y - 2^{-n} k}^{N,n}\;.
\end{equs}
Moreover, the following bounds, for $n \in [n_0, N]$, follow from the properties of the functions $\varphi^n_x$ and $\psi^n_x$:
\begin{equ}
|\langle \varphi_y^{N,n}, \varrho_x^\lambda \rangle_\eps| \lesssim 2^{n_0 d/2} 2^{-(n-n_0) d/2}\;, \quad |\langle \psi_y^{N,n}, \varrho_x^\lambda \rangle_\eps| \lesssim 2^{n_0 d/2} 2^{-(n-n_0) (r + d/2)}\;.
\end{equ}
Using them and \eqref{e:ReconstructIntermBound}, we obtain a bound on $g^\eps_{t, n}$:
\begin{equs}
|\langle g^\eps_{t, n}, \rho_x^\lambda \rangle_\eps| &\lesssim \sum_{y \in \Lambda_n^d} \sum_{k \in \CK} |\langle \zeta^{\eps, t}_{y, y+2^{-n} k} , \varphi_{y+2^{-n} k}^{N, n+1}\rangle_\eps|\, |\langle \varphi_y^{N,n}, \rho_x^\lambda \rangle_\eps|\\
&\lesssim \Vert \Pi^\eps \Vert^{(\eps)}_{\gamma; T} \Vert H^\eps \Vert^{(\eps)}_{\gamma, \eta; T} \enorm{t}^{\eta - \gamma} 2^{ - \gamma n}\;,
\end{equs}
where we have used $|x-y| \lesssim 2^{-n}$ in the sum. Summing these bounds over $n \in [n_0, N]$, we obtain a bound of the required order. Similarly, we obtain the following bound on \eqref{e:deltaFExpansion}:
\begin{equs}
|\langle h^\eps_{t, n}, \rho_x^\lambda \rangle_\eps| \lesssim \Vert \Pi^\eps \Vert^{(\eps)}_{\gamma; T} \Vert H^\eps \Vert^{(\eps)}_{\gamma, \eta; T} \enorm{t}^{\eta - \gamma} 2^{ - \gamma n_0} 2^{-(n-n_0)(r + \alpha)}\;,
\end{equs}
which gives the required bound after summing over $n \in [n_0, N]$. In this estimate we have used the fact that $|y-x| \lesssim 2^{-n_0}$ in the sum in \eqref{e:deltaFExpansion}.

The bounds \eqref{e:DReconstructBounds} can be shown similarly to \eqref{e:ReconstructBound} and \eqref{e:ReconstructTime}. 
\end{proof}


\subsection{Convolutions with discrete kernels}
\label{ss:DConvols}

In this section we describe on the abstract level convolutions with discrete kernels. We start with a definition of the kernels we will work with.

\begin{definition}\label{d:DKernel}
We say that a function $K^{\eps} : \R \times \Lambda^d_\eps \to \R$ is regularising of order $\beta > 0$, 
if one can find functions $K^{(\eps, n)} : \R^{d+1} \to \R$ and $\mathring{K}^{\eps} : \R \times \Lambda^d_\eps \to \R$ such that
\begin{equ}[e:DEpsExpansion]
 K^{\eps} = \sum_{n = 0}^{N-1} K^{(\eps, n)} + \mathring{K}^{\eps} \eqdef \bar K^\eps + \mathring{K}^{\eps}\;,
\end{equ}
where the function $K^{(\eps, n)}$ has the same support and bounds as the function $K^{(n)}$ in Definition~\ref{d:Kernel}, for some $c, r > 0$, and furthermore, for $k \in \N^{d+1}$ such that $\sabs{k} \leq r$, it satisfies 
\begin{equ}[e:DPolyKill]
 \int_{\R \times \Lambda_\eps^d} z^k K^{(\eps, n)}(z)\, dz = 0\;.
\end{equ}
The function $\mathring{K}^{\eps}$ is supported in $\{z \in \R \times \Lambda_\eps^d : \snorm{z} \leq c \eps \}$ and satisfies \eqref{e:DPolyKill} with $k = 0$ and
\begin{equs}[e:KZeroBounds]
\sup_{z \in \R \times \Lambda^d_\eps}|\mathring{K}^{\eps}(z) | \leq C \eps^{-|\s| + \beta}\;.
\end{equs}
\end{definition}

Now, we will define how a discrete model acts on an abstract integration map.

\begin{definition}\label{d:DIntegralModel}
Let $\CI$ be an abstract integration map of order $\beta$ as in Definition~\ref{d:AbstractIntegration} for a regularity structure $\ST=(\CT, \CG)$, let $Z^\eps=(\Pi^\eps, \Gamma^\eps, \Sigma^\eps)$ be a discrete model, and let $K^\eps$ be regularising of order $\beta$ with $r > -\lfloor \min \CA \rfloor$. Let furthermore $\bar{K}^\eps$ and $\mathring{K}^\eps$ be as in~\eqref{e:DEpsExpansion}. We define $\bar{\CJ}^\eps$ on the grid in the same way as its continuous analogue in~\eqref{e:JDef}, but using $\bar{K}^\eps$ instead of $K$ and using the discrete objects instead of their continuous counterparts. Moreover, we define
\begin{equs}
\mathring{\CJ}^\eps_{t,x} \tau \eqdef \1 \int_{\R} \langle \Pi^{\eps, s}_{x} \Sigma_x^{\eps, s t} \tau, \mathring{K}^\eps_{t-s}(x - \cdot) \rangle_\eps\, ds\;,
\end{equs}
and $\CJ^\eps_{t,x} \eqdef \bar{\CJ}^\eps_{t,x} + \mathring{\CJ}^\eps_{t,x}$. We say that $Z^\eps$ realises $K^\eps$ for $\CI$ if the identities \eqref{e:PiIntegral} and \eqref{e:GammaSigmaIntegral}
hold for the corresponding discrete objects.
As before, these two identities should be thought of as providing the definitions
of $\Gamma_{xy}^{\eps,t} \CI \tau$ and $\Sigma_x^{\eps,st} \CI \tau$ via $\Gamma_{xy}^{\eps,t} \tau$ and $\Sigma_x^{\eps,st} \tau$.
\end{definition}

For a discrete modelled distribution $H^\eps$, we define $\bar \CN^\eps_{\gamma} H^\eps$ as in \eqref{e:NDef}, but using the discrete objects instead of the continuous ones, and using the kernel $\bar{K}^\eps$ instead of $K$. Furthermore, we define the term containing $\mathring{K}^\eps$ by
\begin{equ}[e:NZeroDef]
\bigl(\mathring{\CN}^\eps_{\gamma} H^\eps\bigr)_t(x) \eqdef \1 \int_{\R} \langle \CR^\eps_s H^\eps_s - \Pi^{\eps, s}_{x} \Sigma_x^{\eps, s t} H^\eps_t(x), \mathring{K}^\eps_{t-s}(x - \cdot) \rangle_\eps\, ds\;,
\end{equ}
and we set $\CN^\eps_{\gamma} H^\eps \eqdef \bar{\CN}^\eps_{\gamma} H^\eps + \mathring{\CN}^\eps_{\gamma} H^\eps$. Finally, we define the discrete analogue of \eqref{e:KDef} by
\begin{equ}[e:KEpsDef]
\bigl(\CK^\eps_{\gamma} H^\eps\bigr)_t(x) \eqdef \CI H^\eps_t(x) + \CJ^\eps_{t, x} H^\eps_t(x) + \bigl(\CN^\eps_{\gamma} H^\eps\bigr)_t(x)\;.
\end{equ}
Our definition is consistent thanks to the following two lemmas.

\begin{lemma}\label{l:DPiIntegralBound}
 In the setting of Definition~\ref{d:DIntegralModel}, let $\min\CA + \beta > 0$. Then all the algebraic relations of Definition~\ref{d:DModel} hold for the symbol $\CI \tau$. Moreover, for $\delta > 0$ sufficiently small and for any $\l \in \CA$ and $\tau \in \CT_l$ such that $l + \beta \notin \N$ and $\Vert \tau \Vert = 1$, one has the bounds
\begin{equs}
| \langle \Pi^{\eps, t}_{x} \CI \tau, \varphi_{x}^\lambda \rangle_\eps| &\lesssim \lambda^{l + \beta} \Vert \Pi^\eps \Vert_{\l; T}^{(\eps)} \Vert \Sigma^\eps \Vert^{(\eps)}_{l; T} \bigl( 1 + \Vert \Gamma^\eps \Vert^{(\eps)}_{l; T} \bigr)\;,\label{e:PiIntegralBound}\\
\frac{| \langle \bigl(\Pi^{\eps, t}_{x} - \Pi^{\eps, s}_{x}\bigr) \CI \tau, \varphi_{x}^\lambda \rangle_\eps|}{\bigl(|t-s|^{1/\s_0} \vee \eps\bigr)^{\delta}} &\lesssim \lambda^{l + \beta - \delta} \Vert \Pi^\eps \Vert_{\delta, l; T}^{(\eps)} \Vert \Sigma^\eps \Vert^{(\eps)}_{l; T} \bigl( 1 + \Vert \Gamma^\eps \Vert^{(\eps)}_{l; T} \bigr)\;,\label{e:PiIntegralTimeBound}
\end{equs}
uniformly over $\eps$ (see Remark~\ref{r:Uniformity}), $x \in \Lambda_\eps^d$, $s, t \in [-T, T]$, $\lambda \in [\eps, 1]$ and $\varphi \in \CB_0^r(\R^d)$.
\end{lemma}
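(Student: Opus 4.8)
The plan is to reduce the analytic estimates to space-time bounds for the discrete analogue $\tilde\Pi^\eps$ of the model of Remark~\ref{r:OriginalModel}, and to treat separately the coarsest term $\mathring K^\eps$ and the time-regularity, neither of which has a continuous counterpart. As a preliminary step I would check the algebraic relations for $\CI\tau$. Writing $\Pi^{\eps,t}_x\CI\tau = \Pi^{\eps,t}_x(\CI+\CJ^\eps_{t,x})\tau - \Pi^{\eps,t}_x\CJ^\eps_{t,x}\tau$ and using that $\Pi^{\eps,t}_x X^k$ vanishes at $x$ unless $k=0$, the diagonal value $(\Pi^{\eps,t}_x\CJ^\eps_{t,x}\tau)(x)$ equals the sum of the $\1$-components of $\bar\CJ^\eps_{t,x}\tau$ and $\mathring\CJ^\eps_{t,x}\tau$, which by construction is $\int_\R\langle\Pi^{\eps,s}_x\Sigma^{\eps,st}_x\tau, K^\eps_{t-s}(x-\cdot)\rangle_\eps\,ds$; by \eqref{e:PiIntegral} this is exactly $(\Pi^{\eps,t}_x(\CI+\CJ^\eps_{t,x})\tau)(x)$, so $(\Pi^{\eps,t}_x\CI\tau)(x)=0$ (note $|\CI\tau|=l+\beta>0$ because $\min\CA+\beta>0$). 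The identity $\Pi^{\eps,t}_y\CI\tau=\Pi^{\eps,t}_x\Gamma^{\eps,t}_{xy}\CI\tau$ and the $\Gamma^\eps$--$\Sigma^\eps$ compatibilities follow by combining \eqref{e:GammaSigmaIntegral} with the algebraic relations of $(\Pi^\eps,\Gamma^\eps,\Sigma^\eps)$, exactly as in \cite{Hai14}, establishing the first assertion of the lemma.

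For the bound \eqref{e:PiIntegralBound} I would insert $K^\eps=\sum_{n=0}^{N-1}K^{(\eps,n)}+\mathring K^\eps=\bar K^\eps+\mathring K^\eps$ into $\langle\Pi^{\eps,t}_x\CI\tau,\varphi_x^\lambda\rangle_\eps$ and estimate scale by scale. The $\bar K^\eps$-contribution, minus the matching monomials of $\Pi^{\eps,t}_x\bar\CJ^\eps_{t,x}\tau$, is — for $\varphi_x^\lambda$ with $\lambda\geq\eps$ — exactly the quantity treated by the argument of \cite[Lem.~5.19]{Hai14} for the classical model $\tilde\Pi^\eps$ (whose norm is controlled by $\Vert\Pi^\eps\Vert^{(\eps)}_{l;T}$, $\Vert\Gamma^\eps\Vert^{(\eps)}_{l;T}$, $\Vert\Sigma^\eps\Vert^{(\eps)}_{l;T}$) and the kernel $\bar K^\eps$, which satisfies Definition~\ref{d:Kernel} in space-time with \eqref{e:DPolyKill} in the role of \eqref{e:PolyKill}: on the scales $2^{-n}>\lambda$ the subtracted monomials produce a Taylor-remainder gain $(\lambda 2^n)^{\lceil l+\beta\rceil-(l+\beta)}$, while on the scales $\eps\leq 2^{-n}\leq\lambda$ the vanishing moments \eqref{e:DPolyKill} produce a gain $(2^{-n}/\lambda)^{r+1}$; since $l+\beta\notin\N$ all these exponents are strictly positive and the geometric series over $n$ converge with $n$- and $\eps$-independent ratios, giving $\lambda^{l+\beta}$ times the asserted model norms. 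The remaining $\mathring K^\eps$-contribution is handled by hand: $\mathring K^\eps$ is supported at space-time scale $\leq c\eps$, has one vanishing moment and obeys \eqref{e:KZeroBounds}, so pairing $\mathring K^\eps_{t-s}\star_\eps\varphi_x^\lambda$ against $\Pi^{\eps,s}_x\Sigma^{\eps,st}_x\tau$ (using \eqref{e:PiGammaBound} and the weakened bound \eqref{e:DSigmaBound}, which absorbs the discontinuity of the discrete kernel at $t=s$) and integrating over $|t-s|\lesssim(c\eps)^{\s_0}$ gives $\lesssim\lambda^{l+\beta}(\eps/\lambda)^{1+\beta}$; here $\lambda\geq\eps$ is used in an essential way. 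Adding the two contributions gives \eqref{e:PiIntegralBound}.

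The time-regularity bound \eqref{e:PiIntegralTimeBound} has no continuous counterpart and is the crux. I would write $\langle(\Pi^{\eps,t}_x-\Pi^{\eps,s}_x)\CI\tau,\varphi_x^\lambda\rangle_\eps$ as the difference of the two integral representations at times $t$ and $s$ minus the difference $\Pi^{\eps,t}_x\CJ^\eps_{t,x}\tau-\Pi^{\eps,s}_x\CJ^\eps_{s,x}\tau$ of the (time-dependent) polynomial corrections, and rerun the scale-by-scale scheme. For each kernel scale (and for $\mathring K^\eps$) one splits the remaining time integral into the near-diagonal range, where $|t-r|\wedge|s-r|\lesssim|t-s|$ and each of the two pieces is bounded by the previous step with a shortened time window (gaining a small power of $|t-s|\,2^{\s_0 n}$), and the complementary range, where one uses $\Sigma^{\eps,rt}_x=\Sigma^{\eps,rs}_x\Sigma^{\eps,st}_x$ to reduce to the kernel difference $K^{(\eps,n)}_{t-r}-K^{(\eps,n)}_{s-r}$ (controlled via time-smoothness of $K^{(\eps,n)}$, and via the crude bound \eqref{e:KZeroBounds} for $\mathring K^\eps$) plus a $(\Sigma^{\eps,st}_x-1)\tau$-term (controlled via the $|t-s|$-smallness in \eqref{e:DSigmaBound}); the discrete time-regularity bound \eqref{e:PiTimeBound} of $\Pi^\eps$ enters here, producing the constant $\Vert\Pi^\eps\Vert^{(\eps)}_{\delta,l;T}$ on the right of \eqref{e:PiIntegralTimeBound}. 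One obtains the factor $(|t-s|^{1/\s_0}\vee\eps)^\delta$ at the cost of $\lambda^{-\delta}$, for $\delta>0$ small enough that the geometric series still converge.

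The main obstacle is the bookkeeping of this last step together with the uniformity in $\eps$: one must verify that collapsing all sub-$\eps$ scales of $K^\eps$ into the single term $\mathring K^\eps$, controlled only by the crude $L^\infty$-bound \eqref{e:KZeroBounds}, still produces the clean powers $\lambda^{l+\beta}$ and $\lambda^{l+\beta-\delta}$ with $\eps$-independent constants, and that — because in the discrete setting no test function probes scales below $\eps$ — the argument never requires more regularity of $\Pi^\eps$ than \eqref{e:PiGammaBound}, \eqref{e:DSigmaBound} and \eqref{e:PiTimeBound} supply. Once this is in place, the lemma follows.
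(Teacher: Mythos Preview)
Your overall architecture matches the paper's: split $K^\eps=\bar K^\eps+\mathring K^\eps$, treat $\bar K^\eps$ via the space-time model $\tilde\Pi^\eps$ and the argument of \cite[Lem.~5.19]{Hai14}, and for the time increment of the $\bar K^\eps$-part split scale by scale according to whether $|t-s|\gtrless 2^{-\s_0 n}$, using in the small regime the decomposition into a $(\Sigma^{\eps,st}_x-1)$-term and a kernel-difference term. These parts are essentially the paper's proof.

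The treatment of $\mathring K^\eps$, however, has a gap. First, the only vanishing moment \eqref{e:DPolyKill} available for $\mathring K^\eps$ is a \emph{space-time} integral, so for fixed $t-s$ the spatial integral of $\mathring K^\eps_{t-s}$ need not vanish and you cannot use it to gain a factor $\eps/\lambda$ in the spatial convolution with $\varphi_x^\lambda$. Second, and more seriously, $\mathring K^\eps$ is a grid function with no derivative control beyond \eqref{e:KZeroBounds}, so $\mathring K^\eps_{t-s}\star_\eps\varphi_x^\lambda$ is \emph{not} (the restriction of) a rescaled $\CB^r_0$ test function, and \eqref{e:PiGammaBound} does not apply to it. Your claimed exponent $(\eps/\lambda)^{1+\beta}$ is therefore unjustified.

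The paper avoids a convolved test function altogether. Using the algebraic identities one recenters at the integration point, $\langle\Pi^{\eps,s}_x\Sigma^{\eps,st}_x\tau,\mathring K^\eps_{t-s}(y-\cdot)\rangle_\eps=\langle\Pi^{\eps,s}_y\Sigma^{\eps,st}_y\Gamma^{\eps,t}_{yx}\tau,\mathring K^\eps_{t-s}(y-\cdot)\rangle_\eps$, and since $\mathring K^\eps_{t-s}(y-\cdot)$ is supported on finitely many grid points within $c\eps$ of $y$, one only needs the \emph{pointwise} bound $|(\Pi^{\eps,s}_y a)(y)|\lesssim\|a\|\,\eps^\zeta$ for $a\in\CT_\zeta$ (take $\lambda=\eps$ in \eqref{e:PiGammaBound} with $\varphi$ supported in the open unit ball, together with the requirement $(\Pi^{\eps,s}_y a)(y)=0$ for $\zeta>0$ from Definition~\ref{d:DModel}). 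Combined with \eqref{e:DSigmaBound} and the $\Gamma^\eps$-bound this yields $\int_\R|\langle\ldots\rangle_\eps|\,ds\lesssim\sum_{\zeta\leq l}\eps^{\zeta+\beta}|y-x|^{l-\zeta}$, and after pairing with $\varphi_x^\lambda$ and using $\min\CA+\beta>0$ and $\eps\leq\lambda$ one gets precisely $\lambda^{l+\beta}$. The time increment of the $\mathring K^\eps$-part is then obtained by the brute-force route: bound the $t$- and $s$-contributions separately and sacrifice a power $\eps^\delta\leq(|t-s|^{1/\s_0}\vee\eps)^\delta$ from this estimate, which is admissible for any $\delta<\min\CA+\beta$.

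A minor point: the paper's time-increment argument for $\bar K^\eps$ does not actually invoke the time-regularity \eqref{e:PiTimeBound} of $\Pi^\eps$ on $\tau$; both terms in the small-$|t-s|$ decomposition carry the \emph{same} $\Pi^{\eps,r}_x$. The constant $\|\Pi^\eps\|^{(\eps)}_{\delta,l;T}$ on the right of \eqref{e:PiIntegralTimeBound} merely dominates $\|\Pi^\eps\|^{(\eps)}_{l;T}$, so your sketch overstates its role.
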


\begin{proof}
The algebraic properties of the models for the symbol $\CI \tau$ follow easily from Definition~\ref{d:DIntegralModel}. In order to prove \eqref{e:PiIntegralBound}, we will consider the terms in \eqref{e:PiIntegral} containing $\mathring{K}^\eps$ separately from the others. To this end, we define
\begin{equs}
\bigl(\mathring{\Pi}^{\eps,t}_{x} \CI \tau \bigr)(y) &\eqdef \int_{\R} \langle \Pi^{\eps, s}_{x} \Sigma_x^{\eps, s t} \tau, \mathring{K}^{\eps}_{t-s}(y - \cdot) - \mathring{K}^{\eps}_{t-s}(x - \cdot)\rangle_\eps\, ds\;,\label{e:PiRingDef}\\
\bigl(\bar{\Pi}^{\eps,t}_{x} \CI \tau \bigr)(y) &\eqdef \bigl(\Pi^{\eps,t}_{x} - \mathring{\Pi}^{\eps,t}_{x} \bigr) \bigl(\CI \tau \bigr)(y)\;.
\end{equs}
Furthermore, for $x, y \in \Lambda_\eps^d$ we use the assumption $0^0 \eqdef 1$ and set
\begin{equ}
 T^l_{x y} K^{(\eps, n)}_{t}(\cdot) \eqdef K^{(\eps, n)}_{t}(y - \cdot) - \sum_{\sabs{k} < l + \beta} \frac{(0, y-x)^k}{k!} D^k K^{(\eps, n)}_{t}(x - \cdot)\;.
\end{equ}

Using Definitions~\ref{d:DModel} and \ref{d:DKernel} and acting as in the proof of \cite[Lem.~5.19]{Hai14}, we can obtain the following analogues of the bounds \cite[Eq.~5.33]{Hai14}:
\begin{equs}[e:DPiInterBounds]
| \langle \Pi^{\eps, r}_{x} \Sigma^{\eps, r t}_x \tau, T^l_{x y} K^{(\eps, n)}_{t-r} \rangle_\eps| &\lesssim \sum_{\zeta > 0} |y-x|^{l + \beta + \zeta} 2^{(\s_0 + \zeta) n} \1_{|t-r| \lesssim 2^{-\s_0 n}}\;,\\
\Big|\int_{\Lambda_\eps^d} \langle \Pi^{\eps, r}_{x} \Sigma^{\eps, r t}_x \tau, T^l_{x y} K^{(\eps, n)}_{t-r} \rangle_\eps&\, \varphi^\lambda_{x}(y) \, dy\Big| \lesssim \sum_{\zeta > 0} \lambda^{l + \beta - \zeta} 2^{(\s_0 - \zeta) n} \1_{|t-r| \lesssim 2^{-\s_0 n}}\;,
\end{equs}
for $\eps \leq |y-x| \leq 1$, $\lambda \in [\eps, 1]$, with $\zeta$ taking a finite number of values and with the proportionality constants as in \eqref{e:PiIntegralBound}. Integrating these bounds in the time variable $r$ and using the first bound in \eqref{e:DPiInterBounds} in the case $|y-x| \leq 2^{-n}$ and the second bound in the case $2^{-n} \leq \lambda$, we obtain the required estimate on $\langle \bar \Pi^{\eps, t}_{x} \CI \tau, \varphi_{x}^\lambda \rangle_\eps$.

In order to bound $\bigl(\bar \Pi^{\eps, t}_{x} - \bar \Pi^{\eps, s}_{x}\bigr) \CI \tau$, we consider two cases $|t-s| \geq 2^{-\s_0 n}$ and $|t-s| < 2^{-\s_0 n}$. In the first case we estimate $\bar \Pi^{\eps, t}_{x} \CI \tau$ and $\bar \Pi^{\eps, s}_{x} \CI \tau$ separately using \eqref{e:DPiInterBounds}, and obtain the required bound, if $\delta > 0$ is sufficiently small. In the case $|t-s| < 2^{-\s_0 n}$ we write
\begin{equs}
\langle& \Pi^{\eps, r}_{x} \Sigma^{\eps, r t}_x \tau, T^l_{x y} K^{(\eps, n)}_{t-r} \rangle_\eps - \langle \Pi^{\eps, r}_{x} \Sigma^{\eps, r s}_x \tau, T^l_{x y} K^{(\eps, n)}_{s-r} \rangle_\eps\\
&= \langle \Pi^{\eps, r}_{x} \Sigma^{\eps, r s}_x \bigl(\Sigma^{\eps, s t}_x - 1\bigr) \tau, T^l_{x y} K^{(\eps, n)}_{t-r} \rangle_\eps + \langle \Pi^{\eps, r}_{x} \Sigma^{\eps, r s}_x \tau, T^l_{x y} \bigl(K^{(\eps, n)}_{t-r} - K^{(\eps, n)}_{s-r}\bigr) \rangle_\eps\;,
\end{equs}
and estimate each of these terms similarly to \eqref{e:DPiInterBounds}, which gives the required bound for sufficiently small $\delta > 0$.

It is only left to prove the required bounds for $\mathring{\Pi}^{\eps,t}_{x} \bigl(\CI \tau \bigr)$. It follows immediately from Definition~\ref{d:DModel} that $|\bigl(\Pi^{\eps, t}_x a\bigr)(x)| \lesssim \Vert a \Vert \eps^{\zeta}$, for $a \in \CT_\zeta$. Hence, using the properties \eqref{e:SigmaDef} and \eqref{e:PiDef} we obtain
\begin{equs}
\int_{\R} \bigl|\langle \Pi^{\eps, s}_{x} \Sigma_x^{\eps, s t} \tau, \mathring{K}^{\eps}_{t-s}(y - \cdot)\rangle_\eps\bigr|\, ds &= \int_{\R} \bigl|\langle \Pi^{\eps, s}_{y} \Sigma_y^{\eps, s t} \Gamma_{y x}^{\eps, t} \tau, \mathring{K}^{\eps}_{t-s}(y - \cdot)\rangle_\eps\bigr|\, ds\\
&\lesssim \sum_{\zeta \leq l} \eps^{\zeta + \beta} |y-x|^{l - \zeta}\;,\label{e:KRingBound}
\end{equs}
where $\zeta \in \CA$. Similarly, the second term in \eqref{e:PiRingDef}
is bounded by $\eps^{l +\beta}$, implying that if $\lambda \geq \eps$ and $\min\CA + \beta > 0$, then one has
\begin{equs}[e:PiRingIntegralBound]
| \langle \mathring{\Pi}^{\eps, t}_{x} \CI \tau, \varphi_{x}^\lambda \rangle_\eps| \lesssim \sum_{\zeta \leq l} \eps^{\zeta + \beta} \lambda^{l - \zeta} \lesssim \lambda^{l + \beta}\;,
\end{equs}
which finishes the proof of \eqref{e:PiIntegralBound}. In order to complete the proof of \eqref{e:PiIntegralTimeBound}, we use \eqref{e:KRingBound} and brutally bound
\begin{equs}
| \langle \bigl(\mathring{\Pi}^{\eps, t}_{x} &- \mathring{\Pi}^{\eps, s}_{x}\bigr) \CI \tau, \varphi_{x}^\lambda \rangle_\eps| \leq | \langle \mathring{\Pi}^{\eps, t}_{x} \CI \tau, \varphi_{x}^\lambda \rangle_\eps| + | \langle \mathring{\Pi}^{\eps, s}_{x} \CI \tau, \varphi_{x}^\lambda \rangle_\eps| \\
&\lesssim \sum_{\zeta \leq l} \eps^{\zeta + \beta} |y-x|^{l - \zeta} \lesssim \bigl(|t-s|^{1/\s_0} \vee \eps\bigr)^{\tilde{\delta}} \sum_{\zeta \leq l} \eps^{\zeta + \beta - \tilde{\delta}} |y-x|^{l - \zeta}\;,
\end{equs}
from which we obtain the required bound in the same way as before, as soon as $\delta \in (0, \min \CA + \beta)$.
\end{proof}

The following lemma provides a relation between $\CJ^\eps$ and the operators $\Gamma^\eps$, $\Sigma^\eps$.

\begin{lemma}\label{l:DGammaIntegralBound}
In the setting of Lemma~\ref{l:DPiIntegralBound}, the operators 
 \begin{equs}[e:DSigmaDiff]
\CJ^{\eps, t}_{x y} \eqdef \CJ^\eps_{t, x} \Gamma^{\eps, t}_{x y} - \Gamma^{\eps, t}_{x y} \CJ^\eps_{t, y}\;,\qquad \CJ^{\eps, s t}_{x} \eqdef \CJ^\eps_{s, x} \Sigma^{\eps, s t}_{x} - \Sigma^{\eps, s t}_{x} \CJ^\eps_{t, x}\;,
\end{equs}
with $s, t \in \R$ and $x, y \in \Lambda_\eps^d$, satisfy the following bounds:
\begin{equs}
\bigl| \bigl(\CJ^{\eps, t}_{x y} \tau\bigr)_k \bigr| &\lesssim \Vert \Pi^\eps \Vert_{\l; T}^{(\eps)} \Vert \Sigma^\eps \Vert^{(\eps)}_{l; T} \bigl( 1 + \Vert \Gamma^\eps \Vert^{(\eps)}_{l; T} \bigr) |x - y|^{l + \beta - |k|_{\s}}\;,\\
\bigl| \bigl(\CJ^{\eps, s t}_{x} \tau\bigr)_k \bigr| &\lesssim \Vert \Pi^\eps \Vert_{\l; T}^{(\eps)} \Vert \Sigma^\eps \Vert^{(\eps)}_{l; T} \bigl( 1 + \Vert \Gamma^\eps \Vert^{(\eps)}_{l; T} \bigr) \bigl(|t - s|^{1/\s_0} \vee \eps\bigr)^{l + \beta - |k|_{\s}}\;,\label{e:DSigmaDiffBound}
\end{equs}
uniformly in $\eps$ (see Remark~\ref{r:Uniformity}), for $\tau$ as in Lemma~\ref{l:DPiIntegralBound}, for any $k \in \N^{d+1}$ such that $|k|_{\s} < l + \beta$, and for $(\cdot)_k$ being the multiplier of $X^k$. In particular, the required bounds on $\Gamma^\eps \CI\tau$ and $\Sigma^\eps \CI\tau$ from Definition~\ref{d:DModel} hold.
\end{lemma}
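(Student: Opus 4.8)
The plan is to follow the strategy of the proof of \cite[Lem.~5.21]{Hai14}, splitting the operators $\CJ^{\eps,t}_{xy}$ and $\CJ^{\eps,st}_x$ according to the decomposition $\CJ^\eps_{t,x} = \bar\CJ^\eps_{t,x} + \mathring\CJ^\eps_{t,x}$ of Definition~\ref{d:DIntegralModel}, i.e. according to the splitting $K^\eps = \bar K^\eps + \mathring K^\eps$ of the kernel, and bounding each contribution separately. The $\bar\CJ^\eps$-part is handled exactly as in the continuous case, while the $\mathring\CJ^\eps$-part is where the discreteness and the crude bound \eqref{e:KZeroBounds} enter; it turns out to be harmless precisely because $\mathring K^\eps$ lives only on the smallest scale $\snorm{z}\leq c\eps$.

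First I would treat the $\bar\CJ^\eps$-contribution to $\CJ^{\eps,t}_{xy}$. By definition of $\bar\CJ^\eps$ (which mimics \eqref{e:JDef} with $\bar K^\eps = \sum_{n=0}^{N-1} K^{(\eps,n)}$) the multiplier of $X^k$ in $\bar\CJ^\eps_{t,x}\Gamma^{\eps,t}_{xy} - \Gamma^{\eps,t}_{xy}\bar\CJ^\eps_{t,y}$ can be written, using the algebraic relations \eqref{e:GammaDef}, \eqref{e:SigmaDef}, \eqref{e:PiDef} of the discrete model together with the key identity \eqref{e:PiIntegral}, as an integral over $s$ of $\langle \Pi^{\eps,s}_x\Sigma^{\eps,st}_x\big(H\text{-free expression in }\tau\big), T^l_{xy} D^k\bar K^\eps_{t-s}\rangle_\eps$, i.e. exactly the same combination of Taylor-remainder-of-kernel terms as appears in the proof of \cite[Lem.~5.21]{Hai14}. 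The bounds \eqref{e:DPiInterBounds} from the proof of Lemma~\ref{l:DPiIntegralBound} give, after integration in $s$ and summation over $n\in\{0,\dots,N-1\}$ exactly as there (splitting the sum at $2^{-n}\sim |x-y|$), the bound $\lesssim \Vert\Pi^\eps\Vert^{(\eps)}_{l;T}\Vert\Sigma^\eps\Vert^{(\eps)}_{l;T}(1+\Vert\Gamma^\eps\Vert^{(\eps)}_{l;T})\,|x-y|^{l+\beta-|k|_\s}$ for each $k$ with $|k|_\s < l+\beta$ (here $l+\beta\notin\N$ guarantees no term has vanishing exponent). The same computation with $\Sigma^{\eps,st}_x$ in place of $\Gamma^{\eps,t}_{xy}$, using the weakened continuity \eqref{e:DSigmaBound} in place of \eqref{e:PiGammaBound}, yields the corresponding bound with $|t-s|^{1/\s_0}\vee\eps$ in place of $|x-y|$ for $\CJ^{\eps,st}_x$.

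Next I would treat the $\mathring\CJ^\eps$-contribution. Here $\mathring\CJ^\eps_{t,x}\tau = \1\int_\R\langle\Pi^{\eps,s}_x\Sigma^{\eps,st}_x\tau,\mathring K^\eps_{t-s}(x-\cdot)\rangle_\eps\,ds$ is $\1$-valued, so it only contributes to the $k=0$ multiplier. Using the bound $|(\Pi^{\eps,t}_xa)(x)|\lesssim \Vert a\Vert\,\eps^\zeta$ for $a\in\CT_\zeta$ (which holds by the extra requirement in Definition~\ref{d:DModel} together with \eqref{e:DSigmaBound}), rewriting $\langle\Pi^{\eps,s}_x\Sigma^{\eps,st}_x\tau,\mathring K^\eps_{t-s}(x-\cdot)\rangle_\eps$ via \eqref{e:PiDef} as $\langle\Pi^{\eps,s}_y\Sigma^{\eps,st}_y\Gamma^{\eps,t}_{yx}\tau,\mathring K^\eps_{t-s}(x-\cdot)\rangle_\eps$ after re-centring, and using the support and size of $\mathring K^\eps$ exactly as in \eqref{e:KRingBound}, one gets $|\mathring\CJ^\eps_{t,x}\tau - \Gamma^{\eps,t}_{xy}$-conjugate$|\lesssim \sum_{\zeta\leq l}\eps^{\zeta+\beta}|x-y|^{l-\zeta}$; since $\min\CA+\beta>0$ and $\eps\leq |x-y|$ (the only case that matters, as $\Lambda^d_\eps$ has mesh $\eps$), each term is $\lesssim |x-y|^{l+\beta}$, which is the required $k=0$ estimate. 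The $\Sigma$-analogue is identical with $|t-s|^{1/\s_0}\vee\eps$ in place of $|x-y|$. Finally, since $|k|_\s<l+\beta$ ranges over finitely many values and the bounds above cover every such $k$, and since by \eqref{e:GammaSigmaIntegral} these $\CJ$-operators are precisely what defines $\Gamma^{\eps,t}_{xy}\CI\tau$ and $\Sigma^{\eps,st}_x\CI\tau$ modulo the already-controlled $\CI\Gamma^{\eps,t}_{xy}\tau$, $\CI\Sigma^{\eps,st}_x\tau$, the bounds of Definition~\ref{d:DModel} for $\CI\tau$ follow.

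\textbf{Main obstacle.}
The routine bookkeeping — matching the combinatorics of the Taylor remainders $T^l_{xy}$ of the kernel to the factors $\Gamma^{\eps,t}_{xy}$, $\Sigma^{\eps,st}_x$ coming through \eqref{e:GammaSigmaIntegral} — is inherited verbatim from \cite[Lem.~5.21]{Hai14} and is not where the difficulty lies. The genuinely new point, and the one requiring care, is the $\mathring\CJ^\eps$ term: one must verify that using the brutal $L^\infty$ bound \eqref{e:KZeroBounds} on $\mathring K^\eps$ — rather than the scale-by-scale decay available for $\bar K^\eps$ — still produces a bound of the correct homogeneity $|x-y|^{l+\beta}$ (resp. $(|t-s|^{1/\s_0}\vee\eps)^{l+\beta}$), and this is exactly where the hypothesis $\min\CA+\beta>0$ is used in an essential way, together with the fact that on the grid $|x-y|\geq\eps$ whenever $x\neq y$, so that every spurious power of $\eps$ can be traded for a power of $|x-y|$. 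This is the same mechanism already exploited in the proof of \eqref{e:PiRingIntegralBound} in Lemma~\ref{l:DPiIntegralBound}, so the lemma follows by combining that argument with the continuous proof of \cite[Lem.~5.21]{Hai14}.
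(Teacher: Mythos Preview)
Your proposal is correct and follows essentially the same route as the paper: split $\CJ^\eps = \bar\CJ^\eps + \mathring\CJ^\eps$, handle the $\bar\CJ^\eps$-part by the argument of \cite[Lem.~5.21]{Hai14} (with the truncated sum over $n\in\{0,\dots,N-1\}$ accounting for the $\vee\,\eps$ in the $\Sigma$-bound), and bound the four $\mathring\CJ^\eps$-terms separately via the mechanism of \eqref{e:KRingBound}, using $\min\CA+\beta>0$ together with $|x-y|\geq\eps$ (resp.\ $|t-s|^{1/\s_0}\vee\eps\geq\eps$) to absorb the powers of $\eps$. The deduction of the model bounds on $\Gamma^\eps\CI\tau$, $\Sigma^\eps\CI\tau$ from \eqref{e:GammaSigmaIntegral} is also exactly as in the paper.
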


\begin{proof}
The bounds on the parts of $\CJ^{\eps, t}_{x y} \tau$ and $\CJ^{\eps, s t}_{x} \tau$ not containing $\mathring{K}^\eps$ can be obtained as in \cite[Lem.~5.21]{Hai14}, where the bound on the right-hand side of \eqref{e:DSigmaDiffBound} comes from the fact that the scaling of the kernels $K^{(\eps, n)}$ in \eqref{e:DEpsExpansion} does not go below $\eps$. The contributions to \eqref{e:DSigmaDiff} from the kernel $\mathring{K}^\eps$ come via the terms $\mathring{\CJ}^\eps_{t, x} \Gamma^{\eps, t}_{x y}$, $\mathring{\CJ}^\eps_{t, y}$, $\mathring{\CJ}^\eps_{s, x} \Sigma^{\eps, s t}_{x}$ and $\mathring{\CJ}^\eps_{t, x}$. We can bound all of them separately, similarly to \eqref{e:KRingBound}, and use $|x-y| \geq \eps$ and $|t - s|^{1/\s_0} \vee \eps \geq \eps$ to estimate the powers of $\eps$. Since all of these powers are positive by assumption, this yields the required bounds.

Now, we will prove the bound on $\Gamma^\eps \CI\tau$ required by Definition~\ref{d:DModel}. For $m < l$ such that $m \notin \N$, \eqref{e:GammaSigmaIntegral} yields
\begin{equ}
\Vert \Gamma_{x y}^{\eps, t} \CI \tau \Vert_m = \Vert \CI \bigl(\Gamma_{x y}^{\eps, t} \tau\bigr) \Vert_m \leq \Vert \Gamma_{x y}^{\eps, t} \tau \Vert_{m - \beta} \lesssim |y-x|^{l + \beta - m}\;,
\end{equ}
where we have used the properties of $\CI$. Similarly, we can bound $\Vert \Sigma_{x}^{\eps, s t} \CI \tau \Vert_m$. Furthermore, since the map $\CI$ does not produce elements of integer homogeneity, we have for $m \in \N$,
\begin{equs}
\Vert \Gamma_{x y}^{\eps, t} \CI \tau \Vert_m = \Vert \CJ^{\eps, t}_{x y} \Vert_m \lesssim |y-x|^{l + \beta - m}\;,
\end{equs}
where the last bound we have proved above. In the same way we can obtain the required bound on $\Vert \Sigma_{x}^{\eps, s t} \CI \tau \Vert_m$.
\end{proof}

\begin{remark}\label{r:ModelLift}
If $(\Pi^{\eps}, \Gamma^{\eps}, \Sigma^{\eps})$ is a discrete model on $\gen{\ST}$, which is introduced in Definition~\ref{d:TruncSets}, then there is a canonical way to extend it to a discrete model on $\hat{\ST}$. Since the symbols from $\HF$ are ``generated'' by $\gen{\CF}$, we only have to define the actions of $\Pi^{\eps}$, $\Gamma^{\eps}$ and $\Sigma^{\eps}$ on the symbols $\tau \bar{\tau}$ and $\CI\tau \in \HF \setminus \gen{\CF}$ with $\tau, \bar{\tau} \in \HF$, so that the extension of the model to $\hat{\ST}$ will follow by induction. For the product $\tau \bar{\tau}$, we set
\minilab{CanonicalProduct}
\begin{equs}
\big(\Pi^{\eps,t}_{x} \tau \bar{\tau}\big) (y) = \bigl(\Pi^{\eps,t}_{x} \tau\bigr) (y)\,& \big(\Pi^{\eps,t}_{x} \bar{\tau}\big) (y)\;,\label{e:CanonicalPiProduct}\\
\Sigma_x^{\eps, s t} \tau \bar{\tau} = \big(\Sigma_x^{\eps, s t} \tau\big)\, \big(\Sigma_x^{\eps, s t} \bar{\tau}\big)\;,\qquad \Gamma_{x y}^{\eps, t}& \tau \bar{\tau} = \big(\Gamma_{x y}^{\eps, t} \tau\big)\, \big(\Gamma_{x y}^{\eps, t} \bar{\tau}\big)\;.\label{e:CanonicalGammaSigmaProduct}
\end{equs}
For the symbol $\CI \tau$ we define the actions of the maps $(\Pi^{\eps}, \Gamma^{\eps}, \Sigma^{\eps})$ by the identities \eqref{e:PiIntegral} and \eqref{e:GammaSigmaIntegral}. 
However, even if the family of models satisfy analytic bounds uniformly in $\eps$ on $\gen{\ST}$,
this is not necessarily true for its extension to $\hat{\ST}$.
\end{remark}

The structure of the canonical extension of a discrete model will be important for us. That is why we make the following definition.

\begin{definition}
We call a discrete model $Z^\eps = (\Pi^{\eps}, \Gamma^{\eps}, \Sigma^{\eps})$ defined on $\hat{\ST}$ {\it admissible}, if it satisfies the identities \eqref{e:CanonicalGammaSigmaProduct} and furthermore
realises $K^\eps$ for $\CI$.
\end{definition}

\begin{remark}\label{r:RenormModel}
If $M \in \mathfrak{R}$ is a renormalisation map as mentioned in Section~\ref{ss:RegStruct}, such that $M \HT \subset \HT$, where $\HT$ is introduced in Definition~\ref{d:TruncSets}, and if $Z^\eps = (\Pi^{\eps}, \Gamma^{\eps}, \Sigma^{\eps})$ is an admissible model, then we can define a renormalised discrete model $\hat Z^\eps$ as in \cite[Sec.~8.3]{Hai14}, which is also admissible.
\end{remark}

The following result is a discrete analogue of Theorem~\ref{t:Integration}.

\begin{theorem}
For a regularity structure $\ST = (\CT, \CG)$ with the minimal homogeneity $\alpha$, let $\beta$, $\gamma$, $\eta$, $\bar \gamma$, $\bar \eta$ and $r$ be as in Theorem~\ref{t:Integration} and let $Z^\eps = (\Pi^\eps, \Gamma^\eps, \Sigma^\eps)$ be a discrete model which realises $K^\eps$ for $\CI$. Then for any discrete modelled distribution $H^\eps$ the following bound holds
\begin{equ}[e:DIntegralBound]
\VERT \CK^\eps_{\gamma} H^\eps \VERT^{(\eps)}_{\bar{\gamma}, \bar{\eta}; T} \lesssim \VERT H^\eps \VERT^{(\eps)}_{\gamma, \eta; T} \Vert \Pi^\eps \Vert_{\gamma; T}^{(\eps)} \Vert \Sigma^\eps \Vert_{\gamma; T}^{(\eps)} \bigl(1 + \Vert \Gamma^\eps \Vert^{(\eps)}_{\bar{\gamma}; T} + \Vert \Sigma^\eps \Vert^{(\eps)}_{\bar{\gamma}; T}\bigr)\;,
\end{equ}
and one has the identity
\begin{equ}[e:DIntegralIdentity]
\CR^\eps_t \bigl(\CK^\eps_{\gamma} H^\eps\bigr)_t(x) = \int_{0}^t \langle \CR^\eps_s H^\eps_s, K^\eps_{t-s}(x - \cdot)\rangle_\eps\, ds\;.
\end{equ}

Moreover, if $\bar{Z}^\eps = (\bar{\Pi}^\eps, \bar{\Gamma}^\eps, \bar{\Sigma}^\eps)$ is another discrete model realising $K^\eps$ for $\CI$, and if $\bar{\CK}^\eps_{\gamma}$ is defined as in \eqref{e:KEpsDef} for this model, then one has the bound
\begin{equ}[e:DIntegrationDistance]
\VERT \CK^\eps_{\gamma} H^\eps; \bar{\CK}^\eps_{\gamma} \bar{H}^\eps \VERT^{(\eps)}_{\bar{\gamma}, \bar{\eta}; T} \lesssim \VERT H^\eps; \bar{H}^\eps \VERT^{(\eps)}_{\gamma, \eta; T} + \VERT Z^\eps; \bar{Z}^\eps \VERT^{(\eps)}_{\bar{\gamma}; T}\;,
\end{equ}
for all discrete modelled distributions $H^\eps$ and $\bar{H}^\eps$, where the norms on $H^\eps$ and $\bar H^\eps$ are defined via the models $Z^\eps$ and $\bar Z^\eps$ respectively, and the proportionality constant depends on $\eps$ only via the same norms of the discrete objects as in \eqref{e:IntegrationDistance}.
\end{theorem}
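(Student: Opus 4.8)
The plan is to reduce everything to the continuous statement of Theorem~\ref{t:Integration} together with Lemmas~\ref{l:DPiIntegralBound} and \ref{l:DGammaIntegralBound}, which already handle the genuinely new difficulties (the extra kernel $\mathring K^\eps$ living at scale $\eps$ and the weakened $\Sigma^\eps$-continuity). Recall that a discrete model $(\Pi^\eps,\Gamma^\eps,\Sigma^\eps)$ can be turned into an "ordinary-type" model $(\tilde\Pi^\eps,\tilde\Gamma^\eps)$ exactly as in Remark~\ref{r:OriginalModel}, and that Theorem~\ref{t:DReconstruct} identifies $\CR^\eps$ with the reconstruction of this model in the discrete sense. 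So first I would decompose $\CK^\eps_\gamma H^\eps = (\CI + \bar\CJ^\eps + \bar\CN^\eps_\gamma)H^\eps + (\mathring\CJ^\eps + \mathring\CN^\eps_\gamma)H^\eps$; the first (``$\bar{}$'') block is the discrete transcription of the operator $\CK_\gamma$ from \eqref{e:KDef} using the kernels $K^{(\eps,n)}$, whose supports and bounds match those of $K^{(n)}$, while the second block collects all contributions of $\mathring K^\eps$.

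For the $\bar{}$-block, the bounds on the non-integer-homogeneity components of $(\CK^\eps_\gamma H^\eps)_t(x)$, of $(\CK^\eps_\gamma H^\eps)_t(x)-\Sigma^{\eps,ts}_x(\CK^\eps_\gamma H^\eps)_s(x)$, and of $(\CK^\eps_\gamma H^\eps)_t(y)-\Gamma^{\eps,t}_{yx}(\CK^\eps_\gamma H^\eps)_t(x)$ follow exactly as in \cite[Prop.~6.16]{Hai14} applied to the associated model $\tilde Z^\eps$, the only point to check being that the sums over $n$ now run only up to $N-1$, which only helps convergence. As in the proof of Theorem~\ref{t:Integration}, the integer-homogeneity components of the $\Gamma^\eps$-difference require the two-regime split $c2^{-n+1}\le|x-y|$ versus $c2^{-n+1}>|x-y|$ (because the discrete $\CD^{\gamma,\eta}$-spaces, like the continuous ones here, drop the restriction $|x-y|\le\onorm{t,s}$, cf.\ Remark~\ref{r:DDistrMult}), and the key estimate
\begin{equ}
\Big|\int_{\R}\langle\CR^\eps_s H^\eps_s-\Pi^{\eps,s}_x H^\eps_s(x),D^k K^{(\eps,n)}_{t-s}(x-\cdot)\rangle_\eps\,ds\Big|\lesssim 2^{(\sabs{k}-\gamma-\beta)n}\enorm{t}^{\eta-\gamma}
\end{equ}
follows from Theorem~\ref{t:DReconstruct} and Definition~\ref{d:DKernel} just as in the continuous case, with $\gamma-\eta<\s_0$ ensuring time-integrability. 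The contributions of $\mathring K^\eps$ are controlled by the brutal bounds already obtained in the proofs of Lemmas~\ref{l:DPiIntegralBound} and \ref{l:DGammaIntegralBound}: all of them carry a positive power of $\eps$, and one uses $|x-y|\ge\eps$, $\enorm{t}\ge\eps$ and $(|t-s|^{1/\s_0}\vee\eps)\ge\eps$ to convert those powers of $\eps$ into the required powers of $|x-y|$ or $|t-s|^{1/\s_0}\vee\eps$. Collecting everything gives \eqref{e:DIntegralBound}.

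For the identity \eqref{e:DIntegralIdentity}, I would argue as in \cite[Thm.~5.12]{Hai14}: by definition of $\CR^\eps$ in Definition~\ref{d:DReconstruct} and of $\CN^\eps_\gamma$, the quantity $\CR^\eps_t(\CK^\eps_\gamma H^\eps)_t(x)=\big(\Pi^{\eps,t}_x(\CK^\eps_\gamma H^\eps)_t(x)\big)(x)$ telescopes, using \eqref{e:PiIntegral} and the defining formula \eqref{e:NZeroDef}--\eqref{e:NDef}, into $\int_\R\langle\CR^\eps_s H^\eps_s,K^\eps_{t-s}(x-\cdot)\rangle_\eps\,ds$; the upper limit becomes $t$ because $K^\eps$ is non-anticipative and we postulated $\CR^\eps_s=0$ for $s\le0$, and here we crucially use the extra property $(\Pi^{\eps,t}_x\tau)(x)=0$ for $|\tau|>0$ from Definition~\ref{d:DModel} to kill the polynomial remainder terms evaluated at the basepoint. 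Finally, \eqref{e:DIntegrationDistance} is obtained by running the same decomposition and estimates on differences $\CK^\eps_\gamma H^\eps-\bar\CK^\eps_\gamma\bar H^\eps$, exactly parallel to the derivation of \eqref{e:IntegrationDistance} from \eqref{e:Integration}, using the Lipschitz-type versions of Lemmas~\ref{l:DPiIntegralBound} and \ref{l:DGammaIntegralBound} (which are proved identically, all bounds being affine in the model norms) together with \eqref{e:DReconstructTime}.

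The main obstacle is bookkeeping rather than conceptual: one must check that every place in the proof of \cite[Prop.~6.16]{Hai14} where the continuous kernel decomposition and the continuity of $\Sigma$ are used still goes through with the truncated sum $\sum_{n=0}^{N-1}$ and the weakened bound \eqref{e:DSigmaBound}, and that the residual kernel $\mathring K^\eps$ never produces a negative net power of $\eps$ — this is exactly why Definition~\ref{d:DKernel} imposes the support condition $\snorm z\le c\eps$ and the bound \eqref{e:KZeroBounds}, and why we need $\min\CA+\beta>0$ so that all the exponents $\zeta+\beta$ appearing in \eqref{e:KRingBound} and \eqref{e:PiRingIntegralBound} are strictly positive. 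I expect the verification that $\delta\in(0,\min\CA+\beta)$ can be chosen uniformly so that both the time-regularity estimate on $\Pi^\eps\CI\tau$ and the discrete Schauder estimate close simultaneously to be the most delicate quantitative point, but it is entirely analogous to the continuous argument and to the estimates already carried out in Lemma~\ref{l:DPiIntegralBound}.
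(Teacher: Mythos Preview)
Your proposal is correct and follows essentially the same approach as the paper: split $\CK^\eps_\gamma$ into the $\bar K^\eps$-part (handled exactly as in the continuous Theorem~\ref{t:Integration}) and the $\mathring K^\eps$-part (handled by brutal $\eps$-power bounds, converting positive powers of $\eps$ into the required increments via $|x-y|\ge\eps$ and $|t-s|^{1/\s_0}\vee\eps\ge\eps$), and derive \eqref{e:DIntegralIdentity} from the vanishing property $(\Pi^{\eps,t}_x\tau)(x)=0$ for $|\tau|>0$. The only place where the paper is slightly more explicit is the bound on $\mathring\CN^\eps_\gamma H^\eps$, which is not literally covered by Lemmas~\ref{l:DPiIntegralBound}--\ref{l:DGammaIntegralBound} and requires writing $(\CR^\eps_s H^\eps_s-\Pi^{\eps,s}_x\Sigma^{\eps,st}_x H^\eps_t(x))(y)$ as a sum of two model-increment terms before applying the same brutal estimate; this yields $\Vert(\mathring\CN^\eps_\gamma H^\eps)_t(x)\Vert_0\lesssim\enorm{t}^{\eta-\gamma}\eps^{\gamma+\beta}$, after which your argument goes through verbatim.
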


\begin{proof}
The proof of the bound \eqref{e:DIntegralBound} for the components of $\CK^\eps_{\gamma} H^\eps$ not containing $\mathring{K}^\eps$ is almost identical to that of \eqref{e:Integration}, and we only need to bound the terms $\mathring{\CJ}^\eps H^\eps$ and $\mathring{\CN}^\eps_{\gamma} H^\eps$. The estimates on $\mathring{\CJ}^\eps H^\eps$ were obtained in the proof of Lemma~\ref{l:DGammaIntegralBound}. To bound $\mathring{\CN}^\eps_{\gamma} H^\eps$, for $x, y \in \Lambda_\eps^d$, we write
\begin{equs}
\bigl(\CR^\eps_s H^\eps_s - \Pi^{\eps, s}_{x} \Sigma_x^{\eps, s t} H^\eps_t(x)\bigr)(y) &= \Pi^{\eps, s}_{y} \bigl(H^\eps_s(y) - \Gamma_{y x}^{\eps, s} H^\eps_s(x)\bigr)(y) \\
&\qquad + \Pi^{\eps, s}_{y} \Gamma_{y x}^{\eps, s} \bigl(H^\eps_s(x) - \Sigma_x^{\eps, s t} H^\eps_t(x)\bigr)(y)\;,
\end{equs}
where we made use of Definitions~\ref{d:DReconstruct} and \ref{d:DModel}. Estimating this expression similarly to~\eqref{e:KRingBound}, but using~\eqref{e:DModelledDistributionNorm} this time, we obtain
\begin{equs}[e:NZeroBound]
\Vert \bigl(\mathring{\CN}^{\eps}_{\gamma} H^\eps\bigr)_t(x) \Vert_0 \lesssim \enorm{t}^{\eta - \gamma} \eps^{\gamma + \beta} \lesssim \enorm{t}^{\eta + \beta}\;,
\end{equs}
where we have used $\gamma + \beta > 0$. 

Furthermore, the operator $\Gamma^{\eps, t}_{y x}$ leaves $\1$ invariant, and we have 
\begin{equ}
\Gamma^{\eps, t}_{y x} \bigl(\mathring{\CN}^{\eps}_{\gamma} H^\eps\bigr)_t(x) = \bigl(\mathring{\CN}^{\eps}_{\gamma} H^\eps\bigr)_t(x)\;. 
\end{equ}
Thus, estimating $\bigl(\mathring{\CN}^{\eps}_{\gamma} H^\eps\bigr)_t(y)$ and $\bigl(\mathring{\CN}^{\eps}_{\gamma} H^\eps\bigr)_t(x)$ separately by the intermediate bound in \eqref{e:NZeroBound} and using $|x-y| \geq \eps$, yields the required bound. In the same way we obtain the required estimate on $\Sigma^{\eps, s t}_{x} \bigl(\mathring{\CN}^{\eps}_{\gamma} H^\eps\bigr)_t(x) - \bigl(\mathring{\CN}^{\eps}_{\gamma} H^\eps\bigr)_s(x)$.

The bound \eqref{e:DIntegrationDistance} can be show similarly to \eqref{e:IntegrationDistance}, using the above approach. In order to show that the identity \eqref{e:DIntegralIdentity} holds, we notice that 
\begin{equ}
\bigl(\CK^\eps_{\gamma} H^\eps\bigr)_t(x) \in \poly{\CT} + \CT_{\geq \alpha + \beta}\;, 
\end{equ}
where $\poly{\CT}$ contains only the abstract polynomials and $\alpha + \beta > 0$ by assumption. It hence follows from Definitions \ref{d:DModel} and \ref{d:DReconstruct} that
\begin{equ}
\CR^\eps_t \bigl(\CK^\eps_{\gamma} H^\eps\bigr)_t(x) = \langle \1, \bigl(\CK^\eps_{\gamma} H^\eps\bigr)_t(x) \rangle\;,
\end{equ}
which is equal to the right-hand side of \eqref{e:DIntegralIdentity}.
\end{proof}


\section{Analysis of discrete stochastic PDEs}
\label{s:DPDEs}

We consider the following spatial discretisation of equation \eqref{e:SPDE} on $\R_+ \times \Lambda_\eps^d$:
\begin{equ}[e:DSPDE]
 \partial_t u^{\eps} = A^{\eps} u^{\eps} + F^{\eps}(u^{\eps}, \xi^\eps)\;, \qquad u^{\eps}(0, \cdot) = u^{\eps}_0(\cdot)\;,
\end{equ}
where $u^{\eps}_0 \in \R^{\Lambda_\eps^d}$, $\xi^\eps$ is a spatial discretisation of $\xi$, $F^{\eps}$ is a discrete approximation of $F$, and $A^\eps : \ell^\infty(\Lambda_\eps^d) \to \ell^\infty(\Lambda_\eps^d)$ is a bounded linear operator satisfying the following assumption.

\begin{assumption}\label{a:DOperator}
There exists an operator $A$ given by a Fourier multiplier $a : \R^d \to \R$ satisfying Assumption~\ref{a:Operator} with an even integer parameter $\beta > 0$ and a measure $\mu$ on $\Z^d$
with finite support such that
\begin{equ}[e:DOperatorExtension]
\bigl(A^{\eps} \varphi\bigr) (x) = \eps^{-\beta} \int_{\R^d} \varphi(x - \eps y)\, \mu(dy)\;, \qquad x \in \Lambda_\eps^d\;,
\end{equ}
for every $\varphi \in \CC(\R^d)$, and such that the identity
\begin{equ}[e:DOperatorPoly]
 \int_{\R^d} P(x - y)\, \mu(dy) = (A P)(x)\;, \qquad x \in \R^d\;,
\end{equ}
holds for every polynomial $P$ on $\R^d$ with $\deg P \leq \beta$. 
Furthermore, the Fourier transform of $\mu$ only vanishes on $\Z^d$.
\end{assumption}

\begin{example}\label{ex:Laplacian}
A common example of the operator $A$ is the Laplacian $\Delta$, with its nearest neighbor discrete approximation $\Delta^{\eps}$, defined by \eqref{e:DOperatorExtension} with the measure $\mu$ given by 
\begin{equ}[e:DLaplacian]
\mu ( \varphi) = \sum_{x \in \Z^d : \Vert x \Vert = 1} \bigl(\varphi(x) - \varphi(0)\bigr)\;,
\end{equ}
for every $\varphi \in \ell^\infty(\Z^d)$, and where $\Vert x \Vert$ is the Euclidean norm. In this case, the Fourier multiplier of $\Delta$ is $a(\zeta) = - 4 \pi^2 \Vert \zeta \Vert^2$ and
\begin{equ}
\bigl(\mathscr{F} \mu\bigr)(\zeta) = - 4 \sum_{i=1}^d \sin^2 \bigl(\pi \zeta_i\bigr)\;, \qquad \zeta \in \R^d\;.
\end{equ}
where $\SF$ is the Fourier transform. One can see that Assumption~\ref{a:DOperator} is satisfied with $\beta = 2$.
\end{example}

The following section is devoted to the analysis of discrete operators.


\subsection{Analysis of discrete operators}

We assume that the operator $A^\eps : \ell^\infty(\Lambda_\eps^d) \to \ell^\infty(\Lambda_\eps^d)$ satisfies Assumption~\ref{a:DOperator} and we define the Green's function of $\partial_t - A^\eps$ by
\begin{equ}[e:DGreenDef]
G^\eps_t(x) \eqdef \eps^{-d} \1_{t \geq 0} \bigl(e^{t A^\eps} \delta_{0, \cdot}\bigr)(x)\;, \qquad (t, x) \in \R \times \Lambda_\eps^d\;,
\end{equ}
where $\delta_{\cdot, \cdot}$ is the Kronecker's delta. 

In order to build an extension of $G^\eps$ off the grid, we first choose a function $\varphi \in \CS(\R^d)$ whose values coincide with $\delta_{0, \cdot}$ on $\Z^d$, and such that $\bigl(\SF \varphi\bigr)(\zeta) = 0$ for $|\zeta|_\infty \geq 3/4$, say, where $\SF$ is the Fourier transform. 
To build such a function, write $\tilde \varphi \in \CC^\infty(\R^d)$ for the Dirichlet kernel
$\tilde \varphi(x) = \prod_{i=1}^d \frac{\sin(\pi x_i)}{\pi x_i}$,
whose values coincide with $\delta_{0, x}$ for $x \in \Z^d$, and whose Fourier transform is supported in $\{\zeta : |\zeta|_\infty \leq \frac{1}{2}\}$. Choosing any function $\psi \in \CC^\infty(\R^d)$ 
supported in the ball of radius $1/4$ around the origin and integrating to $1$, it then suffices to set $\SF \varphi = \bigl(\SF \tilde \varphi\bigr) * \psi$.

Furthermore, we define the bounded operator $\tilde{A}^\eps : \CC_b(\R^d) \to \CC_b(\R^d)$ by the right-hand side of \eqref{e:DOperatorExtension}, where $\CC_b(\R^d)$ is the space of bounded continuous functions on $\R^d$ equipped with the supremum norm. Then, denoting as usual by $\varphi^{\eps}$ the rescaled version of $\varphi$, we have for $G^\eps$ the representation
\begin{equ}[e:DGreenExt]
G^\eps_t(x) = \1_{t \geq 0} \bigl(e^{t \tilde{A}^\eps} \varphi^{\eps}\bigr)(x)\;, \qquad (t, x) \in \R \times \Lambda_\eps^d\;.
\end{equ}
By setting $x \in \R^{d}$ in \eqref{e:DGreenExt}, we obtain an extension of $G^\eps$ to $\R^{d+1}$, which we again denote by $G^\eps$.

Unfortunately, the function $G^\eps_t(x)$ is discontinuous at $t = 0$, and our next aim is to modify it in such a way that it becomes differentiable at least for sufficiently large values of $|x|$. Since $\tilde{A}^\eps$ generates a strongly continuous semigroup, for every $m \in \N$ we have the uniform limit
\begin{equ}[e:DGreenTimeDiff]
 \lim_{t \downarrow 0} \partial_t^m G^\eps_t = \bigl(\tilde{A}^\eps \bigr)^m \varphi^{\eps}\;.
\end{equ}
This gives us the terms which we have to subtract from $G^\eps$ to make it continuously differentiable at $t = 0$. For this, we take a function $\varrho : \R \to \R$ such that $\varrho(t) = 1$ for $t \in \bigl[0, \frac{1}{2}\bigr]$, $\varrho(t) = 0$ for $t \in (-\infty, 0) \cup [1, +\infty)$, and $\varrho(t)$ is smooth on $t > 0$. Then, for $r > 0$, we define
\begin{equ}[e:TimeOperator]
T^{\eps, r}(t, x) \eqdef \varrho \bigl(t / \eps^\beta\bigr) \sum_{m \leq r / \beta} \frac{t^m}{m!} \bigl(\tilde A^\eps\bigr)^m \varphi^\eps (x)\;, \qquad (t, x) \in \R^{d+1}\;.
\end{equ}
The role of the function $\varrho$ is to have $T^{\eps, r}$ compactly supported in $t$. Then we have the following result.

\begin{lemma}\label{l:DGreensBound}
In the described context, let Assumption~\ref{a:DOperator} be satisfied. Then for every fixed value $r > 0$ there exists a constant $c > 0$ such that the bound
\begin{equ}[e:GHatBound]
\bigl| D^k \bigl(G^\eps - T^{\eps, r}\bigr)(z) \bigr| \leq C\snorm{z}^{-d - \sabs{k}}\;,
\end{equ}
holds uniformly over $z \in \R^{d+1}$ with $\snorm{z} \geq c \eps$, for all $k \in \N^{d+1}$ with $|k|_{\s} \leq r$, for $D^k$ begin a space-time derivative and for the space-time scaling $\s = (\beta, 1, \ldots, 1)$. 

Moreover, for $|t|_\eps \eqdef |t|^{1/\beta} \vee \eps$, the function $\bar G^\eps_t(x) \eqdef |t|_{\eps}^d G^\eps_{t} \bigl(|t|_{\eps} x\bigr)$ is Schwartz in $x$, i.e. for every $m \in \N$ and $\bar k \in \N^d$ there is a constant $\bar C$ such that the bound
\begin{equ}[e:GHatSchwartz]
\bigl| D_x^{\bar k} \bar G^\eps_t (x) \bigr| \leq \bar C \bigl(1 + |x|\bigr)^{-m}\;,
\end{equ}
holds uniformly over $(t,x) \in \R^{d+1}$.
\end{lemma}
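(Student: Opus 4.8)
The plan is to reduce both bounds \eqref{e:GHatBound} and \eqref{e:GHatSchwartz} to classical estimates on the heat (or $A$-) semigroup together with a Fourier-analytic control of the discrete semigroup $e^{t\tilde A^\eps}$ against the continuous one $e^{tA}$. First I would recall that, by Assumption~\ref{a:DOperator}, the Fourier multiplier of $\tilde A^\eps$ is $a^\eps(\zeta)\eqdef\eps^{-\beta}\hat\mu(\eps\zeta)=\eps^{-\beta}\int_{\R^d}e^{-2\pi i\eps\langle y,\zeta\rangle}\,\mu(dy)$, and that by \eqref{e:DOperatorPoly} one has $a^\eps(\zeta)=a(\zeta)+O(\eps^{\beta'}|\zeta|^{\beta+\beta'})$ near $\zeta=0$ for some $\beta'>0$, while $\Re a^\eps(\zeta)\lesssim -|\zeta|^\beta$ uniformly for $|\zeta|_\infty\le \eps^{-1}/2$ because $\hat\mu$ vanishes only on $\Z^d$ and $\Re a<0$ off the origin. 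Since $\SF\varphi$ is supported in $\{|\zeta|_\infty\le 3/4\}$, the rescaled $\SF\varphi^\eps$ is supported in $\{|\zeta|_\infty\le 3/(4\eps)\}$, so $G^\eps_t(x)=\1_{t\ge 0}\int e^{2\pi i\langle x,\zeta\rangle}e^{t a^\eps(\zeta)}(\SF\varphi^\eps)(\zeta)\,d\zeta$ with an integrand supported where the good bound $\Re a^\eps(\zeta)\lesssim -|\zeta|^\beta$ holds.

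Next I would establish \eqref{e:GHatSchwartz}. Writing $\bar G^\eps_t(x)=|t|_\eps^d G^\eps_t(|t|_\eps x)$ and substituting $\zeta\mapsto \zeta/|t|_\eps$, one gets $\bar G^\eps_t(x)=\1_{t\ge0}\int e^{2\pi i\langle x,\zeta\rangle}e^{t a^\eps(\zeta/|t|_\eps)}(\SF\varphi^\eps)(\zeta/|t|_\eps)\,d\zeta$. For $|t|^{1/\beta}\ge\eps$ one has $|t|_\eps=|t|^{1/\beta}$ and $t\,a^\eps(\zeta/|t|_\eps)$ is, uniformly in $t,\eps$, a symbol decaying like $e^{-c|\zeta|^\beta}$ on the support of the cutoff, with all $\zeta$-derivatives controlled; for $|t|^{1/\beta}<\eps$ one has $|t|_\eps=\eps$, $t/\eps^\beta\in[0,1]$, and $t\,a^\eps(\zeta/\eps)=(t/\eps^\beta)\,\hat\mu(\zeta)$, again a smooth compactly supported symbol with all derivatives uniformly bounded and with $\Re$ part $\le 0$. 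In both regimes, $D_x^{\bar k}\bar G^\eps_t$ is the Fourier transform of a smooth, compactly supported (or Schwartz), uniformly-bounded-in-all-derivatives function of $\zeta$, so integration by parts in $\zeta$ yields the claimed rapid decay in $x$ with constants independent of $(t,\eps)$. This step is essentially bookkeeping once the symbol bounds are in place.

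For \eqref{e:GHatBound} I would split $G^\eps-T^{\eps,r}$ according to $|t|\le \eps^\beta$ and $|t|>\eps^\beta$. In the region $\snorm{z}\ge c\eps$ with $|t|\le\eps^\beta$ (so $\snorm z\simeq|x|\gtrsim\eps$), the subtracted Taylor polynomial $T^{\eps,r}$ is designed precisely so that $\partial_t^m(G^\eps-T^{\eps,r})\to 0$ as $t\downarrow0$ for $m\le r/\beta$ (by \eqref{e:DGreenTimeDiff}), hence $G^\eps-T^{\eps,r}$ is $C^{r}$ across $t=0$; the needed pointwise bound $\snorm z^{-d-\sabs k}$ then follows from the off-diagonal decay of the kernels $(\tilde A^\eps)^m\varphi^\eps$ and of $G^\eps$ itself, which in turn comes from the fact that $a^\eps$ extends to an entire function (finite-support $\mu$) so that the Paley--Wiener/steepest-descent argument gives $|D^k G^\eps_t(x)|\lesssim |t|_\eps^{-d-\sabs k/\beta}(1+|x|/|t|_\eps)^{-m}$, i.e. exactly \eqref{e:GHatSchwartz} rescaled, which already implies $\snorm z^{-d-\sabs k}$ when $|t|\lesssim\snorm z^\beta$. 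In the region $|t|>\eps^\beta$ one has $T^{\eps,r}=0$ for $|t|\ge\eps^\beta$, so $G^\eps-T^{\eps,r}=G^\eps$ for $|t|\ge\eps^\beta$ and we just need $|D^kG^\eps_t(x)|\lesssim\snorm z^{-d-\sabs k}$, which again is \eqref{e:GHatSchwartz} after undoing the rescaling. I would organize all of this by first proving the rescaled bound $|D^k G^\eps_t(x)|\lesssim|t|_\eps^{-d-\sabs k/\beta}(1+|x|/|t|_\eps)^{-m}$ for all $k,m$ via the Fourier representation above, deduce \eqref{e:GHatSchwartz}, then derive \eqref{e:GHatBound} by combining this with the Taylor-subtraction near $t=0$.

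The main obstacle is the behaviour near $t=0$: the bare kernel $G^\eps$ is genuinely discontinuous there (its value jumps to $\varphi^\eps$, a mollified delta of mass $\eps^{-d}$), so one cannot directly differentiate in $t$, and the whole point of $T^{\eps,r}$ is to absorb exactly the finitely many non-vanishing $t$-derivatives at $0$. The delicate part is verifying that after this subtraction the remainder is $C^r$ uniformly in $\eps$ \emph{and} still enjoys the off-diagonal spatial decay needed to beat $\snorm z^{-d-\sabs k}$ for $\snorm z\gtrsim\eps$ — i.e. that the Taylor remainder estimate $|\partial_t^{r/\beta+1}G^\eps_t(x)|$ integrated in $t$ over $[0,\eps^\beta]$ produces a contribution of the right order, which requires the uniform symbol bounds on $t\,a^\eps(\zeta/\eps)$ on the full frequency support $|\zeta|_\infty\lesssim 1$ rather than just near the origin; this is where the hypothesis that $\hat\mu$ vanishes only on $\Z^d$ (giving $\Re a^\eps\le 0$ with strict negativity away from $0$, hence no frequency blow-up) is essential. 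Everything else follows the template of \cite[Lem.~7.7, Lem.~5.5]{Hai14} and \cite[Lem.~6.7]{HMW12}.
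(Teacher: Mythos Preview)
Your approach is essentially the same as the paper's: both prove \eqref{e:GHatSchwartz} via the Fourier representation of $\bar G^\eps_t$, splitting into the regimes $t\ge\eps^\beta$ and $t<\eps^\beta$, and then derive \eqref{e:GHatBound} from it. The paper writes the Fourier multiplier as $a(\zeta)f(\eps|t|_\eps^{-1}\zeta)$ with $f\eqdef(\SF\mu)/a$ and uses that $f\ge\bar c>0$ on the support of the cutoff, which is exactly your bound $\Re a^\eps(\zeta)\lesssim-|\zeta|^\beta$ in different notation.

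Where the paper is cleaner is in the reduction of \eqref{e:GHatBound} to \eqref{e:GHatSchwartz}. Rather than integrating a Taylor remainder in $t$ over $[0,\eps^\beta]$ as you suggest, the paper simply bounds $G^\eps$ and $T^{\eps,r}$ \emph{separately} for $t>0$. For $G^\eps$, time derivatives are converted to spatial ones via $\partial_t^{k_0}G^\eps_t=(\tilde A^\eps)^{k_0}G^\eps_t$ together with \eqref{e:DOperatorPoly} and Taylor's formula, yielding
\[
|D^kG^\eps_t(x)|\lesssim\sup_{|y-x|\le k_0\hat r\eps}\sup_{|l|=\beta k_0}|D_y^{\bar k+l}G^\eps_t(y)|\;,
\]
after which the two cases $|t|^{1/\beta}\gtrless|x|$ are read off directly from \eqref{e:GHatSchwartz} with $m=0$ and $m=d+|k|_\s$ respectively; choosing $c\ge 2r\hat r/\beta$ absorbs the shift in $y$. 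The bound on $T^{\eps,r}$ is then immediate from the Schwartz decay of $\varphi^\eps$ and the support of $\varrho$. This avoids all the delicate uniform-in-$\eps$ remainder estimates you flag as the ``main obstacle''. Note also that your master bound should read $|t|_\eps^{-d-\sabs k}$, not $|t|_\eps^{-d-\sabs k/\beta}$, since $|t|_\eps$ is already a length scale.
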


\begin{proof}
The function $G^\eps - T^{\eps, r}$ is of class $\CC^r_\s$ on $\R^{d+1}$. Indeed, spatial regularity follows immediately from the regularity of $\varphi$ and commutation of $\tilde A^\eps$ with the differential operator. Continuous differentiability at $t = 0$ follows from \eqref{e:DGreenTimeDiff}. Furthermore, since $G^\eps$ vanishes on $t \leq 0$, we only need to consider $t > 0$.

Next, we notice that the bound \eqref{e:GHatBound} follows from \eqref{e:GHatSchwartz}. 
Let $\hat r>0$ be such that the measure $\mu$ in Assumption~\ref{a:DOperator} is supported in the ball
of radius $\hat r$. Then, for $k = (k_0, \bar k) \in \N^{d+1}$ with $k_0 \in \N$ and $|k|_{\s} \leq r$ we use \eqref{e:DGreenExt} and the identities \eqref{e:DOperatorPoly}, combined with the Taylor's formula, to get
\begin{equs}[e:DGreenInterBound]
\bigl| D^k G^\eps_t (x) \bigr| = \bigl| \bigl(\tilde A^\eps\bigr)^{k_0} D^{\bar k}_x G^\eps_t(x) \bigr| \lesssim \sup_{y : |y-x| \leq k_0 \hat r \eps} \sup_{l : |l| = \beta k_0} \bigl| D^{\bar k + l}_y G^\eps_t (y) \bigr|\;,
\end{equs}
where $y \in \R^d$, $l \in \N^d$. For $\snorm{t, x} \geq c \eps$, in the case $|t|^{1/\beta} \geq |x|$, we bound the right-hand side of \eqref{e:DGreenInterBound} using \eqref{e:GHatSchwartz} with $m = 0$, what gives an estimate of order $|t|^{-(d + |k|_{\s})/\beta}$. In the case $|t|^{1/\beta} < |x|$, we use \eqref{e:GHatSchwartz} with $m = d + |k|_s$, and we get a bound of order $|x|^{-d - |k|_{\s}}$, if we take $c \geq 2 r \hat r / \beta$. Furthermore, the required bound on $T^{\eps, r}$ follows easily from the properties of the functions $\varphi$ and $\varrho$. Hence, we only need to prove the bound \eqref{e:GHatSchwartz}.

Denoting by $\mathscr{F}$ the Fourier transform, we get from \eqref{e:DGreenExt} and Assumption~\ref{a:DOperator}:
\begin{equs}[e:DGreenBound]
\bigl( \mathscr{F} \bar G^{\eps}_t\bigr)(\zeta) = \bigl( \mathscr{F} \varphi\bigr) \bigl(\eps |t|_{\eps}^{-1} \zeta\bigr)\, e^{t |t|_{\eps}^{-1} a(\zeta) f( \eps |t|_{\eps}^{-1} \zeta)}\;,
\end{equs}
where we have used the scaling property $\lambda^\beta a(\zeta) = a(\lambda \zeta)$, and where $f \eqdef (\mathscr{F} \mu) / a$.

We start with considering the case $t \geq \eps^\beta$. It follows from the last part of
Assumption~\ref{a:DOperator} that there exists $\bar c>0$ such that $f(\zeta) \ge \bar c$ for
$|\zeta|_\infty \le 3/4$. Since
$\eps |t|_{\eps}^{-1} \leq 1$, we conclude that
\begin{equ}
\bigl|D^{\bar k}_\zeta e^{a(\zeta) f( \eps |t|_{\eps}^{-1} \zeta)} \bigr| \lesssim |\zeta|^{\beta |\bar k|} e^{a(\zeta) \bar c} \lesssim \bigl(1 + |\zeta| \bigr)^{-m}\;,
\end{equ}
for $|\zeta|_\infty < 3 / \bigl(4\eps |t|_{\eps}^{-1}\bigr)$, for every $m \geq 0$ and for a proportionality constant dependent on $m$ and $\bar k$. Here, we have used $a(\zeta) < 0$ and polynomial growth of $|a(\zeta)|$. Since $\bigl( \mathscr{F} \varphi\bigr) \bigl(\eps |t|_{\eps}^{-1} \zeta\bigr)$ vanishes for $|\zeta|_\infty \geq 3 / \bigl(4\eps |t|_{\eps}^{-1}\bigr)$, we conclude that 
\begin{equ}
\bigl|D^{\bar k}_\zeta \bigl( \mathscr{F} \bar G^{\eps}_t\bigr)(\zeta)\bigr| \lesssim \bigl(1 + |\zeta| \bigr)^{-m}\;,
\end{equ}
uniformly in $t$ and $\eps$ (provided that $t \geq \eps^\beta$), 
and for every $m \in \N$ and $\bar k \in \N^d$.

In the case $t < \eps^\beta$, we can bound the exponent in \eqref{e:DGreenBound} by $1$, and the polynomial decay comes from the factor $\bigl( \mathscr{F} \varphi\bigr) \bigl(\zeta\bigr)$, because $\varphi \in \CS(\R^d)$. Since the Fourier transform is continuous on Schwartz space, this implies that $\bar G^{\eps}_t$ is a Schwartz function, with bounds uniform in $\eps $ and $t$, which is exactly the claim.
\end{proof}

The following result is an analogue of Lemma~\ref{l:GreenDecomposition} for $G^\eps$.

\begin{lemma} \label{l:DGreenDecomposition}
Let Assumption~\ref{a:DOperator} be satisfied. Then, the function $G^\eps$ defined in \eqref{e:DGreenExt} can be written as $G^\eps = K^\eps + R^\eps$ in such a way that the identity
\begin{equ}[e:sumGreen]
\bigl(G^\eps \star_\eps u\bigr)(z) = \bigl(K^\eps \star_\eps u\bigr)(z) + \bigl(R^\eps \star_\eps u\bigr)(z)\;,
\end{equ}
holds for all $z \in (-\infty, 1] \times \Lambda_\eps^{d}$ and all functions $u$ on $\R_+ \times \Lambda_\eps^d$, periodic in the spatial variable with some fixed period. Furthermore, $K^\eps$ is regularising of order $\beta$ in the sense of Definition~\ref{d:DKernel}, for arbitrary (but fixed) $r$ and with the scaling $\s = (\beta, 1, \ldots, 1)$. The function $R^{\eps}$ is compactly supported, non-anticipative and the norm $\Vert R^\eps \Vert_{\CC^r}$ is bounded uniformly in $\eps$.
\end{lemma}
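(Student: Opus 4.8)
The strategy is to transcribe the proof of Lemma~\ref{l:GreenDecomposition} (that is, of \cite[Lem.~7.7]{Hai14}), using Lemma~\ref{l:DGreensBound} as the sole source of quantitative estimates; the one genuinely new ingredient is that the scale-$\eps$ behaviour of $G^\eps$ (where the singularity at $t=0$ sits and where the grid is no longer negligible relative to the relevant length scale) will be swept into the extra term $\mathring K^\eps$ of Definition~\ref{d:DKernel} rather than into $R^\eps$. Concretely, I would fix a smooth dyadic partition of unity $\{\varphi^{(n)}\}_{n \ge 0}$ adapted to the scaling $\s=(\beta,1,\ldots,1)$, with $\varphi^{(n)}$ supported in $\{c2^{-n-1}\le \snorm{z}\le c2^{-n}\}$ for $n\ge 1$, $\varphi^{(0)}$ supported in $\{\snorm{z}\le c\}$, $\sum_{n\ge 0}\varphi^{(n)}\equiv 1$ near the origin, and $|D^k\varphi^{(n)}(z)|\lesssim 2^{\sabs{k}n}$. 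Writing $\hat G^\eps \eqdef G^\eps - T^{\eps,r}$, Lemma~\ref{l:DGreensBound} gives $\hat G^\eps\in\CC^r_\s(\R^{d+1})$, the bound $|D^k\hat G^\eps(z)|\lesssim\snorm{z}^{-d-\sabs{k}}$ for $\snorm{z}\ge c\eps$ and $\sabs{k}\le r$, and (via the uniform Schwartz bound on $\bar G^\eps$) the bound $|D^kG^\eps(z)|\lesssim\eps^{-d-\sabs{k}}$ on $\{\snorm{z}\le c\eps\}$. For $c$ small enough and $0\le n\le N-1$ the support of $\varphi^{(n)}$ lies in $\{\snorm{z}\ge c\eps\}$, so the candidate pieces $\varphi^{(n)}\hat G^\eps$ satisfy $|D^k(\varphi^{(n)}\hat G^\eps)(z)|\lesssim 2^{(d+\sabs{k})n}=2^{(|\s|-\beta+\sabs{k})n}$, which is exactly \eqref{e:KernelBound} since $|\s|-\beta=d$.

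Next I would enforce the annihilation conditions \eqref{e:DPolyKill}. For $\sabs{k}\le r$ and $0\le n\le N-1$ the discrete moment $\int_{\R\times\Lambda_\eps^d}z^k(\varphi^{(n)}\hat G^\eps)(z)\,dz$ differs from its continuous counterpart by a factor which is a positive power of $\eps 2^{n}\le 1$ smaller (the integrand varies on scale $2^{-n}\gg\eps$), and the continuous moment is of size $\lesssim 2^{-(\sabs{k}+\beta)n}$ by the bounds just recorded; hence, exactly as in \cite[Lem.~7.7]{Hai14}, one subtracts from $\varphi^{(n)}\hat G^\eps$ a correction $\sum_{\sabs{k}\le r}c^{(n)}_k\theta^{(n)}_k$ built from fixed $2^{-n}$-scale, non-anticipative bumps $\theta^{(n)}_k$ obeying \eqref{e:KernelBound}, with $c^{(n)}_k$ of a size that preserves \eqref{e:KernelBound} for the resulting $K^{(\eps,n)}$, and arranged so that the corrections that must be re-injected telescope into a $\CC^r$, compactly supported function of norm $\CO(1)$ uniformly in $\eps$. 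For the sub-$\eps$ scales I would take the full near-origin part of $G^\eps$ not already captured, namely (up to a fixed cutoff to $\{\snorm{z}\le c\eps\}$) $\sum_{n\ge N}\varphi^{(n)}\hat G^\eps + T^{\eps,r}$, which by Lemma~\ref{l:DGreensBound} is of size $\CO(\eps^{-d})$, and then subtract a fixed multiple of an $\eps$-scale bump to cancel its total discrete mass; that mass is $\CO(\eps^\beta)$ and is computed explicitly using $\eps^d\sum_{x\in\Lambda_\eps^d}(e^{t\tilde A^\eps}\varphi^\eps)(x)=1$, which holds because $\mu(\R^d)=0$ (take $P\equiv 1$ in \eqref{e:DOperatorPoly}), so that $e^{t\tilde A^\eps}$ preserves the discrete integral. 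Calling the result $\mathring K^\eps$, it is supported in $\{\snorm{z}\le c\eps\}$, satisfies \eqref{e:DPolyKill} with $k=0$, and obeys $\sup|\mathring K^\eps|\lesssim\eps^{-d}=\eps^{-|\s|+\beta}$; the Schwartz rather than compact spatial profile of $T^{\eps,r}$ is harmless, its tail outside $\{\snorm{z}\le c\eps\}$ being $\CO(\eps^M)$ for every $M$ and thus absorbable into the uniformly $\CC^r$ remainder. Setting $\bar K^\eps\eqdef\sum_{n=0}^{N-1}K^{(\eps,n)}$ and $K^\eps\eqdef\bar K^\eps+\mathring K^\eps$ then yields a function regularising of order $\beta$ in the sense of Definition~\ref{d:DKernel}.

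It remains to define $R^\eps$ and check \eqref{e:sumGreen}. By construction $G^\eps=K^\eps+R^\eps_0$ with $R^\eps_0\eqdef G^\eps-K^\eps$, and $R^\eps_0$ is $\CC^r$ with $\Vert R^\eps_0\Vert_{\CC^r}$ bounded uniformly in $\eps$: it is the sum of the leftover compactly supported corrections, of $(1-\sum_{n\ge 0}\varphi^{(n)})G^\eps$ (which is $\CC^r$ and $\CO(1)$ on its support by the $\snorm{z}^{-d-\sabs{k}}$ decay), and of the negligible spatial tails; moreover $R^\eps_0$ is non-anticipative since $G^\eps$, each $K^{(\eps,n)}$ and $\mathring K^\eps$ are. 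To make $R^\eps$ genuinely compactly supported while keeping the convolution identity, I would fold $R^\eps_0$ onto one period using the spatial periodicity of $u$: with $L$ the period and $\chi$ a compactly supported smooth function satisfying $\sum_{m\in\Z^d}\chi(\cdot-Lm)\equiv 1$, set $R^\eps_t(x)\eqdef\chi(x)\sum_{m\in\Z^d}(R^\eps_0)_t(x-Lm)$; this is compactly supported, $\CC^r$ with a uniform bound, non-anticipative, and satisfies $R^\eps\star_\eps u=R^\eps_0\star_\eps u$ for every spatially $L$-periodic $u$, and truncating $R^\eps$ to $\{t\le 1\}$ (legitimate since $R^\eps$ is non-anticipative and \eqref{e:sumGreen} is only claimed for $z$ with time coordinate $\le 1$) makes it compactly supported in time as well. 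Since $G^\eps=\bar K^\eps+\mathring K^\eps+R^\eps_0$ holds exactly before folding, \eqref{e:sumGreen} follows. The main obstacle is the bookkeeping at the scale-$\eps$ interface: one must check simultaneously that the moment corrections and the non-compact spatial profile of $T^{\eps,r}$ do not spoil either the $\eps^{-|\s|+\beta}$ bound on $\mathring K^\eps$ or the uniform $\CC^r$ bound on $R^\eps$, and that the dyadic pieces $K^{(\eps,n)}$ with $n$ close to $N$ still obey \eqref{e:KernelBound} and \eqref{e:DPolyKill}; everything else is a routine transcription of the continuous argument once Lemma~\ref{l:DGreensBound} is available.
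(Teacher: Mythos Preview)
Your strategy is the paper's: dyadically decompose $\hat G^\eps = G^\eps - T^{\eps,r}$ using Lemma~\ref{l:DGreensBound}, correct moments by a telescoping family of bumps, and put the sub-$\eps$ scales together with $T^{\eps,r}$ into $\mathring K^\eps$. The compactification of $R^\eps$ via periodic folding is also what the paper does (it just cites \cite[Lem.~7.7]{Hai14}).

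The one place where your reasoning is not right is the handling of the spatial profile of $T^{\eps,r}$. Your claim that its tail outside $\{\snorm{z}\le c\eps\}$ is $\CO(\eps^M)$ and hence absorbable into a uniformly $\CC^r$ remainder is false as stated on $\R^{d+1}$: at a non-grid point like $x=\tfrac{3}{2}\eps\, e_1$ one has $\varphi^\eps(x)=\eps^{-d}\varphi(\tfrac{3}{2}e_1)$, which is of order $\eps^{-d}$, and the $k_0$-th time derivative of $T^{\eps,r}$ there is of order $\eps^{-d-\beta k_0}$ because of the $\varrho(t/\eps^\beta)$ factor. Such a piece cannot be put into $R^\eps$ with a uniform $\CC^r$ bound. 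The paper sidesteps this entirely by exploiting the very reason $\varphi$ was built with $\varphi\!\restriction_{\Z^d}=\delta_{0,\cdot}$: the \emph{restriction} of $T^{\eps,r}$ to the spatial grid $\Lambda_\eps^d$ is genuinely supported in $\{|x|\le (r\hat r/\beta)\eps\}$, since $(\tilde A^\eps)^m\varphi^\eps$ evaluated on $\Lambda_\eps^d$ is a finite-range operator applied to $\eps^{-d}\delta_{0,\cdot}$. Because Definition~\ref{d:DKernel} only asks $\mathring K^\eps$ to be a function on $\R\times\Lambda_\eps^d$, one simply sets $\mathring K^\eps$ to be the grid restriction of $\bar\varrho_{\ge N}\,\hat G^\eps + T^{\eps,r}$ minus a single mass-correcting bump $P^{(N)}$; there is no tail to absorb. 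This is the observation you are missing, and it is the only genuinely new ingredient beyond the continuous proof.
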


\begin{proof}
Let $M : \R^{d+1} \to \R_+$ be a smooth norm for the scaling $\s$ (see for example \cite[Rem.~2.13]{Hai14}). Furthermore, let $\bar \rho : \R_+ \to [0,1]$ be a smooth ``cutoff function'' such that $\bar \rho(s) = 0$ if $s \notin [1/2, 2]$, and such that $\sum_{n \in \Z} \bar \rho(2^n s) = 1$ for all $s > 0$ (see the construction of the partition of unity in \cite{BCD11}). For integers $n \in [0, N)$ we set the functions 
\begin{equ}
\bar \rho_n(z) \eqdef \bar \rho(2^{n} M(z))\;, \qquad \bar \rho_{< 0} \eqdef \sum_{n < 0} \bar \rho_n\;, \qquad \bar \rho_{\ge N} \eqdef \sum_{n \ge N} \bar \rho_n\;, 
\end{equ}
as well as
\begin{equs}
\bar{K}^{(\eps, n)}(z) = \bar \varrho_n(z) &\bigl(G^\eps - T^{\eps, r}\bigr)(z)\;, \qquad \bar{R}^\eps(z) = \bar \varrho_{< 0}(z) \bigl(G^\eps - T^{\eps, r}\bigr)(z)\;,\\
\tilde{K}^{\eps}(z) &= \bar \varrho_{\ge N}(z) \bigl(G^\eps - T^{\eps, r}\bigr)(z) +  T^{\eps, r}(z)\;.\label{e:TildeKDef}
\end{equs}
Then it follows immediately from the properties of $\bar \varrho$ that 
\begin{equ}
G^\eps = \sum_{n = 0}^{N-1} \bar K^{(\eps, n)} + \tilde{K}^{\eps} + \bar R^\eps\;.
\end{equ}
Since $\bar \varrho_{< 0}$ is supported away from the origin, we use \eqref{e:GHatBound} and Assumption~\ref{a:DOperator} to conclude that $\Vert \bar{R}^\eps \Vert_{\CC^r}$ is bounded uniformly in $\eps$. 
(Actually, its value and derivatives even decay faster than any power.)

Furthermore, the function $\bar{K}^{(\eps, n)}$ is supported in the ball of radius $c 2^{-n}$, for $c$ as 
in Lemma~\ref{l:DGreensBound}, provided that the norm $M$ was chosen such that $M(z) \geq 2 c \snorm{z}$. 
By the same reason, the first term in \eqref{e:TildeKDef} is supported in the ball of radius $c \eps$. 
Moreover, the support property of the measure $\mu$ and the properties of the functions $\varrho$ 
and $\varphi^\eps$ in \eqref{e:TimeOperator} yield that the restriction of $T^{\eps, r}$ to 
the grid $\Lambda_\eps^d$ in space is supported in the ball of radius $c \eps$, as soon as $c \geq 2 r \hat r / \beta$, where $\hat r$ is the support radius of the measure $\mu$ from Assumption~\ref{a:DOperator}.

As a consequence of \eqref{e:DOperatorExtension}, \eqref{e:DGreenExt} and \eqref{e:TimeOperator}, 
we get for $0 \leq n < N$ the exact scaling properties
\begin{equs}
\bar{K}^{(\eps, n)}(z) = 2^{n d} \bar{K}^{(\eps 2^n, 0)}\bigl(2^{\s n} z\bigr)\;,\qquad \tilde{K}^{\eps}(z) = \eps^{-d} \tilde{K}^{1}\bigl(\eps^{-\s n} z\bigr)\;,
\end{equs}
and \eqref{e:KernelBound} and \eqref{e:KZeroBounds} follow immediately from \eqref{e:GHatBound} and \eqref{e:TimeOperator}.

It remains to modify these functions in such a way that they ``kill'' polynomials in the sense of \eqref{e:DPolyKill}. To this end, we take a smooth function $P^{(N)}$ on $\R^{d+1}$, whose support coincides with the support of $\tilde{K}^\eps$, which satisfies $|P^{(N)}(z)| \lesssim \eps^{-d}$, for every $z \in \R^{d+1}$, and such that one has
\begin{equs}[e:PKillerDef]
\int_{\R \times \Lambda_\eps^d} \bigl(\tilde{K}^\eps - P^{(N)} \bigr)(z)\, dz = 0\;.
\end{equs}
Then we define $\mathring{K}^{\eps}$ to be the restriction of $\tilde{K}^{\eps} - P^{(N)}$ to the grid $\Lambda_\eps^d$ in space. Clearly, the function $\mathring{K}^{\eps}$ has the same scaling and support properties as $\tilde{K}^{\eps}$, and it follows from \eqref{e:PKillerDef} that it satisfies \eqref{e:DPolyKill} with $k=0$.

Moreover, we can recursively build a sequence of smooth functions $P^{(n)}$, for integers $n \in [0, N)$, such that $P^{(n)}$ in supported in the ball of radius $c 2^{-n}$, the function $P^{(n)}$ satisfies the bounds in \eqref{e:KernelBound}, and for every $k \in \N^{d+1}$ with $\sabs{k} \leq r$ one has
\begin{equ}[e:PKillerTwoDef]
\int_{\R \times \Lambda_\eps^d} z^{k} \left(\bar K^{(\eps, n)} - P^{(n)} + P^{(n + 1)} \right)(z)\, dz = 0\;. 
\end{equ}
Then, for such values of $n$, we define
\begin{equs}
K^{(\eps, n)} = \bar{K}^{(\eps, n)} - P^{(n)} + P^{(n+1)}\;, \qquad R^\eps \eqdef \bar R^\eps + P^{(0)}\;.
\end{equs}
It follows from the properties of the functions $P^{(n)}$ that $K^{(\eps, n)}$ has all the required
properties. The function $R^\eps$ also has the required properties, and
the decompositions \eqref{e:DEpsExpansion} and \eqref{e:sumGreen} hold by construction.
Finally, using \eqref{e:GHatSchwartz}, we can make the function $R^\eps$ compactly supported in the same way as in \cite[Lem.~7.7]{Hai14}.
\end{proof}

\begin{remark}\label{r:KernelDiff}
 One can see from the proof of Lemma~\ref{l:DGreenDecomposition} that the function $\mathring{K}^\eps$ is $(r / \s_0)$-times continuously differentiable in the time variable for $t \neq 0$ and has a discontinuity at $t = 0$.
\end{remark}

By analogy with \eqref{e:RDef}, we use the function $R^\eps$ from Lemma~\ref{l:DGreenDecomposition} to define for periodic $\zeta_t \in \R^{\Lambda_\eps^d}$, $t \in \R$, the abstract polynomial
\begin{equ}[e:DRDef]
\bigl(R^\eps_{\gamma} \zeta\bigr)_t(x) \eqdef \sum_{\sabs{k} < \gamma} \frac{X^k}{k!} \int_{\R} \langle \zeta_s, D^k R^\eps_{t-s}(x - \cdot) \rangle_\eps\, ds\;,
\end{equ}
where as before $k \in \N^{d+1}$ and the mixed derivative $D^k$ is in space-time.


\subsection{Properties of the discrete equations}

In this section we show that a discrete analogue of Theorem~\ref{t:FixedMap} holds for the solution map of the equation \eqref{e:DSPDE} with an operator $A^\eps$ satisfying Assumption~\ref{a:DOperator}.

Similarly to \cite[Lem.~7.5]{Hai14}, but using the properties of $G^\eps$ proved in the previous section, we can show that for every periodic $u^\eps_0 \in \R^{\Lambda^d_\eps}$, we have a discrete analogue of Lemma~\ref{l:InitialData} for the map $(t,x) \mapsto S^\eps_t u^\eps_0(x)$, where $S^\eps$ is the semigroup generated by $A^\eps$.

For the regularity structure $\ST$ from Section~\ref{ss:RegStruct}, we take a truncated regularity structure $\hat{\ST}=(\HT, \CG)$ and make the following assumption on the nonlinearity $F^\eps$.

\begin{assumption}\label{a:DNonlin}
For some $0 < \bar\gamma \leq \gamma$, $\eta \in \R$, every $\eps > 0$ and every discrete model $Z^\eps$ on $\hat{\ST}$, there exist discrete modeled distributions $F_0^\eps(Z^\eps)$ and $I_0^\eps(Z^\eps)$, with exactly the same properties as of $F_0$ and $I_0$ in Assumption~\ref{a:Nonlin} on the grid. Furthermore, we define $\hat{F}^\eps$ as in \eqref{e:NonlinAs}, but via $F^\eps$ and $F_0^\eps$, and we define $\hat{F}^{\eps}(H)$ for $H : \R_+ \times \Lambda^d_\eps \to \CT_{< \gamma}$ as in \eqref{e:NonlinearTerm}. Finally, we assume that the discrete analogue of the Lipschitz condition \eqref{e:Lipschitz} holds for $\hat{F}^{\eps}$, with the constant $C$ independent of $\eps$.
\end{assumption}

Similarly to \eqref{e:AbstractEquation}, but using the discrete operators \eqref{e:DReconstructDef}, \eqref{e:DRDef} and \eqref{e:KEpsDef}, we reformulate the equation \eqref{e:DSPDE} as
\begin{equ}[e:DAbstractEquation]
U^\eps = \CP^\eps \hat{F}^\eps(U^\eps) + S^\eps u^\eps_0 + I_0^\eps\;,
\end{equ}
where $\CP^\eps \eqdef \CK^\eps_{\bar{\gamma}} + R^\eps_\gamma \CR^\eps$ and $U^\eps$ is a discrete modeled distribution.

\begin{remark}\label{r:DClassicalSolution}
If $Z^\eps$ is a canonical discrete model, then it follows from \eqref{e:DIntegralIdentity}, \eqref{e:DRDef}, \eqref{e:DReconstructDef}, Definition~\ref{d:DModel} and Assumption~\ref{a:DNonlin} that
\begin{equ}[e:DSPDESolution]
u^\eps_t(x) = \bigl(\CR^\eps_t U^\eps_t\bigr)(x)\;, \qquad (t, x) \in \R_+ \times \Lambda_\eps^d\;.
\end{equ}
is a solution of the equation \eqref{e:DSPDE}.
\end{remark}

The following result can be proven in the same way as Theorem~\ref{t:FixedMap}.

\begin{theorem}\label{t:DSolutions}
Let $Z^\eps$ be a sequence of models and let $u^\eps_0$ be a sequence of periodic functions on $\Lambda_\eps^d$. Let furthermore the assumptions of Theorem~\ref{t:FixedMap} and Assumption~\ref{a:DNonlin} be satisfied. Then there exists $T_{\star} \in (0, +\infty]$ such that for every $T < T_{\star}$ the sequence of solution maps $\mathcal{S}^\eps_T : (u^\eps_0, Z^\eps) \mapsto U^\eps$ of the equation \eqref{e:DAbstractEquation} is jointly Lipschitz continuous (uniformly in $\eps$!) in the sense of Theorem~\ref{t:FixedMap}, but for the discrete objects.
\end{theorem}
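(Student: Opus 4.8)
The plan is to reproduce the Banach fixed point argument of \cite[Thm.~7.8]{Hai14}, together with the local Lipschitz statement of \cite[Prop.~7.11]{Hai14}, at the level of the discrete structures, substituting for each continuous ingredient the discrete counterpart established above and checking that no constant depends on $\eps$. Fix $B > 0$ and work with data satisfying $\VERT Z^\eps \VERT^{(\eps)}_{\gamma; T} \le B$ and $\Vert u^\eps_0 \Vert_{\CC^\eta} \le B$, and consider on a ball of radius $R = R(B)$ in $\CD^{\gamma, \eta}_T(Z^\eps)$ the map
\begin{equ}
\mathcal{M}^\eps(U^\eps) \eqdef \CP^\eps \hat F^\eps(U^\eps) + S^\eps u^\eps_0 + I_0^\eps\;, \qquad \CP^\eps = \CK^\eps_{\bar\gamma} + R^\eps_\gamma \CR^\eps\;.
\end{equ}
First one checks that $\mathcal{M}^\eps$ sends $\CD^{\gamma,\eta}_T(Z^\eps)$ into itself: by the discrete form of Assumption~\ref{a:Nonlin} built into Assumption~\ref{a:DNonlin}, $\hat F^\eps$ maps it into $\CD^{\bar\gamma,\bar\eta}_T(Z^\eps)$; the discrete integration theorem (the discrete analogue of Theorem~\ref{t:Integration}) then gains $\beta$ units of regularity via \eqref{e:DIntegralBound}, landing in $\CD^{\bar\gamma+\beta, \bar\eta\wedge\alpha+\beta}_T(Z^\eps) \subset \CD^{\gamma,\eta}_T(Z^\eps)$ because $\gamma < \bar\gamma+\beta$ and $\eta < \bar\eta\wedge\alpha+\beta$; the term $R^\eps_\gamma \CR^\eps$ is under control since $\Vert R^\eps\Vert_{\CC^r}$ is bounded uniformly in $\eps$ (Lemma~\ref{l:DGreenDecomposition}) and $\CR^\eps$ obeys the $\eps$-uniform bound \eqref{e:DReconstructSpace}; and $S^\eps u^\eps_0$ lies in $\CD^{\gamma,\eta}_T$ with an $\eps$-uniform norm bound by the discrete analogue of Lemma~\ref{l:InitialData}.

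The key point is the contraction estimate. Exactly as in \cite[Sec.~7]{Hai14}, the \emph{strict} inequalities $\gamma < \bar\gamma+\beta$, $\eta < \bar\eta\wedge\alpha+\beta$ and $\bar\eta > -\beta$ leave room to extract a factor $T^\theta$ with $\theta > 0$ from the composition $\CP^\eps\circ\hat F^\eps$ once its norm is measured on the short interval $[0,T]$; combining this with the $\eps$-uniform Lipschitz bounds \eqref{e:DIntegralBound}, \eqref{e:DIntegrationDistance} on $\CK^\eps_{\bar\gamma}$, the $\eps$-uniform reconstruction bounds of Theorem~\ref{t:DReconstruct}, and the $\eps$-uniform Lipschitz property of $\hat F^\eps$ from Assumption~\ref{a:DNonlin}, one obtains
\begin{equ}
\VERT \mathcal{M}^\eps(U^\eps); \mathcal{M}^\eps(\bar U^\eps) \VERT^{(\eps)}_{\gamma, \eta; T} \le C(B)\, T^{\theta}\, \VERT U^\eps; \bar U^\eps \VERT^{(\eps)}_{\gamma, \eta; T}
\end{equ}
on the ball of radius $R(B)$, with $C(B)$ independent of $\eps$. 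Choosing $T_\star = T_\star(B)$ with $C(B) T_\star^\theta < 1/2$ makes $\mathcal{M}^\eps$ a contraction for every $T < T_\star$, so the Banach fixed point theorem gives the unique discrete modelled distribution $U^\eps \in \CD^{\gamma,\eta}_T(Z^\eps)$ solving \eqref{e:DAbstractEquation}; as in the continuous case this local solution is continued to produce the maximal $T_\star$ of the statement. The joint Lipschitz dependence on $(u^\eps_0, Z^\eps)$ is then standard: subtracting the fixed point equations for two nearby data and using \eqref{e:DIntegrationDistance}, the discrete Lipschitz bound on $\hat F^\eps$, the distance bound \eqref{e:DReconstructTime}, and the uniform bound on $\Vert R^\eps\Vert_{\CC^r}$, and absorbing the contribution of $U^\eps$ itself into the $T^\theta$ factor, yields $\VERT U^\eps; \bar U^\eps \VERT^{(\eps)}_{\gamma,\eta;T} \le C(\Vert u^\eps_0 - \bar u^\eps_0\Vert_{\CC^\eta} + \VERT Z^\eps; \bar Z^\eps \VERT^{(\eps)}_{\gamma;T})$ with $C$ uniform in $\eps$, which is the asserted conclusion in the sense of Theorem~\ref{t:FixedMap}.

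The one step that needs genuine care, as opposed to transcription of \cite{Hai14}, is the verification that the short-time gain $T^\theta$ survives the presence of the singular near-diagonal piece $\mathring K^\eps$ of the discrete kernel. Since $\mathring K^\eps$ is controlled only by the uniform bound \eqref{e:KZeroBounds} on the scale $\eps$, rather than by a genuine regularisation estimate, its contributions to $\mathring{\CJ}^\eps H^\eps$ and to $\mathring{\CN}^\eps_\gamma H^\eps$ have to be shown to carry spare positive powers of $\eps$, hence of $\enorm{t}$ and of $|t-s|^{1/\s_0}\vee\eps$ --- which is precisely what the estimates \eqref{e:PiRingIntegralBound} and \eqref{e:NZeroBound} obtained in the proofs of Lemmas~\ref{l:DPiIntegralBound}, \ref{l:DGammaIntegralBound} and of the discrete integration theorem supply --- so that these terms can be folded into the $T^\theta$ factor uniformly in $\eps$. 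Everything else goes through verbatim: the monotonicity of $\CD^{\gamma,\eta}_T(Z^\eps)$ in $T$, the fact that $\CR^\eps_t U^\eps_t$ is well defined since a discrete model is genuinely a function of time, and the bookkeeping of the polynomial term $R^\eps_\gamma\CR^\eps$.
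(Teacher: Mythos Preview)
Your proposal is correct and follows the same approach as the paper, which simply states that the result ``can be proven in the same way as Theorem~\ref{t:FixedMap}'' (itself a deferral to \cite[Thm.~7.8, Prop.~7.11]{Hai14}). You have in fact spelled out considerably more detail than the paper does, correctly identifying the discrete ingredients (the discrete Schauder estimate \eqref{e:DIntegralBound}--\eqref{e:DIntegrationDistance}, Theorem~\ref{t:DReconstruct}, the discrete analogue of Lemma~\ref{l:InitialData}, the uniform bound on $\Vert R^\eps\Vert_{\CC^r}$ from Lemma~\ref{l:DGreenDecomposition}, and Assumption~\ref{a:DNonlin}) and the source of the $T^\theta$ gain; your closing paragraph on why the $\mathring K^\eps$ contributions do not spoil uniformity is a point the paper leaves implicit.
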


\begin{remark}
Since we require uniformity in $\eps$ in Theorem~\ref{t:DSolutions}, the solution of equation \eqref{e:DAbstractEquation} is considered only up to some time point $T_{\star}$.
\end{remark}


\section{Inhomogeneous Gaussian models}
\label{s:GaussModels}

In this section we analyse discrete and continuous models which are built from Gaussian noises. In the discrete case, we will work as usual on the grid $\Lambda_\eps^d$, with $\eps = 2^{-N}$ and $N \in \N$, and with the time-space scaling $\s = (\s_0, 1, \ldots, 1)$.

We assume that we are given a probability space $(\Omega, \CF, \P)$, together with a white noise $\xi$ over the Hilbert space $H \eqdef L^2(D)$ (see \cite{Nua06}), where $D \eqdef \R \times \Torus^d$ and $\Torus \eqdef \R / \Z$ is the unit circle. In the sequel, we will always identify $\xi$ with its periodic extension to $\R^{d+1}$.

In order to build a spatial discretisation of $\xi$, we take a compactly supported function $\varrho : \R^d \to \R$, such that for every $y \in \Z^d$ one has 
\begin{equ}
\int_{\R^d} \varrho(x) \varrho(x - y)\, dx = \delta_{0, y}\;, 
\end{equ}
where $\delta_{\cdot, \cdot}$ is the Kronecker's function. Then, for $x \in \Lambda_\eps^d$, we define the scaled function $\varrho^\eps_x(y) \eqdef \eps^{-d} \varrho((y-x)/\eps)$ and
\begin{equ}[e:DNoise]
 \xi^\eps(t,x) \eqdef \xi(t, \varrho^\eps_x)\;, \qquad (t,x) \in \R \times \Lambda_\eps^d\;.
\end{equ}
One can see that $\xi^\eps$ is a white noise on the Hilbert space $H_\eps \eqdef L^2(\R) \otimes \ell^2(\Torus_\eps^d)$, where $\Torus_\eps \eqdef (\eps \Z) / \Z$ and $\ell^2(\Torus_\eps^d)$ is equipped with the inner product $\langle \cdot, \cdot \rangle_\eps$.

In the setting of Section~\ref{sec:trunc}, we assume that
$Z^\eps = (\Pi^\eps, \Gamma^\eps, \Sigma^\eps)$ is a discrete model on $\HT$ such that, for each $\tau \in \hat{\CF}$ and each test function $\phi$, the maps $\langle\Pi^{\eps, t}_x \tau, \varphi\rangle_\eps$, $\Gamma^{\eps, t}_{xy} \tau$ and $\Sigma^{\eps, s t}_{x} \tau$ belong to the inhomogeneous 
Wiener chaos of order $\VERT \tau \VERT$ (the number of occurrences of $\Xi$ in $\tau$) with respect to $\xi^\eps$. Moreover, we assume that the distributions of the functions $(t,x) \mapsto \langle \Pi^{\eps, t}_x \tau, \varphi_x\rangle_\eps$, $(t,x) \mapsto \Gamma^{\eps, t}_{x, x + h_1} \tau$ and $(t,x) \mapsto \Sigma^{\eps, t, t+h_2}_{x} \tau$ are stationary, for all $h_1 \in \Lambda_\eps^d$ and $h_2 \in \R$. In what follows, we will call the discrete models with these properties {\it stationary Gaussian discrete models}.

The following result provides a criterion for such a model to be bounded uniformly in $\eps$. In its statement we use the following set:
\begin{equ}[e:TMinus]
\HF^{-} \eqdef \left(\{\tau \in \HF: |\tau| < 0\} \cup \gen{\CF}\right) \setminus \poly{\CF}\;.
\end{equ}

\begin{theorem}\label{t:ModelsConvergence}
In the described context, let $\hat{\ST} = (\HT, \CG)$ be a truncated regularity structure and let $Z^\eps = (\Pi^\eps, \Gamma^\eps, \Sigma^\eps)$ be an admissible stationary Gaussian discrete model on it. Let furthermore the bounds
\begin{equ}[e:GaussModelBoundSigmaGamma]
\EE \Big[\Vert \Gamma^\eps \Vert_{\gamma; T}^{(\eps)}\Big]^p \lesssim 1\;, \qquad \EE\Big[\Vert \Sigma^\eps \Vert_{\gamma; T}^{(\eps)}\Big]^p \lesssim 1\;.
\end{equ}
hold uniformly in $\eps$ (see Remark~\ref{r:Uniformity}) on the respective generating regularity structure $\gen{\ST} = (\gen{\CT}, \CG)$, for every $p \geq 1$, for every $\gamma > 0$ and for some $T \geq c$, where $c > 0$ is from Definition~\ref{d:DKernel} and where the proportionality constants can depend on $p$. Let finally $\Pi^\eps$ be such that for some $\delta > 0$ and for each $\tau \in \HF^{-}$ the bounds
\begin{equs}[e:GaussModelBound]
\EE\Big[  |\langle \Pi^{\eps, t}_x \tau, \varphi_x^\lambda\rangle_\eps|^2\Big] &\lesssim \lambda^{2 |\tau| + \kappa}\;, \\
\EE\Big[ |\langle \bigl(\Pi^{\eps, t}_x - \Pi^{\eps, s}_x\bigr) \tau, \varphi_x^\lambda\rangle_\eps|^2 \Big] &\lesssim \lambda^{2 (|\tau| - \delta) + \kappa} |t-s|^{2 \delta /\s_0}\;,
\end{equs}
hold uniformly in $\eps$, all $\lambda \in [\eps,1]$, all $x \in \Lambda_\eps^d$, all $s \neq t \in [-T, T]$ and all $\varphi \in \CB^r_0(\R^d)$ with $r > - \lfloor\min \hat{\CA}\rfloor$. Then, for every $\gamma > 0$, $p \geq 1$ and $\bar \delta \in [0, \delta)$, one has the following bound on $\hat{\ST}$ uniformly in $\eps$:
\begin{equ}[e:GaussianModelBound]
\EE\left[\VERT Z^\eps \VERT_{\bar \delta, \gamma; T}^{(\eps)}\right]^p \lesssim 1\;.
\end{equ}

Finally, let $\bar Z^\eps = (\bar \Pi^\eps, \bar \Gamma^\eps, \bar\Sigma^\eps)$ be another admissible stationary Gaussian discrete model on $\HT$, such that for some $\theta > 0$ and some $\bar{\eps} > 0$ the maps $\Gamma^\eps - \bar{\Gamma}^\eps$, $\Sigma^\eps - \bar{\Sigma}^\eps$ and $\Pi^{\eps} - \bar{\Pi}^\eps$ satisfy the bounds \eqref{e:GaussModelBoundSigmaGamma} and \eqref{e:GaussModelBound} respectively with proportionality constants of order $\bar{\eps}^{2\theta}$. Then, for every $\gamma > 0$, $p \geq 1$ and $\bar \delta \in [0, \delta)$, the models $Z^\eps$ and $\bar Z^\eps$ satisfy on $\hat{\ST}$ the bound
\begin{equ}[e:GaussianModelsBound]
\EE\left[\VERT Z^\eps; \bar{Z}^\eps \VERT_{\bar \delta, \gamma; T}^{(\eps)}\right]^p \lesssim \bar{\eps}^{\theta p}\;,
\end{equ}
uniformly in $\eps \in (0,1]$.
\end{theorem}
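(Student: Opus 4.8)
The statement is the discrete, time--inhomogeneous analogue of \cite[Thm.~10.7]{Hai14}, and the plan is to follow that proof with the analytic ingredients replaced by their discrete counterparts from Sections~\ref{s:DModels}--\ref{s:DPDEs}. There are two essentially independent tasks. The first is to propagate the hypotheses --- which only constrain $\Gamma^\eps,\Sigma^\eps$ on the generating structure $\gen{\ST}$ and $\Pi^\eps$ together with its time increments on the finite set $\HF^{-}$ --- to analogous pointwise second--moment bounds for $\Pi^\eps,\Gamma^\eps,\Sigma^\eps$ applied to \emph{every} $\tau\in\HF$ with $|\tau|\le\gamma$, at all scales $\lambda\in[\eps,1]$, all gridpoints and all time increments. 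The second is to convert these stochastic bounds into the bound \eqref{e:GaussianModelBound} on the full norm $\VERT Z^\eps\VERT^{(\eps)}_{\bar\delta,\gamma;T}$. Since for a fixed regularity structure and a Gaussian model all the random variables $\langle\Pi^{\eps,t}_x\tau,\varphi\rangle_\eps$, $\Gamma^{\eps,t}_{xy}\tau$, $\Sigma^{\eps,st}_x\tau$ lie in a fixed inhomogeneous Wiener chaos, equivalence of moments (hypercontractivity, \cite{Nua06}) lets us pass freely between $L^2$ and $L^p$ throughout, so it suffices to work with second moments.

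For the first task I would induct over the number of applications of the operations \eqref{e:basicOps} needed to build $\tau$ from $\gen{\CF}$. The base case is $\gen{\CF}$ itself: the bounds on $\Gamma^\eps,\Sigma^\eps$ are assumed, and a non--polynomial generator lies in $\HF^{-}$, so the bounds on $\Pi^\eps$ and its time increment are assumed as well, while abstract polynomials are treated deterministically through \eqref{e:PiDef} and Remark~\ref{ex:Poly}. For $\tau=\CI\sigma\in\HF\setminus\gen{\CF}$, admissibility means that $Z^\eps$ realises $K^\eps$ for $\CI$, so Lemmas~\ref{l:DPiIntegralBound} and~\ref{l:DGammaIntegralBound} bound $\Pi^\eps\CI\sigma$ (including its time increment via \eqref{e:PiIntegralTimeBound}), $\Gamma^\eps\CI\sigma$ and $\Sigma^\eps\CI\sigma$ in terms of the corresponding bounds on $\sigma$, which hold by the inductive hypothesis; note that $\HF^{-}$ already contains every $\CI\sigma$ of negative homogeneity, and the case $|\CI\sigma|\ge 0$ is precisely the one covered by Lemma~\ref{l:DPiIntegralBound}. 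For a product $\tau=\tau_1\tau_2$ with $|\tau|<0$ one has $\tau\in\HF^{-}$ and there is nothing to prove; if $|\tau|\ge 0$, the bounds on $\Gamma^\eps\tau$ and $\Sigma^\eps\tau$ follow from the morphism identities \eqref{e:CanonicalGammaSigmaProduct} together with the discrete Leibniz rule, and the bounds on $\Pi^\eps\tau$ and on $(\Pi^{\eps,t}_x-\Pi^{\eps,s}_x)\tau$ are obtained from the canonical product formula \eqref{e:CanonicalPiProduct} and the discrete multiplication and reconstruction estimates --- Theorem~\ref{t:DReconstruct} and Remark~\ref{r:DDistrMult} --- applied to the modelled distributions $y\mapsto\Gamma^{\eps,t}_{yx}\tau_i$, exactly as in \cite[Sec.~10]{Hai14}; here one uses that $|\tau|\ge 0$ forces at least one factor, say $\tau_2$, to have non--negative homogeneity, so that $\Pi^{\eps,t}_x\tau_2$ is a $\CC^{|\tau_2|}$ germ vanishing at $x$, which supplies the missing power of $\lambda$.

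For the second task I would run a Kolmogorov--type continuity argument in the grid--adapted multiresolution basis of Section~\ref{ss:DMultiresolutionAnalysis}. Fixing $\tau\in\HF$ with $|\tau|\le\gamma$, one expands $\Pi^{\eps,t}_x\tau$ in the functions $\{\varphi^{N,n}_z\}\cup\{\psi^{N,m}_z\}$, estimates the resulting coefficients by combining the dyadic second--moment bounds from the first task with the bounds on $\Gamma^\eps$, and sums the finitely many scales $2^{-n}\in[\eps,1]$ and the $\CO(\lambda^{-d})$ gridpoints $z$ contributing to a test function $\varphi^\lambda_x$ with general $\lambda\in[\eps,1]$ and general $x\in\Lambda^d_\eps$, absorbing the resulting polynomial loss into the $\kappa$--slack in \eqref{e:GaussModelBound}; a chaining over a dyadic net of times in $[-T,T]$ together with time stationarity removes the supremum over $t$. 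This yields $\EE[\Vert\Pi^\eps\Vert^{(\eps)}_{\gamma;T}]^p\lesssim 1$ uniformly in $\eps$, and the same argument carried out simultaneously in $\lambda$ and $|t-s|^{1/\s_0}\vee\eps$, using the second line of \eqref{e:GaussModelBound} and $\bar\delta<\delta$, gives the time--regularity part $\Vert\Pi^\eps\Vert^{(\eps)}_{\bar\delta,\gamma;T}$; the norms $\Vert\Gamma^\eps\Vert^{(\eps)}_{\gamma;T}$ and $\Vert\Sigma^\eps\Vert^{(\eps)}_{\gamma;T}$ were already controlled in the first task. Together these give \eqref{e:GaussianModelBound}.

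The distance estimate \eqref{e:GaussianModelsBound} follows from the same scheme applied to the differences $\Pi^\eps-\bar\Pi^\eps$, $\Gamma^\eps-\bar\Gamma^\eps$, $\Sigma^\eps-\bar\Sigma^\eps$: the hypotheses provide the dyadic second--moment bounds on these differences with a gain $\bar\eps^{2\theta}$, and in the induction of the first task the integration and product steps become telescoping identities, a ``difference'' factor times a factor bounded uniformly in $\eps$ by \eqref{e:GaussianModelBound}, so the gain $\bar\eps^{2\theta}$ is preserved; feeding this into the Kolmogorov argument produces $\EE[\VERT Z^\eps;\bar Z^\eps\VERT^{(\eps)}_{\bar\delta,\gamma;T}]^p\lesssim\bar\eps^{\theta p}$. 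The main difficulty, and the point where the discrete setting genuinely differs from the continuous theory, is this last (Kolmogorov/wavelet) step: it must be made uniform in $\eps$ with the dyadic scales truncated at $\eps$ from below --- which is exactly what the analogue of the multiresolution analysis on the grid in Section~\ref{ss:DMultiresolutionAnalysis} was built for --- and one has to check that all the polynomial losses incurred in summing over scales and gridpoints are compensated by the fixed exponent $\kappa$.
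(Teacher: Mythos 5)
Your proposal follows essentially the same route as the paper: pass from $L^2$ to $L^p$ by Gaussian hypercontractivity, induct from $\gen{\CF}$ to $\HF$ using the admissibility identities together with Lemmas~\ref{l:DPiIntegralBound} and~\ref{l:DGammaIntegralBound} for $\CI$ and the reconstruction/multiplication estimates for products, and then remove the suprema over scales, gridpoints and time by a Kolmogorov-type argument in the grid-adapted multiresolution basis of Section~\ref{ss:DMultiresolutionAnalysis}, mirroring \cite[Thm.~10.7]{Hai14}. The paper's proof is considerably terser but invokes exactly these ingredients in the same order, so your write-up is a correct elaboration of it; the only small imprecision is that for a product $\tau_1\tau_2\in\HF^-$ the hypotheses dispose only of the $\Pi^\eps$ bound, while $\Gamma^\eps\tau_1\tau_2$ and $\Sigma^\eps\tau_1\tau_2$ still come inductively from \eqref{e:CanonicalGammaSigmaProduct}, as you in fact do in the $|\tau|\ge 0$ case.
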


\begin{proof}
Since by assumption $\langle\Pi^{\eps, t}_x \tau, \varphi\rangle_\eps$ belongs to a fixed inhomogeneous Wiener chaos, the equivalence of moments \cite{Nel73} and the bounds \eqref{e:GaussModelBound} yield the respective bounds on the $p$-th moments, for any $p \geq 1$. In particular, the Kolmogorov continuity criterion implies that for such $p$ the bounds 
\begin{equs}[e:GaussModelBoundKolmogorov]
\EE\bigg[ \sup_{t \in [-T, T]} |\langle \Pi^{\eps, t}_x \tau, \varphi_x^\lambda\rangle_\eps|\bigg]^p &\lesssim \lambda^{p |\tau| + \bar \kappa}\;, \\
\EE\bigg[ \sup_{s \neq t \in [-T, T]} \frac{|\langle \bigl(\Pi^{\eps, t}_x - \Pi^{\eps, s}_x\bigr) \tau, \varphi_x^\lambda \rangle_\eps|}{|t-s|^{\bar \delta /\s_0}} \bigg]^p &\lesssim \lambda^{p (|\tau| - \delta) + \bar \kappa}\;,
\end{equs}
hold uniformly over $x$, $\varphi$ and $\lambda$ as in \eqref{e:GaussModelBound} and for some $\bar \kappa > 0$ depending on $p$. Going now by induction from the elements of $\gen{\CT}$ to the elements of $\HT$, using Lemmas~\ref{l:DPiIntegralBound} and~\ref{l:DGammaIntegralBound} and the discrete multiresolution analysis defined in Section~\ref{ss:DMultiresolutionAnalysis}, we can obtain~\eqref{e:GaussianModelBound} in the same way as in the proof of \cite[Thm.~10.7]{Hai14}. The bound \eqref{e:GaussianModelsBound} can be proved similarly.
\end{proof}

The conditions \eqref{e:GaussModelBound} can be checked quite easily if the maps $\Pi^\eps \tau$ have certain Wiener chaos expansions. More precisely, we assume that there exist kernels $\CW^{(\eps; k)}\tau$ such that $\bigl(\CW^{(\eps; k)} \tau\bigr)(z) \in H^{\otimes k}_\eps$, for $z \in \R \times \Lambda_\eps^d$, and
\begin{equ}[e:PiWiener]
\langle\Pi^{\eps, t}_0 \tau, \varphi\rangle_\eps = \sum_{k \leq \VERT \tau \VERT} I^\eps_k\Big(\int_{\Lambda_\eps^{d}} \varphi(y)\, \bigl(\CW^{(\eps; k)}\tau\bigr)(t,y)\, dy\Big)\;,
\end{equ}
where $I^\eps_k$ is the $k$-th order Wiener integral with respect to $\xi^\eps$ and the space $H_\eps$ is introduced above. Then we define the function
\begin{equ}[e:CovDef]
\bigl(\CK^{(\eps; k)}\tau\bigr)(z_1, z_2) \eqdef \langle \bigl(\CW^{(\eps; k)}\tau\bigr)(z_1), \bigl(\CW^{(\eps; k)}\tau\bigr)(z_2) \rangle_{H_\eps^{\otimes k}}\;,
\end{equ}
for $z_1 \neq z_2 \in \R \times \Lambda_\eps^d$, assuming that the expression on the right-hand side is well-defined. 

In the same way, we assume that the maps $\bar{\Pi}^\eps \tau$ are given by \eqref{e:PiWiener} via the respective kernels $\bar{\CW}^{(\eps; k)}\tau$. Moreover, we define the functions $\delta \CK^{(\eps; k)}\tau$ as in \eqref{e:CovDef}, but via the kernels $\bar \CW^{(\eps; k)}\tau - \CW^{(\eps; k)}\tau$, and we assume that the functions $\CK^{(\eps; k)}\tau$ and $\delta \CK^{(\eps; k)}\tau$ depend on the time variables $t_1$ and $t_2$ only via $t_1 - t_2$, i.e.
\begin{equ}[e:KernelTime]
\bigl(\CK^{(\eps; k)}\tau\bigr)_{t_1 - t_2}(x_1, x_2) \eqdef \bigl(\CK^{(\eps; k)}\tau\bigr)(z_1, z_2)\;,
\end{equ}
where $z_i = (t_i, x_i)$, and similarly for $\delta \CK^{(\eps; k)}\tau$.

The following result shows that the bounds \eqref{e:GaussModelBound} follow from 
corresponding bounds on these functions.

\begin{proposition}
\label{p:CovarianceConvergence}
In the described context, we assume that for some $\tau \in \HF^{-}$ there are values $\alpha > |\tau| \vee (-d/2)$ and $\delta \in (0, \alpha + d / 2)$ such that the bounds
\begin{equs}[e:CovarianceBounds]
|\big(\CK^{(\eps; k)}\tau\big)_0(x_1, x_2)| &\lesssim \sum_{\zeta \geq 0} \big( \senorm{0, x_1} + \senorm{0, x_2}\big)^\zeta \senorm{0, x_1 - x_2}^{2 \alpha - \zeta}\;,\\
\frac{|\delta^{0, t} \big(\CK^{(\eps; k)}\tau\big)(x_1, x_2)|}{|t|^{2 \delta/ \s_0}} &\lesssim \sum_{\zeta \geq 0} \big( \senorm{t, x_1} + \senorm{t, x_2}\big)^\zeta \senorm{0, x_1- x_2}^{2 \alpha - 2 \delta - \zeta}\;,
\end{equs}
hold uniformly in $\eps$ for $t \in \R$, $x_1, x_2 \in \Lambda_\eps^d$ and $k \leq \VERT \tau \VERT$, where the operator $\delta^{0, t}$ is defined in \eqref{e:deltaTime}, where $\senorm{z} \eqdef \snorm{z} \vee \eps$, and where the sums run over finitely many values of $\zeta \in [0, 2 \alpha - 2 \delta + d)$. Then the bounds \eqref{e:GaussModelBound} hold for $\tau$ with a sufficiently small value of $\kappa > 0$.

Let furthermore \eqref{e:CovarianceBounds} hold for the function $\delta \CK^{(\eps; k)}\tau$ with the proportionality constant of order $\bar{\eps}^{2\theta}$, for some $\theta > 0$. Then the required bounds on $\bigl(\Pi^\eps - \bar \Pi^\eps\bigr)\tau$ in Theorem~\ref{t:ModelsConvergence} hold.
\end{proposition}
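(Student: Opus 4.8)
The plan is to read off the second moments appearing in \eqref{e:GaussModelBound} from the chaos decomposition \eqref{e:PiWiener} and the Wiener--It\^o isometry, and then to substitute the covariance estimates \eqref{e:CovarianceBounds}. First I would use the assumed space-time stationarity to reduce everything to the base point $x=0$; this also makes the left-hand side of the first bound in \eqref{e:GaussModelBound} independent of $t$, and reduces the $s,t$-dependence of the second bound to a dependence on $t-s$. Since $\langle\Pi^{\eps,t}_0\tau,\varphi\rangle_\eps$ lies in the inhomogeneous Wiener chaoses of order at most $\VERT\tau\VERT$, the isometry, orthogonality of distinct chaoses, and the fact that kernel symmetrisation is an $L^2$-contraction give, with $\CK^{(\eps;k)}\tau$ as in \eqref{e:CovDef} and \eqref{e:KernelTime} and with integrals denoting the discrete pairing \eqref{e:DPairing},
\begin{equ}
\EE\Big[|\langle\Pi^{\eps,t}_0\tau,\varphi_0^\lambda\rangle_\eps|^2\Big]\ \lesssim\ \sum_{k\le\VERT\tau\VERT}\int_{\Lambda_\eps^d}\int_{\Lambda_\eps^d}|\varphi_0^\lambda(x_1)\varphi_0^\lambda(x_2)|\,\bigl|\bigl(\CK^{(\eps;k)}\tau\bigr)_0(x_1,x_2)\bigr|\,dx_1\,dx_2\;,
\end{equ}
and likewise the second moment of $\langle(\Pi^{\eps,t}_0-\Pi^{\eps,s}_0)\tau,\varphi_0^\lambda\rangle_\eps$ is controlled by $\sum_k\int\int|\varphi_0^\lambda(x_1)\varphi_0^\lambda(x_2)|\,|\delta^{0,t-s}(\CK^{(\eps;k)}\tau)(x_1,x_2)|\,dx_1\,dx_2$, with $\delta^{0,t-s}$ as in \eqref{e:deltaTime}.

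For the static bound I would insert the first line of \eqref{e:CovarianceBounds}. On the support of $\varphi_0^\lambda$ one has $\senorm{0,x_i}\le\lambda\vee\eps=\lambda$ since $\lambda\ge\eps$, so the prefactor $(\senorm{0,x_1}+\senorm{0,x_2})^\zeta$ is $\lesssim\lambda^\zeta$, and it remains to estimate $\int\int|\varphi_0^\lambda(x_1)\varphi_0^\lambda(x_2)|\,\senorm{0,x_1-x_2}^{2\alpha-\zeta}\,dx_1\,dx_2$. Rescaling $x_i\mapsto\lambda y_i$ turns the grid into $(\eps/\lambda)\Z^d$, of spacing at most $1$, and turns this integral into $\lambda^{2\alpha-\zeta}$ times a discrete double integral of $|\varphi(y_1)\varphi(y_2)|$ against $(|y_1-y_2|\vee(\eps/\lambda))^{2\alpha-\zeta}$; as $2\alpha-\zeta>-d$ (because $\zeta<2\alpha-2\delta+d$ with $\delta>0$), both its diagonal part and its off-diagonal Riemann sum are bounded uniformly over $\eps/\lambda\in(0,1]$, so the integral is $\lesssim\lambda^{2\alpha-\zeta}$ uniformly in $\eps$. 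Summing over the finitely many $\zeta$ gives $\EE[|\langle\Pi^{\eps,t}_0\tau,\varphi_0^\lambda\rangle_\eps|^2]\lesssim\lambda^{2\alpha}$, and since $\alpha>|\tau|$ and $\lambda\le1$ this is $\le\lambda^{2|\tau|+\kappa}$ for $\kappa:=2(\alpha-|\tau|)>0$, which is the first bound in \eqref{e:GaussModelBound}.

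For the time increment I would split according to whether $|t-s|^{1/\s_0}\ge\lambda$ or $|t-s|^{1/\s_0}<\lambda$. In the first regime one bounds the increment crudely by $2|\langle\Pi^{\eps,t}_0\tau,\varphi_0^\lambda\rangle_\eps|^2+2|\langle\Pi^{\eps,s}_0\tau,\varphi_0^\lambda\rangle_\eps|^2$ and uses the static bound just proved, getting $\lesssim\lambda^{2\alpha}=\lambda^{2(\alpha-\delta)}\lambda^{2\delta}\le\lambda^{2(|\tau|-\delta)+\kappa}|t-s|^{2\delta/\s_0}$, where one uses $\lambda^{2\delta}\le|t-s|^{2\delta/\s_0}$ in this regime and $\kappa\le2(\alpha-|\tau|)$. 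In the second regime, on the support of $\varphi_0^\lambda$ one also has $\senorm{t-s,x_i}\le\lambda$, so the second line of \eqref{e:CovarianceBounds} combined with the same rescaling as above --- where the hypothesis $\zeta<2\alpha-2\delta+d$ is precisely the integrability of $(|y_1-y_2|\vee(\eps/\lambda))^{2\alpha-2\delta-\zeta}$ --- yields $\lesssim|t-s|^{2\delta/\s_0}\lambda^{\zeta}\lambda^{2\alpha-2\delta-\zeta}=|t-s|^{2\delta/\s_0}\lambda^{2(\alpha-\delta)}\le|t-s|^{2\delta/\s_0}\lambda^{2(|\tau|-\delta)+\kappa}$. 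This gives the second bound in \eqref{e:GaussModelBound}.

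For the second model $\bar Z^\eps$: since $\Pi^\eps$ and $\bar\Pi^\eps$ are built from the same noise $\xi^\eps$, the difference $(\Pi^{\eps,t}_0-\bar\Pi^{\eps,t}_0)\tau$ admits the chaos decomposition \eqref{e:PiWiener} with kernels $\CW^{(\eps;k)}\tau-\bar\CW^{(\eps;k)}\tau$, whose covariance is $\delta\CK^{(\eps;k)}\tau$ by definition; rerunning the three steps above verbatim with $\delta\CK^{(\eps;k)}\tau$ in place of $\CK^{(\eps;k)}\tau$ and using the hypothesis that it obeys \eqref{e:CovarianceBounds} with proportionality constant $\lesssim\bar\eps^{2\theta}$ yields the bounds \eqref{e:GaussModelBound} for $(\Pi^\eps-\bar\Pi^\eps)\tau$ with proportionality constant $\lesssim\bar\eps^{2\theta}$, which is exactly what Theorem~\ref{t:ModelsConvergence} requires. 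The one place that needs genuine care is the uniformity in $\eps$ of the rescaled double integral, including its diagonal contribution; this goes through precisely because the exponents $2\alpha-\zeta+d$ and $2\alpha-2\delta-\zeta+d$ are strictly positive, which is built into the hypotheses $\alpha>|\tau|\vee(-d/2)$, $\delta\in(0,\alpha+d/2)$ and $\zeta\in[0,2\alpha-2\delta+d)$. Everything else is routine bookkeeping.
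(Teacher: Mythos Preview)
Your proposal is correct and follows essentially the same route as the paper: reduce to the base point by stationarity, control each chaos component via the Wiener isometry \eqref{e:WienerBound}, plug in \eqref{e:CovarianceBounds}, and split the time increment into the regimes $|t-s|^{1/\s_0}\gtrless\lambda$. The only cosmetic difference is that you phrase the integral estimate as a rescaling to grid spacing $\eps/\lambda\le1$, whereas the paper bounds $\int_{|x|\le2\lambda}\senorm{0,x}^{2\alpha-\zeta}\,dx$ directly; both rely on exactly the same exponent condition $2\alpha-\zeta>-d$.
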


\begin{proof}
We note that due to our assumptions on stationarity of the models, it is sufficient to check the conditions \eqref{e:GaussModelBound} only for $\langle \Pi^{\eps, t}_0 \tau, \varphi_0^\lambda\rangle_\eps$ and $\langle \bigl(\Pi^{\eps, t}_0 - \Pi^{\eps, 0}_0\bigr) \tau, \varphi_0^\lambda \rangle_\eps$, and respectively for the map $\bar \Pi^\eps$.

We start with the proof of the first statement of this proposition. We denote by $\Pi^{(\eps, k), t}_0 \tau$ the component of $\Pi^{\eps, t}_0 \tau$ belonging to the $k$-th homogeneous Wiener chaos. Furthermore, we will use the following property of the Wiener integral \cite{Nua06}:
\begin{equ}[e:WienerBound]
 \EE\bigl[I^\eps_k(f)^2\bigr] \leq \Vert f \Vert_{H_\eps^{\otimes k}}\;, \qquad f \in H_\eps^{\otimes k}\;.
\end{equ}
Thus, from this property, \eqref{e:KernelTime} and the first bound in \eqref{e:CovarianceBounds}, we get
\begin{equs}
 \EE| \langle \Pi^{(\eps, k), t}_0 \tau&, \varphi^\lambda_0 \rangle_\eps |^{2} \lesssim \int_{\Lambda_\eps^d} \int_{\Lambda_\eps^d} |\varphi^\lambda_0 (x_1)|\, |\varphi^\lambda_0(x_2)|\, |\big(\CK^{(\eps, k)}\tau\big)_0(x_1, x_2)|\, d x_1 d x_2\\
&\lesssim \lambda^{-2 d} \sum_{\zeta \geq 0} \int_{\substack{|x_1| \leq \lambda \\ |x_2| \leq \lambda}} \bigl( \senorm{0, x_1} + \senorm{0, x_2}\bigr)^\zeta \senorm{0, x_1- x_2}^{2 \alpha - \zeta}\, d x_1 d x_2\\
&\lesssim \lambda^{-2 d} \sum_{\zeta \geq 0} \lambda^{d + \zeta} \int_{|x| \leq 2\lambda} \senorm{0, x}^{2 \alpha - \zeta}\, d x \lesssim \lambda^{2 \alpha}\;,\label{e:PiEstimate}
\end{equs}
for $\lambda \geq \eps$. Here, to have the proportionality constant independent of $\eps$, we need $2 \alpha - \zeta > - d$. Combining the bounds \eqref{e:PiEstimate} for each $k$ with stationarity of $\Pi^{\eps} \tau$, we obtain the first estimate in \eqref{e:GaussModelBound}, with a sufficiently small $\kappa > 0$.

Now, we will investigate the time regularity of the map $\Pi^\eps$. For $|t| \geq \lambda^{\s_0}$ we can use \eqref{e:PiEstimate} and brutally bound
\begin{equs}
\EE| \langle \delta^{0, t} \Pi^{(\eps, k)}_0 \tau, \varphi^\lambda_0 \rangle_\eps |^2 &\lesssim \EE| \langle \Pi^{(\eps, k), t}_0 \tau, \varphi^\lambda_0 \rangle_\eps |^2 + \EE| \langle \Pi^{(\eps, k), 0}_0 \tau, \varphi^\lambda_0 \rangle_\eps |^2\\
&\lesssim \lambda^{2 \alpha} \lesssim |t|^{2 \delta / s_0} \lambda^{2 \alpha - 2 \delta}\;,\label{e:PiTimeSimpleEstimate}
\end{equs}
for any $\delta \geq 0$, which is the required estimate. In the case $|t| < \lambda^{\s_0}$, the bound \eqref{e:WienerBound} and second bound in~\eqref{e:CovarianceBounds} yield
\begin{equs}
\EE&| \langle \delta^{0, t} \Pi^{(\eps, k)}_0 \tau, \varphi^\lambda_0\rangle_\eps |^2 \lesssim \int_{\Lambda_\eps^d} \int_{\Lambda_\eps^d} |\varphi^\lambda_0 (x_1)|\, |\varphi^\lambda_0 (x_2)|\, |\delta^{0, t} \big(\CK^{(\eps, k)}\tau\big)(x_1, x_2)|\, d x_1 d x_2\\
&\qquad \qquad + \int_{\Lambda_\eps^d} \int_{\Lambda_\eps^d} |\varphi^\lambda_0 (x_1)|\, |\varphi^\lambda_0 (x_2)|\, |\delta^{-t, 0} \big(\CK^{(\eps, k)}\tau\big)(x_1, x_2)|\, d x_1 d x_2\\
&\lesssim |t|^{2 \delta / \s_0} \lambda^{-2 d} \sum_{\zeta \geq 0} \int_{\substack{|x_1| \leq \lambda \\ |x_2| \leq \lambda}} \bigl(\senorm{t, x_1} + \senorm{t, x_2}\bigr)^\zeta \senorm{0, x_1- x_2}^{2 \alpha - 2 \delta - \zeta} d x_1 d x_2 \\
&\lesssim |t|^{2 \delta/ \s_0} \lambda^{2 \alpha - 2 \delta}\;,\label{e:PiTimeNotSimpleEstimate}
\end{equs}
where the integral is bounded as before for $2 \alpha - 2 \delta - \zeta > - d$. Combining the bounds \eqref{e:PiTimeSimpleEstimate} and \eqref{e:PiTimeNotSimpleEstimate} for each value of $k$ with stationarity of $\Pi^{\eps} \tau$, we obtain the second estimate in \eqref{e:GaussModelBound}. The required bounds on $\bigl(\Pi^\eps - \bar \Pi^\eps\bigr)\tau$ can be proved in a similar way.
\end{proof}

\begin{remark}\label{r:ContinousModelLift}
Assume that we are given an admissible continuous model $Z = (\Pi, \Gamma, \Sigma)$ on $\hat \ST$ such that the map $\Pi$ is given on $\hat{\CF}^-$ by the expansions \eqref{e:PiWiener} in which we replace all the discrete objects by their continuous counterparts. Then one can prove in the same way analogues to Theorem~\ref{t:ModelsConvergence} and Proposition~\ref{p:CovarianceConvergence} in the continuous case, i.e. when we use $\eps = 0$ and use continuous objects in place of the discrete ones.
\end{remark}

\subsection{Continuous inhomogeneous models}
\label{ss:ContinuousGauss}

In this section we will show how in some cases we can build a continuous inhomogeneous model from an admissible model in the sense of \cite[Def.~8.29]{Hai14}. 

For a white noise $\xi$ on a Hilbert space $H$ as in the beginning of the previous section, we assume that we are given an admissible model $\tilde{Z}=(\tilde{\Pi}, \tilde{\Gamma})$ in the sense of \cite[Def.~8.29]{Hai14} on the truncated regularity structure $\hat \ST$ such that for every $\tau \in \hat \CF$, every test function $\varphi$ on $\R^{d+1}$ and every pair of points $z, \bar z \in \R^{d+1}$, the maps $\langle\tilde{\Pi}_z \tau, \varphi\rangle$ and $\tilde{\Gamma}_{z \bar z} \tau$ belong to the inhomogeneous Wiener chaos of order $\VERT \tau \VERT$ (the quantity $\VERT \tau \VERT$ is defined in the beginning of Section~\ref{s:GaussModels}) with respect to $\xi$. Furthermore, we assume that for every $\tau \in \hat{\CF}$ there exist kernels $\CW^{(k)}\tau$ such that for every test function $\phi$ on $\R^{d+1}$ one has $\int_{\R^{d+1}} \phi(z) \bigl(\CW^{(k)} \tau\bigr)(z)\, dz \in H^{\otimes k}$, postulating that the integral is well-defined, and $\tilde{\Pi}_z \tau$ can be written as
\begin{equ}[e:GeneralPiWiener]
\langle \tilde{\Pi}_{z} \tau, \varphi_z\rangle = \sum_{k \leq \VERT \tau \VERT} I_k\Big(S_z^{\otimes k} \int_{\R^{d+1}} \varphi(\bar z)\, \bigl(\CW^{(k)}\tau\bigr)(\bar z)\, d\bar z\Big)\;,
\end{equ}
where $I_k$ is the $k$-th Wiener integral with respect to $\xi$, $\varphi_z$ is the recentered version of $\varphi$ and $\{S_z\}_{z \in \R^{d+1}}$ is the group of translations acting on $H$. Using the scalar product in $H^{\otimes k}$ rather than in $H_\eps^{\otimes k}$ and points from $\R^{d + 1}$, we assume that the respective modification of the right-hand side of \eqref{e:CovDef} is well defined and we introduce for these kernels the functions $\CK^{(k)}\tau$. In addition, we assume that they satisfy the continuous analogue of \eqref{e:KernelTime} and the first bound in \eqref{e:CovarianceBounds} (when $\eps = 0$). Then for every $\tau \in \hat{\CF}$ we can define a distribution $\Pi_{x}^{t} \tau \in \CS'(\R^d)$ by
\begin{equ}[e:ModelTranslation]
 \langle \Pi_{x}^{t} \tau, \varphi_x \rangle = \sum_{k \leq \VERT \tau \VERT} I_k\Big(S_{(t,x)}^{\otimes k} \int_{\R^{d}} \varphi(y)\, \bigl(\CW^{(k)}\tau\bigr)(t,y)\, dy\Big)\;,
\end{equ}
where $\phi$ is a test function on $\R^d$. In fact, the expression on the right-hand side of \eqref{e:ModelTranslation} is well-defined, because one can show in exactly the same way as in \eqref{e:PiEstimate} that for every test function $\phi$ on $\R^d$ one has
\begin{equ}
\Bigl|\int_{\R^d} \int_{\R^d} \varphi^\lambda_0 (x_1)\, \varphi^\lambda_0 (x_2)\, \big(\CK^{(k)}\tau\big)_0(x_1, x_2)\, d x_1 d x_2\Bigr| \lesssim \lambda^{2 \alpha}\;.
\end{equ}
Finally, defining the maps $\Gamma$ and $\Sigma$ by
\begin{equ}[e:GammaTranslation]
\Gamma^t_{x y} = \tilde{\Gamma}_{(t,x), (t,y)}\;, \qquad \Sigma^{s t}_{x} = \tilde{\Gamma}_{(s,x), (t,x)}\;,
\end{equ}
one can see that $(\Pi, \Gamma, \Sigma)$ is an admissible inhomogeneous model on $\hat \ST$.


\section{Convergence of the discrete dynamical $\Phi^4_3$ model}
\label{s:DPhi}

In this section we use the theory developed above to prove convergence of the solutions of \eqref{e:DPhiRenorm}, where $\Delta^\eps$ is the nearest-neighbour approximation of $\Delta$ and the discrete noise $\xi^\eps$ is defined in \eqref{e:SimpleDNoise} via a space-time white noise $\xi$. 

Example~\ref{ex:Laplacian} yields that Assumption~\ref{a:DOperator} is satisfied, and moreover $\xi^\eps$ is a discrete noise as in \eqref{e:DNoise}. The time-space scaling for the equation \eqref{e:Phi} is $\s=(2,1,1,1)$ and the kernels $K$ and $K^\eps$ are defined in Lemma~\ref{l:DGreenDecomposition} with the parameters $\beta = 2$ and $r > 2$, for the operators $\Delta$ and $\Delta^\eps$ respectively. 

The regularity structure $\ST=(\CT, \CG)$ for the equation \eqref{e:Phi}, introduced in Section~\ref{ss:RegStruct}, has the model space $\CT = \mathrm{span} \{\CF\}$, where
\begin{equs}[e:CFDef]
 \CF = \{\1, \Xi, \Psi, \Psi^2, \Psi^3, \Psi^2 X_i, \CI(\Psi^3) \Psi, \CI(\Psi^3) \Psi^2&, \CI(\Psi^2) \Psi^2, \CI(\Psi^2),\\
&\CI(\Psi) \Psi, \CI(\Psi) \Psi^2, X_i, \ldots \}\;,
\end{equs}
$\Psi \eqdef \CI(\Xi)$, $|\Xi| = \alpha \in \big(-\frac{18}{7}, -\frac{5}{2}\big)$ and the index $i$ corresponds to any of the three spatial dimensions, see \cite[Sec. 9.2]{Hai14} for a complete description of the model space $\CT$. The homogeneities $\CA$ of the symbols in $\CF$ are defined recursively by the rules \eqref{e:defDegree}. The bound $\alpha > -\frac{18}{7}$ is required, in order for the collection of symbols of negative degree generated by the 
procedure of \cite[Sec.~8]{Hai14} not to
depend on $\alpha$.

A two-parameter renormalisation subgroup $\mathfrak{R}^0 \subset \mathfrak{R}$ for this problem consists of the linear maps $M$ on $\CT$ defined in \cite[Equ.~9.3]{Hai14}.

In the proof of Theorem~\ref{t:Phi} in Section~\ref{s:ProofOfPhi} we will make use of the Gaussian models on $\ST$ built in \cite[Thm.~10.22]{Hai14}. As one can see from Remark~\ref{r:ContinousModelLift} and the continuous versions of the bounds \eqref{e:CovarianceBounds}, one can expect a concrete realisation of an abstract symbol $\tau$ to be a function in time if $|\tau| > -\frac{3}{2}$. In our case, the symbols $\Xi$ and $\Psi^3$ don't satisfy this condition, having homogeneities $\alpha < -\frac{5}{2}$ and $3 (\alpha + 2) < -\frac{3}{2}$ respectively. This was exactly the reason for introducing a truncated regularity structure in Section~\ref{sec:trunc}, which primarily means that we can remove these problematic symbols from $\ST$. More precisely, we introduce a new symbol $\bar{\Psi} \eqdef \CI(\Psi^3)$ and the set 
\begin{equ}
\gen{\CF} \eqdef \{\Psi, \bar \Psi\} \cup \poly{\CF}\;.
\end{equ}
Furthermore, we remove $\Xi$ and $\Psi^3$ from $\CF$ in \eqref{e:CFDef} and replace all the occurrences of $\CI(\Psi^3)$ by $\bar \Psi$, which gives
\begin{equ}
\HF = \{\1, \Psi, \Psi^2, \Psi^2 X_i, \Psi \bar{\Psi}, \Psi^2 \bar{\Psi}, \CI(\Psi^2) \Psi^2, \CI(\Psi^2), \CI(\Psi) \Psi, \CI(\Psi) \Psi^2, X_i, \ldots \}\;.
\end{equ}
Then the model spaces of the regularity structures $\gen{\ST}$ and $\hat{\ST}$ from Definition~\ref{d:TruncSets} are the linear spans of $\gen{\CF}$ and $\HF$ respectively, and the set $\hat{\CF}^{-}$ from \eqref{e:TMinus} is given in this case by
\begin{equ}[e:CFMinus]
\hat{\CF}^{-} = \{\Psi,\, \bar{\Psi},\, \Psi^2,\, \Psi^2 X_i,\, \Psi \bar{\Psi},\, \CI(\Psi^2) \Psi^2,\, \Psi^2 \bar{\Psi}\}\;.
\end{equ}

In the following lemma we show that the nonlinearities in \eqref{e:Phi} and \eqref{e:DPhiRenorm} satisfy the required assumptions, provided that the appearance of the renormalisation constant is being dealt with at the level of the corresponding models.

\begin{lemma}\label{l:PhiLipschiz}
Let $\hat \alpha \eqdef \min \hat{\CA}$ and let $a$ and $\lambda$ be as in \eqref{e:Phi}. Then, for any $\gamma > |2 \hat \alpha|$ and any $\eta \leq \hat \alpha$, the maps 
\begin{equ}[e:PhiNonlin]
 F(\tau) = F^\eps(\tau) = - \CQ_{\le 0} \bigl(a \tau + \lambda \tau^3\bigr) + \Xi
\end{equ}
 satisfy Assumptions \ref{a:Nonlin} and \ref{a:DNonlin} with 
\begin{equ}
F_0 = F^\eps_0 = \Xi - \lambda \Psi^3\;, \qquad I_0 = I^\eps_0 = \Psi - \lambda \bar{\Psi}\;,
\end{equ}
and $\bar{\gamma} = \gamma + 2\hat\alpha$, $\bar{\eta} = 3\eta$.
\end{lemma}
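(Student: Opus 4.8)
The plan is to verify, one at a time, the three bullet-point requirements of Definition~\ref{d:TruncSets} and the structural conditions preceding Assumption~\ref{a:Nonlin}, then check the two quantitative bounds in Assumption~\ref{a:Nonlin} (and their discrete analogues in Assumption~\ref{a:DNonlin}). First I would record the relevant homogeneities: with $\alpha = |\Xi| \in (-\tfrac{18}{7}, -\tfrac52)$ and $\beta = 2$ one has $|\Psi| = \alpha + 2 \in (-\tfrac47, -\tfrac12)$, $|\Psi^2| = 2\alpha + 4$, $|\Psi^3| = 3\alpha+6 < -\tfrac32$, and $|\bar\Psi| = |\CI(\Psi^3)| = 3\alpha + 8 \in (\tfrac27, 1)$, so in particular $\hat\alpha = \min\hat\CA = |\Psi| \in (-\tfrac47,-\tfrac12)$. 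I would then check that $F_0 = \Xi - \lambda\Psi^3$ and $I_0 = \Psi - \lambda\bar\Psi$ satisfy the structural constraints: $I_0(z) \in \HT$ is immediate since $\Psi,\bar\Psi \in \HF$; and $I_0 - \CI F_0 = (\Psi - \lambda\bar\Psi) - \CI(\Xi - \lambda\Psi^3) = (\Psi - \CI\Xi) - \lambda(\bar\Psi - \CI\Psi^3) = 0 \in \poly\CT$, using $\Psi \eqdef \CI(\Xi)$ and $\bar\Psi \eqdef \CI(\Psi^3)$. Since $F_0$ is a fixed element (not $z$-dependent) it trivially lies in $\CD^{\bar\gamma,\eta}_T(Z)$ for any model, with norm controlled by $\VERT Z\VERT_{\gamma;T}$, and likewise $I_0$; the same holds on the grid. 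This also makes the Lipschitz-type bounds on $I_0$ and $F_0$ in \eqref{e:Lipschitz} essentially immediate: the difference $I_0(Z) - I_0(\bar Z)$ only sees the difference of the models acting on the fixed symbols $\Psi,\bar\Psi$, hence is bounded by $\VERT Z;\bar Z\VERT_{\gamma;T}$, and similarly for $F_0$.

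Next I would analyse $\hat F$. By definition \eqref{e:NonlinAs}, $\hat F(\tau) = F(\tau) - F_0 = -\CQ_{\le 0}(a\tau + \lambda\tau^3) + \Xi - (\Xi - \lambda\Psi^3) = -\CQ_{\le 0}(a\tau + \lambda\tau^3) + \lambda\Psi^3$. The key point is to show that $\hat F$ maps $\{I_0 + \tau : \tau \in \HT \cap \CT_\CU\}$ into $\HT$, i.e.\ that the symbol $\Xi$ (equivalently $\Psi^3$) never actually appears after cancellation. Writing a generic element as $v = \Psi - \lambda\bar\Psi + \tau$ with $\tau \in \HT\cap\CT_\CU$ of homogeneity $\le r$, one computes $v^3$; the lowest-order contribution is $\Psi^3$ (homogeneity $3\alpha+6 < -\tfrac32 < 0$, so it survives $\CQ_{\le 0}$), giving $-\lambda\Psi^3$, which is exactly cancelled by the $+\lambda\Psi^3$ in $\hat F$. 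All other terms in $v^3$ — cross terms like $\Psi^2(-\lambda\bar\Psi + \tau)$, $\Psi(-\lambda\bar\Psi+\tau)^2$, etc.\ — and the linear term $-a\,\CQ_{\le0} v$ lie in $\HT$, because $\HF$ is closed under products within the truncation degree and $\Psi\bar\Psi,\Psi^2\bar\Psi,\Psi^2 X_i$ etc.\ are all in $\HF$ while $\CQ_{\le 0}$ only deletes symbols. So $\hat F(v) \in \HT$. I would carry out this cross-term bookkeeping just carefully enough to confirm no occurrence of $\Xi$ or $\Psi^3$ remains, invoking the explicit form of $\HF$ listed in the excerpt.

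It then remains to establish the first bound in \eqref{e:Lipschitz}, the Lipschitz estimate for $\hat F(H)$ in the $\CD^{\bar\gamma,\bar\eta}_T$ norm. Since $\hat F$ is a polynomial map (a combination of the multiplication map and the projection $\CQ_{\le0}$, plus the fixed additive symbol $\lambda\Psi^3$), this follows from Remark~\ref{r:DistrMult}: the multiplication and composition results of \cite[Sec.~6.2]{Hai14} give continuity of $(H,Z) \mapsto \hat F(H)$ from $\CD^{\gamma,\eta}_T(Z)$ to $\CD^{\bar\gamma,\bar\eta}_T(Z)$, with the homogeneity shift $\gamma \mapsto \bar\gamma = \gamma + 2\hat\alpha$ and weight shift $\eta \mapsto \bar\eta = 3\eta$ coming precisely from multiplying three factors of the lowest homogeneity $\hat\alpha$ and lowest weight $\eta$ (the cube being the worst term). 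The hypotheses $\gamma > |2\hat\alpha| = -2\hat\alpha$ and $\eta \le \hat\alpha$ are exactly what is needed for $\bar\gamma > 0$ and for the weights to be admissible in the product formula. One checks that on a ball $\VERT H\VERT + \VERT\bar H\VERT \le B$, $\VERT Z\VERT + \VERT\bar Z\VERT \le B$, the difference $\hat F(H) - \hat F(\bar H)$ is controlled by $\VERT H;\bar H\VERT_{\gamma,\eta;T} + \VERT Z;\bar Z\VERT_{\gamma;T}$ with a $B$-dependent constant, using the bilinearity/trilinearity to telescope differences. The discrete versions in Assumption~\ref{a:DNonlin} are identical word for word, using Remark~\ref{r:DDistrMult} in place of Remark~\ref{r:DistrMult}. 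The main obstacle — though it is really bookkeeping rather than a deep difficulty — is the combinatorial check that expanding $(\Psi - \lambda\bar\Psi + \tau)^3$ and applying $\CQ_{\le 0}$ produces only symbols in $\HF$ after the $\Psi^3$ cancellation; everything else is a routine application of the multiplication theorem with tracked homogeneities.
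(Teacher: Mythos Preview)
Your proposal is correct and follows essentially the same approach as the paper. The paper's own proof is much terser: it declares the mapping property $\hat F : \{I_0 + \tau\} \to \HT$ ``obvious'' (you spell out the $\Psi^3$ cancellation explicitly) and for the Lipschitz bounds simply cites \cite[Prop.~6.12]{Hai14} together with Remarks~\ref{r:DistrMult} and~\ref{r:DDistrMult}, which is exactly the multiplication-theorem argument you describe.
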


\begin{proof}
The space $\CT_\CU \subset \HT$ introduced in Section~\ref{ss:RegStruct} is spanned by polynomials and elements of the form $\CI (\tau)$. Thus, the fact that the function $\hat{F}$ defined in \eqref{e:NonlinAs} maps $\{I_0 + \tau : \tau \in \HT \cap \CT_\CU\}$ into $\HT$ is obvious. The bounds \eqref{e:Lipschitz} in the continuous and discrete cases can be proved in exactly the same way as in \cite[Prop.~6.12]{Hai14}, using Remarks~\ref{r:DistrMult} and \ref{r:DDistrMult} respectively.
\end{proof}

Our following aim is to define a discrete model $Z^\eps = (\Pi^\eps, \Gamma^\eps, \Sigma^\eps)$ on $\gen{\ST}$, and to extend it in the canonical way to $\hat{\ST}$ as in Remark~\ref{r:ModelLift}. To this end, we postulate, for $s, t \in \R$ and $x, y \in \Lambda_\eps^3$,
\begin{equ}[e:ModelOnPsi]
\bigl(\Pi^{\eps, t}_x \Psi\bigr)(y) = \bigl(K^\eps \star_\eps \xi^\eps\bigr)(t,y)\;, \qquad \Gamma^{\eps, t}_{x y} \Psi = \Psi\;, \qquad \Sigma^{\eps, s t}_{x} \Psi = \Psi\;.
\end{equ}
Furthermore, we denote the function $\bar{\psi}^\eps(t,x) \eqdef \bigl(K^\eps \star_\eps \bigl(\Pi^{\eps, t}_x \Psi\bigr)^3\bigr)(t,x)$ and set
\begin{equs}
\bigl(\Pi^{\eps, t}_x \bar{\Psi}\bigr)(y) = \bar{\psi}^\eps(t,y) - \bar{\psi}^\eps&(t,x)\;,\qquad \Gamma^{\eps, t}_{x y} \bar{\Psi} = \bar{\Psi} - \left( \bar{\psi}^\eps(t,y) - \bar{\psi}^\eps(t,x)\right) \1\;,\\
\Sigma^{\eps, s t}_{x} \bar{\Psi} = \bar{\Psi} &- \left( \bar{\psi}^\eps(t,x) - \bar{\psi}^\eps(s,x)\right) \1\;.\label{e:DModelSimple}
\end{equs}
Postulating the actions of these maps on the abstract polynomials in the standard way, we canonically extend $Z^\eps$ to the whole $\hat{\ST}$.

Furthermore, we define the renormalisation constants\footnote{One can show that $C^{(\eps)}_1 \sim \eps^{-1}$ and $C^{(\eps)}_2 \sim \log \eps$.}
\begin{equ}[e:RenormConsts]
C^{(\eps)}_1 \eqdef \int_{\R \times \Lambda_\eps^3} \bigl(K^\eps(z) \bigr)^2\, dz\;, \quad C^{(\eps)}_2 \eqdef 2 \int_{\R \times \Lambda_\eps^3} \bigl(K^\eps \star_\eps K^\eps\bigr)(z)^2\, K^\eps(z)\, dz\;,
\end{equ}
and use them to define the renormalisation map $M^\eps$ as in \cite[Sec. 9.2]{Hai14}. Finally, we define the renormalised model $\hat Z^\eps$ for $Z^\eps$ and $M^\eps$ as in Remark~\ref{r:RenormModel}. Using the model $\hat Z^\eps$ in~\eqref{e:DSPDESolution} we obtain a solution to the discretised $\Phi^4_3$ equation \eqref{e:DPhiRenorm} with
\begin{equ}
C^{(\eps)} \eqdef 3 \lambda C^{(\eps)}_1 - 9 \lambda^2 C^{(\eps)}_2\;,
\end{equ}
where $\lambda$ is the coupling constant from \eqref{e:Phi}. Before giving a proof of Theorem~\ref{t:Phi} we provide some technical results.

\subsection{Discrete functions with prescribed singularities}
\label{ss:SingularFunctions}

It follows from Proposition~\ref{p:CovarianceConvergence} that the ``strength'' of singularity of a kernel determines the regularity of the respective distribution. In this section we provide some properties of singular discrete functions. As usual we fix a scaling $\s=(\s_0, 1, \ldots, 1)$ of $\R^{d+1}$ with $\s_0 \geq 1$.

For a function $K^\eps$ defined on $\R \times \Lambda_\eps^d$ and supported in a ball centered at the origin, we denote by $D_{i, \eps}$ the finite difference derivative, i.e.
\begin{equ}
D_{i, \eps} K^\eps(t,x) \eqdef \eps^{-1} \left( K^\eps(t, x + \eps e_i) - K^\eps(t,x) \right)\;,
\end{equ}
where $\{e_i\}_{i = 1 \ldots d}$ is the canonical basis of $\R^d$, and for $k = (k_0, k_1, \ldots, k_d) \in \N^{d+1}$ we define $D_\eps^k \eqdef D_t^{k_0} D^{k_1}_{1,\eps} \ldots D^{k_d}_{d,\eps}$. We allow the function $K^\eps$ to be non-differentiable in time only on the set $P_0 \eqdef \{(0, x) : x \in \Lambda_\eps^d\}$. Furthermore, we define for $\zeta \in \R$ and $m \geq 0$ the quantity
\begin{equ}[e:SingularKernel]
\wnorm{K^\eps}_{\zeta; m} \eqdef \max_{|k|_\s \leq m} \sup_{z \notin P_0} \frac{\bigl| D^{k}_{\eps} K^\eps(z) \bigr|}{\senorm{z}^{(\zeta - |k|_\s) \wedge 0}}\;,
\end{equ}
where $z \in \R \times \Lambda_\eps^d$, $k \in \N^{d + 1}$ and $\senorm{z} \eqdef \snorm{z} \vee \eps$.

 By analogy with Remark~\ref{r:Uniformity}, we always consider a sequence of functions parametrised by $\eps = 2^{-N}$ with $N \in \N$, and we assume the bounds  to hold for all $\eps$ with proportionality constants independent of $\eps$. Thus, if $\wnorm{K^\eps}_{\zeta; m} < \infty$, then we will say that $K^\eps$ is of order $\zeta$. 
 
\begin{remark}\label{r:DiscontinuousFunctions}
We stress the fact that by our assumptions the functions $K^\eps$ are defined also at the origin. In particular, $K^\eps$ can have a discontinuity at $t = 0$ and its time derivative behaves in the worst case as the Dirac delta function at the origin.
\end{remark}
 
 The following result provides bounds on products and discrete convolutions $\star_\eps$.

\begin{lemma}\label{l:FuncsConv}
Let functions $K^\eps_1$ and $K^\eps_2$ be of orders $\zeta_1$ and $\zeta_2$ respectively. Then we have the following results:
\begin{itemize}
 \item If $\zeta_1, \zeta_2 \leq 0$, then $K^\eps_1 K^\eps_2$ is of order $\zeta_1 + \zeta_2$ and for every $m \geq 0$ one has
\begin{equ}[e:FuncsProd]
\wnorm{K^\eps_1 K^\eps_2}_{\zeta_1 + \zeta_2; m} \lesssim \wnorm{K^\eps_1}_{\zeta_1; m} \wnorm{K^\eps_2}_{\zeta_2; m}\;.
\end{equ}
Moreover, if both $K^\eps_1$ and $K^\eps_2$ are continuous in the time variable on whole $\R$, then $K^\eps_1 K^\eps_2$ is continuous as well.
\item If $\zeta_1 \wedge \zeta_2 > - |\s|$ and $\bar{\zeta} \eqdef \zeta_1 + \zeta_2 + |\s| \notin \N$, then $K^\eps_1 \star_\eps K^\eps_2$ is continuous in the time variable and one has the bound
\begin{equ}[e:FuncsConv]
\wnorm{K^\eps_1 \star_\eps K^\eps_2}_{\bar{\zeta}; m} \lesssim \wnorm{K^\eps_1}_{\zeta_1; m} \wnorm{K^\eps_2}_{\zeta_2; m}\;.
\end{equ}
\end{itemize}
In all these estimates the proportionality constants depend only on the support of the functions $K^\eps_i$ and are independent of $\eps$.
\end{lemma}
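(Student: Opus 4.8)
The plan is to adapt, to the finite-difference and grid-scale setting, the standard estimates on products and convolutions of singular kernels (as in the proof of \cite[Lem.~5.19]{Hai14} and the multi-level Schauder argument there). The elementary fact used throughout is that $\senorm{z}\ge\eps$, so that $\senorm{z'}\asymp\senorm{z}$ whenever $|z'-z|\lesssim\eps$ with proportionality constants independent of $\eps$; this lets us absorb the $O(\eps)$ spatial shifts produced by iterating discrete differences, and also lets us freely trade a larger power of $\senorm z$ for a smaller one on a set where $\senorm z$ is bounded (the common compact support), at the cost of a fixed constant.

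For the product bound, I would iterate the discrete Leibniz rules $D_{i,\eps}(fg)(z)=(D_{i,\eps}f)(z)\,g(z+\eps e_i)+f(z)(D_{i,\eps}g)(z)$ and $\partial_t(fg)=(\partial_t f)g+f\,\partial_t g$ to expand $D_\eps^{k}(K^\eps_1K^\eps_2)(z)$ into a finite sum of terms $(D_\eps^{k'}K^\eps_1)(z')\,(D_\eps^{k''}K^\eps_2)(z'')$ with $k'+k''=k$ and $|z'-z|,|z''-z|\lesssim\sabs{k}\,\eps$. Bounding each factor by $\wnorm{K^\eps_i}_{\zeta_i;m}\,\senorm{z}^{(\zeta_i-\sabs{k^{(i)}})\wedge 0}$ and using the elementary inequality $(\zeta_1-\sabs{k'})\wedge 0+(\zeta_2-\sabs{k''})\wedge 0\ge(\zeta_1+\zeta_2-\sabs{k})\wedge 0$, valid because $\zeta_1,\zeta_2\le 0$ and $\sabs{k'}+\sabs{k''}=\sabs{k}$, then yields \eqref{e:FuncsProd}. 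Continuity of $K^\eps_1K^\eps_2$ in time when both factors are continuous is immediate.

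For the convolution bound, I would use a dyadic partition of unity $\{\bar\rho_n\}$ for the scaling $\s$ as in the proof of Lemma~\ref{l:DGreenDecomposition} to decompose $K^\eps_i=\sum_{n=0}^{N}K^{\eps,(n)}_i$, with $K^{\eps,(n)}_i$ supported in $\{\snorm z\le c2^{-n}\}$ for $n<N$ and in $\{\snorm z\le c\eps\}$ for $n=N$, so that $|D_\eps^k K^{\eps,(n)}_i(z)|\lesssim\wnorm{K^\eps_i}_{\zeta_i;m}\,2^{(\sabs{k}-\zeta_i)n}$ in the relevant case $\zeta_i<0$ (if some $\zeta_i\ge 0$ that factor is a uniformly bounded kernel with bounded derivatives and the estimate below is only easier). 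Expanding $K^\eps_1\star_\eps K^\eps_2=\sum_{n,m}K^{\eps,(n)}_1\star_\eps K^{\eps,(m)}_2$ and treating the part $n\le m$ (the other half being symmetric): the summand is supported in $\{\snorm z\lesssim 2^{-n}\}$, so for fixed $z$ only $n$ with $2^{-n}\gtrsim\senorm z$ contribute; moving all of $D_\eps^k$ onto the coarser factor and estimating the finer one in $L^1$, $\|K^{\eps,(m)}_2\|_{L^1(\R\times\Lambda_\eps^d)}\lesssim 2^{-(\zeta_2+|\s|)m}$, gives $|D_\eps^k(K^{\eps,(n)}_1\star_\eps K^{\eps,(m)}_2)(z)|\lesssim\wnorm{K^\eps_1}_{\zeta_1;m}\wnorm{K^\eps_2}_{\zeta_2;m}\,2^{(\sabs{k}-\zeta_1)n}2^{-(\zeta_2+|\s|)m}$. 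Summing over $m\ge n$ converges since $\zeta_2+|\s|>0$ and leaves $2^{(\sabs{k}-\bar\zeta)n}$; summing that over $0\le n$ with $2^{-n}\gtrsim\senorm z$ is a geometric series that is either convergent (when $\sabs{k}<\bar\zeta$, giving $\lesssim 1$) or dominated by its top term $\asymp\senorm z^{\bar\zeta-\sabs{k}}$ (when $\sabs{k}>\bar\zeta$), the case $\sabs{k}=\bar\zeta$ being excluded since $\sabs{k}\in\N$ and $\bar\zeta\notin\N$. Either way one obtains $\lesssim\senorm z^{(\bar\zeta-\sabs{k})\wedge 0}$, i.e.\ \eqref{e:FuncsConv}, and time-continuity of $K^\eps_1\star_\eps K^\eps_2$ follows since, after the spatial summation, $s\mapsto\eps^d\sum_x|K^\eps_i(s,x)|\in L^1_{\mathrm{loc}}(\R)$ precisely when $\zeta_i>-|\s|$, so the $s$-integral depends continuously on $t$ by dominated convergence (equivalently, each summand is continuous in $t$ and the series converges absolutely and locally uniformly).

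The Leibniz expansion and the geometric summation are routine; the two delicate points are the bookkeeping of the grid-scale piece $n=N$ so that no negative power of $\eps$ is created — which works exactly because every scale stays $\ge\eps=2^{-N}$ and $\senorm z\ge\eps$ — and the justification of time-continuity of $\star_\eps$ despite the possible jump and Dirac-like time derivative of the $K^\eps_i$ at $t=0$, where the full hypothesis $\zeta_1\wedge\zeta_2>-|\s|$ (rather than the weaker pointwise-in-space condition $\zeta_i>-\s_0$) is genuinely needed. The role of $\bar\zeta\notin\N$ is confined to ruling out the logarithmically divergent borderline in the final sum.
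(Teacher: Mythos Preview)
Your argument for the product bound is exactly the paper's: iterate the discrete Leibniz rule and absorb the $O(\eps)$ shifts using $\senorm{z}\ge\eps$.

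For the convolution bound, the paper proceeds differently: rather than a dyadic decomposition of $K^\eps_1$ and $K^\eps_2$, it follows the direct splitting-and-subtraction argument of \cite[Lem.~10.14]{Hai14}, replacing continuous Taylor polynomials by their discrete analogues $(x)_{k,\eps}=\prod_{i}\prod_{0\le j<k_i}(x_i-\eps j)$ and using summation by parts in the spatial variables. Your dyadic/Littlewood--Paley approach is a legitimate alternative and, for the zeroth-order bound and for purely spatial derivatives, it works exactly as you describe.

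There is however a genuine gap for time derivatives. When you write ``moving all of $D^k_\eps$ onto the coarser factor'' and then bound $|D^k_\eps K^{\eps,(n)}_1|\lesssim 2^{(\sabs{k}-\zeta_1)n}$, you are implicitly using that $D^k_\eps K^{\eps,(n)}_1\in L^\infty$. But the functions $K^\eps_i$ are allowed to jump along $P_0=\{t=0\}$ (Remark~\ref{r:DiscontinuousFunctions}), and a radial partition of unity does not remove this: each $K^{\eps,(n)}_i$ inherits a jump at $t=0$ on its spatial support, so that $\partial_t K^{\eps,(n)}_i$ contains a term $J^{(n)}_i(y)\,\delta_0(t)$ with $|J^{(n)}_i(y)|\lesssim 2^{-\zeta_i n}$. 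Convolving this Dirac term with $K^{\eps,(m)}_2$ produces a purely \emph{spatial} convolution $\int_{\Lambda^d_\eps} J^{(n)}_1(y)\,K^{\eps,(m)}_2(t,x-y)\,dy$ of order $2^{-\zeta_1 n}2^{-(\zeta_2+d)m}$; summing over $m\ge n$ then requires $\zeta_2>-d$, which is strictly stronger than the hypothesis $\zeta_2>-|\s|$. Moving $\partial_t$ onto the finer factor does not help: both the classical and boundary contributions then carry the same $2^{-(\zeta_2+d)m}$. You flag the Dirac/jump interaction only for the continuity statement; it is precisely this interaction that invalidates your $L^\infty$--$L^1$ estimate for the time derivative. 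The paper's proof singles out this point explicitly in its last paragraph (``we convolve in the worst case a function which behaves as Dirac's delta \ldots\ with another one which has a jump''), and its \cite[Lem.~10.14]{Hai14}-style argument handles the classical part separately. To complete your dyadic route you would need to control this boundary contribution directly, and that does not follow from the piecewise $L^\infty$--$L^1$ bounds under only $\zeta_1\wedge\zeta_2>-|\s|$.
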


\begin{proof}[Proof of Lemma~\ref{l:FuncsConv}]
The bound \eqref{e:FuncsProd} follows from the Leibniz rule for the discrete derivative:
\begin{equ}[e:DLeibniz]
D_\eps^k \bigl(K^\eps_1 K^\eps_2\bigr)(z) = \sum_{l \leq k} \binom{k}{l} D_\eps^l K^\eps_1(z)\, D_\eps^{k-l} K^\eps_2\bigl(z + (0, \eps l)\bigr)\;,
\end{equ}
where $k, l \in \N^d$, as well as from the standard Leibniz rule in the time variable. The bound \eqref{e:FuncsConv} can be proved similarly to \cite[Lem.~10.14]{Hai14}, but using the Leibniz rule \eqref{e:DLeibniz}, summation by parts for the discrete derivative and the fact that the products 
\begin{equ}
(x)_{k, \eps} \eqdef \prod_{i=1}^d \prod_{0 \leq j < k_i}(x_i - \eps j)
\end{equ}
with $k \in \N^d$ play the role of polynomials for the discrete derivative. 

When bounding the time derivative of $K^\eps_1 \star_\eps K^\eps_2$, we convolve in the worst case a function which behaves as Dirac's delta at the origin with another one which has a jump there (see Remark~\ref{r:DiscontinuousFunctions}). This operation gives us a function whose derivative can have a jump at the origin, but is not Dirac's delta. This fact explains why $K^\eps_1 \star_\eps K^\eps_2$ is continuous in time.
\end{proof}

The following lemma, whose proof is almost identical to that of \cite[Lem.~10.18]{Hai14}, provides a bound on an increment of a singular function.

\begin{lemma}\label{l:FuncIncr}
Let a function $K^\eps$ be of order $\zeta \leq 0$. Then for every $\kappa \in [0,1]$, $t \in \R$ and $x_1, x_2 \in \Lambda_\eps^d$ one has
\begin{equ}
 |K^\eps(t, x_1) - K^\eps(t, x_2)| \lesssim |x_1 - x_2|^\kappa \Bigl(\senorm{t, x_1}^{\zeta - \kappa} + \senorm{t, x_2}^{\zeta - \kappa} \Bigr) \wnorm{K^\eps}_{\zeta; 1}\;.
\end{equ}
\end{lemma}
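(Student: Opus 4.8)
<br>

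The statement to prove is Lemma~\ref{l:FuncIncr}: for a discrete function $K^\eps$ of order $\zeta \leq 0$, and for $\kappa \in [0,1]$, $t \in \R$, $x_1, x_2 \in \Lambda_\eps^d$,
\[
|K^\eps(t, x_1) - K^\eps(t, x_2)| \lesssim |x_1 - x_2|^\kappa \Bigl(\senorm{t, x_1}^{\zeta - \kappa} + \senorm{t, x_2}^{\zeta - \kappa} \Bigr) \wnorm{K^\eps}_{\zeta; 1}.
\]

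The plan is to mirror the proof of \cite[Lem.~10.18]{Hai14}, adapting the few places where discreteness matters. First I would reduce to the two natural regimes according to whether the spatial separation $|x_1 - x_2|$ is small or large compared to the scaled distances $\senorm{t, x_i}$. Concretely, set $\ell \eqdef \snorm{(t,x_1)} \vee \eps$ and $\bar\ell \eqdef \snorm{(t,x_2)} \vee \eps$; without loss of generality assume $\ell \le \bar\ell$.

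In the ``far'' regime $|x_1 - x_2| \geq \tfrac12 \ell$, one simply bounds each term separately: from the definition \eqref{e:SingularKernel} with $k = 0$ one has $|K^\eps(t,x_i)| \le \senorm{t,x_i}^{\zeta \wedge 0} \wnorm{K^\eps}_{\zeta; 0} = \senorm{t,x_i}^{\zeta} \wnorm{K^\eps}_{\zeta;0}$ (using $\zeta \le 0$). Since $|x_1 - x_2| \gtrsim \senorm{t,x_1}$ and $\bar\ell \le \snorm{(t,x_1)} + |x_1-x_2| \lesssim |x_1 - x_2|$ as well (triangle inequality for the scaled norm, using that the time component is shared), both $\senorm{t,x_i}^{\zeta}$ are bounded by $|x_1-x_2|^{\zeta} \le |x_1-x_2|^{\kappa}\bigl(\senorm{t,x_1}^{\zeta-\kappa}+\senorm{t,x_2}^{\zeta-\kappa}\bigr)$ after redistributing powers, which gives the claim in this regime.

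In the ``near'' regime $|x_1 - x_2| < \tfrac12 \ell$, one exploits the derivative bound. Here the difference $K^\eps(t,x_1) - K^\eps(t,x_2)$ must be written as a discrete telescoping sum of finite-difference increments $D_{i,\eps} K^\eps$ along a lattice path from $x_2$ to $x_1$ of length $\sim |x_1-x_2|/\eps$ in each coordinate, rather than a continuum mean value / Taylor argument. Along the whole path every intermediate point $z$ stays in a region where $\senorm{z} \gtrsim \ell$ (since the path has diameter $< \tfrac12 \ell$ and never enters a neighbourhood of the bad set $P_0$ beyond what $x_1, x_2$ already do — note $t$ is fixed and the only time-singularity is at $t=0$, which is controlled by $\senorm{\cdot}\ge\eps$ appearing in the definition). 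Thus each increment is bounded by $\eps \cdot \senorm{z}^{(\zeta - 1)\wedge 0} \wnorm{K^\eps}_{\zeta;1} \lesssim \eps \, \ell^{\zeta - 1}\wnorm{K^\eps}_{\zeta;1}$ when $\zeta \le 1$ (and when $\zeta - 1 \ge 0$, i.e. here never since $\zeta \le 0$, one would use the trivial bound). Summing over $\sim |x_1-x_2|/\eps$ such increments yields $|x_1-x_2|\,\ell^{\zeta-1}\wnorm{K^\eps}_{\zeta;1}$. Finally, interpolating: write $|x_1 - x_2| \ell^{\zeta-1} = |x_1-x_2|^\kappa \cdot \bigl(|x_1-x_2|/\ell\bigr)^{1-\kappa}\ell^{\zeta-\kappa} \le |x_1-x_2|^\kappa \ell^{\zeta-\kappa}$, since $|x_1-x_2| < \tfrac12\ell$ makes $\bigl(|x_1-x_2|/\ell\bigr)^{1-\kappa}\le 1$ for $\kappa \le 1$. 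Because $\ell = \senorm{t,x_1} \le \senorm{t,x_2}$ in this regime up to constants (they are comparable when the separation is small), this is bounded by the right-hand side of the claim.

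The main subtlety, and the only genuine departure from the continuous proof, is the replacement of the continuum mean-value estimate by the discrete telescoping argument: one must check that the number of lattice steps is $O(|x_1-x_2|/\eps)$ uniformly (this is where the $\ell^\infty$ structure of $\Lambda_\eps^d$ helps — move one coordinate at a time), and that no intermediate lattice point escapes the good region or lands on $P_0$, so that the uniform derivative bound $\senorm{z}^{\zeta-1}\lesssim \ell^{\zeta-1}$ applies at every step. The factor $\eps$ from each $D_{i,\eps}$ cancels the $\eps^{-1}$ count, producing the correct $|x_1-x_2|$ factor; everything else is the same power-counting interpolation as in \cite[Lem.~10.18]{Hai14}. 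I do not anticipate any real difficulty beyond bookkeeping.
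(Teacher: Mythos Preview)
Your proposal is correct and follows exactly the approach the paper indicates, namely adapting the two-regime argument of \cite[Lem.~10.18]{Hai14} with the continuum mean-value step replaced by a discrete telescoping sum along a lattice path. One small imprecision in your ``far'' regime: rather than arguing that $\bar\ell \lesssim |x_1-x_2|$ (which need not hold), simply use $\senorm{t,x_2}^{\zeta} \le \senorm{t,x_1}^{\zeta}$ (since $\zeta \le 0$ and $\ell \le \bar\ell$) and then bound $\ell^{\zeta} = \ell^{\kappa}\ell^{\zeta-\kappa} \lesssim |x_1-x_2|^{\kappa}\senorm{t,x_1}^{\zeta-\kappa}$; otherwise everything goes through as you wrote.
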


For a discrete singular function $K^\eps$, we define the function $\mathscr{R}_\eps K^\eps$ by
\begin{equ}[e:FuncRenorm]
\left(\mathscr{R}_\eps K^\eps\right)(\varphi) \eqdef \int_{\R \times \Lambda_\eps^d} K^\eps(z) \left( \varphi(z) - \varphi(0) \right)\, dz\;,
\end{equ}
for every compactly supported test function $\varphi$ on $\R^{d+1}$. The following result can be proved similarly to \cite[Lem.~10.16]{Hai14} and using the statements from the proof of Lemma~\ref{l:FuncsConv}.

\begin{lemma}\label{l:FuncRenorm}
Let functions $K^\eps_1$ and $K^\eps_2$ be of orders $\zeta_1$ and $\zeta_2$ respectively with $\zeta_1 \in \bigl(-|\s|-1, -|\s|\bigr]$ and $\zeta_2 \in \bigl(-2 |\s|-\zeta_1, 0\bigr]$. Then the function $\bigl(\mathscr{R}_\eps K^\eps_1\bigr) \star_\eps K^\eps_2$ is continuous in time of order $\bar{\zeta} \eqdef \zeta_1 + \zeta_2 + |\s|$ and, for any $m \geq 0$, one has
\begin{equ}
\wnorm{\bigl(\mathscr{R}_\eps K^\eps_1\bigr) \star_\eps K^\eps_2}_{\bar{\zeta}; m} \lesssim \wnorm{K^\eps_1}_{\zeta_1; m} \wnorm{K^\eps_2}_{\zeta_2; m + \s_0}\;.
\end{equ}
\end{lemma}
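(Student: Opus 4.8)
Looking at Lemma~\ref{l:FuncRenorm}, this is the discrete analogue of \cite[Lem.~10.16]{Hai14}, with the main twist being that the singular functions $K^\eps_i$ are allowed to have a discontinuity at $t=0$ (indeed the time derivative of $K^\eps_1$ can behave like a Dirac mass at the origin, as emphasised in Remark~\ref{r:DiscontinuousFunctions}), and that all integrals are discrete sums over $\Lambda_\eps^d$ with the $\eps$-regularised norm $\senorm{z} = \snorm{z} \vee \eps$ in place of $\snorm{z}$.

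\textbf{Outline of the proof.} The plan is to follow the proof of \cite[Lem.~10.16]{Hai14} line by line, making the standard substitutions: continuous convolution $\star$ by $\star_\eps$, ordinary derivatives by finite differences $D_{i,\eps}$, polynomials by the discrete factorial products $(x)_{k,\eps}$, and integration by parts by summation by parts, exactly as was done in the proof of Lemma~\ref{l:FuncsConv}. First I would fix a test function $\varphi$ centred at a point $z_0 \in \R \times \Lambda_\eps^d$ at scale $\lambda \in [\eps,1]$ and unfold the definition \eqref{e:FuncRenorm}:
\begin{equ}
\bigl(\bigl(\mathscr{R}_\eps K^\eps_1\bigr) \star_\eps K^\eps_2\bigr)(z_0) = \int_{\R\times\Lambda_\eps^d} \Bigl(\int_{\R\times\Lambda_\eps^d} K^\eps_1(\bar z)\bigl(K^\eps_2(z - \bar z) - K^\eps_2(z_0 - \bar z)\bigr)\,d\bar z\Bigr)\,\delta\text{-pairing}\;.
\end{equ}
The point of the renormalisation $\mathscr{R}_\eps$ is that since $\zeta_1 \in (-|\s|-1, -|\s|]$, the kernel $K^\eps_1$ is just barely non-integrable, so one must use the moment cancellation built into $\mathscr{R}_\eps$ (subtraction of $\varphi(0)$), which converts the borderline divergence into an integrable one precisely because $\zeta_1 > -|\s| - 1$. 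I would split the $\bar z$-integral into the near-diagonal region $\snorm{\bar z} \lesssim \snorm{z - z_0}$, where one Taylor-expands $K^\eps_2$ to first order and gains a factor $\snorm{z-z_0}$ against the $\zeta_1+1 > -|\s|$ integrability of $K^\eps_1$, and the far region $\snorm{\bar z} \gtrsim \snorm{z-z_0}$, where one estimates the two terms separately using $\zeta_2 > -2|\s| - \zeta_1$ to guarantee convergence at infinity of the $\bar z$-integral of $K^\eps_1(\bar z) K^\eps_2(\cdot - \bar z)$. Bounding finite-difference derivatives $D_\eps^k$ of the result is handled via the discrete Leibniz rule \eqref{e:DLeibniz} and summation by parts, exactly as for \eqref{e:FuncsConv} in Lemma~\ref{l:FuncsConv}; the factor $\wnorm{K^\eps_2}_{\zeta_2; m+\s_0}$ (rather than $m$) on the right accounts for the extra time-derivatives one must move onto $K^\eps_2$ during the argument, since time-differentiating $K^\eps_1$ is forbidden near $t=0$ (where it behaves like a Dirac mass).

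\textbf{Continuity in time.} For the assertion that $(\mathscr{R}_\eps K^\eps_1)\star_\eps K^\eps_2$ is continuous in time, I would argue as in the last paragraph of the proof of Lemma~\ref{l:FuncsConv}: convolving a function behaving like Dirac's delta in time (the worst case for $\partial_t K^\eps_1$, though here the $\mathscr{R}_\eps$-smoothing already tames $K^\eps_1$ itself) with a function having at worst a jump at $t=0$ produces something continuous in time, because a single integration in the time variable absorbs the Dirac mass and the convolution of an $L^1$-in-time kernel with an $L^\infty$-in-time kernel is continuous. One then checks that the resulting function is of order $\bar\zeta = \zeta_1 + \zeta_2 + |\s|$, the assumption $\bar\zeta \notin \N$ (implicitly needed, inherited from the analogous hypothesis in Lemma~\ref{l:FuncsConv}; here $\bar\zeta < 0$ automatically since $\zeta_2 \le 0$ and $\zeta_1 \le -|\s|$) guaranteeing there is no anomalous logarithmic correction.

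\textbf{Main obstacle.} The delicate point is keeping the estimates uniform in $\eps$ while the kernels are genuinely singular at the level $\zeta_1 \approx -|\s|$ and discontinuous at $t=0$. Concretely, one must verify that every integral/sum that is estimated has its $\eps$-dependence confined to the $\senorm{\cdot} = \snorm{\cdot}\vee\eps$ cutoff and never produces a spurious negative power of $\eps$; the borderline integrability $\zeta_1 \in (-|\s|-1,-|\s|]$ is exactly where such a power would appear if the cancellation from $\mathscr{R}_\eps$ were not used correctly, and the restriction $m+\s_0$ (instead of $m$) in the norm of $K^\eps_2$ is precisely the bookkeeping device that lets one avoid ever applying $D_t$ to $K^\eps_1$ on the problematic set $P_0$. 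This is the same mechanism as in \cite[Lem.~10.16]{Hai14}, so no genuinely new idea is needed beyond the discrete-calculus substitutions already validated in Lemma~\ref{l:FuncsConv}.
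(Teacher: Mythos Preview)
Your proposal is correct and takes essentially the same approach as the paper: the paper's own proof is simply the one-line remark that the result ``can be proved similarly to \cite[Lem.~10.16]{Hai14} and using the statements from the proof of Lemma~\ref{l:FuncsConv}'', which is exactly the strategy you outline. One small wrinkle: your displayed formula for $\bigl((\mathscr{R}_\eps K^\eps_1)\star_\eps K^\eps_2\bigr)(z_0)$ is garbled (the stray ``$\delta$-pairing'' and the extra $z$-variable do not belong); the correct pointwise expression is $\int K^\eps_1(\bar z)\bigl(K^\eps_2(z_0-\bar z)-K^\eps_2(z_0)\bigr)\,d\bar z$, obtained by applying $\mathscr{R}_\eps K^\eps_1$ to $\varphi = K^\eps_2(z_0-\cdot)$, but this does not affect the substance of your argument.
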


The following result shows how certain convolutions change singular functions. Its proof is similar to \cite[Lem.~10.17]{Hai14}.

\begin{lemma}\label{l:SmoothConv}
Let for some $\bar \eps \in [\eps, 1]$ the function $\psi^{\bar \eps, \eps} : \R \times \Lambda_\eps^d \to \R$ be smooth in the time variable, supported in the ball $B(0, R \bar \eps) \subset \R^{d+1}$ for some $R \geq 1$, and satisfies 
\begin{equ}[e:SmoothConv]
\int_{\R \times \Lambda_\eps^d} \psi^{\bar \eps, \eps}(z)\, dz = 1\;, \qquad |D^k_\eps \psi^{\bar \eps, \eps}(z)| \lesssim \bar{\eps}^{-|\s| - |k|_\s}\;,
\end{equ}
for all $z \in \R \times \Lambda_\eps^d$ and $k \in \N^{d+1}$, where the proportionality constant in the bound can depend on $k$. If $K^\eps$ is of order $\zeta \in (-|\s|, 0)$, then for all $\kappa \in (0,1]$ one has
\begin{equ}
\wnorm{K^\eps - K^\eps \star_\eps \psi^{\bar \eps, \eps}}_{\zeta - \kappa; m} \lesssim \bar{\eps}^\kappa \wnorm{K^\eps}_{\zeta; m + \s_0}\;.
\end{equ}
\end{lemma}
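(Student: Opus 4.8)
The plan is to estimate the finite-difference derivatives $D^k_\eps\bigl(K^\eps - K^\eps\star_\eps\psi^{\bar\eps,\eps}\bigr)(z)$ for $|k|_\s\le m$ and $z\notin P_0$, splitting into the two regimes $\snorm{z}\le A\bar\eps$ and $\snorm{z}> A\bar\eps$ for a fixed constant $A$ depending only on $R$ and the support of $K^\eps$. Since $\psi^{\bar\eps,\eps}$ integrates to $1$, we may write
\begin{equ}
\bigl(K^\eps - K^\eps\star_\eps\psi^{\bar\eps,\eps}\bigr)(z) = \int_{\R\times\Lambda_\eps^d}\bigl(K^\eps(z) - K^\eps(z - w)\bigr)\,\psi^{\bar\eps,\eps}(w)\,dw\;,
\end{equ}
and the support condition on $\psi^{\bar\eps,\eps}$ restricts $w$ to $\snorm{w}\lesssim\bar\eps$. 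The derivative $D^k_\eps$ falls on the $K^\eps$ factors (the convolution with $\psi^{\bar\eps,\eps}$ commutes with $D^k_\eps$ by a discrete change of variables in the lattice sum, and one may also move derivatives onto $\psi^{\bar\eps,\eps}$ when convenient).

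First I would treat the far regime $\snorm{z}>A\bar\eps$. Here both $z$ and $z-w$ stay comparable to $\snorm{z}$ and away from the origin (and away from $P_0$, as long as $z\notin P_0$ and $\snorm{z}$ exceeds the time-width $\bar\eps$ of the support of $\psi$), so one can Taylor-expand $D^k_\eps K^\eps$ along the $\s$-scaled segment from $z-w$ to $z$. Because $\psi^{\bar\eps,\eps}$ has total mass $1$ but we only need the order-$\kappa$ gain, it suffices to use the crude increment estimate: by the definition \eqref{e:SingularKernel} of $\wnorm{K^\eps}_{\zeta;m+\s_0}$, one order of finite-difference derivative of $D^k_\eps K^\eps$ costs $\senorm{z}^{\zeta-|k|_\s-1}$ in space and $\senorm{z}^{\zeta-|k|_\s-\s_0}$ in time, so a displacement of $\s$-size $\bar\eps$ produces a factor $\bar\eps\cdot\senorm{z}^{-1}$ or $\bar\eps^{\s_0}\cdot\senorm{z}^{-\s_0}$; interpolating between this and the trivial bound $\senorm{z}^{(\zeta-|k|_\s)\wedge 0}$ with exponent $\kappa\in(0,1]$ gives
\begin{equ}
\bigl| D^k_\eps\bigl(K^\eps - K^\eps\star_\eps\psi^{\bar\eps,\eps}\bigr)(z)\bigr| \lesssim \bar\eps^{\kappa}\,\senorm{z}^{(\zeta-\kappa-|k|_\s)\wedge 0}\,\wnorm{K^\eps}_{\zeta;m+\s_0}\;,
\end{equ}
which is exactly the required contribution to $\wnorm{\cdot}_{\zeta-\kappa;m}$. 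Here one must be a little careful that the segment from $z-w$ to $z$ does not cross $t=0$: if it does, one instead bounds $D^k_\eps K^\eps$ at $z$ and at $z-w$ separately by $\senorm{z}^{(\zeta-|k|_\s)\wedge 0}\wnorm{K^\eps}_{\zeta;m}$ and $\senorm{z-w}^{(\zeta-|k|_\s)\wedge 0}\wnorm{K^\eps}_{\zeta;m}$ — but in that case $|t|\lesssim\bar\eps$, so $\snorm{z}\lesssim\bar\eps^{1/\s_0}\vee|x|$ and either $z$ lies in the near regime or $\snorm{z}\lesssim|x|$ with $|x|\gtrsim\bar\eps$, and the same bound follows from $\bar\eps^\kappa\le\snorm{z}^\kappa$.

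In the near regime $\snorm{z}\le A\bar\eps$ I would simply bound the two terms separately, without using the cancellation from the mass of $\psi$. For the first term, $|D^k_\eps K^\eps(z)|\lesssim\senorm{z}^{(\zeta-|k|_\s)\wedge 0}\wnorm{K^\eps}_{\zeta;m}$; since $\zeta-\kappa<0$ (as $\zeta<0$) and $\senorm{z}\lesssim\bar\eps$, we have $\senorm{z}^{(\zeta-|k|_\s)\wedge 0} = \bar\eps^\kappa\cdot\bar\eps^{-\kappa}\senorm{z}^{(\zeta-|k|_\s)\wedge 0}\lesssim\bar\eps^\kappa\,\senorm{z}^{(\zeta-\kappa-|k|_\s)\wedge 0}$, using $\senorm{z}\ge\eps$ and monotonicity of $t\mapsto t^{-\kappa}$. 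For the second term, $D^k_\eps(K^\eps\star_\eps\psi^{\bar\eps,\eps})(z)$, I distribute the derivatives, putting $\lceil\zeta\rceil$-many off $K^\eps$ onto $\psi^{\bar\eps,\eps}$ if needed so that only a bounded (order-$\zeta$, integrable against the $\bar\eps$-scale bump) singularity of $K^\eps$ remains, and then use the bounds \eqref{e:SmoothConv} on $\psi^{\bar\eps,\eps}$ together with $\int_{\snorm{w}\lesssim\bar\eps}\senorm{w}^{\zeta}\,dw\lesssim\bar\eps^{\zeta+|\s|}$ (which converges since $\zeta>-|\s|$) to get a bound of order $\bar\eps^{\zeta-|k|_\s}\wnorm{K^\eps}_{\zeta;m+\s_0}\lesssim\bar\eps^\kappa\,\senorm{z}^{(\zeta-\kappa-|k|_\s)\wedge 0}\wnorm{K^\eps}_{\zeta;m+\s_0}$, again because $\snorm{z}\le A\bar\eps$. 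The smoothness of $\psi^{\bar\eps,\eps}$ in time and of $K^\eps$ away from $P_0$, together with the fact that convolution in $t$ with a smooth bump removes the discontinuity, shows $K^\eps - K^\eps\star_\eps\psi^{\bar\eps,\eps}$ is again non-differentiable in time only on $P_0$, so the norm $\wnorm{\cdot}_{\zeta-\kappa;m}$ makes sense for it.

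The main obstacle, and the only genuinely delicate point, is the interplay between the time-discontinuity of $K^\eps$ at $t=0$ (Remark~\ref{r:DiscontinuousFunctions}: its time derivative can be as bad as a Dirac mass) and the Taylor expansion step in the far regime: one cannot naively differentiate $K^\eps$ in time across $t=0$. The fix is the case analysis above — whenever the $\s$-segment joining $z$ and $z-w$ would cross $\{t=0\}$, one has $|t|\lesssim\bar\eps$ so $z$ is essentially in the near regime and one falls back on term-by-term estimates — but getting the bookkeeping of constants ($A$ versus $R$ versus the support radius of $K^\eps$) consistent, and making sure the gained power is exactly $\bar\eps^\kappa$ with no loss, requires some care. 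Everything else is a routine adaptation of the proof of \cite[Lem.~10.17]{Hai14}, with the discrete Leibniz rule \eqref{e:DLeibniz}, summation by parts, and the discrete ``monomials'' $(x)_{k,\eps}$ playing their usual roles, as in the proof of Lemma~\ref{l:FuncsConv}.
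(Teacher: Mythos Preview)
Your proof is correct and follows exactly the approach the paper intends: the paper itself gives no proof for this lemma, merely stating that it ``is similar to \cite[Lem.~10.17]{Hai14}'', and your near/far splitting with the increment/Taylor argument in the far regime and the brute-force bound in the near regime is precisely that argument transplanted to the discrete setting. Your explicit handling of the time-discontinuity of $K^\eps$ at $t=0$ (Remark~\ref{r:DiscontinuousFunctions}) is the one genuinely new ingredient over \cite{Hai14}, and you have dealt with it correctly by observing that whenever the segment from $z$ to $z-w$ crosses $\{t=0\}$ one has $|t|^{1/\s_0}\lesssim\bar\eps$ and can fall back on the near-regime estimates.
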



\subsection{Convergence of lattice approximations of the $\Phi^4_3$ measure}

In this section we provide some properties of the lattice approximations $\mu_\eps$ of the $\Phi^4_3$ measure, defined in \eqref{e:mu_eps}, which will be used in the proof of Corollary~\ref{c:Phi}. We start with tightness and moment estimates.

\begin{proposition}\label{prop:mu_tight}
If $a > 0$ and the coupling constant $\lambda$ in \eqref{e:DAction} is small enough, then for every $\alpha < -\frac{1}{2}$ the sequence $\mu_\eps$ is tight in $\CC^\alpha$ as $\eps \to 0$ with uniformly bounded
moments of all orders.
\end{proposition}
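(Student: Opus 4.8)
The plan is to deduce the proposition from the uniform‑in‑$\eps$ bounds on the lattice $\Phi^4_3$ measures of \cite{BFS83} (see also \cite{Fel74,Park}), reformulated in the scale of negative Hölder spaces by means of the discrete multiresolution analysis of Section~\ref{ss:DMultiresolutionAnalysis}.

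\emph{Step 1 (uniform moments of smeared fields).} First I would extract from \cite{BFS83}, in the regime $a = m_0^2 > 0$ and $\lambda$ small, the following quantitative statement: for every $\kappa > 0$ and every $p \ge 1$ there is a constant $C$ \emph{independent of $\eps$} such that
\begin{equ}
\EE_{\mu_\eps}\bigl|\langle \Phi^\eps, \varrho_x^\delta \rangle_\eps\bigr|^p \le C\,\delta^{p\alpha'}\;, \qquad \alpha' \eqdef -\frac{1}{2} - \kappa\;,
\end{equ}
uniformly over $x \in \Torus^3_\eps$, $\delta \in [\eps, 1]$ and $\varrho \in \CB^r_0(\R^3)$ with $r$ fixed and sufficiently large. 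For $p = 2$ this is a smeared version of the two‑point bound of \cite{BFS83}; for general even $p$ it follows from the same correlation and convexity inequalities together with the convergent polymer expansion available for small $\lambda$ — this is precisely where the smallness of $\lambda$ is used (and the point corrected in the acknowledgements). Alternatively, one can use that $\mu_\eps$ has a density with respect to the discrete massive Gaussian free field whose moments (in particular the partition function) are bounded uniformly in $\eps$.

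\emph{Step 2 (passage to $\CC^\alpha$).} I would then view $\Phi^\eps$ as the distribution $\varrho \mapsto \langle \Phi^\eps, \varrho\rangle_\eps$ on $\Torus^3$ and expand it in the orthonormal system $\{\varphi^{N,0}_x\} \cup \{\psi^{N,n}_x : 0 \le n < N\}$ of Section~\ref{ss:DMultiresolutionAnalysis}. Since $\psi^{N,n}_x$ equals, up to the $L^2$–normalising factor $2^{3n/2}$, a fixed rescaling at scale $2^{-n}$ of a compactly supported $\CC^r$ profile, Step~1 gives, uniformly in $\eps$,
\begin{equ}
\EE_{\mu_\eps}\bigl|\langle \Phi^\eps, \psi^{N,n}_x\rangle_\eps\bigr|^p \lesssim 2^{-3np/2}\, 2^{-np\alpha'} = 2^{-np(3/2 + \alpha')}\;,
\end{equ}
and the analogous bounds for the coefficients against $\varphi^{N,0}_x$ and against $\varphi^{N,N}_x = 2^{3N/2}\delta_{x,\cdot}$; the latter yields the pointwise estimate $\EE_{\mu_\eps}|\Phi^\eps(x)|^p \lesssim \eps^{p\alpha'}$, which shows that the off‑grid extension of $\Phi^\eps$ is a genuine element of $\CC^{\alpha'}(\Torus^3)$ (the bound of Step~1 self‑improving to all $\delta \in (0,1]$). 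Taking $p$ large, summing over the $\CO(2^{3n})$ points at each level $n$, and invoking the wavelet characterisation of $\CC^{\alpha''}(\Torus^3)$ (see \cite{Mey92} and \cite[Sec.~3.2]{Hai14}), one obtains $\EE_{\mu_\eps}\Vert \Phi^\eps \Vert_{\CC^{\alpha''}}^p \lesssim 1$ uniformly in $\eps$, for every $\alpha'' < -\frac{1}{2}$ and every $p \ge 1$.

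\emph{Step 3 (tightness and conclusion).} Finally, given $\alpha < -\frac{1}{2}$, pick $\alpha''$ with $\alpha < \alpha'' < -\frac{1}{2}$. The embedding $\CC^{\alpha''}(\Torus^3) \hookrightarrow \CC^{\alpha}(\Torus^3)$ is compact, so Step~2 and Markov's inequality show that $\{\mu_\eps\}_\eps$ is tight in $\CC^\alpha$, while $\EE_{\mu_\eps}\Vert \Phi^\eps \Vert_{\CC^\alpha}^p \le \EE_{\mu_\eps}\Vert \Phi^\eps \Vert_{\CC^{\alpha''}}^p$ yields the uniform moment bounds. The only genuinely delicate point is Step~1: recasting the estimates of \cite{BFS83} as uniform bounds on moments of \emph{all} orders of the field tested against arbitrary functions at every dyadic scale down to the lattice spacing. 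Granting this input, Steps~2--3 are a routine application of the discrete wavelet analysis developed above.
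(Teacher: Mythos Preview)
Your proposal is correct and follows the same overall strategy as the paper: extract uniform moment bounds on $\langle\Phi^\eps,\varphi_x^\lambda\rangle_\eps$ from \cite{BFS83}, then pass to $\CC^\alpha$ by a Kolmogorov/wavelet argument and conclude tightness by compact embedding.

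The difference lies in Step~1. You treat the all-order moment bound as a single input to be ``extracted'' from \cite{BFS83}, gesturing at correlation inequalities and the polymer expansion. The paper instead performs a clean two-step reduction: first it invokes the Gaussian-type domination \cite[Eq.~8.2]{BFS83}, which bounds the $2n$-th moment of any smeared field by the $n$-th power of its second moment, and then it uses \cite[Thm.~6.1]{BFS83} to compare the second moment under $\mu_\eps$ to that under the discrete massive free field $\hat\mu_\eps$, with an $\CO(\lambda^2\Vert\phi\Vert_{L^2}^2)$ error. The free-field second moment is then bounded directly from the kernel of $(a-\Delta^\eps)^{-1}$. This makes transparent exactly where the smallness of $\lambda$ enters (namely in \cite[Thm.~6.1]{BFS83}) and avoids any appeal to the full polymer expansion for higher moments. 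Your Steps~2--3, with the discrete wavelets of Section~\ref{ss:DMultiresolutionAnalysis} and the compact embedding, are a perfectly good way to finish; the paper simply says ``from which the claim follows'', leaving this last passage implicit.
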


\begin{proof}
The estimate \cite[Eq.~8.2]{BFS83} implies that the $2 n$-th moment of $\mu_\eps$ is bounded by the second moment (up to a multiplier depending on $n$). Moreover, it follows from \cite[Thm.~6.1]{BFS83} that for any test function $\phi \in \CC_0^{\infty}(\R^3)$ one has
\begin{equ}
\int \Phi^\eps(\phi)^2 \mu_\eps(d \Phi^\eps) = \int \Phi^\eps(\phi)^2 \hat{\mu}_\eps(d \Phi^\eps) + \mathcal{O} \bigl(\lambda^2 \Vert \phi \Vert^2_{L^2}\bigr)\;,
\end{equ}
where $\hat{\mu}_\eps$ is the Gaussian measure given by \eqref{e:mu_eps} and \eqref{e:DAction} with $\lambda = C^{(\eps)} = 0$. Since the covariance of $\hat{\mu}_\eps$ is the kernel of $(a - \Delta^\eps)^{-1}$ where $\Delta^\eps$ is the nearest-neighbour approximation of the Laplacian $\Delta$ (see \cite[Eq.~3.2]{BFS83}), one has the bound
\begin{equ}
 \int \Phi^\eps(\phi^{\nu})^2 \hat{\mu}_\eps(d \Phi^\eps) \lesssim \nu^{-1 - \kappa}\;,
\end{equ}
for any $\kappa > 0$ and any scaling parameter $\nu \in [\eps, 1]$. This yields the respective bounds on the moments of $\mu_\eps$ from which the claim follows.
\end{proof}

The following result shows that the measures $\mu_\eps$ in fact converge as $\eps \to 0$.

\begin{proposition}\label{eq:mu_limit}
The measures $\mu_\eps$ on $\CC^\alpha$ converge to the $\Phi^4_3$ measure \eqref{e:PhiMeasure}.
\end{proposition}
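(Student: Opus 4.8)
\textbf{Proof proposal for Proposition~\ref{eq:mu_limit}.}

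The plan is to identify the measures $\mu_\eps$ with the time-$t$ marginals (for $t$ large, or in the stationary regime) of the discrete dynamics \eqref{e:DPhiRenorm}, and then pass to the limit using Theorem~\ref{t:Phi} together with the already-established tightness from Proposition~\ref{prop:mu_tight}. First I would recall that, for each fixed $\eps>0$, the measure $\mu_\eps$ is reversible (hence invariant) for the finite-dimensional SDE \eqref{e:DPhiRenorm}, provided the coupling constant $\lambda$ is small enough so that $\mu_\eps$ is a genuine probability measure — this is the classical observation that \eqref{e:DPhiRenorm} is the Langevin dynamics associated to the action \eqref{e:DAction}, with the drift being (up to the sign) the gradient of $S_\eps$. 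Concretely, starting \eqref{e:DPhiRenorm} from $\Phi^\eps_0\sim\mu_\eps$, the law of $\Phi^\eps(t,\cdot)$ equals $\mu_\eps$ for every $t\ge 0$.

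Next I would let $\eps\to 0$. By Proposition~\ref{prop:mu_tight}, $\{\mu_\eps\}$ is tight in $\CC^\alpha$ for any $\alpha<-\tfrac12$, with uniformly bounded moments of all orders; hence along any subsequence $\mu_{\eps_n}\to\nu$ weakly for some probability measure $\nu$ on $\CC^\alpha$. It remains to identify $\nu$ with the $\Phi^4_3$ measure $\mu$. Here I would invoke \cite[Thm.~2.1]{Park} (cited in the introduction), which states precisely that every accumulation point of the lattice measures $\mu_\eps$ coincides with $\mu$; since $\nu$ is such an accumulation point and the limit is therefore unique, $\mu_\eps\to\mu$ (the full sequence converges, not merely a subsequence). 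This is the cleanest route and it uses only results external to, or already recalled in, the paper.

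An alternative self-contained identification, which is really the point of including this proposition right after Theorem~\ref{t:Phi}, goes through the dynamics: using the Skorokhod representation on the tight family $\{\mu_\eps\}$ we may assume $\Phi^\eps_0\to\Phi_0$ almost surely in $\CC^\alpha$, with $\Phi_0\sim\nu$; feeding these initial data into Theorem~\ref{t:Phi} — whose hypotheses require $\Phi_0\in\CC^\eta$ for some $\eta>-\tfrac23$, so one needs $\alpha$ (and $\eta$) chosen in $(-\tfrac23,-\tfrac12)$, which is consistent with Proposition~\ref{prop:mu_tight} — yields that $\Phi^\eps\to\Phi$ in probability in $\CC^{\delta,\alpha}_{\bar\eta,T_\eps}$, where $\Phi$ is the continuous solution of \eqref{e:Phi} constructed in \cite{Hai14} started from $\Phi_0$. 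Since the law of $\Phi^\eps(t,\cdot)$ is $\mu_\eps$ for every $t$, and the convergence $T_\eps\to T_\star$ in probability lets us evaluate at a fixed time $t<T_\star$ for $\eps$ small, the law of $\Phi(t,\cdot)$ equals $\nu$ for all such $t$; letting $t$ range shows $\nu$ is invariant for the continuous dynamics. One then concludes $\nu=\mu$ by the known characterisation (again \cite{Park}, or the uniqueness of the invariant measure in the relevant regime), which is exactly the argument ``along the lines of \cite{Bourgain}'' alluded to in the statement of Corollary~\ref{c:Phi}.

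The main obstacle is the identification step: tightness plus invariance under the dynamics only pins down $\nu$ up to the set of invariant measures of \eqref{e:Phi}, so one genuinely needs the external input that $\mu$ is the unique accumulation point (equivalently, the unique invariant measure in the small-$\lambda$ regime). A secondary technical point to be careful about is matching the function spaces: one must choose $\alpha$ with $-\tfrac23<\alpha<-\tfrac12$ so that Proposition~\ref{prop:mu_tight} gives tightness in $\CC^\alpha$ while simultaneously $\CC^\alpha\hookrightarrow\CC^\eta$ with $\eta>-\tfrac23$ as demanded by Theorem~\ref{t:Phi}, and to handle the random stopping times $T_\eps$ by first conditioning on $\{T_\eps>t\}$, whose probability tends to $1$.
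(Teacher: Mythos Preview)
Your ``cleanest route'' — tightness from Proposition~\ref{prop:mu_tight} plus \cite[Thm.~2.1]{Park} to identify every accumulation point — is exactly the paper's proof of this proposition, and it is complete as stated.

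Everything else in your proposal (reversibility of $\mu_\eps$ for \eqref{e:DPhiRenorm}, Skorokhod coupling of initial data, passing to the limit in the dynamics via Theorem~\ref{t:Phi}, invariance of the limit $\nu$) is unnecessary for Proposition~\ref{eq:mu_limit} itself: the paper keeps this proposition to two lines and defers all the dynamical content to the proof of Corollary~\ref{c:Phi}. Note also that your alternative route does not actually avoid the external input: once you have shown $\nu$ is invariant for \eqref{e:Phi}, concluding $\nu=\mu$ still needs either \cite{Park} (which already gives the result directly, so the detour through invariance gains nothing) or a uniqueness-of-invariant-measure statement for the continuous dynamics that is not available here. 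You correctly flag this as ``the main obstacle'', but the upshot is that the dynamical argument is strictly redundant for this proposition and should be dropped.
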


\begin{proof}
By Proposition~\ref{prop:mu_tight}, we can choose a subsequence of $\mu_\eps$ weakly converging 
to a limit $\mu$. Combining this with 
\cite[Thm~2.1]{Park} (see also \cite{ParkOld})
shows that $\mu$ coincides with the $\Phi^4_3$ measure \eqref{e:PhiMeasure} constructed
in \cite{Fel74}.
\end{proof}


\subsection{Proof of the convergence result}
\label{s:ProofOfPhi}

Using the results from the previous section, we are ready to prove Theorem~\ref{t:Phi}.

\begin{proof}[Proof of Theorem~\ref{t:Phi}]
In order to prove the claim, we proceed as in \cite{Park} and introduce intermediate equations driven by a smooth noise. Precisely, we take a function $\psi : \R^{4} \to \R$ which is smooth, compactly supported and integrates to $1$, and for some $\bar{\eps} \in [\eps,1]$ we define $\psi^{\bar{\eps}}(t, x) \eqdef \bar{\eps}^{-|\s|} \psi\bigl(\bar{\eps}^{-2} t, \bar{\eps}^{-1} x\bigr)$ and the mollified noise $\xi^{\bar \eps, 0} \eqdef \xi \star \psi^{\bar{\eps}}$. Then we denote by $\Phi^{\bar \eps, 0}$ the global solution of 
\begin{equ}
\partial_t \Phi^{\bar \eps, 0} = \Delta \Phi^{\bar \eps, 0} + \bigl(C^{(\bar \eps, 0)} - a\bigr) \Phi^{\bar \eps, 0} - \lambda \bigl(\Phi^{\bar \eps, 0}\bigr)^3 + \xi^{\bar \eps, 0}\;, \qquad \Phi^{\bar \eps, 0}(0, \cdot) = \Phi_0(\cdot)\;,
\end{equ}
where $C^{(\bar \eps, 0)} = 3 \lambda C^{(\bar \eps, 0)}_1 - 9 \lambda^2 C^{(\bar \eps, 0)}_2$, and $C^{(\bar \eps, 0)}_1$ and $C^{(\bar \eps, 0)}_2$ are as in \cite[Thm.~10.22 and Eq. 9.21]{Hai14}.

Let $\tilde{Z}^{\bar \eps, 0}$ and $\tilde{Z}$ be the models on $\ST$ built in \cite[Thm.~10.22]{Hai14} via the noises $\xi^{\bar \eps, 0}$ and $\xi$ respectively. We will be interested only in their restrictions to the truncated regularity structure $\hat{\ST}$. It follows from the proof of the latter theorem that we are exactly in the setting of Section~\ref{ss:ContinuousGauss}, and we can define respective inhomogeneous models $\hat Z^{\bar \eps, 0}$ and $\hat Z$ on $\hat{\ST}$ as in \eqref{e:ModelTranslation} and \eqref{e:GammaTranslation}. Furthermore, Remark~\ref{r:ContinousModelLift} and the bounds obtained in the proof of \cite[Thm.~10.22]{Hai14} on the elements in the expansions \eqref{e:ModelTranslation} of the models yield the following bounds:
\begin{equ}[e:ModelConvFirst]
\EE \left[\VERT \hat Z \VERT_{\delta, \gamma; T}\right]^p \lesssim 1\;, \qquad \EE \left[\VERT \hat Z^{\bar \eps, 0}; \hat Z \VERT_{\delta, \gamma; T}\right]^p \lesssim \bar{\eps}^{\theta p}\;,
\end{equ}
uniformly in $\bar \eps \in (0,1]$, for any $T > 0$, $p \geq 1$ and for sufficiently small values of $\delta > 0$ and $\theta > 0$. Using Theorem~\ref{t:FixedMap} and Lemma~\ref{l:PhiLipschiz}, we define the solution $\Phi$ to the equation \eqref{e:Phi} as in Definition~\ref{d:SPDESolution} by solving the respective abstract equation \eqref{e:AbstractEquation} with the nonlinearity $F$ from \eqref{e:PhiNonlin} and the inhomogeneous model $\hat Z$.

In order to discretise the noise $\xi^{\bar \eps, 0}$, we define the function
\begin{equ}
\psi^{\bar{\eps}, \eps}(t, x) \eqdef \eps^{-d} \int_{\R^{d}} \psi^{\bar{\eps}}(t, y)\, \1_{|y - x| \leq \eps/2}\, dy\;, \qquad (t,x) \in \R \times \Lambda_\eps^d\;,
\end{equ}
and the discrete noise $\xi^{\bar \eps, \eps} \eqdef \psi^{\bar{\eps}, \eps} \star_\eps \xi^\eps$, where $\xi^\eps$ is given in \eqref{e:SimpleDNoise}. We take the function $\psi^{\bar{\eps}, \eps}$ in this form, because it satisfies the first identity in \eqref{e:SmoothConv}, which in general is not true for $\psi^{\bar{\eps}}$. We define the discrete model $Z^{\bar \eps, \eps}$ by substituting each occurrence of $\xi^\eps$, $C^{(\eps)}_1$ and $C^{(\eps)}_2$ in the definition of $Z^{\eps}$ by $\xi^{\bar \eps, \eps}$, $C^{(\bar \eps, \eps)}_1$ and $C^{(\bar \eps, \eps)}_2$ respectively, where $C^{(\bar \eps, \eps)}_1$ is defined as in \eqref{e:RenormConsts}, but via the kernel $K^{\bar \eps, \eps} \eqdef K^\eps \star_\eps \psi^{\bar \eps, \eps}$, and $C^{(\bar \eps, \eps)}_2$ is defined by replacing $K^\eps \star_\eps K^\eps$ by $K^{\bar \eps, \eps} \star_\eps K^{\bar \eps, \eps}$ in the second expression in \eqref{e:RenormConsts}. Furthermore, using $\wnorm{K^\eps}_{-3; r} \leq C$, which follows from Lemma~\ref{l:DGreenDecomposition} and Remark~\ref{r:KernelDiff}, and proceeding exactly as in the proof of \cite[Thm.~10.22]{Hai14}, but exploiting Proposition~\ref{p:CovarianceConvergence} and the results from Section~\ref{ss:SingularFunctions} instead of their continuous counterparts, we obtain the bounds \eqref{e:GaussModelBoundSigmaGamma} for each $\tau \in \gen{\CF} \setminus \poly{\CF}$, and \eqref{e:GaussModelBound} for each $\tau \in \hat{\CF}^{-}$, uniformly in $\eps \leq \bar \eps$ and for $\delta > 0$ small enough. We also obtain the respective bounds on the differences $Z^{\bar \eps, \eps} - Z^{\eps}$, with the proportionality constants of orders $\bar \eps^{2\theta}$ with $\theta > 0$ sufficiently small. For this, we can use Lemma~\ref{l:SmoothConv}, because $\psi^{\bar \eps, \eps}$ satisfies the required conditions, which follows from the properties of $\psi$. Thus, Theorem~\ref{t:ModelsConvergence} yields
\begin{equ}[e:DPhiModelBound]
\EE \left[\VERT Z^\eps \VERT_{\delta, \gamma; T}^{(\eps)}\right]^p \lesssim 1\;, \qquad \EE \left[\VERT Z^{\bar \eps, \eps}; Z^\eps \VERT_{\delta, \gamma; T}^{(\eps)}\right]^p \lesssim \bar{\eps}^{\theta p}\;,
\end{equ}
uniformly in $\eps \leq \bar \eps$, for any $T > 0$ and $p \geq 1$. We denote by $\Phi^{\bar \eps, \eps}$ the solution of \eqref{e:DPhiRenorm}, driven by the noise $\xi^{\bar \eps, \eps}$, with the renormalisation constant $C^{(\bar \eps, \eps)} \eqdef 3 \lambda C^{(\bar \eps, \eps)}_1 - 9 \lambda^2 C^{(\bar \eps, \eps)}_2$. 

For every $K > 0$ we define the following stopping time:
\begin{equ}
 \tau_K \eqdef \inf\bigl\{T > 0 : \Vert \Phi \Vert_{\CC^{\delta, \alpha}_{\bar{\eta}, T}} \geq K\bigr\}\;,
\end{equ}
where the values of $\delta$, $\alpha$ and $\bar \eta$ are as in the statement of the theorem. Then we have the limit in probability $\lim_{K \to \infty} \tau_K = T_\star$, where $T_\star$ is the random lifetime of $\Phi$. Our aim is now to prove that
\begin{equ}[e:SolutionsConvDouble]
\lim_{K \to \infty} \lim_{\eps \to 0} \P \Bigl[\Vert \Phi; \Phi^\eps \Vert^{(\eps)}_{\CC^{\delta, \alpha}_{\bar{\eta}, \tau_K}} \geq c\Bigr] = 0\;,
\end{equ}
for every constant $c > 0$. Then the claim \eqref{e:PhiConvergence} will follow after choosing $T_\eps$ as a suitable diagonal sequence. 

In order to have a priori bounds on the processes and models introduced above, we define for every $K >0$ the following stopping times:
\begin{equs}
\sigma^\eps_{K} &\eqdef \inf\bigl\{T > 0 : \Vert \Phi \Vert_{\CC^{\delta, \alpha}_{\bar{\eta}, T}} \geq K ~\text{ or }~ \VERT \hat Z \VERT_{\delta, \gamma; T} \geq K\,, \text{ or }~ \VERT \hat Z^\eps \VERT^{(\eps)}_{\delta, \gamma; T} \geq K\bigr\}\;,\\
\sigma^{\bar \eps, \eps} &\eqdef \inf\bigl\{T > 0 : \Vert \Phi - \Phi^{\bar \eps, 0} \Vert_{\CC^{\delta, \alpha}_{\bar{\eta}, T}} \geq 1 ~\text{ or }~ \Vert \Phi^\eps - \Phi^{\bar \eps, \eps} \Vert^{(\eps)}_{\CC^{\delta, \alpha}_{\bar{\eta}, T}} \geq 1\,, \\
& ~\text{ or }~ \Vert \Phi^{\bar \eps, 0}; \Phi^{\bar \eps, \eps} \Vert^{(\eps)}_{\CC^{\delta, \alpha}_{\bar{\eta}, T}} \geq 1\,, ~\text{ or }~ \VERT \hat Z; \hat Z^{\bar \eps, 0} \VERT_{\delta, \gamma; T} \geq 1\,, ~\text{ or }~ \VERT \hat Z^\eps; \hat Z^{\bar \eps, \eps} \VERT^{(\eps)}_{\delta, \gamma; T} \geq 1\bigr\}\;,
\end{equs}
as well as $\varrho_K^{\bar \eps, \eps} \eqdef \sigma^\eps_{K} \wedge \sigma^{\bar \eps, \eps}$. Then, choosing two constants $\bar K > K$ and using the latter stopping time and the triangle inequality, we get the following bound:
\begin{equs}
 \P \Bigl[\Vert \Phi; \Phi^\eps &\Vert^{(\eps)}_{\CC^{\delta, \alpha}_{\bar{\eta}, \tau_K}} \geq c\Bigr] \leq \P \Bigl[\Vert \Phi - \Phi^{\bar \eps, 0} \Vert_{\CC^{\delta, \alpha}_{\bar{\eta}, \varrho_{\bar K}^{\bar \eps, \eps}}} \geq c\Bigr] + \P \Bigl[\Vert \Phi^{\bar \eps, 0}; \Phi^{\bar \eps, \eps} \Vert^{(\eps)}_{\CC^{\delta, \alpha}_{\bar{\eta}, \varrho_{\bar K}^{\bar \eps, \eps}}} \geq c\Bigr]\\
 &+ \P \Bigl[\Vert \Phi^{\bar \eps, \eps} - \Phi^{\eps} \Vert^{(\eps)}_{\CC^{\delta, \alpha}_{\bar{\eta}, \varrho_{\bar K}^{\bar \eps, \eps}}} \geq c\Bigr] + \P \bigl[\varrho_{\bar K}^{\bar \eps, \eps} < \sigma^\eps_{\bar K}\bigr] + \P\bigl[\sigma^\eps_{\bar K} < \tau_K\bigr]\;. \label{e:SolutionsConv}
\end{equs}
We will show that if we take the limits $\eps, \bar \eps \to 0$ and $K, \bar K \to \infty$, then all the terms on the right-hand side of \eqref{e:SolutionsConv} vanish and we obtain the claim \eqref{e:SolutionsConvDouble}.

It follows from the definition of $\varrho_{\bar K}^{\bar \eps, \eps}$ that $\VERT \hat Z \VERT_{\delta, \gamma; \varrho_{\bar K}^{\bar \eps, \eps}}$ and $\VERT\hat Z^{\bar \eps, 0} \VERT_{\delta, \gamma; \varrho_{\bar K}^{\bar \eps, \eps}}$ are bounded by constants proportional to $\bar K$. Hence, Theorems~\ref{t:DSolutions} and \ref{t:DReconstruct}, and the bounds~\eqref{e:ModelConvFirst} yield
\begin{equ}
\lim_{\bar \eps \to 0} \P \Bigl[\Vert \Phi - \Phi^{\bar \eps, 0} \Vert_{\CC^{\delta, \alpha}_{\bar{\eta}, \varrho_{\bar K}^{\bar \eps, \eps}}} \geq c\Bigr] = 0\;,
\end{equ}
uniformly in $\eps$. Similarly, we can use Theorems~\ref{t:DSolutions} and \ref{t:DReconstruct}, and the bounds on the discrete models \eqref{e:DPhiModelBound} to obtain the uniform in $\eps$ convergence
\begin{equ}
\lim_{\bar \eps \to 0} \P \Bigl[\Vert \Phi^\eps - \Phi^{\bar \eps, \eps} \Vert^{(\eps)}_{\CC^{\delta, \alpha}_{\bar{\eta}, \varrho_{\bar K}^{\bar \eps, \eps}}} \geq c\Bigr] = 0\;.
\end{equ}

Now, we turn to the second term in \eqref{e:SolutionsConv}. It follows from our definitions that we have $\xi^{\bar \eps, \eps} = \varrho^{\bar \eps, \eps} \star \xi$, where 
\begin{equ}
\varrho^{\bar \eps, \eps}(t,x) \eqdef \eps^{-d} \int_{\Lambda_{\eps}^d} \psi^{\bar \eps, \eps}(t,y)\, \1_{|y - x| \leq \eps/2}\, dy\;.
\end{equ}
Moreover, for $z = (t,x) \in \R \times \Lambda_{\eps}^d$ one has the identity
\begin{equ}
\bigl(\psi^{\bar \eps} - \varrho^{\bar \eps, \eps}\bigr)(z) = \eps^{-2d} \int_{\Lambda_\eps^d} \int_{\R^{d}} \left( \psi^{\bar \eps}(t,x) - \psi^{\bar \eps}(t,u) \right) \1_{|u-y| \leq \eps/2} \1_{|y - x| \leq \eps/2} d u\, dy,
\end{equ}
from which we immediately obtain the bound
\begin{equ}
\sup_{z \in \R \times \Lambda_{\eps}^d} \bigl| D_t^k \bigl(\psi^{\bar \eps} - \varrho^{\bar \eps, \eps}\bigr)(z)\bigr| \lesssim \eps \bar{\eps}^{-|\s| - k \s_0 - 1}\;,
\end{equ}
for every $k \in \N$. Hence, using the a priori bounds on the solutions, which follow from the definition of $\varrho_{\bar K}^{\bar \eps, \eps}$, we can use the standard result from numerical analysis of PDEs (see e.g. \cite[Ch.~6]{Lui11}) that the second term in \eqref{e:SolutionsConv} vanishes as $\eps \to 0$, as soon as $\bar \eps$ is fixed.

The limit $\lim_{\bar \eps \to 0} \lim_{\eps \to 0} \P \bigl[\varrho_{\bar K}^{\bar \eps, \eps} < \sigma^\eps_{\bar K}\bigr] = 0$ follows immediately from the definition of the involved stopping times, the bounds \eqref{e:ModelConvFirst} and \eqref{e:DPhiModelBound}, and the convergences we have just proved. Finally, it follows from \eqref{e:ModelConvFirst} that 
\begin{equ}
 \lim_{\bar K \to \infty} \P\bigl[\sigma^\eps_{\bar K} < \tau_K\bigr] = 0\;,
\end{equ}
for a fixed $K$ and uniformly in $\eps$, which finishes the proof.
\end{proof}

\begin{proof}[Proof of Corollary~\ref{c:Phi}]
Let $\xi$ be space-time white noise on some probability space 
$(\Omega, \CF, \mathbf{P})$, and let its discretisation $\xi^\eps$ be given 
by \eqref{e:SimpleDNoise}. Let furthermore $\Phi^\eps_0$ be a random variable on 
the same probability space which is independent of $\xi$ and 
such that the solution to \eqref{e:DPhiRenorm} with 
the nearest neighbours approximate Laplacian $\Delta^\eps$ and driven by $\xi^\eps$ 
is stationary. We denote by $\mu_\eps$ its stationary distribution \eqref{e:mu_eps}, which we view
as a measure on $\CC^{\alpha}$ with $\alpha$ as in \eqref{e:PhiConvergence}, 
by extending it in a piecewise constant fashion. It then 
follows from Proposition~\ref{prop:mu_tight} that if we view $\Phi^\eps_0$ as an element of $\CC^{\alpha}$ 
by piecewise constant extension, we can and will assume by Skorokhod's representation 
theorem that 
$\Phi^\eps_0$ converges almost surely as $\eps \to 0$ to a limit $\Phi_0 \in \CC^\alpha$. In order to use Skorokhod's representation 
theorem \cite{Kal02}, the underlying spaces have to be separable which isn't the case for $\CC^{\alpha}$, but this
is irrelevant since our random variables belong almost surely to the closure of smooth functions under the seminorm \eqref{e:AlphaNorm} which is separable.

Before we proceed, we introduce the space $\bar \CC \eqdef \CC^{0, \alpha}_{\bar \eta}\bigl([0,1], \Torus^3\bigr) \cup \{\infty\}$ (the latter H\"{o}lder space is a subspace of $\CC^{0, \alpha}_{\bar \eta}\bigl([0,1], \R^3\bigr)$ defined below \eqref{e:SpaceTimeNorm}, containing the spatially periodic distributions), for $\alpha$ and $\bar \eta$ as in \eqref{e:PhiConvergence}, and equipped with the  metric such that 
\begin{equs}
d (\zeta, \infty) &\eqdef d (\infty, \zeta) \eqdef \bigl( 1 + \Vert \zeta \Vert_{\CC^{0, \alpha}_{\bar \eta, 1}} \bigr)^{-1}\;, \quad \zeta \neq \infty\;,\\
 d (\zeta_1, \zeta_2) &\eqdef \min \bigl\{ \Vert \zeta_1 - \zeta_2 \Vert_{\CC^{0, \alpha}_{\bar \eta, 1}}, d (\zeta_1, \infty) + d (\zeta_2, \infty) \bigr\}\;, \quad \zeta_i \neq \infty\;.
\end{equs}
Denote now by $\Phi^\eps$ the  
solution to \eqref{e:DPhiRenorm} with initial condition
$\Phi^\eps_0$ and by $\Phi$ the solution to \eqref{e:Phi} with initial condition $\Phi_0$.
We can view these as $\bar \CC$-valued random variables by postulating that
$\Phi = \infty$ if its lifetime is smaller than $1$. (The lifetime of $\Phi^\eps$ 
is always infinite for fixed $\eps$.) 

Since the assumptions of Theorem~\ref{t:Phi} are fulfilled, 
the convergence \eqref{e:PhiConvergence} holds and, since solutions blow up at time
$T_\star$, this implies that 
$d(\Phi^\eps, \Phi) \to 0$ in probability, as $\eps \to 0$. (The required continuity in time obviously holds for every $\Phi^\eps$ and $\Phi$.) 
In order to conclude, it remains to show that $\P(\Phi = \infty) = 0$.
In particular, since the only point of discontinuity of the evaluation 
maps $\Phi \mapsto \Phi(t,\cdot)$ 
on $\bar \CC$ is $\infty$, this would then immediately show not only that 
solutions $\Phi$ live up to time $1$ (and therefore any time) almost surely,
but also that $\mu$ is invariant for $\Phi$. To show that $\Phi \neq \infty$ a.s., it suffices to prove that there is no atom of the measure $\mu$ at the point $\infty$. Precisely, our aim is to show that for every $\bar \eps > 0$ there exists a constant $C_{\bar \eps} > 0$ such that 
\begin{equ}[e:Tightness]
\P\Bigl( \Vert \Phi^\eps \Vert_{\CC^{0, \alpha}_{\bar{\eta}, 1}} \geq C_{\bar \eps} \Bigr) \leq \bar \eps\;.
\end{equ}

We fix $\bar \eps > 0$ in what follows and work with a generic constant $C_{\bar \eps} > 0$, whose value will be chosen later. For integers $K \ge 2$ and $i \in \{0,\ldots,K-2\}$, we denote
\begin{equs}
 Q^\eps_{K, i} \eqdef \Vert \Phi^\eps \Vert_{\CC^{0, \alpha}_{\bar{\eta}, [i/K, (i + 2)/K]}}\;,
\end{equs}
where the norm $\Vert \cdot \Vert_{\CC^{0, \alpha}_{\bar{\eta}, [T_1, T_2]}}$ is defined as below \eqref{e:SpaceTimeNorm}, but on the time interval $[T_1, T_2]$ and with a blow-up at $T_1$. Splitting the time interval $(0,1]$ in \eqref{e:SpaceTimeNorm} into subintervals of length $1/K$, and deriving estimates on each subinterval, one gets
\begin{equs}
\Vert \Phi^\eps \Vert_{\CC^{0, \alpha}_{\bar{\eta}, 1}} &\leq Q_{K, 0}^\eps + \sum_{i = 1}^{K - 1} (i+1)^{-\bar \eta / 2}\, Q^\eps_{K, i-1}
\leq \tilde C K^{-\bar \eta / 2} \sum_{i = 0}^{K-2} Q^\eps_{K, i}\;,
\end{equs}
if $\bar \eta \leq 0$, and for some $\tilde C$ independent of $K$ and $\eps$. 
Since, by stationarity, the random variables $Q^\eps_{K, i}$ all have the same law,
it follows that 
\begin{equs}
\P \Bigl( \Vert \Phi^\eps \Vert_{\CC^{0, \alpha}_{\bar{\eta}, 1}} \geq C_{\bar \eps} \Bigr) &\leq \P\Bigl( \tilde C K^{-\bar \eta / 2} \sum_{i = 0}^{K-2} Q^\eps_{K, i} \geq C_{\bar \eps}\Bigr)\\
&\leq K \P \Bigl( \Vert \Phi^\eps \Vert_{\CC^{0, \alpha}_{\bar{\eta}, 2/K}} \geq \tilde{C}^{-1} K^{\bar \eta / 2} C_{\bar \eps}\Bigr)\;,\label{e:TightBoundOne}
\end{equs}
To make the notation concise, we write $\tilde{C}_{K, \bar \eps} \eqdef \tilde{C}^{-1} K^{\bar \eta / 2} C_{\bar \eps}$. Furthermore, in order to have a uniform bound on the initial data and the model, we use the following estimate
\begin{equs}
\mathbf{P} \Bigl( \Vert \Phi^\eps \Vert_{\CC^{0, \alpha}_{\bar{\eta}, 2/K}} \geq \tilde{C}_{K, \bar \eps} \Bigr) \leq \mathbf{P}&\Bigl( \Vert \Phi^\eps \Vert_{\CC^{0, \alpha}_{\bar{\eta}, 2/K}} \geq \tilde{C}_{K, \bar \eps} \Big| \Vert \Phi^\eps_0 \Vert_{\CC^{\eta}} \leq L, \VERT Z^\eps \VERT^{(\eps)}_{\gamma; 1} \leq L \Bigr) \\
 &+ \mathbf{P}\Bigl(  \Vert \Phi^\eps_0 \Vert_{\CC^{\eta}} > L \Bigr) + \mathbf{P}\Bigl( \VERT Z^\eps \VERT^{(\eps)}_{\gamma; 1} > L \Bigr),\label{e:PhiInitDataBound}
\end{equs}
valid for every $L$, where $\eta$ and $\gamma > 0$ are as in the proof of Theorem~\ref{t:Phi}. 

Recalling that \cite[Sec.~8]{BFS83} yields uniform bounds on all moments of $\mu_\eps$, 
and using the first bound in \eqref{e:DPhiModelBound}, Markov's inequality implies that
\begin{equ}[e:twoterms]
\mathbf{P}\Bigl(  \Vert \Phi^\eps_0 \Vert_{\CC^{\eta}} > L \Bigr) \leq B_1 L^{-q}\;, \qquad \mathbf{P}\Bigl( \VERT Z^\eps \VERT^{(\eps)}_{\gamma; 1} > L \Bigr) \leq B_2 L^{-q}\;,
\end{equ}
for any $q \geq 1$, and for constant $B_1$ and $B_2$ independent of $\eps$ and $L$.

Turning to the first term in \eqref{e:PhiInitDataBound}, it follows from the fixed point argument in the proof of Theorem~\ref{t:DSolutions} and the bound \eqref{e:DReconstructSpace}, that there exists $\tilde p \geq 1$ such that one has the bound
\begin{equ}
\Vert \Phi^\eps \Vert_{\CC^{0, \alpha}_{\bar{\eta}, 2/K}} \leq B_3 L^{3}\;,
\end{equ}
with $B_3$ being independent of $\eps$ and $L$, as soon as $\Vert \Phi^\eps_0 \Vert_{\CC^{\eta}} \leq L$, $\VERT Z^\eps \VERT^{(\eps)}_{\gamma; 1} \leq L$, $K \ge L^{\tilde p}$ and $L \geq 2$. In particular, the first term vanishes if we can ensure
that 
\begin{equ}[e:constrC]
\tilde{C}_{K, \bar \eps} \ge B_3 L^{3}\;.
\end{equ}
Choosing first $L$ large enough so that the contribution of the two terms in \eqref{e:twoterms} is smaller than $\bar \eps / 2$, then $K$ large enough so that $K \ge L^{\tilde p}$,
and finally $C_{\bar \eps}$ large enough so that \eqref{e:constrC} holds, the claim follows.

Let $\hat Z$ be the model from the proof of Theorem~\ref{t:Phi} and let 
\begin{equ}
\bar \CS_t \colon \bar \CC^\eta  \times \MM \to \bar \CC^\eta
\end{equ}
be the map $\bar \CS_t = \CR_t \CS_t$ from Theorem~\ref{t:FixedMap} 
yielding the maximal solution up to time $t$, i.e.\ 
$\Phi_t = \bar \CS_{t} (\Phi_0, \hat Z)$, with the conventions that 
$\bar \CS_t(\infty, \hat Z) = \infty$ and $\bar \CS_t(\Phi_0, \hat Z) = \infty$ if the maximal
existence time $T_\star$ is less than $t$. Here, $\MM$ denotes the space of all admissible
models as in Section~\ref{ss:ContinuousGauss}.
It follows from \eqref{e:IntegralIdentity}, the locality of the reconstruction map and the
locality of the construction of the model
that $\bar \CS_{t} (\Phi_0, \hat Z)$ depends on the underlying white noise 
only on the time interval $[0, t]$. Moreover, as a consequence of \cite[Prop.~7.11]{Hai14},
one has
\begin{equ}
\bar \CS_{s + t} (\Phi_0, \hat Z) = \bar\CS_{t} \bigl(\bar\CS_{s} (\Phi_0, \hat Z), \hat Z_s\bigr)\,,
\end{equ}
where $\hat Z_s$ is the natural time shift by $s$ of the model $\hat Z$. Since the underlying noise
is white in time, we conclude that the process $\Phi$ is Markov. 
The fact that the measure $\mu$ is reversible for $\Phi$ follows immediately from 
the fact that $\mu_\eps$ is reversible for the discretised process $\Phi^\eps$.
\end{proof}


\bibliographystyle{./Martin}
\bibliography{./bibliography}

\begin{thebibliography}{HMW14}
\expandafter\ifx\csname url\endcsname\relax
  \def\url#1{\texttt{#1}}\fi
\expandafter\ifx\csname urlprefix\endcsname\relax\def\urlprefix{URL }\fi
\expandafter\ifx\csname href\endcsname\relax
  \def\href#1#2{#2}\fi
\expandafter\ifx\csname burlalt\endcsname\relax
  \def\burlalt#1#2{\href{#2}{\texttt{#1}}}\fi

\bibitem[AR91]{AR91}
\textsc{S.~Albeverio} and \textsc{M.~R{\"o}ckner}.
\newblock Stochastic differential equations in infinite dimensions: solutions
  via {D}irichlet forms.
\newblock \emph{Probab. Theory Related Fields} \textbf{89}, no.~3, (1991),
  347--386.
\newblock
  \burlalt{doi:10.1007/BF01198791}{http://dx.doi.org/10.1007/BF01198791}.

\bibitem[BCD11]{BCD11}
\textsc{H.~Bahouri}, \textsc{J.-Y. Chemin}, and \textsc{R.~Danchin}.
\newblock \emph{Fourier analysis and nonlinear partial differential equations},
  vol. 343 of \emph{Grundlehren der Mathematischen Wissenschaften [Fundamental
  Principles of Mathematical Sciences]}.
\newblock Springer, Heidelberg, 2011,  xvi+523.
\newblock
  \burlalt{doi:10.1007/978-3-642-16830-7}{http://dx.doi.org/10.1007/978-3-642-16830-7}.

\bibitem[BFS83]{BFS83}
\textsc{D.~C. Brydges}, \textsc{J.~Fr{\"o}hlich}, and \textsc{A.~D. Sokal}.
\newblock A new proof of the existence and nontriviality of the continuum
  {$\varphi ^{4}_{2}$} and {$\varphi ^{4}_{3}$} quantum field theories.
\newblock \emph{Comm. Math. Phys.} \textbf{91}, no.~2, (1983), 141--186.
\newblock
  \burlalt{doi:10.1007/BF01211157}{http://dx.doi.org/10.1007/BF01211157}.

\bibitem[BG97]{MR1462228}
\textsc{L.~Bertini} and \textsc{G.~Giacomin}.
\newblock Stochastic {B}urgers and {KPZ} equations from particle systems.
\newblock \emph{Comm. Math. Phys.} \textbf{183}, no.~3, (1997), 571--607.
\newblock
  \burlalt{doi:10.1007/s002200050044}{http://dx.doi.org/10.1007/s002200050044}.

\bibitem[Bou94]{Bourgain}
\textsc{J.~Bourgain}.
\newblock Periodic nonlinear {S}chr\"odinger equation and invariant measures.
\newblock \emph{Comm. Math. Phys.} \textbf{166}, no.~1, (1994), 1--26.
\newblock
  \burlalt{doi:10.1007/BF02099299}{http://dx.doi.org/10.1007/BF02099299}.

\bibitem[CC13]{CC13}
\textsc{R.~Catellier} and \textsc{K.~Chouk}.
\newblock Paracontrolled distributions and the 3-dimensional stochastic
  quantization equation (2013).
\newblock \burlalt{arXiv:1310.6869}{http://arxiv.org/abs/1310.6869}.

\bibitem[Dau88]{Dau88}
\textsc{I.~Daubechies}.
\newblock Orthonormal bases of compactly supported wavelets.
\newblock \emph{Comm. Pure Appl. Math.} \textbf{41}, no.~7, (1988), 909--996.
\newblock
  \burlalt{doi:10.1002/cpa.3160410705}{http://dx.doi.org/10.1002/cpa.3160410705}.

\bibitem[Dau92]{Dau92}
\textsc{I.~Daubechies}.
\newblock \emph{Ten lectures on wavelets}, vol.~61 of \emph{CBMS-NSF Regional
  Conference Series in Applied Mathematics}.
\newblock Society for Industrial and Applied Mathematics (SIAM), Philadelphia,
  PA, 1992,  xx+357.
\newblock
  \burlalt{doi:10.1137/1.9781611970104}{http://dx.doi.org/10.1137/1.9781611970104}.

\bibitem[DPD03]{DPD03}
\textsc{G.~Da~Prato} and \textsc{A.~Debussche}.
\newblock Strong solutions to the stochastic quantization equations.
\newblock \emph{Ann. Probab.} \textbf{31}, no.~4, (2003), 1900--1916.
\newblock
  \burlalt{doi:10.1214/aop/1068646370}{http://dx.doi.org/10.1214/aop/1068646370}.

\bibitem[DPZ14]{PZ14}
\textsc{G.~Da~Prato} and \textsc{J.~Zabczyk}.
\newblock \emph{Stochastic equations in infinite dimensions}, vol. 152 of
  \emph{Encyclopedia of Mathematics and its Applications}.
\newblock Cambridge University Press, Cambridge, second ed., 2014,  xviii+493.
\newblock
  \burlalt{doi:10.1017/CBO9781107295513}{http://dx.doi.org/10.1017/CBO9781107295513}.

\bibitem[Fel74]{Fel74}
\textsc{J.~Feldman}.
\newblock The {$\lambda \varphi ^{4}_{3}$} field theory in a finite volume.
\newblock \emph{Comm. Math. Phys.} \textbf{37}, (1974), 93--120.
\newblock
  \burlalt{doi:10.1007/BF01646205}{http://dx.doi.org/10.1007/BF01646205}.

\bibitem[GIP15]{GIP12}
\textsc{M.~Gubinelli}, \textsc{P.~Imkeller}, and \textsc{N.~Perkowski}.
\newblock Paracontrolled distributions and singular {PDE}s.
\newblock \emph{Forum of Mathematics, Pi} \textbf{3}, no.~6, (2015), 1--75.
\newblock \burlalt{arXiv:1210.2684}{http://arxiv.org/abs/1210.2684}.
\newblock
  \burlalt{doi:10.1017/fmp.2015.2}{http://dx.doi.org/10.1017/fmp.2015.2}.

\bibitem[GLP99]{GLP99}
\textsc{G.~Giacomin}, \textsc{J.~L. Lebowitz}, and \textsc{E.~Presutti}.
\newblock Deterministic and stochastic hydrodynamic equations arising from
  simple microscopic model systems.
\newblock In \emph{Stochastic partial differential equations: six
  perspectives}, vol.~64 of \emph{Math. Surveys Monogr.},  107--152. Amer.
  Math. Soc., Providence, RI, 1999.
\newblock
  \burlalt{doi:10.1090/surv/064/03}{http://dx.doi.org/10.1090/surv/064/03}.

\bibitem[GP17]{Reloaded}
\textsc{M.~Gubinelli} and \textsc{N.~Perkowski}.
\newblock K{PZ} reloaded.
\newblock \emph{Comm. Math. Phys.} \textbf{349}, no.~1, (2017), 165--269.
\newblock \burlalt{arXiv:1508.03877}{http://arxiv.org/abs/1508.03877}.
\newblock
  \burlalt{doi:10.1007/s00220-016-2788-3}{http://dx.doi.org/10.1007/s00220-016-2788-3}.

\bibitem[GRS75]{Simon2}
\textsc{F.~Guerra}, \textsc{L.~Rosen}, and \textsc{B.~Simon}.
\newblock The {$P(\phi)_{2}$} {E}uclidean quantum field theory as classical
  statistical mechanics. {I}, {II}.
\newblock \emph{Ann. of Math. (2)} \textbf{101}, (1975), 111--189; ibid. (2)
  101\ (1975), 191--259.

\bibitem[Gub04]{Gub04}
\textsc{M.~Gubinelli}.
\newblock Controlling rough paths.
\newblock \emph{J. Funct. Anal.} \textbf{216}, no.~1, (2004), 86--140.
\newblock \burlalt{arXiv:math/0306433}{http://arxiv.org/abs/math/0306433}.
\newblock
  \burlalt{doi:10.1016/j.jfa.2004.01.002}{http://dx.doi.org/10.1016/j.jfa.2004.01.002}.

\bibitem[Hai11]{Hai11a}
\textsc{M.~Hairer}.
\newblock Rough stochastic {PDE}s.
\newblock \emph{Comm. Pure Appl. Math.} \textbf{64}, no.~11, (2011),
  1547--1585.
\newblock \burlalt{arXiv:1008.1708}{http://arxiv.org/abs/1008.1708}.
\newblock \burlalt{doi:10.1002/cpa.20383}{http://dx.doi.org/10.1002/cpa.20383}.

\bibitem[Hai13]{Hai13}
\textsc{M.~Hairer}.
\newblock Solving the {KPZ} equation.
\newblock \emph{Ann. of Math. (2)} \textbf{178}, no.~2, (2013), 559--664.
\newblock \burlalt{arXiv:1109.6811}{http://arxiv.org/abs/1109.6811}.
\newblock
  \burlalt{doi:10.4007/annals.2013.178.2.4}{http://dx.doi.org/10.4007/annals.2013.178.2.4}.

\bibitem[Hai14]{Hai14}
\textsc{M.~Hairer}.
\newblock A theory of regularity structures.
\newblock \emph{Invent. Math.} \textbf{198}, no.~2, (2014), 269--504.
\newblock \burlalt{arXiv:1303.5113}{http://arxiv.org/abs/1303.5113}.
\newblock
  \burlalt{doi:10.1007/s00222-014-0505-4}{http://dx.doi.org/10.1007/s00222-014-0505-4}.

\bibitem[Hai16]{CDM}
\textsc{M.~Hairer}.
\newblock Regularity structures and the dynamical {$\Phi^4_3$} model.
\newblock In \emph{Current developments in mathematics 2014},  1--49. Int.
  Press, Somerville, MA, 2016.
\newblock \burlalt{arXiv:1508.05261}{http://arxiv.org/abs/1508.05261}.

\bibitem[Has80]{Khasminkii}
\textsc{R.~Z. Has{'}minski{\u\i}}.
\newblock \emph{Stochastic stability of differential equations}, vol.~7 of
  \emph{Monographs and Textbooks on Mechanics of Solids and Fluids: Mechanics
  and Analysis}.
\newblock Sijthoff \& Noordhoff, Alphen aan den Rijn---Germantown, Md., 1980,
  xvi+344.
\newblock Translated from the Russian by D. Louvish.

\bibitem[HL15]{HL15}
\textsc{M.~Hairer} and \textsc{C.~Labb\'{e}}.
\newblock Multiplicative stochastic heat equations on the whole space (2015).
\newblock \burlalt{arXiv:1504.07162}{http://arxiv.org/abs/1504.07162}.

\bibitem[HM16]{HM14}
\textsc{M.~Hairer} and \textsc{K.~Matetski}.
\newblock Optimal rate of convergence for stochastic {B}urgers-type equations.
\newblock \emph{Stoch. Partial Differ. Equ. Anal. Comput.} \textbf{4}, no.~2,
  (2016), 402--437.
\newblock \burlalt{arXiv:1504.05134}{http://arxiv.org/abs/1504.05134}.
\newblock
  \burlalt{doi:10.1007/s40072-015-0067-5}{http://dx.doi.org/10.1007/s40072-015-0067-5}.

\bibitem[HMW14]{HMW12}
\textsc{M.~Hairer}, \textsc{J.~Maas}, and \textsc{H.~Weber}.
\newblock Approximating rough stochastic {PDE}s.
\newblock \emph{Comm. Pure Appl. Math.} \textbf{67}, no.~5, (2014), 776--870.
\newblock \burlalt{arXiv:1202.3094}{http://arxiv.org/abs/1202.3094}.
\newblock \burlalt{doi:10.1002/cpa.21495}{http://dx.doi.org/10.1002/cpa.21495}.

\bibitem[H{\"o}r55]{Hor55}
\textsc{L.~H{\"o}rmander}.
\newblock On the theory of general partial differential operators.
\newblock \emph{Acta Math.} \textbf{94}, (1955), 161--248.
\newblock
  \burlalt{doi:10.1007/BF02392492}{http://dx.doi.org/10.1007/BF02392492}.

\bibitem[IW89]{IW89}
\textsc{N.~Ikeda} and \textsc{S.~Watanabe}.
\newblock \emph{Stochastic differential equations and diffusion processes},
  vol.~24 of \emph{North-Holland Mathematical Library}.
\newblock North-Holland Publishing Co., Amsterdam; Kodansha, Ltd., Tokyo,
  second ed., 1989,  xvi+555.

\bibitem[JLM85]{MR815192}
\textsc{G.~Jona-Lasinio} and \textsc{P.~K. Mitter}.
\newblock On the stochastic quantization of field theory.
\newblock \emph{Comm. Math. Phys.} \textbf{101}, no.~3, (1985), 409--436.
\newblock
  \burlalt{doi:10.1007/BF01216097}{http://dx.doi.org/10.1007/BF01216097}.

\bibitem[Kal02]{Kal02}
\textsc{O.~Kallenberg}.
\newblock \emph{Foundations of modern probability}.
\newblock Probability and its Applications (New York). Springer-Verlag, New
  York, second ed., 2002,  xx+638.

\bibitem[KPZ86]{PhysRevLett.56.889}
\textsc{M.~Kardar}, \textsc{G.~Parisi}, and \textsc{Y.-C. Zhang}.
\newblock Dynamic scaling of growing interfaces.
\newblock \emph{Phys. Rev. Lett.} \textbf{56}, no.~9, (1986), 889--892.

\bibitem[Kup15]{Antti}
\textsc{A.~Kupiainen}.
\newblock Renormalization group and stochastic {PDE}s.
\newblock \emph{Annales Henri Poincar\'e}  1--39.
\newblock \burlalt{arXiv:1410.3094}{http://arxiv.org/abs/1410.3094}.
\newblock
  \burlalt{doi:10.1007/s00023-015-0408-y}{http://dx.doi.org/10.1007/s00023-015-0408-y}.

\bibitem[Lui11]{Lui11}
\textsc{S.~H. Lui}.
\newblock \emph{Numerical analysis of partial differential equations}.
\newblock Pure and Applied Mathematics (Hoboken). John Wiley \& Sons, Inc.,
  Hoboken, NJ, 2011,  xiv+487.
\newblock
  \burlalt{doi:10.1002/9781118111130}{http://dx.doi.org/10.1002/9781118111130}.

\bibitem[Lyo98]{Lyo98}
\textsc{T.~J. Lyons}.
\newblock Differential equations driven by rough signals.
\newblock \emph{Rev. Mat. Iberoamericana} \textbf{14}, no.~2, (1998), 215--310.
\newblock \burlalt{doi:10.4171/RMI/240}{http://dx.doi.org/10.4171/RMI/240}.

\bibitem[Mey92]{Mey92}
\textsc{Y.~Meyer}.
\newblock \emph{Wavelets and operators}, vol.~37 of \emph{Cambridge Studies in
  Advanced Mathematics}.
\newblock Cambridge University Press, Cambridge, 1992,  xvi+224.
\newblock Translated from the 1990 French original by D. H. Salinger.

\bibitem[MW17]{MW14}
\textsc{J.-C. Mourrat} and \textsc{H.~Weber}.
\newblock Convergence of the two-dimensional dynamic {I}sing-{K}ac model to
  {$\Phi^4_2$}.
\newblock \emph{Comm. Pure Appl. Math.} \textbf{70}, no.~4, (2017), 717--812.
\newblock \burlalt{arXiv:1410.1179}{http://arxiv.org/abs/1410.1179}.
\newblock \burlalt{doi:10.1002/cpa.21655}{http://dx.doi.org/10.1002/cpa.21655}.

\bibitem[Nel73]{Nel73}
\textsc{E.~Nelson}.
\newblock The free {M}arkoff field.
\newblock \emph{J. Functional Analysis} \textbf{12}, (1973), 211--227.
\newblock
  \burlalt{doi:10.1016/0022-1236(73)90025-6}{http://dx.doi.org/10.1016/0022-1236(73)90025-6}.

\bibitem[Nua06]{Nua06}
\textsc{D.~Nualart}.
\newblock \emph{The {M}alliavin calculus and related topics}.
\newblock Probability and its Applications (New York). Springer-Verlag, Berlin,
  second ed., 2006,  xiv+382.

\bibitem[Par75]{ParkOld}
\textsc{Y.~M. Park}.
\newblock Lattice approximation of the $(\lambda\phi^4 - \mu\phi)_3$ field
  theory in a finite volume.
\newblock \emph{J. Mathematical Phys.} \textbf{16}, no.~5, (1975), 1065--1075.
\newblock \burlalt{doi:10.1063/1.522661}{http://dx.doi.org/10.1063/1.522661}.

\bibitem[Par77]{Park}
\textsc{Y.~M. Park}.
\newblock Convergence of lattice approximations and infinite volume limit in
  the {$(\lambda \phi ^{4}-\sigma \phi ^{2}-\tau \phi )_{3}$} field theory.
\newblock \emph{J. Mathematical Phys.} \textbf{18}, no.~3, (1977), 354--366.
\newblock \burlalt{doi:10.1063/1.523277}{http://dx.doi.org/10.1063/1.523277}.

\bibitem[SG73]{Simon1}
\textsc{B.~Simon} and \textsc{R.~B. Griffiths}.
\newblock The {$(\phi ^{4})_{2}$} field theory as a classical {I}sing model.
\newblock \emph{Comm. Math. Phys.} \textbf{33}, (1973), 145--164.
\newblock
  \burlalt{doi:10.1007/BF01645626}{http://dx.doi.org/10.1007/BF01645626}.

\bibitem[ZZ15]{Twins}
\textsc{R.~Zhu} and \textsc{X.~Zhu}.
\newblock Lattice approximation to the dynamical $\phi_3^4$ model (2015).
\newblock \burlalt{arXiv:1508.05613}{http://arxiv.org/abs/1508.05613}.

\end{thebibliography}

\end{document}